\documentclass{article}
\usepackage{amsmath}
\usepackage{fancyhdr}

\setcounter{MaxMatrixCols}{10}


\newtheorem{theorem}{Theorem}

\newtheorem{corollary}[theorem]{Corollary}

\newtheorem{lemma}[theorem]{Lemma}

\newtheorem{proposition}[theorem]{Proposition}
\newtheorem{remark}[theorem]{Remark}

\newenvironment{proof}[1][Proof]{\noindent\textbf{#1.} }{\ \rule{0.5em}{0.5em}}
\input{tcilatex}
\pagestyle{fancy}
\rhead{\today}

\begin{document}

\date{\today}
\title{On a classification of Killing vector fields on a tangent bundle with 
$g$-natural metric.}
\author{Stanis\l aw Ewert-Krzemieniewski (Szczecin)}
\maketitle

\begin{abstract}
The tangent bundle of a Riemannian manifold $(M,g)$ with non-degenerated $g-$
natural metric $G$ that admits a Killing vector field is investigated. Using
Taylor's formula $(TM,G)$ is decomposed into four classes that are
investigated separately. The equivalence of the existence of Killing vector
field on $M$ and $TM$ is proved.

\textbf{Mathematics Subject Classification }Primary 53B20, 53C07, secondary
53B21, 55C25.

\textbf{Key words}: Riemannian manifold, tangent bundle, g - natural metric,
Killing vector field, non-degenerate metric
\end{abstract}

\section{Introduction}

Geometry of a tangent bundle goes back to 1958 when Sasaki published (\cite%
{S}). Heaving given Riemannian metric $g$ on a differentiable manifold $M,$
he constructed a Riemannian metric $G$ on the tangent bundle $TM$ of $M,$
known today as the Sasaki metric. Since then different topics of geometry of
the tangent bundle were studied by many geometers. The Killing vector fields
on $(TM,G)$ were studied in (\cite{Abbassi 2003}), (\cite{Tanno 1}) and (%
\cite{Tanno 2}) with $G$ being the Cheeger-Gromoll metric $g^{CG}$, the
complete lift $g^{c}$ and the Sasaki metric $g^{S},$ respectively, obtained
from a metric $g$ on a base manifold $M$. Similar results were obtained
independently in (\cite{P}). All these metrics belong to a class of metrics
on $TM,$ known as a $g-$natural one and constructed in (\cite{KS}), see also
(\cite{Abb 2005 b}). These metrics can be regarded as jets of a Riemannian
metric $g$ on a manifold $M$ (\cite{A}).

In the paper we develop the method by Tanno (\cite{Tanno 2}) to investigate
Killing vector fields on $TM$ with arbitrary, non-degenerated $g-$ natural
metrics. The method applies Taylor's formula to components of the vector
field that is supposed to be an infinitesimal affine transformation, in
particular an infinitesimal isometry. The infinitesimal affine
transformation is determined by the values of its components and their first
partial derivatives at a point (\cite{KN}, p. 232). It appears by applying
the Taylor's formula there are at most four "generators" of the
infinitesimal isometry: two vectors and two tensors of type $(1,1).$

The paper is organized as follows. In Chapter 2 we describe the conventions
and give basic formulas we shall need. We also give a short resum\'{e} on a
tangent bundle of a Riemannian manifold. In Chapter 3 we calculate the Lie
derivative of a $g-$natural metric $G$ on $TM$ in terms of horizontal and
vertical lifts of vector fields from $M$ to $TM.$ Furthermore, we obtain the
Lie derivative of $G$ with respect to an arbitrary vector field in terms of
an adapted frame. By applying the Taylor's formula to the Killing vector
field on a neighbourhood of the set $M\times \{0\}$ we get a series of
conditions relating components and their covariant derivatives. Finally we
prove some lemmas of a general character. It is worth mentioning that at
this level ther is a restriction on one of the generator to be non-zero.\
The further restrictions of this kind will appear later on.

In Chapter 4, making use of these conditions and lemmas we split the
non-degenerated $g-$natural metrics on $TM$ into four classes (Theorem \ref%
{Splitting theorem}). For each such class further properties are proved
separately. Moreover, a complete structure of Killing vector fields on $TM$
for some subclasses is given (Theorems \ref{Structure 1} and \ref{Structure
2}).

As a consequence of the splitting theorem and Theorem \ref{Lift prop 2} as
well, we obtain the main

\begin{theorem}
\label{mine}If the tangent bundle of a Riemannian manifold $(M,g)$, $dimM>2,$
with a $g-$ natural, non-degenerated metric $G$ admits a Killing vector
field, then there exists a Killing vector field on $M.$

Conversely, any Killing vector field $X$ on a Riemannian manifold $(M,g)$
gives rise to the Killing vector field $Z$ on its tangent bundle endowed
with a $g-$ natural metric. Precisely, $Z$ is the complete lift of $X.$
\end{theorem}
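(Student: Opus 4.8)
The plan is to obtain both implications as corollaries of the structural work of Chapters 3 and 4, with essentially no fresh computation. For the forward implication I would argue as follows. Let $Z$ be a Killing vector field on $(TM,G)$; I may assume $Z\neq 0$, the conclusion being trivial otherwise. Writing $Z$ in the adapted frame and Taylor-expanding its components about the zero section $M\times\{0\}$, as prepared in Chapter 3, the equation $\mathcal{L}_Z G=0$ is equivalent to a finite hierarchy of tensorial conditions on the four ``generators'' --- two vector fields and two $(1,1)$-tensor fields on $M$ arising from the zeroth and first Taylor coefficients (an infinitesimal affine transformation, hence an infinitesimal isometry, being determined by these data, cf. \cite{KN}, p.~232). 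The lowest-order condition already reads $\mathcal{L}_V g=0$ for the zeroth-order horizontal generator $V$, while the higher-order conditions, combined with non-degeneracy of $G$, govern the remaining generators and --- by the lemmas of Chapter 3 --- force at least one generator to be non-zero. I would then invoke the Splitting Theorem (Theorem~\ref{Splitting theorem}): $(TM,G)$ together with $Z$ falls into one of the four classes, which I treat separately. In the subclasses covered by Theorems~\ref{Structure 1} and~\ref{Structure 2} the complete list of Killing fields of $TM$ is available, and from it one simply reads off a non-zero vector field $V$ on $M$ with $\mathcal{L}_V g=0$. For the remaining classes I would feed the class hypotheses into the hierarchy above; tracing the integrability-type conditions on the generators yields a numerical coefficient that is non-zero precisely because $\dim M>2$, and cancelling it shows that the non-zero generator supplied by Chapter 3 projects to a non-zero Killing field on $M$.

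For the converse I would take a Killing vector field $X$ on $(M,g)$ with local flow $\{\varphi_t\}$ by isometries. Its natural lift to $TM$, the flow $\{d\varphi_t\}$, has infinitesimal generator the complete lift $X^{c}$; since $G$ is $g$-natural it is invariant under tangent maps of isometries of $(M,g)$, so each $d\varphi_t$ is an isometry of $(TM,G)$ and $X^{c}$ is Killing. The same fact is what Theorem~\ref{Lift prop 2} records computationally: substituting $Z=X^{c}$ into the Lie-derivative formula for $G$ obtained in Chapter 3 and using $\mathcal{L}_X g=0$ together with the commutation rules relating horizontal lifts, vertical lifts and the complete lift, every term cancels.

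I expect the forward direction to be the main obstacle: it is not a single identity but a case analysis over the four classes of the Splitting Theorem, and the real work is concentrated in the borderline subcases in which the obvious candidate generator degenerates to zero. There one must chase the higher-order Taylor conditions to locate a different non-zero Killing field on $M$, and it is exactly there that non-degeneracy of $G$ and the hypothesis $\dim M>2$ are indispensable. The converse, once the machinery of Chapter 3 and Theorem~\ref{Lift prop 2} is in place, is routine.
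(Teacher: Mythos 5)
Your converse is correct: the invariance of a $g$-natural metric $G$ under the differentials $d\varphi_t$ of isometries of $(M,g)$, plus the fact that the flow of $X^{C}$ is precisely $\{d\varphi_t\}$, gives the result conceptually, and it matches the content of Theorem \ref{Lift prop 2}, which the paper proves instead by computing $L_{X^{C}}G$ explicitly (with $L_Xg=fg$ and $f=0$).

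The forward direction, however, contains genuine gaps. First, the lowest-order condition is not $L_Xg=0$ for the zeroth-order horizontal generator: equation (\ref{I1}) reads $A\left( \nabla _{k}X_{l}+\nabla _{l}X_{k}\right) +a_{2}\left( \nabla_{k}Y_{l}+\nabla _{l}Y_{k}\right) =0$ and mixes the horizontal generator $X$ with the vertical one $Y$. The crux of the paper's argument is that $A=A(0)$ and $a_{2}=a_{2}(0)$ are \emph{constants} on $M$, so (\ref{I1}) --- equivalently, the relations (\ref{14}), (\ref{17}), (\ref{20}), (\ref{22}) in the four cases of Theorem \ref{Splitting theorem} --- says exactly that the fixed constant-coefficient combination $AX+a_{2}Y$ is a Killing field of $(M,g)$; in Cases 2 and 3 neither $X$ nor $Y$ need be Killing separately, so your reading fails precisely there, and no higher-order chase or dimension-dependent coefficient is needed beyond what already enters the splitting theorem. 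Second, your claims that the Chapter 3 lemmas ``force at least one generator to be non-zero'' and that one can always extract a \emph{non-zero} Killing field on $M$ are not provable and are in fact false: the theorem does not assert non-vanishing, and it cannot. For the Sasaki metric over a K\"{a}hler base with trivial isometry group, $Z=\iota J$ (with $J$ the parallel skew-symmetric complex structure) is a non-zero Killing field on $TM$ whose generators $X$ and $Y$ vanish identically; the remaining generators $K$, $P$ are $(1,1)$ tensors and do not ``project'' to vector fields on $M$, so the final step of your sketch for the ``remaining classes'' has no content. The structure theorems \ref{Structure 1} and \ref{Structure 2} do give what you want on their special subclasses, but outside them the only correct conclusion is the one the paper draws: a Killing field of the form $AX+a_{2}Y$ (possibly zero) on $M$, obtained directly from the splitting theorem together with the constancy of $A$ and $a_{2}$ along $M\times\{0\}$.
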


In the next section some classical lifts of some tensor fields from $(M,g)$
to $(TM,G)$ are discussed.

Finally, in the Appendix we collect some known facts and theorems that we
use throughout the paper and also prove lemmas of a general character.

Throughout the paper all manifolds under consideration are smooth and
Hausdorff ones. The metric $g$ of the base manifold $M$ is always assumed to
be Riemannian one.

The computations in local coordinates were partially carried out and checked
using MathTensor\texttrademark\ and Mathematica\registered\ software.

\section{Preliminaries}

\subsection{Conventions and basic formulas}

Let $(M,g)$ be a pseudo-Riemannian manifold of dimension $n$ with metric $g.$
The Riemann curvature tensor $R$ is defined by 
\begin{equation*}
R(X,Y)=\nabla _{X}\nabla _{Y}-\nabla _{Y}\nabla _{X}-\nabla _{\left[ X,Y%
\right] }.
\end{equation*}%
In a local coordinate neighbourhood $(U,(x^{1},...,x^{n}))$ its components
are given by 
\begin{multline*}
R(\partial _{i},\partial _{j})\partial _{k}=R(\partial _{i},\partial
_{j},\partial _{k})=R_{kji}^{r}\partial _{r}= \\
\left( \partial _{i}\Gamma _{jk}^{r}-\partial _{j}\Gamma _{ik}^{r}+\Gamma
_{is}^{r}\Gamma _{jk}^{s}-\Gamma _{js}^{r}\Gamma _{ik}^{s}\right) \partial
_{r},
\end{multline*}%
where $\partial _{k}=\frac{\partial }{\partial x^{k}}$ and$\ \Gamma
_{jk}^{r} $ are the Christoffel symbols of the Levi-Civita connection $%
\nabla .$ We have%
\begin{equation}
\partial _{l}g_{hk}=g_{hk;l}=\Gamma _{hl}^{r}g_{rk}+\Gamma _{kl}^{r}g_{rk}.
\label{Conv3}
\end{equation}%
The Ricci identity is%
\begin{equation}
\nabla _{i}\nabla _{j}X_{k}-\nabla _{j}\nabla
_{i}X_{k}=X_{k,ji}-X_{k,ij}=-X^{s}R_{skji}.  \label{Conv4}
\end{equation}%
The Lie derivative of a metric tensor $g$ is given by%
\begin{equation}
\left( L_{X}g\right) \left( Y,Z\right) =g\left( \nabla _{Y}X,Z\right)
+g\left( Y,\nabla _{Z}X\right)  \label{Conv4a}
\end{equation}%
for all vector fields $X,$ $Y,$ $Z$ on $M.$ In local coordinates $%
(U,(x^{1},...,x^{n}))$\ we get%
\begin{equation*}
\left( L_{X^{r}\partial _{r}}g\right) _{ij}=\nabla _{i}X_{j}+\nabla
_{j}X_{i},
\end{equation*}%
where $X_{k}=g_{kr}X^{r}.$

We shall need the following properties of the Lie derivative 
\begin{multline}
L_{X}\Gamma _{ji}^{h}=\nabla _{j}\nabla _{i}X^{h}+X^{r}R_{rjis}g^{sh}=
\label{Conv5} \\
\frac{1}{2}g^{hr}\left[ \nabla _{j}\left( L_{X}g_{ir}\right) +\nabla
_{i}\left( L_{X}g_{jr}\right) -\nabla _{r}\left( L_{X}g_{ji}\right) \right] .
\end{multline}%
If $L_{X}\Gamma _{ji}^{h}=0,$ then $X$ is said to be an infinitesimal affine
transformation.

The vector field $X$ is said to be the Killing vector field or infinitesimal
isometry if%
\begin{equation*}
L_{X}g=0.
\end{equation*}

For a Killing vector field $X$ we have%
\begin{equation*}
L_{X}\nabla =0,\quad L_{X}R=0,\quad L_{X}\left( \nabla R\right) =0,....
\end{equation*}%
(\cite{Y}, p. 23 and 24).

\subsection{Tangent bundle}

Let $x$ be a point of a Riemannian manifold $(M,g),$ dim$M=n,$ covered by
coordinate neighbourhoods $(U,$ $(x^{j})),$ $j=1,...,n.\ $Let $TM\ $be
tangent bundle of $M$ and $\pi :TM\longrightarrow M\ $be a natural
projection on $M.$ If $x\in U$ and $u=u^{r}\frac{\partial }{\partial x^{r}}%
_{\mid x}\in T_{x}M\ $then $(\pi ^{-1}(U),$ $((x^{r}),(u^{r})),$ $r=1,...,n,$
is a coordinate neighbourhood on $TM.$

The space $T_{(x,u)}TM$ tangent to $TM$ at $(x,u)$ splits into direct sum%
\begin{equation*}
T_{(x,u)}TM=H_{(x,u)}TM\oplus V_{(x,u)}TM.
\end{equation*}

$V_{(x,u)}TM$ is the kernel of the differential of the projection $\pi
:TM\longrightarrow M,$ i.e. 
\begin{equation*}
V_{(x,u)}TM=Ker\left( d\pi _{\mid (x,u)}\right)
\end{equation*}%
and is called the vertical subspace of $T_{(x,u)}TM.$

Let $V\subset M$ and $W\subset T_{x}M$ be open neighbourhhoods of $x$ and $0$
respectively, diffeomorphic under exponential mapping $exp_{x}:T_{x}M%
\longrightarrow M.$ Furthermore, let $S:\pi ^{-1}(V)\longrightarrow T_{x}M$
be a smooth mapping that translates every vector $Z\in $ $\pi ^{-1}(V)$ from
the point $y$ to the point $x$ in a parallel manner along the unique
geodesic connecting $y$ and $x.$ Finally, for a given $u\in T_{x}M,$ let $%
R_{-u}:T_{x}M\longrightarrow T_{x}M$ be a translation by $u,$ i.e. $%
R_{-u}(X_{x})=X_{x}-u.$ The connection map%
\begin{equation*}
K_{(x,u)}:T_{(x,u)}TM\longrightarrow T_{x}M
\end{equation*}%
of the Levi-Civita connection $\nabla $ \ is given by 
\begin{equation*}
K_{(x,u)}(Z)=d(exp_{p}\circ R_{-u}\circ S)(Z)
\end{equation*}%
for any $Z\in T_{(x,u)}TM.$

For any smooth vector field $Z:M\longrightarrow TM$ and $X_{x}\in T_{x}M$ we
have%
\begin{equation*}
K(dZ_{x}(X_{x}))=\left( \nabla _{X}Z\right) _{x}.
\end{equation*}%
Then $H_{(x,u)}TM=Ker(K_{(x,u)})$ is called the horizontal subspace of $%
T_{(x,u)}TM.$

We have isomorphisms%
\begin{equation*}
H_{(x,u)}TM\sim T_{x}M\sim V_{(x,u)}TM.
\end{equation*}%
For any vector $X\in T_{x}M$ there exist the unique vectors: $X^{h}$ given
by $d\pi (X^{h})=X$ and $X^{v}$ given by $X^{v}(df)=Xf$ for any function $f$
on $M.$ $X^{h}$ and $X^{v}$ are called the horizontal and the vertical lifts
of $X$ to the point $(x,u)\in TM$.

The vertical lift of a vector field $X$ on $M$ is a unique vector field $%
X^{v}$ on $TM$ such that at each point $(x,u)\in TM$ its value is a vertical
lift of $X_{x}$ to the point $(x,u).$ The horizontal lift of a vector field
is defined similarly.

If $((x^{j}),$ $(u^{j})),$ $i=1,...,n,$ is a local coordinate system around
the point $(x,u)\in TM$ where $u\in T_{x}M$ and $X=X^{j}\frac{\partial }{%
\partial x^{j}},$ then%
\begin{equation*}
X^{h}=X^{j}\frac{\partial }{\partial x^{j}}-u^{r}X^{s}\Gamma _{rs}^{j}\frac{%
\partial }{\partial u^{j}},\quad X^{v}=X^{j}\frac{\partial }{\partial u^{j}},
\end{equation*}%
where $\Gamma _{rs}^{j}$ are Christoffel symbols of the Levi-Civita
connection $\nabla $ on $(M,g).$ We shall write $\partial _{k}=\frac{%
\partial }{\partial x^{k}}$ and $\delta _{k}=\frac{\partial }{\partial u^{k}}%
.$ Cf. \cite{Dombr} or \cite{Gudmundsson}. See also \cite{YI}.

In the paper we shall frequently use the frame $(\partial _{k}^{h},\partial
_{l}^{v})=\left( \left( \frac{\partial }{\partial x^{k}}\right) ^{h},\left( 
\frac{\partial }{\partial x^{l}}\right) ^{v}\right) $ known as the adapted
frame.

Every metric $g$ on $M$ defines a family of metrics on $TM.$ Between them a
class of so called $g-$ natural metrics is of special interest. The
well-known Cheeger-Gromoll and Sasaki metrics are special cases of the $g-$%
natural metrics (\cite{KS}).

\begin{lemma}
\label{Lemma 8}(\cite{Abb 2005 b}, \cite{Abb 2005 c}) Let $(M,g)$ be a
Riemannian manifold and $G$ be a $g-$natural metric on $TM.$ There exist
functions $a_{j},$ $b_{j}:<0,\infty )\longrightarrow R,$ $j=1,2,3,$ such
that for every $X,$ $Y,$ $u\in T_{x}M$%
\begin{eqnarray}
G_{(x,u)}(X^{h},Y^{h})
&=&(a_{1}+a_{3})(r^{2})g_{x}(X,Y)+(b_{1}+b_{3})(r^{2})g_{x}(X,u)g_{x}(Y,u), 
\notag \\
G_{(x,u)}(X^{h},Y^{v})
&=&a_{2}(r^{2})g_{x}(X,Y)+b_{2}(r^{2})g_{x}(X,u)g_{x}(Y,u),  \label{g1a} \\
G_{(x,u)}(X^{v},Y^{h})
&=&a_{2}(r^{2})g_{x}(X,Y)+b_{2}(r^{2})g_{x}(X,u)g_{x}(Y,u),  \notag \\
G_{(x,u)}(X^{v},Y^{v})
&=&a_{1}(r^{2})g_{x}(X,Y)+b_{1}(r^{2})g_{x}(X,u)g_{x}(Y,u),  \notag
\end{eqnarray}%
where $r^{2}=g_{x}(u,u).$ For $\dim M=1$ the same holds for $b_{j}=0,$ $%
j=1,2,3.$
\end{lemma}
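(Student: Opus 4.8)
The plan is to recognise the statement as a classification of $O(n)$-equivariant bilinear forms and to prove it by invariant theory, in the spirit of the natural-operator construction of \cite{KS}. Recall that a $g$-natural metric is the value of a natural operator transforming the metric $g$ into a $(0,2)$-tensor field $G$ on $TM$; in particular, for every local isometry $\phi$ of $(M,g)$ the tangent map $T\phi$ is an isometry of $(TM,G)$. First I would fix $x\in M$, write $V:=T_xM$ equipped with the inner product $g_x$, and use the splitting $T_{(x,u)}TM=H_{(x,u)}TM\oplus V_{(x,u)}TM$ together with the canonical isomorphisms $d\pi$ and the connection map $K$ to identify each summand with $V$. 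In these identifications $G_{(x,u)}$ is encoded by four bilinear forms on $V$, say $B^{hh}_u,B^{hv}_u,B^{vh}_u,B^{vv}_u$, each depending smoothly on $u\in V$; symmetry of $G$ makes $B^{hh}_u$ and $B^{vv}_u$ symmetric and gives $B^{hv}_u(X,Y)=B^{vh}_u(Y,X)$.

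The central reduction is to convert naturality into pointwise $O(V)$-equivariance. By the reduction theorem for natural operators of Kol\'{a}\v{r}, Michor and Slov\'{a}k, a natural $(0,2)$-tensor built from $g$ has, at $(x,u)$ and expressed in the adapted frame, a value depending only on the $0$-jet $g_x$ and on $u$, and this dependence is equivariant under the residual structure group $O(V,g_x)$. Concretely, realising a given $A\in O(V,g_x)$ as the differential at $x$ of a local diffeomorphism and transporting $g$ along it, naturality yields for each block
\[
B_{Au}(AX,AY)=B_u(X,Y),\qquad A\in O(V).
\]
Hence every block is an $O(V)$-equivariant, symmetric-bilinear-form-valued smooth function of the single vector $u$.

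Now the first fundamental theorem of invariant theory for $O(n)$ finishes the classification: the invariant polynomials in $u$ are generated by $r^2:=g_x(u,u)$, and the module of equivariant symmetric $(0,2)$-tensors is generated over the invariants by $g_x$ and by the tensor $(X,Y)\mapsto g_x(X,u)\,g_x(Y,u)$. Consequently each block has the form
\[
B_u(X,Y)=\alpha(r^2)\,g_x(X,Y)+\beta(r^2)\,g_x(X,u)\,g_x(Y,u),
\]
and in particular is automatically symmetric, so $B^{hv}_u=B^{vh}_u$ as asserted. Renaming the coefficients in the customary way --- the $hh$-coefficients as $a_1+a_3$ and $b_1+b_3$, the mixed ones as $a_2,b_2$, and the $vv$-ones as $a_1,b_1$ --- reproduces the four displayed formulas. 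Smoothness of $a_j,b_j$ on $[0,\infty)$ follows from that of $G$ and the classical theorem on smooth $O(n)$-invariants (a smooth invariant function of $u$ is a smooth function of $r^2$), applied to the scalars that recover $\alpha$ and $\beta$. When $\dim M=1$ one has $O(V)=\{\pm1\}$ and $V$ one-dimensional, so $g_x(X,u)\,g_x(Y,u)=g_x(X,Y)\,g_x(u,u)=r^2\,g_x(X,Y)$; the rank-one term is then proportional to the $g_x$-term and can be absorbed into the $a_j$, so one may take $b_j\equiv0$.

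I expect the genuine obstacle to be the reduction step rather than the invariant theory: one must justify rigorously that the a priori arbitrary jet-dependence of a $g$-natural metric collapses, in the adapted frame, to a dependence on $g_x$ and $u$ alone, and that this residual dependence is $O(n)$-equivariant. Once this equivariance is in hand, the classification of covariants and the bookkeeping of coefficient functions are routine.
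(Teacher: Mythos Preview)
The paper does not supply its own proof of this lemma; it is quoted verbatim as a known classification result from \cite{Abb 2005 b} and \cite{Abb 2005 c}, which in turn build on Kowalski--Sekizawa \cite{KS}. Your sketch is, in outline, precisely the argument those references use: reduce naturality to pointwise $O(n)$-equivariance in the adapted frame and then invoke the first fundamental theorem for $O(n)$ to classify the equivariant symmetric bilinear forms depending on a single vector $u$.

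One point deserves sharpening. You assert that the reduction theorem forces $G_{(x,u)}$, expressed in the adapted frame, to depend only on the $0$-jet $g_x$ and on $u$. This is not automatic for an arbitrary natural operator: in general a natural tensor built from a metric may involve curvature and its covariant derivatives, and these would survive passage to the adapted frame. In the cited literature the phrase ``$g$-natural metric'' refers by definition to metrics arising from natural operators of a fixed finite order (first-order lifts in the sense of \cite{KS}), and it is this order bound, combined with the homogeneity argument under rescalings, that rules out curvature terms and collapses the dependence to $(g_x,u)$. You correctly flag this reduction as the genuine obstacle; once it is secured, your invariant-theory classification and the $\dim M=1$ absorption argument are sound and complete the proof.
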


Setting $a_{1}=1,$ $a_{2}=a_{3}=b_{j}=0$ we obtain the Sasaki metric, while
setting $a_{1}=b_{1}=\frac{1}{1+r^{2}},$ $a_{2}=b_{2}=0=0,$ $a_{1}+a_{3}=1,$ 
$b_{1}+b_{3}=1$ we get the Cheeger-Gromoll one.

Following (\cite{Abb 2005 b}) we put

\begin{enumerate}
\item $a(t)=a_{1}(t)\left( a_{1}(t)+a_{3}(t)\right) -a_{2}^{2}(t),$

\item $F_{j}(t)=a_{j}(t)+tb_{j}(t),$

\item $F(t)=F_{1}(t)\left[ F_{1}(t)+F_{3}(t)\right] -F_{2}^{2}(t)$

for all $t\in <0,\infty ).$
\end{enumerate}

We shall often abbreviate: $A=a_{1}+a_{3},$ $B=b_{1}+b_{3}.$

\begin{lemma}
\label{Lemma 9}(\cite{Abb 2005 b}, Proposition 2.7) The necessary and
sufficient conditions for a $g-$ natural metric $G$ on the tangent bundle of
a Riemannian manifold $(M,g)$ to be non-degenerate are $a(t)\neq 0$ and $%
F(t)\neq 0$ for all $t\in <0,\infty ).$ If $\dim M=1$ this is equivalent to $%
a(t)\neq 0$ for all $t\in <0,\infty ).$
\end{lemma}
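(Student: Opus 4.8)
The plan is to reduce the question to pointwise linear algebra. Fix a point $(x,u)\in TM$. By the splitting $T_{(x,u)}TM=H_{(x,u)}TM\oplus V_{(x,u)}TM$ together with the isomorphisms $H_{(x,u)}TM\sim T_xM\sim V_{(x,u)}TM$ realized by the lifts $X\mapsto X^{h}$ and $X\mapsto X^{v}$, the metric $G_{(x,u)}$ (being symmetric) is non-degenerate at $(x,u)$ if and only if the Gram matrix of $G_{(x,u)}$ relative to any basis obtained by lifting a basis of $T_xM$ has non-zero determinant. So the whole statement comes down to computing one $2n\times 2n$ determinant and then letting $(x,u)$ vary over $TM$.

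First I would choose a $g_x$-orthonormal basis $\{e_1,\dots,e_n\}$ of $T_xM$ adapted to $u$: if $u\neq 0$ take $e_1=u/\sqrt{t}$ with $t=g_x(u,u)>0$, so that $g_x(e_i,e_j)=\delta_{ij}$ and $g_x(e_i,u)\,g_x(e_j,u)=t\,\delta_{i1}\delta_{j1}$; if $u=0$ any orthonormal basis serves and all $b_j$-terms drop out. Substituting $X=e_i$, $Y=e_j$ into the four formulas of Lemma \ref{Lemma 8}, one sees that every entry with $i\neq j$ vanishes, because both $\delta_{ij}$ and $\delta_{i1}\delta_{j1}$ do. Hence, in the ordered basis $(e_1^{h},e_1^{v},e_2^{h},e_2^{v},\dots,e_n^{h},e_n^{v})$, the Gram matrix is block-diagonal with one $2\times 2$ block for each index $i$.

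Next I would identify the blocks. For every $i\geq 2$ the block is
\begin{equation*}
\begin{pmatrix} a_1+a_3 & a_2 \\ a_2 & a_1 \end{pmatrix},
\end{equation*}
with determinant $a_1(a_1+a_3)-a_2^{2}=a(t)$. For $i=1$, using $a_j+t b_j=F_j$, the block equals
\begin{equation*}
\begin{pmatrix} F_1+F_3 & F_2 \\ F_2 & F_1 \end{pmatrix},
\end{equation*}
with determinant $F_1(F_1+F_3)-F_2^{2}=F(t)$. Since the determinant of a block-diagonal matrix is the product of the block determinants, $\det G_{(x,u)}=F(t)\,a(t)^{\,n-1}$ (for $u=0$ this reads $a(0)^{n}$, consistent because $F_j(0)=a_j(0)$ forces $F(0)=a(0)$). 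Thus $G$ is non-degenerate at $(x,u)$ if and only if $F(t)\neq 0$ and $a(t)\neq 0$. As $(x,u)$ ranges over $TM$ the value $t=g_x(u,u)$ ranges over all of $[0,\infty)$, so $G$ is non-degenerate everywhere exactly when $a(t)\neq 0$ and $F(t)\neq 0$ for all $t\in[0,\infty)$. When $\dim M=1$ the functions $b_j$ vanish, hence $F_j=a_j$ and $F=a$, and the two conditions collapse to $a(t)\neq 0$.

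The computation is routine linear algebra once the adapted basis is in place; the only genuine care is bookkeeping. The main point to verify is that adapting $e_1$ to $u$ makes every entry off-diagonal in the index $i$ vanish, so that the Gram matrix really decouples into the advertised $2\times 2$ blocks. The subsidiary point is to treat the zero section $u=0$, where the $b_j$-terms are absent, consistently with the generic computation, which the identity $F(0)=a(0)$ guarantees.
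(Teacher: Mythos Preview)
Your argument is correct. The paper itself does not prove this lemma; it is quoted verbatim from \cite{Abb 2005 b}, Proposition~2.7, so there is no in-paper proof to compare against. Your approach---choosing a $g_x$-orthonormal basis adapted to $u$, observing that the Gram matrix of $G_{(x,u)}$ in the lifted basis $(e_1^{h},e_1^{v},\dots,e_n^{h},e_n^{v})$ is block-diagonal with $2\times 2$ blocks, and reading off $\det G_{(x,u)}=F(t)\,a(t)^{n-1}$---is the standard and most direct route, and all the bookkeeping (including the $u=0$ case and the collapse $F=a$ when $\dim M=1$ via $b_j=0$) is handled correctly.
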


\begin{lemma}
The Lie brackets of vector fields on the tangent bundle of the
pseudo-Riemannian manifold $M$ are given by%
\begin{eqnarray*}
\left[ X^{h},Y^{h}\right] _{\left( x,u\right) } &=&\left[ X,Y\right]
_{\left( x,u\right) }^{h}-v\left\{ R\left( X_{x},Y_{x}\right) u\right\} , \\
\left[ X^{h},Y^{v}\right] _{\left( x,u\right) } &=&\left( \nabla
_{X}Y\right) _{\left( x,u\right) }^{v}=\left( \nabla _{Y}X\right) _{\left(
x,u\right) }^{v}+\left[ X,Y\right] _{\left( x,u\right) }^{v}, \\
\left[ X^{v},Y^{v}\right] _{\left( x,u\right) } &=&0
\end{eqnarray*}%
for all vector fields $X,$ $Y$ on $M.$
\end{lemma}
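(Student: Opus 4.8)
The plan is to prove all three identities by a single direct computation in the natural coordinates $((x^{j}),(u^{j}))$ on $\pi ^{-1}(U)$, using the explicit expressions $X^{h}=X^{j}\partial _{j}-u^{r}X^{s}\Gamma _{rs}^{j}\delta _{j}$ and $X^{v}=X^{j}\delta _{j}$ recorded above. The decisive simplification is that the components $X^{j},Y^{j}$ depend only on the base coordinates $x$, so $\delta _{k}X^{j}=0$, while $\delta _{k}u^{r}=\delta _{k}^{r}$. Since $\{\partial _{j},\delta _{j}\}$ is a coordinate frame on $TM$ all these fields commute, so for two fields written in this frame the bracket is computed by the usual formula (each coefficient applied as a derivation, minus the symmetric term). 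I shall evaluate the horizontal ($\partial _{k}$) and vertical ($\delta _{k}$) components of each bracket separately.

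The two vertical-type brackets are immediate. For $[X^{v},Y^{v}]$ both coefficients are $u$-independent, so the $\delta _{k}$-component is $X^{j}(\delta _{j}Y^{k})-Y^{j}(\delta _{j}X^{k})=0$ and the bracket vanishes. For $[X^{h},Y^{v}]$ the $\partial _{k}$-component is $-Y^{v}(X^{k})=0$, confirming that the result is vertical; its $\delta _{k}$-component is $X^{h}(Y^{k})-Y^{v}(-u^{r}X^{s}\Gamma _{rs}^{k})=X^{j}\partial _{j}Y^{k}+X^{s}Y^{l}\Gamma _{ls}^{k}=(\nabla _{X}Y)^{k}$, where I used $\delta _{j}Y^{k}=0$ and $\delta _{l}(u^{r})=\delta _{l}^{r}$. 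Hence $[X^{h},Y^{v}]=(\nabla _{X}Y)^{v}$. The second equality on that line is then just the torsion-free identity $\nabla _{X}Y-\nabla _{Y}X=[X,Y]$ applied to the vertical lift.

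The horizontal-horizontal bracket is where the curvature enters and is the main obstacle. Its $\partial _{k}$-component reduces at once to $X^{j}\partial _{j}Y^{k}-Y^{j}\partial _{j}X^{k}=[X,Y]^{k}$, accounting for the horizontal part of $[X,Y]^{h}$. The $\delta _{k}$-component is the laborious term: expanding $X^{h}(-u^{p}Y^{q}\Gamma _{pq}^{k})-Y^{h}(-u^{r}X^{s}\Gamma _{rs}^{k})$ produces, on the one hand, the pieces proportional to $\partial _{j}Y^{q}$ and $\partial _{j}X^{s}$ that combine with products of two Christoffel symbols into $-u^{r}[X,Y]^{s}\Gamma _{rs}^{k}$, i.e. precisely the vertical component of $[X,Y]^{h}$; on the other hand the remaining terms, coming from the derivatives $\partial _{j}\Gamma _{rs}^{k}$ of the Christoffel symbols together with the surviving $\Gamma \Gamma $ products, must be regrouped into the curvature combination $\partial _{i}\Gamma _{jk}^{r}-\partial _{j}\Gamma _{ik}^{r}+\Gamma _{is}^{r}\Gamma _{jk}^{s}-\Gamma _{js}^{r}\Gamma _{ik}^{s}=R_{kji}^{r}$ from the preliminaries.

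The delicate bookkeeping is matching indices and signs so that exactly this antisymmetric combination appears, contracted with $u$, $X$ and $Y$, and thereby identifying the leftover $\delta _{k}$-part with the coordinate expression of $-v\{R(X,Y)u\}$. Once this regrouping is carried out the three stated formulas follow. I expect no conceptual difficulty beyond this index manipulation; the symmetry $\Gamma _{rs}^{j}=\Gamma _{sr}^{j}$ and the commutativity of the $u$-partials are what make the cancellations work.
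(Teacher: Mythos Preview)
Your computation is correct and is exactly the standard coordinate proof of these bracket formulas. The paper, however, does not supply its own proof of this lemma: it is stated in the preliminaries as a known fact (in the spirit of the cited references \cite{Dombr}, \cite{Gudmundsson}, \cite{YI}) and used without further justification. So there is nothing to compare against; your direct calculation in the coordinate frame $(\partial_j,\delta_j)$, exploiting that $X^j,Y^j$ are $u$-independent and that $\delta_l u^r=\delta_l^r$, is precisely the argument one finds in those sources and would serve perfectly well as the missing proof.
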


\section{Killing vector field}

\subsection{Lie derivative}

Applying the formula (\ref{Conv4a}) to the $g-$natural metric $G$ on $TM$
and vertical and horizontal lifts of vector fields $X,$ $Y,$ $Z$ on $M,$
using Proposition \ref{Connection}, we get%
\begin{multline*}
\left( L_{X^{v}}G\right) \left( Y^{v},Z^{v}\right) = \\
b_{1}g(X,Z)g(Y,u)+b_{1}g(X,Y)g(Z,u)+ \\
2a_{1}^{\prime }g(Y,Z)g(X,u)+2b_{1}^{\prime }g(X,u)g(Y,u)g(Z,u),
\end{multline*}%
\begin{equation*}
\left( L_{X^{h}}G\right) \left( Y^{v},Z^{v}\right) =0,
\end{equation*}%
whence%
\begin{multline*}
\left( L_{H^{a}\partial _{a}^{h}+V^{a}\partial _{a}^{v}}G\right) \left(
\partial _{k}^{v},\partial _{l}^{v}\right) =V^{a}\left( L_{\partial
_{a}^{v}}G\right) \left( \partial _{k}^{v},\partial _{l}^{v}\right)
+\partial _{k}^{v}H^{a}G\left( \partial _{a}^{h},\partial _{l}^{v}\right) +
\\
\partial _{k}^{v}V^{a}G\left( \partial _{a}^{v},\partial _{l}^{v}\right)
+\partial _{l}^{v}H^{a}G\left( \partial _{k}^{v},\partial _{a}^{h}\right)
+\partial _{l}^{v}V^{a}G\left( \partial _{k}^{v},\partial _{a}^{v}\right) .
\end{multline*}%
Next we find%
\begin{multline*}
\left( L_{X^{h}}G\right) \left( Y^{v},Z^{h}\right) = \\
-a_{1}R(Y,u,X,Z)+a_{2}g(\nabla _{Z}X,Y)+b_{2}g(\nabla _{Z}X,u)g(Y,u),
\end{multline*}

\begin{multline*}
\left( L_{X^{v}}G\right) \left( Y^{v},Z^{h}\right) =a_{1}g(\nabla
_{Z}X,Y)+b_{1}g(\nabla _{Z}X,u)g(Y,u)+ \\
b_{2}\left[ g(X,Z)g(Y,u)+g(X,Y)g(Z,u)\right] + \\
2a_{2}^{\prime }g(Y,Z)g(X,u)+2b_{2}^{\prime }g(X,u)g(Y,u)g(Z,u),
\end{multline*}%
whence%
\begin{multline*}
\left( L_{H^{a}\partial _{a}^{h}+V^{a}\partial _{a}^{v}}G\right) \left(
\partial _{k}^{v},\partial _{l}^{h}\right) = \\
H^{a}\left( L_{\partial _{a}^{h}}G\right) \left( \partial _{k}^{v},\partial
_{l}^{h}\right) +V^{a}\left( L_{\partial _{a}^{v}}G\right) \left( \partial
_{k}^{v},\partial _{l}^{h}\right) +\partial _{k}^{v}H^{a}G\left( \partial
_{a}^{h},\partial _{l}^{h}\right) + \\
\partial _{k}^{v}V^{a}G\left( \partial _{a}^{v},\partial _{l}^{h}\right)
+\partial _{l}^{h}H^{a}G\left( \partial _{k}^{v},\partial _{a}^{h}\right)
+\partial _{l}^{h}V^{a}G\left( \partial _{k}^{v},\partial _{a}^{v}\right) .
\end{multline*}%
Finally, we have%
\begin{multline*}
\left( L_{X^{h}}G\right) \left( Y^{h},Z^{h}\right) =A\left[ g(\nabla
_{Z}X,Y)+g(\nabla _{Y}X,Z)\right] + \\
B\left[ g(\nabla _{Z}X,u)g(Y,u)+g(\nabla _{Y}X,u)g(Z,u)\right] - \\
a_{2}\left[ R(Y,u,X,Z)+R(Z,u,X,Y)\right] ,
\end{multline*}%
\begin{multline*}
\left( L_{X^{v}}G\right) \left( Y^{h},Z^{h}\right) =a_{2}\left[ g(\nabla
_{Z}X,Y)+g(\nabla _{Y}X,Z)\right] + \\
b_{2}\left[ g(\nabla _{Z}X,u)g(Y,u)+g(\nabla _{Y}X,u)g(Z,u)\right] + \\
B\left[ g(X,Y)g(Z,u)+g(X,Z)g(Y,u)\right] + \\
2A^{\prime }g(Y,Z)g(X,u)+2B^{\prime }g(X,u)g(Y,u)g(Z,u),
\end{multline*}%
whence%
\begin{multline*}
\left( L_{H^{a}\partial _{a}^{h}+V^{a}\partial _{a}^{v}}G\right) \left(
\partial _{k}^{h},\partial _{l}^{h}\right) = \\
H^{a}\left( L_{\partial _{a}^{h}}G\right) \left( \partial _{k}^{h},\partial
_{l}^{h}\right) +V^{a}\left( L_{\partial _{a}^{v}}G\right) \left( \partial
_{k}^{h},\partial _{l}^{h}\right) +\partial _{k}^{h}H^{a}G\left( \partial
_{a}^{h},\partial _{l}^{h}\right) + \\
\partial _{k}^{h}V^{a}G\left( \partial _{a}^{v},\partial _{l}^{h}\right)
+\partial _{l}^{h}H^{a}G\left( \partial _{k}^{h},\partial _{a}^{h}\right)
+\partial _{l}^{h}V^{a}G\left( \partial _{k}^{h},\partial _{a}^{v}\right) .
\end{multline*}

Suppose now that%
\begin{equation*}
Z=Z^{a}\partial _{a}+\widetilde{Z}^{\alpha }\delta _{\alpha }=Z^{a}\partial
_{a}^{h}+(\widetilde{Z}^{\alpha }+Z^{a}u^{r}\Gamma _{ar}^{\alpha })\partial
_{\alpha }^{v}=H^{a}\partial _{a}^{h}+V^{\alpha }\partial _{\alpha }^{v}
\end{equation*}%
is a vector field on $TM,$ $H^{a},$ $V^{a}$ being the horizontal and
vertical components of the vector field $Z$ on $TM$ respectively.

\begin{lemma}
\label{Lie Deriv}Let $G$ be a metric of the form (\ref{g1a}) defined on the
tangent bundle $TM$ of a manifold $(M,g).$ In particular, $G$ being the $g$
- natural metric on $TM.$ With respect to the base $\left( \partial
_{k}^{v},\partial _{l}^{h}\right) $ we have%
\begin{multline}
\left( L_{H^{a}\partial _{a}^{h}+V^{\alpha }\partial _{\alpha }^{v}}G\right)
\left( \partial _{k}^{h},\partial _{l}^{h}\right) =  \label{LD10} \\
-a_{2}\left[ R_{aklr}+R_{alkr}\right] H^{a}u^{r}+ \\
A\left[ \left( \partial _{k}^{h}H^{a}+H^{r}\Gamma _{rk}^{a}\right)
g_{al}+\left( \partial _{l}^{h}H^{a}+H^{r}\Gamma _{rl}^{a}\right) g_{ak}%
\right] + \\
B\left[ \left( \partial _{k}^{h}H^{a}+H^{r}\Gamma _{rk}^{a}\right)
u_{a}u_{l}+\left( \partial _{l}^{h}H^{a}+H^{r}\Gamma _{rl}^{a}\right)
u_{a}u_{k}\right] + \\
a_{2}\left[ \left( \partial _{k}^{h}V^{a}+V^{r}\Gamma _{rk}^{a}\right)
g_{al}+\left( \partial _{l}^{h}V^{a}+V^{r}\Gamma _{rl}^{a}\right) g_{ak}%
\right] + \\
b_{2}\left[ \left( \partial _{k}^{h}V^{a}+V^{r}\Gamma _{rk}^{a}\right)
u_{a}u_{l}+\left( \partial _{l}^{h}V^{a}+V^{r}\Gamma _{rl}^{a}\right)
u_{a}u_{k}\right] + \\
2A^{\prime }g_{kl}V^{b}u_{b}+2B^{\prime }V^{a}u_{a}u_{k}u_{l}+B\left(
V_{k}u_{l}+V_{l}u_{k}\right) ,
\end{multline}%
\begin{multline}
\left( L_{H^{a}\partial _{a}^{h}+V^{\alpha }\partial _{\alpha }^{v}}G\right)
\left( \partial _{k}^{v},\partial _{l}^{h}\right) =  \label{LD11} \\
-a_{1}R_{alkr}u^{r}H^{a}+\partial _{k}^{v}H^{a}\left(
Ag_{al}+Bu_{a}u_{l}\right) + \\
a_{2}\left( \partial _{l}^{h}H^{a}+H^{r}\Gamma _{rl}^{a}\right)
g_{ak}+b_{2}\left( \partial _{l}^{h}H^{a}+H^{r}\Gamma _{rl}^{a}\right)
u_{a}u_{k}+ \\
\partial _{k}^{v}V^{a}\left( a_{2}g_{al}+b_{2}u_{a}u_{l}\right) + \\
a_{1}\left( \partial _{l}^{h}V^{a}+V^{r}\Gamma _{rl}^{a}\right)
g_{ak}+b_{1}\left( \partial _{l}^{h}V^{a}+V^{r}\Gamma _{rl}^{a}\right)
u_{a}u_{k}+ \\
2a_{2}^{\prime }g_{kl}V^{b}u_{b}+2b_{2}^{\prime
}V^{a}u_{a}u_{k}u_{l}+b_{2}\left( V_{k}u_{l}+V_{l}u_{k}\right) ,
\end{multline}%
\begin{multline}
\left( L_{H^{a}\partial _{a}^{h}+V^{\alpha }\partial _{\alpha }^{v}}G\right)
\left( \partial _{k}^{v},\partial _{l}^{v}\right) =  \label{LD12} \\
a_{2}\left( \partial _{k}^{v}H^{a}g_{al}+\partial _{l}^{v}H^{a}g_{ak}\right)
+b_{2}\left( \partial _{k}^{v}H^{a}u_{a}u_{l}+\partial
_{l}^{v}H^{a}u_{a}u_{k}\right) + \\
b_{1}\left( V_{k}u_{l}+V_{l}u_{k}\right) +2a_{1}^{\prime
}g_{kl}V^{b}u_{b}+2b_{1}^{\prime }V^{b}u_{b}u_{k}u_{l}+ \\
a_{1}\left( \partial _{k}^{v}V^{a}g_{al}+\partial _{l}^{v}V^{a}g_{ak}\right)
+b_{1}\left( \partial _{k}^{v}V^{a}u_{a}u_{l}+\partial
_{l}^{v}V^{a}u_{a}u_{k}\right) .
\end{multline}
\end{lemma}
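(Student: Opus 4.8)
The plan is to compute the three components $(L_Z G)(\partial_k^h,\partial_l^h)$, $(L_Z G)(\partial_k^v,\partial_l^h)$, $(L_Z G)(\partial_k^v,\partial_l^v)$ by starting from the general identity
\[
(L_Z G)(U,W) = Z\bigl(G(U,W)\bigr) - G([Z,U],W) - G(U,[Z,W]),
\]
with $Z = H^a\partial_a^h + V^\alpha\partial_\alpha^v$ and $U,W$ ranging over the adapted frame vectors. First I would record, from Lemma \ref{Lemma 8}, the expressions for $G$ on pairs of adapted frame vectors in coordinates: $G(\partial_a^h,\partial_b^h) = A g_{ab} + B u_a u_b$, $G(\partial_a^h,\partial_b^v) = a_2 g_{ab} + b_2 u_a u_b$, $G(\partial_a^v,\partial_b^v) = a_1 g_{ab} + b_1 u_a u_b$, where everything is evaluated at $(x,u)$ and $r^2 = g(u,u)$. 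The derivatives of these coefficient functions along the fibre direction bring in the primed functions: $\partial_k^v(r^2) = 2u_k$, so e.g. $\partial_k^v\bigl(A g_{ab} + B u_a u_b\bigr) = 2A' u_k g_{ab} + 2B' u_k u_a u_b + B(\delta_{ak}u_b + \delta_{bk}u_a)$; along the horizontal direction $\partial_k^h = \partial_k - u^r\Gamma_{rk}^s\delta_s$ one uses $\partial_k^h g_{ab} = g_{ab;k} - u^r\Gamma_{rk}^s(\dots)$, and the clean way to organize this is via the covariant-derivative bookkeeping already visible in the target formulas, where horizontal derivatives of components appear only in the combination $\partial_k^h H^a + H^r\Gamma_{rk}^a$ (and likewise for $V$).

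The second ingredient is the Lie-bracket lemma stated just above: $[\partial_k^h,\partial_l^h] = -v\{R(\partial_k,\partial_l)u\} = -R_{\,kl r}^{\;s}u^r\,\partial_s^v$ in coordinates (up to sign/index conventions to be fixed), $[\partial_k^h,\partial_l^v] = \Gamma_{kl}^s\,\partial_s^v$, and $[\partial_k^v,\partial_l^v]=0$. Expanding $[Z,U]$ for each frame vector $U$ then produces two kinds of terms: the ``structure'' terms coming from $[\partial_a^h,\cdot]$ and $[\partial_a^v,\cdot]$ — these are responsible for the curvature terms $-a_2(R_{aklr}+R_{alkr})H^a u^r$, $-a_1 R_{alkr}u^r H^a$ in (\ref{LD10})–(\ref{LD11}) and, crucially, for converting bare partials of $H^a, V^a$ into the covariant combinations $\partial_k^h H^a + H^r\Gamma_{rk}^a$; and the ``coefficient'' terms $U(H^a)$, $U(V^a)$ times $G(\partial_a^{h/v},W)$. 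After substitution one pairs up, in a fixed order, the six pieces $Z(G(U,W))$ split by product rule, and the four bracket pieces, exactly mirroring the ``whence'' displays preceding the statement.

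Alternatively — and this is the route the preceding displays set up — I would simply invoke the coordinate-free formulas for $(L_{X^h}G)$ and $(L_{X^v}G)$ on lifted fields already derived in the text (the six displayed ``$\bigl(L_{X^{h/v}}G\bigr)(Y^?,Z^?)$'' identities, valid by Proposition \ref{Connection} and \ref{Conv4a}), together with the tensorial expansion $(L_{fX}G)(Y,W) = f(L_X G)(Y,W) + (Yf)\,G(X,W) + (Wf)\,G(Y,W)$ applied to $Z = H^a\partial_a^h + V^\alpha\partial_\alpha^v$. Taking $Y,W \in \{\partial_k^h,\partial_l^h\}$ etc., each term $H^a(L_{\partial_a^h}G)(\dots)$ is read off from the lifted formulas with $X=\partial_a$, $Y=\partial_k$, $Z=\partial_l$; the remaining terms $\partial_k^{?}H^a\cdot G(\partial_a^h,\partial_l^?)$ and $\partial_k^{?}V^a\cdot G(\partial_a^v,\partial_l^?)$ are read off from Lemma \ref{Lemma 8}; and one expresses $g(\nabla_{\partial_l}\partial_a,\partial_k) = \Gamma_{la}^s g_{sk}$, regroups $\partial_l^h H^a + H^r\Gamma_{rl}^a$ wherever the coefficient functions $A$, $B$, $a_2$, $b_2$ appear, and collects $R$-terms. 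Matching indices then yields (\ref{LD10}), (\ref{LD11}), (\ref{LD12}) verbatim.

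The main obstacle is purely bookkeeping: correctly tracking index raising/lowering with $g$ (the formulas mix $u^r$ and $u_a = g_{ar}u^r$, and $V_k = g_{kr}V^r$), keeping the symmetrization in $k\leftrightarrow l$ consistent, and — most delicate — verifying that all the ``loose'' partial derivatives of $H^a$ and $V^a$ that are genuinely non-tensorial recombine into the connection-corrected forms $\partial^h_\bullet(\cdot) + (\cdot)\Gamma$ exactly where the target formula demands, with nothing left over. The curvature terms require care with the curvature-sign convention fixed in (\ref{Conv4}) and the placement of indices in $R_{aklr}$ versus $R_{alkr}$. I expect no conceptual difficulty beyond this; the computation is the kind that the authors note was cross-checked with MathTensor and Mathematica.
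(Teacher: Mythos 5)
Your second route is precisely the paper's derivation: the six displayed identities for $(L_{X^{h}}G)$ and $(L_{X^{v}}G)$ on lifts (obtained from (\ref{Conv4a}) and Proposition \ref{Connection}) are combined with the function-linearity expansion of the Lie derivative shown in the ``whence'' displays, and (\ref{LD10})--(\ref{LD12}) follow by substitution and regrouping; your first, bracket-based route is an equivalent standard alternative. Two typographical slips should be fixed before carrying the plan out: the last term of the expansion must be $(Wf)\,G(X,Y)$, not $(Wf)\,G(Y,W)$, and $\partial_{k}^{v}(u_{a}u_{b})=g_{ak}u_{b}+g_{bk}u_{a}$ with the metric rather than Kronecker deltas, which is what produces the term $B\left(V_{k}u_{l}+V_{l}u_{k}\right)$.
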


\subsection{Taylor's formula and coefficients}

Throughout the paper the following hypothesis will be used:%
\begin{eqnarray}
&&(M,g)\text{ is a Riemannian manifold of dimension }n\text{ with metric }g,%
\text{ }  \TCItag{H}  \label{H} \\
&&\text{covered by the coordinate system }(U,\text{ }(x^{r})).  \notag \\
&&(TM,G)\text{ is the tangent bundle of }M\text{ with }g\text{-natural non-}
\notag \\
&&\text{degenerated metric }G,\text{ covered by a coordinate system }  \notag
\\
&&(\pi ^{-1}(U),\text{ }(x^{r},u^{s}))\text{, }r,s\text{ run through the
range }\{1,...,n\}.  \notag \\
&&Z\text{ is a Killing vector field on }TM\text{ with local components }%
(Z^{r},\widetilde{Z}^{s})\text{ }  \notag \\
&&\text{with respect to the local base }(\partial _{r},\delta _{s})\text{ }.
\notag
\end{eqnarray}

Let%
\begin{multline}
H^{a}=Z^{a}=Z^{a}(x,u)=  \label{Taylor1} \\
X^{a}+K_{p}^{a}u^{p}+\frac{1}{2}E_{pq}^{a}u^{p}u^{q}+\frac{1}{3!}%
F_{pqr}^{a}u^{p}u^{q}u^{r}+\frac{1}{4!}G_{pqrs}^{a}u^{p}u^{q}u^{r}u^{s}+%
\cdots ,
\end{multline}%
\begin{multline}
\widetilde{Z}^{a}=\widetilde{Z}^{a}(x,u)=  \label{Taylor2} \\
Y^{a}+\widetilde{P}_{p}^{a}u^{p}+\frac{1}{2}Q_{pq}^{a}u^{p}u^{q}+\frac{1}{3!}%
S_{pqr}^{a}u^{p}u^{q}u^{r}+\frac{1}{4!}V_{pqrs}^{a}u^{p}u^{q}u^{r}u^{s}+%
\cdots
\end{multline}%
be expansions of the components $Z^{a}$ and $\widetilde{Z}^{a}$ by Taylor's
formula in a neighbourhood of \ a point $(x,0)\in TM.$ For each $a$ the
coefficients are values of partial derivatives of $Z^{a}$, $\widetilde{Z}%
^{a} $ respectively$,$ taken at a point $(x,0)$ and therefore are symmetric
in all lower indices. For simplicity we have omitted the remainders.

\begin{lemma}
(\cite{Tanno 2}) The quantities%
\begin{multline*}
X=\left( X^{a}(x)\right) =\left( Z^{a}(x,0)\right) , \\
Y=\left( Y^{a}\left( x\right) \right) =\left( \widetilde{Z}^{a}\left(
x,0\right) \right) , \\
K=\left( K_{p}^{a}\left( x\right) \right) =\left( \delta _{p}Z^{a}\left(
x,0\right) \right) , \\
E=\left( E_{pq}^{a}\left( x\right) \right) =\left( \delta _{p}\delta
_{q}Z^{a}\left( x,0\right) \right) , \\
P=\left( P_{p}^{a}\left( x\right) \right) =\left( \left( \delta _{p}%
\widetilde{Z}^{a}\right) \left( x,0\right) -\partial _{p}\left( Z^{a}\left(
x,0\right) \right) \right)
\end{multline*}%
are tensor fields $M.$
\end{lemma}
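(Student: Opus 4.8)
The statement asserts that the five arrays $X$, $Y$, $K$, $E$, $P$ are genuine tensor fields on $M$, i.e. that under a change of coordinates on $M$ they transform by the appropriate Jacobian factors. The key point is that $Z$ is a globally defined vector field on $TM$, so its components $(Z^a,\widetilde Z^a)$ transform by the known rule for the natural (induced) coordinates $(x^r,u^s)$ on $TM$; the plan is to differentiate those transformation laws in $u$, evaluate at $u=0$, and read off the tensorial character of each coefficient. First I would recall that under $x^a = x^a(\bar x^1,\dots,\bar x^n)$ the fibre coordinates change by $u^a = \frac{\partial x^a}{\partial \bar x^p}\,\bar u^p$, and hence a vector field $Z = Z^a\partial_a + \widetilde Z^a\delta_a$ on $TM$ has
\begin{equation*}
\bar Z^c = \frac{\partial \bar x^c}{\partial x^a} Z^a, \qquad
\bar{\widetilde Z}^c = \frac{\partial \bar x^c}{\partial x^a}\widetilde Z^a + \frac{\partial^2 \bar x^c}{\partial x^a\partial x^b}\, u^b\, Z^a,
\end{equation*}
where $Z^a$, $\widetilde Z^a$ are evaluated at the point whose natural coordinates are $(x,u)$, with $u^b = \frac{\partial x^b}{\partial \bar x^q}\bar u^q$.

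For $X$ and $Y$ this is immediate: setting $u=0$ (equivalently $\bar u = 0$) in the two displayed formulas gives $\bar X^c = \frac{\partial \bar x^c}{\partial x^a}X^a$ and $\bar Y^c = \frac{\partial \bar x^c}{\partial x^a}Y^a$, so $X$ and $Y$ are vector fields on $M$. For $K = (\delta_p Z^a(x,0))$ I would apply $\bar\delta_p = \frac{\partial x^b}{\partial \bar x^p}\delta_b$ to the first transformation law; since $\frac{\partial \bar x^c}{\partial x^a}$ does not depend on the fibre coordinate, one gets $\bar K^c_p = \frac{\partial \bar x^c}{\partial x^a}\frac{\partial x^b}{\partial \bar x^p}K^a_b$, exhibiting $K$ as a $(1,1)$ tensor. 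The same differentiation applied twice, $\bar\delta_p\bar\delta_q$, gives $\bar E^c_{pq} = \frac{\partial \bar x^c}{\partial x^a}\frac{\partial x^d}{\partial \bar x^p}\frac{\partial x^b}{\partial \bar x^q}E^a_{db}$ after evaluation at $u=0$, so $E$ is a $(1,2)$ tensor (symmetric in the lower indices, as already noted from Taylor's formula).

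The only coefficient requiring genuine care is $P$, because of the inhomogeneous second-derivative term in the transformation of $\widetilde Z$, and this is the step I expect to be the main obstacle. Here one must differentiate $\bar{\widetilde Z}^c = \frac{\partial \bar x^c}{\partial x^a}\widetilde Z^a + \frac{\partial^2 \bar x^c}{\partial x^a\partial x^b}u^b Z^a$ with respect to $\bar\delta_p$: the first term contributes $\frac{\partial \bar x^c}{\partial x^a}\frac{\partial x^b}{\partial \bar x^p}\delta_b\widetilde Z^a$, while the Leibniz rule on the second term produces, at $u=0$, the extra piece $\frac{\partial^2 \bar x^c}{\partial x^a\partial x^b}\frac{\partial x^b}{\partial \bar x^p}Z^a(x,0) = \frac{\partial^2 \bar x^c}{\partial x^a\partial x^b}\frac{\partial x^b}{\partial \bar x^p}X^a$. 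Thus $\bar\delta_p\bar{\widetilde Z}^c(x,0)$ is \emph{not} tensorial on its own, but the anomalous term is exactly cancelled by subtracting $\bar\partial_p(\bar Z^c(x,0)) = \bar\partial_p\bigl(\frac{\partial \bar x^c}{\partial x^a}X^a\bigr)$, whose expansion by the chain rule also contains $\frac{\partial^2 \bar x^c}{\partial x^a\partial x^b}\frac{\partial x^b}{\partial \bar x^p}X^a$ plus $\frac{\partial \bar x^c}{\partial x^a}\frac{\partial x^b}{\partial \bar x^p}\partial_b X^a$. Subtracting, the inhomogeneous terms drop out and one is left with $\bar P^c_p = \frac{\partial \bar x^c}{\partial x^a}\frac{\partial x^b}{\partial \bar x^p}P^a_b$, so $P$ is a $(1,1)$ tensor field on $M$; this is precisely why $P$ is defined with the correction term $-\partial_p(Z^a(x,0))$ rather than simply as $\delta_p\widetilde Z^a(x,0)$. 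Assembling the four computations completes the proof.
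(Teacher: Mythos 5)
Your proposal is correct and is the standard verification (via the transformation law of the induced coordinates $(x^r,u^s)$ and of the components $(Z^a,\widetilde Z^a)$ under a change of chart on $M$); the paper itself gives no proof, merely citing Tanno, and your argument — in particular the cancellation of the inhomogeneous second-derivative term in the definition of $P$ — is exactly the computation that justifies the lemma.
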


Applying the operators $\partial _{k}^{v}$ and $\partial _{k}^{h}$ to the
horizontal components we get%
\begin{equation*}
\partial _{k}^{v}H^{a}=K_{k}^{a}+E_{kq}^{a}u^{q}+\frac{1}{2}%
F_{kpq}^{a}u^{p}u^{q}+\frac{1}{3!}G_{kpqr}^{a}u^{p}u^{q}u^{r}+\cdots ,
\end{equation*}%
\begin{multline*}
\partial _{k}^{h}H^{a}=\Theta _{k}X^{a}+\Theta _{k}K_{p}^{a}u^{p}+ \\
\frac{1}{2}\Theta _{k}E_{pq}^{a}u^{p}u^{q}+\frac{1}{3!}\Theta
_{k}F_{pqr}^{a}u^{p}u^{q}u^{r}+\frac{1}{4!}\Theta
_{k}G_{pqrs}^{a}u^{p}u^{q}u^{r}u^{s}+\cdots .
\end{multline*}%
on a neighbourhood of a point $(x,0)\in TM,$ where for any $(1,z)$ tensor $T$
we have put 
\begin{equation*}
\Theta _{k}T_{hij...}^{a}=\nabla _{k}T_{hij...}^{a}-\Gamma
_{rk}^{a}T_{hij....}^{r}.
\end{equation*}%
Moreover, if we put:%
\begin{equation*}
S_{k}^{a}=\widetilde{P}_{k}^{a}+X^{b}\Gamma _{bk}^{a}=\widetilde{P}%
_{k}^{a}-\partial _{k}X^{a}+\nabla _{k}X^{a}=P_{k}^{a}+\nabla _{k}X^{a},
\end{equation*}%
\begin{equation*}
T_{kp}^{a}=Q_{kq}^{a}+K_{k}^{b}\Gamma _{bp}^{a}+K_{p}^{b}\Gamma _{bk}^{a},
\end{equation*}%
\begin{eqnarray*}
F_{lkpq} &=&\delta _{k}\delta _{p}\delta _{q}Z^{a}(x,0)g_{al},\quad \\
W_{lkpq} &=&\left( \delta _{k}\delta _{p}\delta _{q}\overline{Z}%
^{a}(x,0)+E_{pk}^{c}\Gamma _{cq}^{a}+E_{qk}^{c}\Gamma
_{cp}^{a}+E_{pq}^{c}\Gamma _{ck}^{a}\right) g_{al}= \\
&&\left( S_{kpq}^{a}+E_{pk}^{c}\Gamma _{cq}^{a}+E_{qk}^{c}\Gamma
_{cp}^{a}+E_{pq}^{c}\Gamma _{ck}^{a}\right) g_{al},
\end{eqnarray*}%
\begin{equation*}
Z_{kpqr}^{a}=V_{kpqr}^{a}+F_{kpq}^{c}\Gamma _{cr}^{a}+F_{kqr}^{c}\Gamma
_{cp}^{a}+F_{krp}^{c}\Gamma _{cq}^{a}+F_{pqr}^{c}\Gamma _{ck}^{a},
\end{equation*}%
then the vertical component writes%
\begin{equation*}
V^{a}=Y^{a}+S_{p}^{a}u^{p}+\frac{1}{2!}T_{pq}^{a}u^{p}u^{q}+\frac{1}{3!}%
W_{pqr}^{a}u^{p}u^{q}u^{r}+\frac{1}{4!}Z_{pqrs}^{a}u^{p}u^{q}u^{r}u^{s}+%
\cdots
\end{equation*}%
and%
\begin{equation*}
\partial _{k}^{v}V^{a}=S_{k}^{a}+T_{kp}^{a}u^{p}+\frac{1}{2}%
W_{kpq}^{a}u^{p}u^{q}+\frac{1}{3!}Z_{kpqr}^{a}u^{p}u^{q}u^{r}+...,
\end{equation*}%
\begin{multline}
\partial _{k}^{h}V^{a}=\Theta _{k}Y^{a}+\Theta _{k}S_{p}^{a}u^{p}+
\label{Taylor7} \\
\frac{1}{2!}\Theta _{k}T_{pq}^{a}u^{p}u^{q}+\frac{1}{3!}\Theta
_{k}W_{pqr}^{a}u^{p}u^{q}u^{r}+\frac{1}{4!}\Theta
_{k}Z_{pqrs}^{a}u^{p}u^{q}u^{r}u^{s}+\cdots
\end{multline}%
on a neighbourhood of a point \thinspace $(x,0)\in TM.$

We shall often use the following definitions and abbreviations:%
\begin{equation*}
S_{p}^{a}=P_{p}^{a}+\nabla _{p}X^{a},\quad S_{kp}=S_{p}^{a}g_{ak},\quad
P_{lk}=P_{k}^{a}g_{al},
\end{equation*}%
\begin{equation*}
K_{lp}=K_{p}^{a}g_{al},\quad E_{kpq}=E_{kqp}=E_{pq}^{a}g_{ak},\quad
T_{lkp}=T_{kp}^{a}g_{al}.
\end{equation*}

Substituting (\ref{Taylor1}) - (\ref{Taylor7}) into the right hand sides of (%
\ref{LD10})-(\ref{LD12}) we obtain on some neighbourhood of $(x,0)$
expressions that are sums of polynomials in variables $u^{r}$ with
coefficients depending on $x^{t}$ multiplied by functions depending on $%
r^{2}=g_{rs}u^{r}u^{s}$ plus terms that contain remainders. Suppose that $%
Z=Z^{r}\partial _{r}+\widetilde{Z}^{r}\delta _{r}$ is a Killing vector field
on $TM.$ Then the left hand sides vanish and substituting $u=(u^{j})=0$ we
obtain on $M\times \{0\}$%
\begin{equation}
A\left( \nabla _{k}X_{l}+\nabla _{l}X_{k}\right) +a_{2}\left( \nabla
_{k}Y_{l}+\nabla _{l}Y_{k}\right) =0,  \tag{$I_{1}$}  \label{I1}
\end{equation}%
\begin{equation}
AK_{lk}+a_{2}\left( P_{lk}+\nabla _{k}X_{l}+\nabla _{l}X_{k}\right)
+a_{1}\nabla _{l}Y_{k}=0,  \tag{$II_{1}$}  \label{II1}
\end{equation}%
\begin{equation}
a_{2}\left( K_{lk}+K_{kl}\right) +a_{1}\left( S_{lk}+S_{kl}\right) =0, 
\tag{$III_{1}$}  \label{III1}
\end{equation}%
where $A=A(0),$ $a_{j}=a_{j}(0).$ Differentiating with respect to $\delta
_{k}$, making use of the property%
\begin{equation*}
\delta _{k}f(r^{2})=2f^{\prime }(r^{2})g_{ks}u^{s}
\end{equation*}%
and substituting $u^{j}=0$ we find

\begin{multline}
A\left( \nabla _{k}K_{lp}+\nabla _{l}K_{kp}\right) +a_{2}\left[ \nabla
_{k}S_{lp}+\nabla _{l}S_{kp}-X^{a}\left( R_{aklp}+R_{alkp}\right) \right] + 
\tag{$I_{2}$}  \label{I2} \\
2A^{\prime }g_{kl}Y_{p}+B\left( Y_{k}g_{lp}+Y_{l}g_{kp}\right) =0,
\end{multline}%
\begin{multline}
AE_{lkp}+a_{1}\left( \nabla _{l}S_{kp}-X^{a}R_{alkp}\right) +a_{2}\left(
\nabla _{l}K_{kp}+T_{lkp}\right) +  \tag{$II_{2}$}  \label{II2} \\
2a_{2}^{\prime }g_{kl}Y_{p}+b_{2}\left( Y_{k}g_{lp}+Y_{l}g_{kp}\right) =0,
\end{multline}

\begin{equation}
a_{1}\left( T_{lkp}+T_{klp}\right) +a_{2}\left( E_{lkp}+E_{klp}\right)
+b_{1}\left( Y_{k}g_{lp}+Y_{l}g_{kp}\right) +2a_{1}^{\prime }g_{kl}Y_{p}=0, 
\tag{$III_{2}$}  \label{III2}
\end{equation}%
on $M\times \{0\},$ where $A^{\prime }=A^{\prime }(0),$ $a_{j}^{\prime
}=a_{j}^{\prime }(0)$ etc.

For any $(0,2)$ tensor $T$ we put%
\begin{equation*}
\overline{T}_{ab}=T_{ab}+T_{ba},\quad \widehat{T}_{ab}=T_{ab}-T_{ba}
\end{equation*}%
It is easily seen, that the quantities $F$ and $W$\ are symmetric in the
last three indices. Proceeding in the same way as before we easily obtain
expressions of the second order:%
\begin{multline}
\left( L_{H^{a}\partial _{a}^{h}+V^{\alpha }\partial _{\alpha }^{v}}G\right)
\left( \partial _{k}^{h},\partial _{l}^{h}\right) _{pq}|_{(x,0)}= 
\tag{$I_{3}$}  \label{I3} \\
A\left( \nabla _{k}E_{lpq}+\nabla _{l}E_{kpq}\right) +a_{2}\left( \nabla
_{k}T_{lpq}+\nabla _{l}T_{kpq}\right) +2A^{\prime }g_{kl}\overline{S}_{pq}+
\\
B\left[ \left( \nabla _{k}X_{p}+S_{kp}\right) g_{ql}+\left( \nabla
_{k}X_{q}+S_{kq}\right) g_{pl}+\right. \\
\left. \left( \nabla _{l}X_{p}+S_{lp}\right) g_{qk}+\left( \nabla
_{l}X_{q}+S_{lq}\right) g_{pk}\right] + \\
b_{2}\left( \nabla _{k}Y_{p}g_{ql}+\nabla _{k}Y_{q}g_{pl}+\nabla
_{l}Y_{p}g_{qk}+\nabla _{l}Y_{q}g_{pk}\right) - \\
a_{2}\left[ K_{p}^{a}\left( R_{alkq}+R_{aklq}\right) +K_{q}^{a}\left(
R_{alkp}+R_{aklp}\right) \right] =0,
\end{multline}%
\begin{multline}
\left( L_{H^{a}\partial _{a}^{h}+V^{\alpha }\partial _{\alpha }^{v}}G\right)
\left( \partial _{k}^{v},\partial _{l}^{h}\right) _{pq}|_{(x,0)}= 
\tag{$II_{3}$}  \label{II3} \\
AF_{lkpq}+a_{2}W_{lkpq}+a_{1}\nabla _{l}T_{kpq}+a_{2}\nabla
_{l}E_{kpq}+2a_{2}^{\prime }g_{kl}\overline{S}_{pq}- \\
a_{1}\left( K_{p}^{a}R_{alkq}+K_{q}^{a}R_{alkp}\right) +B\left(
K_{pk}g_{ql}+K_{qk}g_{pl}\right) + \\
b_{2}\left( \overline{S}_{pk}g_{ql}+\overline{S}%
_{qk}g_{pl}+S_{lp}g_{qk}+S_{lq}g_{pk}+\nabla _{l}X_{p}g_{kq}+\nabla
_{l}X_{q}g_{kp}\right) + \\
b_{1}\left( \nabla _{l}Y_{p}g_{kq}+\nabla _{l}Y_{q}g_{kp}\right) =0,
\end{multline}

\begin{multline}
\left( L_{H^{a}\partial _{a}^{h}+V^{\alpha }\partial _{\alpha }^{v}}G\right)
\left( \partial _{k}^{v},\partial _{l}^{v}\right) _{pq}|_{(x,0)}= 
\tag{$III_{3}$}  \label{III3} \\
a_{2}\left( F_{lkpq}+F_{klpq}\right) +a_{1}\left( W_{lkpq}+W_{klpq}\right)
+2a_{1}^{\prime }g_{kl}\overline{S}_{pq}+ \\
b_{1}\left( \overline{S}_{kp}g_{ql}+\overline{S}_{kq}g_{pl}+\overline{S}%
_{lp}g_{qk}+\overline{S}_{lq}g_{pk}\right) + \\
b_{2}\left( K_{pk}g_{ql}+K_{qk}g_{pl}+K_{pl}g_{qk}+K_{ql}g_{pk}\right) =0.
\end{multline}

Finally, expressions of the third order are:%
\begin{multline}
\left( L_{H^{a}\partial _{a}^{h}+V^{\alpha }\partial _{\alpha }^{v}}G\right)
\left( \partial _{k}^{h},\partial _{l}^{h}\right) _{pqr}|_{(x,0)}= 
\tag{$I_{4}$}  \label{I4} \\
A\left[ \nabla _{k}F_{lpqr}+\nabla _{l}F_{kpqr}\right] +a_{2}\left[ \nabla
_{k}W_{lpqr}+\nabla _{l}W_{kpqr}\right] - \\
a_{2}\left[ E_{pq}^{a}\left( R_{alkr}+R_{aklr}\right) +E_{qr}^{a}\left(
R_{alkp}+R_{aklp}\right) +E_{rp}^{a}\left( R_{alkq}+R_{aklq}\right) \right] +
\\
B\left[ \nabla _{k}\overline{K}_{qp}g_{lr}+\nabla _{k}\overline{K}%
_{rq}g_{lp}+\nabla _{k}\overline{K}_{pr}g_{lq}+\nabla _{l}\overline{K}%
_{qp}g_{kr}+\nabla _{l}\overline{K}_{rq}g_{kp}+\nabla _{l}\overline{K}%
_{pr}g_{kq}\right] + \\
b_{2}\left[ \nabla _{k}\overline{S}_{qp}g_{lr}+\nabla _{k}\overline{S}%
_{rq}g_{lp}+\nabla _{k}\overline{S}_{pr}g_{lq}+\nabla _{l}\overline{S}%
_{qp}g_{kr}+\nabla _{l}\overline{S}_{rq}g_{kp}+\nabla _{l}\overline{S}%
_{pr}g_{kq}\right] + \\
B\left[
g_{lp}T_{kqr}+g_{lq}T_{krp}+g_{lr}T_{kpq}+g_{kp}T_{lqr}+g_{kq}T_{lrp}+g_{kr}T_{lpq}%
\right] + \\
2B^{\prime }\left[ \left( g_{pk}g_{ql}+g_{qk}g_{pl}\right) Y_{r}+\left(
g_{qk}g_{rl}+g_{rk}g_{ql}\right) Y_{p}+\left(
g_{rk}g_{pl}+g_{pk}g_{rl}\right) Y_{q}\right] + \\
2A^{\prime }g_{kl}M_{pqr}=0,
\end{multline}%
\begin{multline}
\left( L_{H^{a}\partial _{a}^{h}+V^{\alpha }\partial _{\alpha }^{v}}G\right)
\left( \partial _{k}^{v},\partial _{l}^{h}\right) _{pqr}|_{(x,0)}= 
\tag{$II_{4}$}  \label{II4} \\
AG_{lkpqr}+a_{2}Z_{lkpqr}+a_{2}\nabla _{l}F_{kpqr}+a_{1}\nabla _{l}W_{kpqr}-
\\
a_{1}\left[ E_{pq}^{a}R_{alkr}+E_{qr}^{a}R_{alkp}+E_{rp}^{a}R_{alkq}\right] +
\\
b_{2}\left[ \nabla _{l}\overline{K}_{qp}g_{kr}+\nabla _{l}\overline{K}%
_{rq}g_{kp}+\nabla _{l}\overline{K}_{pr}g_{kq}\right] + \\
B\left[ g_{lr}\left( E_{qkp}+E_{pkq}\right) +g_{lp}\left(
E_{rkq}+E_{qkr}\right) +g_{lq}\left( E_{pkr}+E_{rkp}\right) \right] + \\
b_{1}\left[ \nabla _{l}\overline{S}_{qp}g_{kr}+\nabla _{l}\overline{S}%
_{rq}g_{kp}+\nabla _{l}\overline{S}_{pr}g_{kq}\right] + \\
b_{2}\left[ g_{kp}T_{lqr}+g_{kq}T_{lrp}+g_{kr}T_{lpq}\right] +b_{2}\left[
g_{lp}M_{kqr}+g_{lq}M_{krp}+g_{lr}M_{kpq}\right] + \\
2b_{2}^{\prime }\left[ \left( g_{pk}g_{ql}+g_{qk}g_{pl}\right) Y_{r}+\left(
g_{qk}g_{rl}+g_{rk}g_{ql}\right) Y_{p}+\left(
g_{rk}g_{pl}+g_{pk}g_{rl}\right) Y_{q}\right] + \\
2a_{2}^{\prime }g_{kl}M_{pqr}=0,
\end{multline}%
where $M_{pqr}=T_{pqr}+T_{qrp}+T_{rpq}$ and 
\begin{equation*}
Z_{lkpqr}=\left( V_{kpqr}^{a}+F_{kpq}^{c}\Gamma _{cr}^{a}+F_{kqr}^{c}\Gamma
_{cp}^{a}+F_{krp}^{c}\Gamma _{cq}^{a}+F_{pqr}^{c}\Gamma _{ck}^{a}\right)
g_{al}
\end{equation*}%
is symmetric in the last four lower indices.

Moreover, we have 
\begin{multline}
\left( L_{H^{a}\partial _{a}^{h}+V^{\alpha }\partial _{\alpha }^{v}}G\right)
\left( \partial _{k}^{v},\partial _{l}^{v}\right) _{pqr}|_{(x,0)}= 
\tag{$III_{4}$}  \label{III4} \\
a_{2}\left( G_{lkpqr}+G_{klpqr}\right) +a_{1}\left(
Z_{lkpqr}+Z_{klpqr}\right) + \\
b_{2}\left[ g_{lr}\left( E_{qkp}+E_{pkq}\right) +g_{lp}\left(
E_{rkq}+E_{qkr}\right) +g_{lq}\left( E_{pkr}+E_{rkp}\right) \right] + \\
b_{2}\left[ g_{kr}\left( E_{qlp}+E_{plq}\right) +g_{kp}\left(
E_{rlq}+E_{qlr}\right) +g_{kq}\left( E_{plr}+E_{rlp}\right) \right] + \\
b_{1}\left[ g_{kp}M_{lqr}+g_{kq}M_{lrp}+g_{kr}M_{lpq}\right] +b_{1}\left[
g_{lp}M_{kqr}+g_{lq}M_{krp}+g_{lr}M_{kpq}\right] + \\
2b_{1}^{\prime }\left[ \left( g_{pk}g_{ql}+g_{qk}g_{pl}\right) Y_{r}+\left(
g_{qk}g_{rl}+g_{rk}g_{ql}\right) Y_{p}+\left(
g_{rk}g_{pl}+g_{pk}g_{rl}\right) Y_{q}\right] + \\
2a_{1}^{\prime }g_{kl}M_{pqr}=0.
\end{multline}%
Let us remind once again that these identities hold on $M$ and therefor all
the coefficients $a_{j},$ $b_{j},$ $a_{j}^{\prime },$ $b_{j}^{\prime }$ are
considered to be constants.

\subsection{Lemmas}

\begin{lemma}
\label{Lemmas L5}Under hypothesis (\ref{H}) at a point $(x,0)\in TM$ we have:%
\begin{equation}
a_{1}T_{lkp}+a_{2}E_{lkp}=a_{1}^{\prime }\left(
Y_{l}g_{kp}-Y_{k}g_{lp}-Y_{p}g_{kl}\right) -b_{1}Y_{l}g_{kp},  \label{L 5}
\end{equation}%
\begin{equation}
AE_{lkp}+a_{2}T_{lkp}+a_{2}^{\prime }(g_{kl}Y_{p}+g_{pl}Y_{k})+\frac{1}{2}%
b_{2}(2g_{kp}Y_{l}+g_{lp}Y_{k}+g_{kl}Y_{p})=0.  \label{L5a}
\end{equation}%
If $a\neq 0,$ then%
\begin{multline}
aE_{lkm}=(a_{2}b_{1}-a_{1}b_{2}-a_{2}a_{1}^{\prime })g_{km}Y_{l}-
\label{L5a1} \\
\frac{1}{2}(a_{1}b_{2}-2a_{2}a_{1}^{\prime }+2a_{1}a_{2}^{\prime
})(g_{lm}Y_{k}+g_{lk}Y_{m}),
\end{multline}%
\begin{equation}
aT_{lkm}=(Aa_{1}^{\prime }+a_{2}b_{2}-Ab_{1})g_{km}Y_{l}+\frac{1}{2}%
(a_{2}b_{2}-2Aa_{1}^{\prime }+2a_{2}a_{2}^{\prime
})(g_{lm}Y_{k}+g_{lk}Y_{m}),  \label{L5a2}
\end{equation}%
\begin{equation}
aM_{lkm}=[2a_{2}(b_{2}+a_{2}^{\prime })-A(b_{1}+a_{1}^{\prime
})](g_{km}Y_{l}+g_{lk}Y_{m}+g_{ml}Y_{k}).  \label{L5a2'}
\end{equation}%
Moreover,%
\begin{multline}
a_{2}\left[ \nabla _{k}\left( \nabla _{l}X_{p}+\nabla _{p}X_{l}\right)
+\nabla _{l}\left( \nabla _{k}X_{p}+\nabla _{p}X_{k}\right) -\nabla
_{p}\left( \nabla _{l}X_{k}+\nabla _{k}X_{l}\right) \right] +  \label{L5b} \\
a_{1}\left( \nabla _{k}\nabla _{l}Y_{p}+\nabla _{l}\nabla _{k}Y_{p}\right)
=2A^{\prime }g_{kl}Y_{p}+B\left( Y_{k}g_{lp}+Y_{l}g_{kp}\right) ,
\end{multline}%
\begin{multline}
a\left( \nabla _{k}K_{lp}+\nabla _{l}K_{kp}\right)
+(a_{2}b_{2}+2a_{1}A^{\prime }-2a_{2}a_{2}^{\prime })Y_{p}g_{kl}+
\label{L5c} \\
\frac{1}{2}(-a_{2}b_{2}+2a_{1}B+2a_{2}a_{2}^{\prime
})(Y_{k}g_{lp}+Y_{l}g_{kp})=0.
\end{multline}
\end{lemma}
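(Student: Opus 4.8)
The plan is to extract Lemma \ref{Lemmas L5} purely algebraically from the first- and second-order Taylor identities already derived, together with the contracted second Bianchi-type structure hidden in $(I_2)$--$(III_2)$. No curvature-free simplification is needed beyond what $(I_1)$--$(III_2)$ already give us, because all coefficients $a_j, b_j, a_j', b_j'$ are now constants on $M$.

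First I would establish \eqref{L 5}. The identity $(III_2)$ reads $a_1(T_{lkp}+T_{klp}) + a_2(E_{lkp}+E_{klp}) + b_1(Y_k g_{lp} + Y_l g_{kp}) + 2a_1' g_{kl} Y_p = 0$, which is symmetric in $l,k$; it alone does not isolate $a_1 T_{lkp} + a_2 E_{lkp}$. To break the symmetry one combines $(II_1)$ and $(III_1)$ at order one — but those carry no $Y$-gradient terms of the needed shape, so instead the right source is the antisymmetrization machinery: subtract from $(III_2)$ its version with indices permuted. Concretely, write $\Phi_{lkp} := a_1 T_{lkp} + a_2 E_{lkp}$; then $(III_2)$ says $\Phi_{lkp} + \Phi_{klp} = -b_1(Y_k g_{lp}+Y_l g_{kp}) - 2a_1' g_{kl}Y_p$. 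Since $E_{lkp} = E_{pq}^a g_{al}$ is symmetric only in its last two slots while $T_{lkp}$ inherits the symmetry $T_{lkp}=T_{lpk}$ from $Q_{kq}^a=Q_{qk}^a$, the combination $\Phi_{lkp}$ is symmetric in $(k,p)$; so $\Phi_{lkp}+\Phi_{klp}$ and its cyclic permutations form a standard $3\times 3$ linear system in the three quantities $\Phi_{lkp}, \Phi_{klp}, \Phi_{kpl}$ whose solution (the usual Koszul-type resolution) is exactly $\Phi_{lkp} = a_1'(Y_l g_{kp} - Y_k g_{lp} - Y_p g_{kl}) - b_1 Y_l g_{kp}$. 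I would record \eqref{L5a} as the analogous resolution applied to $(II_2)$ after symmetrizing appropriately in $(k,l)$ and using that $AE_{lkp}+a_2 T_{lkp}$ is the "mixed" block; the $\tfrac12 b_2(\cdots)$ shape comes precisely from the half-symmetrization of the $b_2(Y_k g_{lp}+Y_l g_{kp})$ term in $(II_2)$ against the genuinely $(k,p)$-symmetric part.

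Next, \eqref{L5a1}, \eqref{L5a2}, \eqref{L5a2'} follow by treating \eqref{L 5} and \eqref{L5a} as a linear system for the pair $(E_{lkp}, T_{lkp})$: the coefficient matrix is $\begin{pmatrix} a_2 & a_1 \\ A & a_2 \end{pmatrix}$ with determinant $a_2^2 - a_1 A = -a$, invertible exactly when $a \neq 0$ (Lemma \ref{Lemma 9}). Solving gives $E_{lkp}$ and $T_{lkp}$ as explicit combinations of $g Y$ terms with the stated coefficients; \eqref{L5a2'} is then just $M_{lkm} = T_{lkm}+T_{kml}+T_{mlk}$, whose three summands each contribute one of the three index patterns so the totally-symmetrized result has the single coefficient $[2a_2(b_2+a_2') - A(b_1+a_1')]$. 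This step is routine $2\times2$ linear algebra once the constants are pinned down; the only care needed is bookkeeping of which $g Y$ monomials survive symmetrization.

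For \eqref{L5b} I would start from $(I_2)$, which already contains $A(\nabla_k K_{lp}+\nabla_l K_{kp}) + a_2[\nabla_k S_{lp}+\nabla_l S_{kp} - X^a(R_{aklp}+R_{alkp})] + 2A'g_{kl}Y_p + B(Y_k g_{lp}+Y_l g_{kp}) = 0$, and eliminate the $K$-term using the covariant derivative of $(II_1)$; writing $S_{kp} = P_{kp}+\nabla_k X_p$ and $S_{p}^a g_{ap} = \nabla_p X_a + P_{ap}$, and invoking the Ricci identity \eqref{Conv4} to convert the second covariant derivatives of $X$ into the curvature terms, collapses everything into the displayed symmetric second-order operator on $X$ plus $a_1(\nabla_k\nabla_l + \nabla_l\nabla_k)Y_p$ on the left and $2A' g_{kl}Y_p + B(Y_k g_{lp}+Y_l g_{kp})$ on the right. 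Finally \eqref{L5c} is obtained by feeding the now-known $T_{lkp}$ from \eqref{L5a2} back into $(II_2)$ — i.e. replacing $a_2 T_{lkp}$ in $(II_2)$ by its value $\frac{a_2}{a}[(Aa_1'+a_2b_2-Ab_1)g_{km}Y_l + \cdots]$ — multiplying through by $a/a_2$ where legitimate (and handling $a_2=0$ separately as a degenerate sub-case where $(II_2)$ directly gives the identity), then regrouping the $g Y$ terms; $\nabla_l K_{kp}$ survives because $(II_2)$ carries $a_2 \nabla_l K_{kp}$, and symmetrizing in $(k,l)$ produces $\nabla_k K_{lp}+\nabla_l K_{kp}$ with overall coefficient $a$. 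The main obstacle throughout is not any single deep idea but the disciplined symmetrization bookkeeping: keeping straight which tensors are symmetric in which index pairs ($E$ in the last two, $T$ in the last two, $K$ in neither before symmetrization, $g Y$ monomials in the obvious slots) so that the Koszul-type $3\times3$ resolutions are applied to the correct symmetry type; a sign error or a mis-assigned symmetry there propagates into every subsequent formula.
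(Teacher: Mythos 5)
Your handling of \eqref{L 5}, \eqref{L5a1}, \eqref{L5a2}, \eqref{L5a2'} and \eqref{L5b} is sound and essentially coincides with the paper's proof: the Koszul-type resolution of \eqref{III2}, exploiting that $\Phi_{lkp}=a_{1}T_{lkp}+a_{2}E_{lkp}$ is symmetric in $(k,p)$, is exactly the paper's ``alternate in $(l,p)$, interchange $(p,k)$, add back'' manipulation; the $2\times 2$ inversion with determinant $a_{2}^{2}-a_{1}A=-a$ and the cyclic summation for $M_{lkm}$ are the intended routine steps; and \eqref{L5b} is indeed obtained by differentiating \eqref{II1}, symmetrizing in $(k,l)$, subtracting from \eqref{I2} and invoking the Ricci identity.

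There are, however, two genuine gaps. For \eqref{L5a}, ``the analogous resolution applied to \eqref{II2} after symmetrizing appropriately in $(k,l)$'' does not work as stated: the relevant symmetrization is in $(k,p)$, not $(k,l)$ (only then does $X^{a}R_{alkp}$ drop out, by skew-symmetry of $R$ in its last pair), and, more importantly, the symmetrized \eqref{II2} still carries the derivative block $a_{1}\nabla_{l}(S_{kp}+S_{pk})+a_{2}\nabla_{l}(K_{kp}+K_{pk})$, while \eqref{L5a} is purely algebraic in $E$, $T$, $Y$. That block vanishes only because it is precisely $\nabla_{l}$ applied to \eqref{III1}; you never invoke the covariant derivative of \eqref{III1}, so nothing in your plan removes it. Similarly, for \eqref{L5c} your substitution-only recipe (insert $T$ from \eqref{L5a2} into \eqref{II2}, multiply by $a/a_{2}$, symmetrize in $(k,l)$) leaves the term $a_{1}\left(\nabla_{l}S_{kp}-X^{a}R_{alkp}\right)$ of \eqref{II2} completely untouched --- symmetrizing in $(k,l)$ kills neither $\nabla S$ nor the curvature there. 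The only way to remove them is to combine the $(k,l)$-symmetrization of \eqref{II2} (scaled by $2a_{2}$) with $a_{1}$ times \eqref{I2}, whose block $a_{2}\left[\nabla_{k}S_{lp}+\nabla_{l}S_{kp}-X^{a}(R_{aklp}+R_{alkp})\right]$ cancels them and simultaneously produces the factor $a=a_{1}A-a_{2}^{2}$ in front of $\nabla_{k}K_{lp}+\nabla_{l}K_{kp}$; one then subtracts the $(k,l)$-symmetrization of \eqref{L5a}. Your route never brings \eqref{I2} into this step, and the division by $a_{2}$ with its attendant case split is unnecessary --- the whole derivation is a linear combination valid for all values of the coefficients.
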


\begin{proof}
Alternating ($III_{2}$) in $(l,p),$ then interchanging the indices $(p,k)$
and adding the resulting equation to ($III_{2}$)$,$ we obtain (\ref{L 5}).

Differentiating covariantly (\ref{III1}) we get%
\begin{equation*}
a_{2}\left( \nabla _{k}K_{lp}+\nabla _{k}K_{pl}\right) +a_{1}\left( \nabla
_{k}S_{lp}+\nabla _{k}S_{pl}\right) =0.
\end{equation*}%
Symmetrizing (\ref{II2}) in $(k,p)$ and subtracting the resulting equation
from the above one we find (\ref{L5a}).

Now (\ref{L5a1}) and (\ref{L5a2}) result immediately from (\ref{L 5}) and (%
\ref{L5a}).

From (\ref{II1}) we easily get%
\begin{equation*}
A\nabla _{k}K_{lp}+a_{2}\left( \nabla _{k}P_{lp}+\nabla _{k}\nabla
_{p}X_{l}+\nabla _{k}\nabla _{l}X_{p}\right) +a_{1}\nabla _{k}\nabla
_{l}Y_{p}=0,
\end{equation*}%
whence, symmetrizing in $(k,l),$ subtracting from (\ref{I2}), by the use of
the Ricci identity, we obtain (\ref{L5b}).

To prove (\ref{L5c}) first we symmetrize (\ref{II2}) in $(k,l)$ and combine
it with (\ref{I2}) to obtain%
\begin{multline*}
a\left( \nabla _{k}K_{lm}+\nabla _{l}K_{km}\right) -a_{2}\left[ A\left(
E_{lkm}+E_{klm}\right) +a_{2}\left( T_{lkm}+T_{klm}\right) \right] + \\
2\left( a_{1}A^{\prime }-2a_{2}a_{2}^{\prime }\right) g_{kl}Y_{m}+\left(
a_{1}B-2a_{2}b_{2}\right) \left( g_{lm}Y_{k}+g_{km}Y_{l}\right) =0.
\end{multline*}%
On the other hand, symmetrizing (\ref{L5a}) in $(k,l)$ and subtracting from
the above we obtain (\ref{L5c}). This completes the proof.
\end{proof}

\begin{lemma}
Under hypothesis (\ref{H}) suppose $a\neq 0$ at a point $(x,0)\in TM.$ Then
we have%
\begin{multline}
2a\nabla _{l}K_{km}=a_{1}^{2}Y^{r}R_{rmkl}-a_{1}Bg_{km}Y_{l}+  \label{L6a} \\
(-a_{1}B+a_{2}b_{2}-2a_{2}a_{2}^{\prime
})g_{lm}Y_{k}+(-a_{2}b_{2}-2a_{1}A^{\prime }+2a_{2}a_{2}^{\prime
})g_{kl}Y_{m},
\end{multline}%
\begin{multline}
2a\left( \nabla _{l}S_{km}-X^{r}R_{rlkm}\right)
+a_{1}a_{2}Y^{r}R_{rmkl}-a_{2}Bg_{km}Y_{l}+  \label{L6b} \\
\left[ -a_{2}B+A\left( b_{2}-2a_{2}^{\prime }\right) \right] g_{lm}Y_{k}+%
\left[ -2a_{2}A^{\prime }-A\left( b_{2}-2a_{2}^{\prime }\right) \right]
g_{kl}Y_{m}=0
\end{multline}%
at the point.
\end{lemma}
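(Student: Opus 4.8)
The plan is to derive the two displayed identities (\ref{L6a}) and (\ref{L6b}) by combining the first-order equations $(I_2)$, $(II_1)$, $(II_2)$, $(III_1)$ with the algebraic consequences already obtained in Lemma \ref{Lemmas L5}, and then inverting the resulting $2\times2$ linear system over the coefficients. Since $a=a_1A-a_2^2\neq 0$, the pair $(a_1,a_2)$ acting on the unknowns $\nabla_l K_{km}$ and $\nabla_l S_{km}$ (or rather the curvature-corrected combinations appearing in $(I_2)$ and $(II_2)$) is invertible, so the whole argument is: assemble two independent linear equations in these two unknowns, with right-hand sides expressed purely in terms of $Y$, $g$, and the curvature, then solve.

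First I would take equation $(I_2)$, which reads $A(\nabla_k K_{lp}+\nabla_l K_{kp})+a_2[\nabla_k S_{lp}+\nabla_l S_{kp}-X^a(R_{aklp}+R_{alkp})]+2A'g_{kl}Y_p+B(Y_kg_{lp}+Y_lg_{kp})=0$, and equation $(II_2)$ in the form $AE_{lkp}+a_1(\nabla_l S_{kp}-X^aR_{alkp})+a_2(\nabla_l K_{kp}+T_{lkp})+2a_2'g_{kl}Y_p+b_2(Y_kg_{lp}+Y_lg_{kp})=0$. The key is that $(II_2)$ already isolates $\nabla_l S_{km}$ and $\nabla_l K_{km}$ (no symmetrization in $k,l$), so once I substitute the closed forms (\ref{L5a1}) for $E_{lkm}$ and (\ref{L5a2}) for $T_{lkm}$ into $(II_2)$, I get one clean linear relation $a_1(\nabla_l S_{km}-X^aR_{alkm})+a_2\nabla_l K_{km}=(\text{explicit }Y,g\text{ terms})$. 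To get a second independent relation I would use $(I_2)$ together with its $(k,l)$-antisymmetrization: the symmetric-in-$(k,l)$ part is already captured, but alternating $(I_2)$ in $k,l$ and using $\nabla_k K_{lp}-\nabla_l K_{kp}$ combined with the Ricci-type identity $\nabla_k K_{lp}-\nabla_l K_{kp}=\widehat{(\nabla K)}_{(kl)p}$ — more precisely, I would differentiate $(III_1)$ covariantly, subtract $(II_2)$ symmetrized, recover the $a$-weighted combination, and feed in (\ref{L5b}), (\ref{L5c}) from the previous lemma. The term $a_1^2Y^rR_{rmkl}$ in (\ref{L6a}) signals that the curvature enters quadratically in $a_1$, which is exactly what one expects after eliminating $S$ from a system where $S$ carries one factor $a_1$ and the $\nabla S$ term in $(II_1)$, after commuting covariant derivatives via the Ricci identity (\ref{Conv4}), produces $Y^rR$.

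Concretely the steps are: (i) substitute (\ref{L5a1}) and (\ref{L5a2}) into $(II_2)$ to obtain $a_1(\nabla_l S_{km}-X^rR_{rlkm})+a_2\nabla_l K_{km}$ in closed form; (ii) from $(II_1)$, namely $AK_{lk}+a_2(P_{lk}+\nabla_kX_l+\nabla_lX_k)+a_1\nabla_lY_k=0$, apply $\nabla$ and use $S_p^a=P_p^a+\nabla_pX^a$ to get $A\nabla_lK_{km}+a_2(\nabla_lS_{km}+\nabla_l\nabla_mX_k)+a_1\nabla_l\nabla_mY_k=0$; then symmetrize appropriately and invoke (\ref{L5c}) to eliminate the $\nabla K$ symmetric part, producing a second relation between $\nabla_lS_{km}$ and $\nabla_lK_{km}$ in which the Ricci identity converts $\nabla_l\nabla_mY_k-\nabla_m\nabla_lY_k$ into $-Y^rR_{rlmk}$; (iii) solve the two linear equations from (i) and (ii) for $2a\nabla_lK_{km}$ and $2a(\nabla_lS_{km}-X^rR_{rlkm})$, using $a=a_1A-a_2^2$; (iv) collect the coefficient algebra to match the stated right-hand sides. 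The main obstacle is step (iii)–(iv): the bookkeeping of the numerous $Y_kg_{lm}$, $Y_lg_{km}$, $Y_mg_{kl}$ terms with coefficients built from $a_1,a_2,A,b_1,b_2,a_1',a_2',A',B$ is error-prone, and one must be careful about which index pair the curvature symmetrization $R_{rmkl}$ versus $R_{rlkm}$ lands on — the asymmetry between (\ref{L6a}) (curvature $R_{rmkl}$, coefficient $a_1^2$) and (\ref{L6b}) (curvature $R_{rmkl}$ with coefficient $a_1a_2$, plus a separate genuine $R_{rlkm}$ attached to $X$) comes precisely from whether the Ricci identity is applied to $Y$ inside a term already multiplied by $a_1$ or by $a_2$. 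I would double-check the final coefficients by specializing to the Sasaki case ($a_1=1$, all else zero), where (\ref{L6a}) should reduce to Tanno's classical relation $\nabla_lK_{km}=\tfrac12Y^rR_{rmkl}$.
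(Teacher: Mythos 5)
Your proposal is correct and follows essentially the same route as the paper: one linear relation $a_2\nabla_lK_{km}+a_1\left(\nabla_lS_{km}-X^rR_{rlkm}\right)=\cdots$ comes from (\ref{II2}) combined with Lemma \ref{Lemmas L5} (the paper subtracts (\ref{L5a}); substituting (\ref{L5a1})--(\ref{L5a2}) is equivalent), a second relation with coefficients $(2A,2a_2)$ comes from covariantly differentiating (\ref{II1}), alternating in $(k,l)$, applying the Ricci identity and subtracting from (\ref{I2}) via the Bianchi identity, and the resulting $2\times2$ system is inverted using $a=a_1A-a_2^{2}\neq 0$, exactly as you outline. The only slips are cosmetic: (\ref{I2}) is already symmetric in $(k,l)$, so it is the derivative of (\ref{II1}) rather than (\ref{I2}) that must be alternated, and (\ref{L5c}) is not actually needed for this step.
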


\begin{proof}
From (\ref{II2}) we subtract (\ref{L5a}) to obtain 
\begin{equation}
a_{2}\nabla _{l}K_{km}+a_{1}\left( \nabla _{l}S_{km}-X^{r}R_{rlkm}\right)
+\left( a_{2}^{\prime }-\frac{b_{2}}{2}\right) \left(
g_{kl}Y_{m}-g_{ml}Y_{k}\right) =0.  \label{L6c}
\end{equation}%
On the other hand, interchanging in (\ref{II1}) $k$ and $m,$ differentiating
covariantly with respect to $\partial _{k},$ alternating in $(k,l)$ and
applying the Ricci identity, we find%
\begin{equation*}
A\left( \nabla _{k}K_{lm}-\nabla _{l}K_{km}\right) +a_{2}\left( \nabla
_{k}S_{lm}-\nabla _{l}S_{km}\right) +a_{2}X^{r}R_{rmkl}+a_{1}Y^{r}R_{rmkl}=0.
\end{equation*}%
Subtracting from (\ref{I2}), in virtue of the Bianchi identity, we get%
\begin{multline*}
2A\nabla _{l}K_{km}+2a_{2}\left( \nabla _{l}S_{km}-X^{r}R_{rlkm}\right) - \\
a_{1}Y^{r}R_{rmkl}+2A^{\prime }g_{kl}Y_{m}+B\left(
g_{lm}Y_{k}+g_{km}Y_{l}\right) =0.
\end{multline*}%
The last equation together with (\ref{L6c}) yields the result.
\end{proof}

\begin{lemma}
\label{LE5}Under hypothesis (\ref{H}) suppose $\dim M>2.$ Then on $M\times
\{0\}$%
\begin{equation}
T_{kl}=T_{lk}=2\left( b_{1}-a_{1}^{\prime }\right) \overline{S}_{kl}+b_{2}%
\overline{K}_{kl}=0,  \label{LE5-1}
\end{equation}%
\begin{multline}
a_{2}F_{labk}+a_{1}W_{labk}+\frac{1}{2}b_{2}\left( \widehat{K}_{kl}g_{ab}+%
\widehat{K}_{bl}g_{ak}+\widehat{K}_{al}g_{bk}+\overline{K}_{ak}g_{bl}\right)
+  \label{LE5-2} \\
b_{1}g_{bl}\overline{S}_{ak}+a_{1}^{\prime }(g_{kl}\overline{S}_{ab}+g_{al}%
\overline{S}_{bk})=0.
\end{multline}
\end{lemma}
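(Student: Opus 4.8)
The plan is to extract the two claimed identities by taking suitable symmetrizations, alternations, and trace contractions of the third-order conditions $(I_3)$–$(III_3)$, together with the already-proven relations of Lemma \ref{Lemmas L5}. The hypothesis $\dim M>2$ will be used to divide by factors of the form $n-2$ (or $n-1$) that appear after contracting a pair of indices against the metric; this is the point where the dimension restriction is genuinely needed, and it is the step I expect to require the most care, since one must check that the coefficient that gets divided out is exactly $n-2$ and not some combination that could vanish for other reasons.

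For \eqref{LE5-1}: start from $(III_3)$, which is symmetric under $(k,l)\leftrightarrow$ swapping the ``barred'' slots in a controlled way. First contract $(III_3)$ with $g^{pq}$; the terms $\overline S_{kp}g_{ql}g^{pq}$ etc.\ collapse, producing a multiple of $n$ on the pure-trace pieces and a multiple of $1$ elsewhere, so one obtains an equation of the shape $\alpha(\overline S_{kl})+\beta(\overline K_{kl})+(\text{trace terms})g_{kl}=0$. Next contract $(III_3)$ with $g^{kl}$ instead, and also contract with $g^{pk}$ (mixing a barred slot with a $(k,l)$ slot); comparing the three contractions and using $(III_1)$ (which already relates $a_2\overline K$ to $a_1\overline S$) and \eqref{L 5} to eliminate the $F,W$-type unknowns, the coefficient of $\overline S_{kl}$ works out to a nonzero multiple of $2(b_1-a_1')$ plus the $b_2\overline K_{kl}$ term, while the remaining scalar-times-$g_{kl}$ part must separately vanish. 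Taking the trace of the resulting identity once more and using $\dim M>2$ to conclude the scalar coefficient is zero forces the whole expression $2(b_1-a_1')\overline S_{kl}+b_2\overline K_{kl}=0$, and since $T_{kl}$ was defined precisely so that $\overline{T}$ reduces (via \eqref{L5a1}–\eqref{L5a2}, i.e.\ the explicit formulas for $E,T$) to this combination, we get $T_{kl}=T_{lk}=0$ as well. One should double-check the bookkeeping: $T_{lkp}$ and $E_{lkp}$ are built from $Q,K$ and thus $T_{kl}$ here is shorthand obtained by a further contraction, so the identity $T_{kl}=2(b_1-a_1')\overline S_{kl}+b_2\overline K_{kl}$ is really a definition-plus-$(III_2)$ computation.

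For \eqref{LE5-2}: this is the ``non-traced'' content of $(II_3)$ and $(III_3)$ taken together. I would alternate $(II_3)$ in the pair $(k,l)$ — the symmetric part is controlled, the antisymmetric part isolates the curvature terms $a_1(K^a_pR_{alkq}+\dots)$ and the $\widehat K$ combinations — then feed in the expressions for $T_{lkp},E_{lkp},M_{lkm}$ from \eqref{L5a1}, \eqref{L5a2}, \eqref{L5a2'} (valid since $a\neq 0$, which holds because $G$ is non-degenerate by Lemma \ref{Lemma 9}) to eliminate all $T,E,M$ unknowns in favour of $Y$ and $g$. What survives is an equation purely in $F_{labk}$, $W_{labk}$, $\overline S$, $\overline K$, $\widehat K$ and metric contractions; collecting the coefficients of $g_{ab},g_{ak},g_{bk},g_{bl}$ and matching against the stated form yields \eqref{LE5-2}. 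The appearance of the specific coefficients $\tfrac12 b_2$, $b_1$, $a_1'$ is forced by \eqref{L 5} (which gives $a_1T_{lkp}+a_2E_{lkp}$ in terms of $a_1'$ and $b_1$) and by $(III_2)$; again a trace against $g^{ab}$ should be taken at the end to confirm no residual $g_{kl}$-proportional term is hiding, and here too $\dim M>2$ is what kills that residual scalar. The main obstacle throughout is purely organizational — keeping the many index symmetrizations straight and verifying that each contraction produces the claimed numerical coefficient — rather than conceptual; no new geometric input beyond $a\neq 0$, the Ricci/Bianchi identities, and Lemma \ref{Lemmas L5} is needed.
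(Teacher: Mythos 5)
Your overall instinct --- manipulate the third--order conditions by symmetrization, alternation and contraction, and let $\dim M>2$ enter through an $(n-2)$ factor --- is the right one, and indeed the paper's proof ends with exactly the contraction you anticipate, yielding $(n-2)T_{ak}=0$. But two concrete steps of your plan would not go through. First, for \eqref{LE5-1} you propose to contract $(III_{3})$ with $g^{pq}$, $g^{kl}$, $g^{pk}$ and to eliminate ``the $F,W$-type unknowns'' using $(III_{1})$ and \eqref{L 5}. Neither $(III_{1})$ nor \eqref{L 5} contains $F$ or $W$: these are genuinely new third--order Taylor coefficients, and each of your trace contractions leaves behind an unknown trace tensor such as $(F_{lkpq}+F_{klpq})g^{pq}$ or $F_{lkpq}g^{kl}$ that cannot be removed by lower--order relations. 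The actual mechanism for killing $F$ and $W$ is their symmetry in the last three indices: replacing $(p,q)$ by $(a,b)$ in $(III_{3})$, alternating in $(a,l)$, then in $(k,l)$, and adding back to the original collapses the $F,W$ contribution to the single block $a_{2}F_{labk}+a_{1}W_{labk}$; a further alternation in $(a,b)$ annihilates this block identically (since $F_{labk}$ and $W_{labk}$ are symmetric in $a,b$) and leaves the pure Walker--type identity $g_{bl}T_{ak}-g_{bk}T_{al}-g_{al}T_{bk}+g_{ak}T_{bl}=0$, from which $(n-2)T_{ak}=0$ follows by one contraction. No trace of $(III_{3})$ itself is ever taken.

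Second, for \eqref{LE5-2} you start from $(II_{3})$. The coefficient pattern in \eqref{LE5-2} is $a_{2}F_{labk}+a_{1}W_{labk}$, which is the combination occurring in $(III_{3})$; the combination in $(II_{3})$ is $AF_{lkpq}+a_{2}W_{lkpq}$, and what one extracts from $(II_{3})$ is the separate identity of Lemma \ref{LE6}, not \eqref{LE5-2}. In the paper, \eqref{LE5-2} requires no independent derivation at all: it is precisely the intermediate equation obtained from $(III_{3})$ by the alternation procedure above, read off once $T_{ak}=0$ has been established. A smaller point: the $T_{kl}$ of this lemma is simply notation for $2(b_{1}-a_{1}^{\prime})\overline{S}_{kl}+b_{2}\overline{K}_{kl}$ (so its symmetry is immediate), not a contraction of the earlier three--index $T_{lkp}$ requiring \eqref{L5a1}--\eqref{L5a2}; that identification in your write--up conflates two different objects.
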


\begin{proof}
Replacing in (\ref{III3}) the indices $(p,q)$ with $(a,b),$ alternating in $%
(a,l),$ then again in $(k,l)$ and adding to the first equation we get%
\begin{multline*}
a_{2}F_{labk}+a_{1}W_{labk}+ \\
\frac{1}{2}b_{2}\left( \widehat{K}_{kl}g_{ab}+2K_{bl}g_{ak}+\widehat{K}%
_{al}g_{bk}+\overline{K}_{ak}g_{bl}\right) + \\
b_{1}(g_{bl}\overline{S}_{ak}+g_{ak}\overline{S}_{bl})+a_{1}^{\prime
}(-g_{ak}\overline{S}_{bl}+g_{kl}\overline{S}_{ab}+g_{al}\overline{S}%
_{bk})=0.
\end{multline*}%
Alternating in $(a,b)$ we find 
\begin{equation*}
g_{bl}T_{ak}-g_{bk}T_{al}-g_{al}T_{bk}+g_{ak}T_{bl}=0,
\end{equation*}%
whence $(n-2)T_{ak}=0$ results. Then (\ref{LE5-2}) is obvious.
\end{proof}

\begin{lemma}
\label{AfterLE5}Under hypothesis (\ref{H}) suppose $\dim M>1$ and $a\neq 0.$
Then 
\begin{equation*}
(n-1)\beta Y_{l}=0
\end{equation*}%
on $M\times \{0\}$ holds, where%
\begin{equation*}
\beta =2A(b_{1}^{2}-a_{1}^{\prime 2}-a_{1}b_{1}^{\prime
})+(a_{1}b_{2}-2a_{2}b_{1})(3b_{2}+2a_{2}^{\prime })+2a_{2}\left[
2a_{1}^{\prime }(b_{2}+a_{2}^{\prime })+a_{2}b_{1}^{\prime }\right] .
\end{equation*}
\end{lemma}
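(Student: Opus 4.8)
The goal is an algebraic identity of the form $(n-1)\beta Y_l = 0$, where $\beta$ is a polynomial in the structural constants $a_j, b_j, a_j', b_j'$ (all evaluated at $0$). Since the factor $(n-1)$ already appears, the plan is to produce this relation by \emph{alternating} one of the third-order conditions in a pair of indices, so that a Kronecker-delta-type contraction collapses to the dimension factor; the residual scalar coefficient of $Y_l$ must then be shown to equal $\beta$. First I would assemble the raw material: the expressions for $E_{lkm}$, $T_{lkm}$, $M_{lkm}$ in terms of $Y$ from Lemma~\ref{Lemmas L5} (equations (\ref{L5a1})--(\ref{L5a2'})), the relation $T_{kl}=0$ together with $2(b_1-a_1')\overline{S}_{kl}+b_2\overline{K}_{kl}=0$ from Lemma~\ref{LE5}, and the identity (\ref{LE5-2}) relating $a_2 F_{labk}+a_1 W_{labk}$ to $\overline{S}$, $\overline{K}$, $\widehat{K}$. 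These already express the symmetric-part data ($\overline{S}$, $\overline{K}$, and the fully symmetrized cubic terms) in terms of $Y$ up to the antisymmetric pieces $\widehat{K}$, which are curvature-controlled but will drop out under the right alternation.

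The main computation is to take the third-order condition (\ref{III4}) — the purely vertical one — and contract/alternate it suitably. Concretely, I would set $(p,q,r)=(a,b,m)$, trace over one index pair with $g^{ab}$ to turn the $g_{\cdot k}E_{\cdot\cdot\cdot}$ and $g_{\cdot k}M_{\cdot\cdot\cdot}$ blocks into contracted tensors, and simultaneously alternate in the remaining horizontal-type index pair $(k,l)$ so that the terms symmetric in $(k,l)$ (the $g_{kl}$-proportional ones) cancel and a single $g$-delta factor survives, yielding the scalar factor $(n-1)$ (here $\dim M = n$, $\dim M>1$). Into the resulting expression I substitute the $Y$-expressions for $G_{lkpqr}+G_{klpqr}$, $Z_{lkpqr}+Z_{klpqr}$, $E$, and $M$. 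The $G$- and $Z$-type quintic terms are not yet solved for explicitly, so I would first need to derive their symmetrized parts: combine (\ref{II4}) and (\ref{III4}) (and (\ref{I4}) if needed) the same way the second-order system was solved in Lemma~\ref{Lemmas L5}, i.e. use $a\neq 0$ to invert the $2\times 2$ linear system in the $(AG+a_2 Z, a_2 G + a_1 Z)$ unknowns, expressing their relevant symmetrizations again as multiples of $Y$ with coefficients built from $a_j,b_j,a_j',b_j'$. After all substitutions, everything is a scalar multiple of $Y_l$, and collecting the coefficient gives $(n-1)\beta Y_l = 0$ with $\beta$ the stated polynomial.

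The hard part will be the bookkeeping of this final coefficient: there are many terms of the same tensorial type ($g_{kl}Y_m$, $g_{lm}Y_k$, $g_{km}Y_l$, and after contraction $Y_l$), each carrying a product of two of the structural constants, and the claimed $\beta = 2A(b_1^2 - a_1'^2 - a_1 b_1') + (a_1 b_2 - 2a_2 b_1)(3b_2 + 2a_2') + 2a_2[2a_1'(b_2 + a_2') + a_2 b_1']$ must emerge exactly, including the mixed $A$-terms (recall $A = a_1 + a_3$, $B = b_1 + b_3$), after cancellations against the $B$- and $b_2$-proportional contributions from (\ref{III4}). I would organize this by grouping contributions according to which Taylor coefficient ($E$, $M$, or the quintic symmetrizations) they come from, verifying that the $\widehat{K}$ and curvature terms cancel by antisymmetry before tracing, and leaving the purely mechanical algebraic expansion — which the paper notes was checked with MathTensor and Mathematica — to a verification step rather than spelling it out in full.
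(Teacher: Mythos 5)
There is a genuine gap, and it concerns precisely the terms you flag as "not yet solved for": the quartic Taylor coefficients $G$ and $Z$. The paper never solves for them. Its proof uses only (\ref{III4}) together with the single structural fact that $G_{lkpqr}$ and $Z_{lkpqr}$ are symmetric in their \emph{last four} indices: after relabelling $(p,q,r)\to(a,b,c)$ one alternates in $(a,l)$, then in $(k,l)$, adds back, and finally alternates in $(k,b)$ and contracts with $g^{ab}g^{kc}$; the alternation in $(k,b)$ — two indices inside the symmetric block — annihilates $a_{2}G_{labck}+a_{1}Z_{labck}$ identically, and what survives is a scalar relation in the contracted $E$- and $M$-terms and $Y_{l}$, which (\ref{L5a1}) and (\ref{L5a2'}) convert into $(n-1)\beta Y_{l}=0$. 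Your proposed manipulation cannot reach this: $(L_{Z}G)(\partial_{k}^{v},\partial_{l}^{v})$ is symmetric in $(k,l)$, so every term of (\ref{III4}) is $(k,l)$-symmetric and "tracing with $g^{ab}$ and alternating in $(k,l)$" returns the tautology $0=0$; no factor $(n-1)$ and no constraint on $Y$ is produced.

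Your fallback — invert the $2\times 2$ system in the symmetrized quintic combinations from (\ref{II4}) and (\ref{III4}) using $a\neq 0$, then substitute back — is circular as stated: re-inserting the solved-for symmetrizations into the very equations used to determine them yields no new information. To make that route work you would have to exploit the symmetry of $G^{a}_{\,pqrs}$, $Z^{a}_{\,pqrs}$ in all four lower indices as an \emph{extra} constraint (e.g.\ reconstruct the full tensors from their $(l,k)$-symmetrizations by a Christoffel-type identity and feed them into the skew-in-$(l,k)$ part of (\ref{II4})), and you would then also have to control the additional terms (\ref{II4}) carries ($a_{2}\nabla_{l}F+a_{1}\nabla_{l}W$, the curvature terms with $E$, $\nabla\overline{K}$, $\nabla\overline{S}$) — none of which your plan addresses, and all of which the paper's route avoids. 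A further, smaller mismatch: you invoke Lemma \ref{LE5} ($T_{kl}=0$), which requires $\dim M>2$, whereas the statement assumes only $\dim M>1$; the paper's proof needs neither Lemma \ref{LE5} nor the case-splitting data, only $a\neq 0$ and Lemma \ref{Lemmas L5}.
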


\begin{proof}
First, replace in (\ref{III4}) the indices $(p,q,r)$ with $(a,b,c).$
Alternating an equationo btained in such a way in $(a,l),$ then in $(k,l)$
and adding the result to the first one, we get%
\begin{multline*}
a_{2}G_{labck}+a_{1}Z_{labck}+ \\
\frac{1}{2}b_{2}\left[
(E_{kcl}-E_{lck})g_{ab}+(E_{kbl}-E_{lbk})g_{ac}+2(E_{bcl}+E_{cbl})g_{ak}+(E_{acl}-E_{lac})g_{bk}+\right.
\\
\left. \left( E_{ack}+2E_{cak}+E_{kac}\right)
g_{bl}+(E_{abl}-E_{lba})g_{ck}+\left( E_{abk}+2E_{bak}+E_{kab}\right) g_{cl} 
\right] + \\
b_{1}(M_{bcl}g_{ak}+M_{ack}g_{bl}+M_{abk}g_{cl})+a_{1}^{\prime
}(-M_{bcl}g_{ak}+M_{bck}g_{al}+M_{abc}g_{kl})+ \\
b_{1}^{\prime }\left[ (g_{bl}g_{ck}+g_{bk}g_{cl})Y_{a}+2g_{ak}\left(
g_{cl}Y_{b}+g_{bl}Y_{c}\right) +(g_{ac}g_{bl}+g_{ab}g_{cl})Y_{k}-\right. \\
\left. (g_{ac}g_{bk}+g_{ab}g_{ck})Y_{l}\right] =0.
\end{multline*}%
Alternating in $(k,b)$ and contracting with $g^{ab}g^{kc}$ we obtain%
\begin{equation*}
b_{2}\left[ \left( n-2\right) E_{rls}+nE_{lrs}\right]
g^{rs}+(n-1)(b_{1}-a_{1}^{\prime })M_{rls}g^{rs}+(n+2)(n-1)b_{1}^{\prime
}Y_{l}=0,
\end{equation*}%
which, using (\ref{L5a1}) and (\ref{L5a2'}), yields the thesis.
\end{proof}

\begin{remark}
For the Cheeger-Gromoll metric $g^{CG}$ on $TM,$ the vector field $Y$
vanishes everywhere on $M$.
\end{remark}

\begin{lemma}
\label{LE6}Under hypothesis (\ref{H})%
\begin{multline}
3AF_{lkmn}+3a_{2}W_{lkmn}+B\left( g_{kl}\overline{K}_{mn}+g_{lm}\overline{K}%
_{kn}+g_{ln}\overline{K}_{km}\right) +  \label{LE6-1} \\
(b_{1}-a_{1}^{\prime })\left(
Y_{n,l}g_{km}+Y_{m,l}g_{kn}+Y_{k,l}g_{mn}\right) + \\
2(b_{2}+a_{2}^{\prime })\left( g_{kl}\overline{S}_{mn}+g_{lm}\overline{S}%
_{kn}+g_{ln}\overline{S}_{km}\right) + \\
2b_{2}\left[ g_{km}\left( X_{n,l}+S_{ln}\right) +g_{kn}\left(
X_{m,l}+S_{lm}\right) +g_{mn}\left( X_{k,l}+S_{lk}\right) \right] =0
\end{multline}%
is satisfied at a point $(x,0)\in TM$.
\end{lemma}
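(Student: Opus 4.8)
The plan is to extract identity (\ref{LE6-1}) from the third-order equation (\ref{I3}) by a symmetrization/contraction argument analogous to the ones used in Lemmas \ref{Lemmas L5}--\ref{AfterLE5}. Equation (\ref{I3}) already contains the combination $A(\nabla_kE_{lpq}+\nabla_lE_{kpq})+a_2(\nabla_kT_{lpq}+\nabla_lT_{kpq})$, while the quantities $F_{lkpq}$ and $W_{lkpq}$ entering (\ref{LE6-1}) are, by their very definitions in Section 3.2, built from third-order vertical derivatives together with Christoffel-symbol correction terms; moreover $W_{lkpq}$ was defined so that $W_{lkpq}=(S^a_{kpq}+\cdots)g_{al}$ and $S^a_p=P^a_p+\nabla_pX^a$. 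So the natural route is to pass from (\ref{I3}) to the ``$II_3$-type'' equation (\ref{II3}), which is the one that actually carries $AF_{lkpq}+a_2W_{lkpq}$, and then symmetrize it appropriately in the index quadruple so that the curvature terms $K^a_pR_{alkq}$ drop out (by pairing with their counterparts after index swaps and invoking the first Bianchi identity, exactly as in the proof of (\ref{L6a})).

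Concretely, I would start from (\ref{II3}) written with the free symmetric indices relabelled as $(m,n)$ in place of $(p,q)$, and then form the cyclic sum over the three indices $\{k,m,n\}$ (equivalently, symmetrize over these three, since $F$ and $W$ are already symmetric in their last three slots). The antisymmetric curvature contributions $a_1(K^a_mR_{alkn}+K^a_nR_{alkm})$ and the $\nabla_lE$, $\nabla_lT$ terms should then reorganize: the $E$- and $T$-divergences get eliminated using (\ref{L5a}) and (\ref{L 5}) (or their consequences (\ref{L5a1})--(\ref{L5a2'})) to trade $T_{lkp}$ and $E_{lkp}$ for $Y$-terms, and the curvature terms get symmetrized away via Bianchi. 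The coefficient $3$ in front of $AF$ and $a_2W$ in (\ref{LE6-1}) is exactly the signature of having added three cyclic copies, which confirms that this is the right manipulation. The remaining algebra is then bookkeeping: collecting the $g\otimes\overline K$, $g\otimes\overline S$, $g\otimes(X_{,l}+S)$ and $g\otimes Y_{,l}$ contributions and checking that the scalar coefficients $B$, $b_1-a_1'$, $2(b_2+a_2')$, $2b_2$ come out as stated.

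The main obstacle is the pure bookkeeping in the cyclic sum: (\ref{II3}) has of order ten distinct tensorial terms, and after cyclically summing over $\{k,m,n\}$ one must carefully group the $b_1$-, $b_2$-, $A'$-, $a_2'$-terms and the $\overline S$- versus $\nabla X$- pieces (recall $\overline S_{pk}=S_{pk}+S_{kp}$ while the equation also carries bare $\nabla_lX_p$ terms), simplifying $\overline S_{pk}+\nabla_lX_p$-type combinations into the compact form $X_{n,l}+S_{ln}$ appearing in (\ref{LE6-1}). There is also a subtlety in making sure no $\dim M$ factor survives — unlike Lemmas \ref{LE5} and \ref{AfterLE5}, this identity is claimed without any contraction, hence without an $(n-2)$ or $(n-1)$ factor, so one must use the \emph{un}contracted cyclic sum and resist the temptation to contract indices. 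I expect that once (\ref{L5a}), (\ref{L 5}) and the Bianchi identity are fed in at the right moment, everything collapses to (\ref{LE6-1}); the risk is purely clerical, so I would verify the final coefficients against the MathTensor computation mentioned in the introduction.
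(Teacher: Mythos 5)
Your plan is essentially sound and it does lead to (\ref{LE6-1}), but by a route that is cleaner than the one the paper actually takes. You correctly identify (\ref{II3}) as the equation carrying $AF_{lkpq}+a_{2}W_{lkpq}$, and the cyclic sum over $\{k,m,n\}$ (with $l$ held fixed) does the job: since $F$ and $W$ are symmetric in their last three slots you get the factor $3$; the combination $a_{1}\nabla_{l}T_{kmn}+a_{2}\nabla_{l}E_{kmn}$ is eliminated by the covariant derivative of (\ref{L 5}) alone (note that (\ref{L5a}) is not needed, and falling back on (\ref{L5a1})--(\ref{L5a2'}) would import the hypothesis $a\neq 0$, which Lemma \ref{LE6} does not assume); and summing the cyclic copies of the (L 5)-terms together with the $b_{1}\nabla_{l}Y$-terms of (\ref{II3}) produces exactly the coefficient $b_{1}-a_{1}'$, while the $B$-, $b_{2}$- and $a_{2}'$-terms regroup into the $\overline{K}$, $\overline{S}$ and $X_{,l}+S_{l\cdot}$ blocks with the stated coefficients. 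One small correction to your mechanism: under the cyclic sum the curvature terms $a_{1}(K^{a}_{m}R_{alkn}+K^{a}_{n}R_{alkm})$ cancel pairwise purely by the antisymmetry of $R_{alkq}$ in its last two indices -- no Bianchi identity is required. The paper proceeds differently: it first subtracts $\nabla$(\ref{L 5}) from (\ref{II3}) to get an intermediate identity (with curvature terms still present), then applies an antisymmetrization in $(k,m)$ followed by a symmetrization in $(k,n)$, and finally adds three copies of the $k\leftrightarrow m$-swapped intermediate identity so that the curvature terms cancel; your single cyclic symmetrization achieves the same cancellation in one step. What remains in your write-up is only the coefficient bookkeeping you explicitly defer, and that bookkeeping does close as you predict, so there is no gap -- just carry it out (or, as you suggest, machine-check it) to make the argument complete.
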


\begin{proof}
Differentiating covariantly (\ref{L 5}) and subtracting from (\ref{II3}) we
get%
\begin{multline}
AF_{lkmn}+a_{2}W_{lkmn}+B\left( g_{lm}K_{nk}+g_{ln}K_{mk}\right) +
\label{LE6-2} \\
(b_{1}-a_{1}^{\prime })\left(
Y_{n,l}g_{km}+Y_{m,l}g_{kn}-Y_{k,l}g_{mn}\right) - \\
a_{1}\left( K_{n}^{r}R_{rlkm}+K_{m}^{r}R_{rlkn}\right) +2a_{2}^{\prime
}g_{kl}\overline{S}_{mn}+ \\
b_{2}\left[ g_{km}\left( X_{n,l}+S_{ln}\right) +g_{kn}\left(
X_{m,l}+S_{lm}\right) +g_{ln}\overline{S}_{km}+g_{lm}\overline{S}_{kn}\right]
=0.
\end{multline}%
Antisymmetrizing in $(k,m)$ and symmetrizing in $(k,n)$ we have%
\begin{multline}
B\left[ g_{kl}\left( K_{mn}-2K_{nm}\right) +g_{lm}\left(
K_{kn}+K_{nk}\right) +g_{ln}\left( K_{mk}-2K_{km}\right) \right] +
\label{like 78} \\
2(b_{1}-a_{1}^{\prime })\left(
2Y_{m,l}g_{kn}-Y_{n,l}g_{km}-Y_{k,l}g_{mn}\right) +3a_{1}\left(
K_{n}^{r}R_{rlmk}+K_{k}^{r}R_{rlmn}\right) + \\
b_{2}\left[ 2g_{kn}\left( X_{m,l}+S_{lm}\right) -g_{km}\left(
X_{n,l}+S_{ln}\right) -g_{mn}\left( X_{k,l}+S_{lk}\right) \right] + \\
(b_{2}-2a_{2}^{\prime })\left( 2g_{lm}\overline{S}_{kn}-g_{ln}\overline{S}%
_{km}-g_{kl}\overline{S}_{mn}\right) .
\end{multline}%
Exchanging in (\ref{LE6-2}) times three the indices $k$ and $m$ and adding
to the last equation we obtain (\ref{LE6-1}). This completes the proof.
\end{proof}

\begin{lemma}
\label{LE6'}Under hypothesis (\ref{H}) relation%
\begin{multline}
3a_{2}\left[ E_{bc}^{p}\left( R_{pkal}+R_{lak}^{p}\right) +E_{ac}^{p}\left(
R_{pkbl}+R_{lbk}^{p}\right) +E_{ab}^{p}\left( R_{pkcl}+R_{lck}^{p}\right) %
\right] +  \label{LE6'-1} \\
6A^{\prime
}g_{kl}(T_{abc}+T_{bca}+T_{cab})+g_{bc}K_{kal}+g_{ca}K_{kbl}+g_{ab}K_{kcl}+
\\
g_{cl}L_{abk}+g_{al}L_{bck}+g_{bl}L_{cak}+g_{ck}L_{abl}+g_{ak}L_{bcl}+g_{bk}L_{cal}=0
\end{multline}%
holds on $M\times \{0\}$, where%
\begin{multline}
K_{kal}=K_{lak}=  \label{LE6'-2} \\
-2b_{2}\left( S_{ka,l}+S_{la,k}+X_{a,kl}+X_{a,lk}\right)
-(b_{1}-a_{1}^{\prime })(Y_{a,kl}+Y_{a,lk}),
\end{multline}%
\begin{equation}
L_{abk}=L_{bak}=2B\overline{K}_{ab,k}+3BT_{kab}+(b_{2}-2a_{2}^{\prime })%
\overline{S}_{ab,k}+3B^{\prime }(g_{ka}Y_{b}+g_{kb}Y_{a}).  \label{LE6'-3}
\end{equation}
\end{lemma}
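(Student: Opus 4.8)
The plan is to mimic the approach already used in the proofs of Lemmas \ref{LE5} and \ref{LE6}: start from one of the third‑order Taylor identities, in this case $(I_4)$, and extract an antisymmetric/contracted consequence after first cancelling the bulkiest ``connection'' terms using the lower‑order relations. First I would rename the free indices $(p,q,r)$ in $(I_4)$ as $(a,b,c)$, so that $k,l$ remain the ``frame'' indices coming from $\partial_k^h,\partial_l^h$ and $a,b,c$ are the genuinely symmetric Taylor indices. The term $A[\nabla_kF_{labc}+\nabla_lF_{kabc}]+a_2[\nabla_kW_{labc}+\nabla_lW_{kabc}]$ is the obstruction to a clean statement, exactly as in Lemma \ref{LE6}; to kill it I would differentiate the already‑established relation $(\ref{LE6-1})$ of Lemma \ref{LE6} covariantly with respect to $\nabla_l$ (and symmetrically with $k\leftrightarrow l$) and subtract, so that the $F$ and $W$ derivatives are traded for covariant derivatives of $\overline K$, $\overline S$, $X$, $Y$ and algebraic curvature terms.

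Next I would substitute for $T_{abc}$, $E_{abc}$ and $M_{abc}$ wherever they appear undifferentiated, using $(\ref{L5a1})$, $(\ref{L5a2})$ and $(\ref{L5a2'})$ from Lemma \ref{Lemmas L5} — but only in the places where doing so does not reintroduce a $Y$‑term that should be hidden inside the definitions of $K_{kal}$ and $L_{abk}$. In fact the cleanest route is the reverse: I would \emph{define} $K_{kal}$ by $(\ref{LE6'-2})$ and $L_{abk}$ by $(\ref{LE6'-3})$ and then verify that, after the subtraction above and after using $(\ref{L5a1})$–$(\ref{L5a2'})$ and $(\ref{LE5-1})$ (namely $T_{kl}=0$, so $M_{abc}$‑type symmetrizations simplify and $b_2\overline K_{ab}=-2(b_1-a_1')\overline S_{ab}$), all the curvature‑free terms organise themselves precisely into the combination $g_{bc}K_{kal}+\cdots+g_{bk}L_{cal}$ displayed in $(\ref{LE6'-1})$, with the curvature contribution being exactly $3a_2[E_{bc}^p(R_{pkal}+R^p_{lak})+\cdots]$. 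The symmetry $K_{kal}=K_{lak}$ and $L_{abk}=L_{bak}$ is immediate from the defining formulas once one notes $X_{a,kl}+X_{a,lk}$ and $Y_{a,kl}+Y_{a,lk}$ are symmetrized Hessians and $\overline K,\overline S$ are already symmetric pairs.

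The main obstacle will be bookkeeping: $(I_4)$ has nine groups of terms, several carrying the coefficients $B,b_2,B',A'$, and after covariant differentiation of $(\ref{LE6-1})$ one must track how the six‑fold $g_{lp}T_{kqr}+\cdots$ pattern and the $B'(g_{pk}g_{ql}+\cdots)Y_r$ pattern recombine under the $(a,l)$‑then‑$(k,l)$ alternation used in Lemmas \ref{LE5} and \ref{LE6}. I expect the $a_1(K^r R)$ curvature terms to drop out by the first Bianchi identity (as in Lemma \ref{LE6} and in the unnumbered lemma preceding it), leaving only the $a_2 E^p R$ block with coefficient $3a_2$, which is why $(\ref{LE6'-1})$ has no $a_1$‑curvature term. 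Once the curvature terms are isolated, matching the remaining algebraic terms against the right‑hand sides of $(\ref{LE6'-2})$ and $(\ref{LE6'-3})$ is a finite linear‑algebra check in the coefficients $a_j,b_j,a_j',b_j'$, and the identity follows on $M\times\{0\}$.
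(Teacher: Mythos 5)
Your strategy coincides with the paper's proof: covariantly differentiate (\ref{LE6-1}) and use it to eliminate the $\nabla F$ and $\nabla W$ terms from (\ref{I4}) (after tripling), whereupon the surviving terms regroup directly into the combination $g_{bc}K_{kal}+\dots+g_{bk}L_{cal}$ with $K$ and $L$ as in (\ref{LE6'-2})--(\ref{LE6'-3}). The extra machinery you anticipate --- the $(a,l)$/$(k,l)$ alternations, a Bianchi cancellation of $a_{1}K^{r}R$ terms, and the substitutions (\ref{L5a1})--(\ref{L5a2'}) --- is not actually needed, since neither (\ref{I4}) nor the differentiated (\ref{LE6-1}) contains such curvature terms and the lemma leaves $E$, $T$, $M$ unevaluated.
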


\begin{proof}
To prove the lemma it is enough to differentiate covariantly (\ref{LE6-1})
and eliminate covariant derivatives of $F$ and $W$ from (\ref{I4}).
\end{proof}

\begin{lemma}
\label{LE6''}Under hypothesis (\ref{H}) suppose $\dim M>2.$ Then the relation%
\begin{multline*}
a_{1}\left[
2E_{ab}^{p}R_{plck}-E_{bk}^{p}R_{plac}+E_{bc}^{p}R_{plak}-E_{ak}^{p}R_{plbc}+E_{ac}^{p}R_{plbk}%
\right] + \\
B\left[ \left( E_{ckb}-E_{kcb}\right) g_{al}+\left( E_{cak}-E_{kac}\right)
g_{bl}+\right. \\
\left. \left( E_{abk}+E_{bak}\right) g_{cl}-\left( E_{abc}+E_{bac}\right)
g_{kl}\right] + \\
(b_{1}-a_{1}^{\prime })\left[ \nabla _{l}\overline{S}_{bc}g_{ak}-\nabla _{l}%
\overline{S}_{bk}g_{ac}\right] + \\
b_{2}\left[ \nabla _{l}\widehat{K}_{kc}g_{ab}+g_{ak}\left( \frac{3}{2}\nabla
_{l}K_{bc}+\frac{1}{2}\nabla _{l}K_{cb}\right) -g_{ac}\left( \frac{3}{2}%
\nabla _{l}K_{bk}+\frac{1}{2}\nabla _{l}K_{kb}\right) \right] + \\
b_{2}\left( \nabla _{l}K_{ac}g_{bk}-\nabla _{l}K_{ak}g_{bc}\right) + \\
\left( b_{2}-2a_{2}^{\prime }\right) \left(
M_{abk}g_{cl}-M_{abc}g_{kl}\right) +b_{2}\left[
g_{bk}T_{lac}-g_{bc}T_{lak}+g_{ak}T_{lbc}-g_{ac}T_{lbk}\right] + \\
2b_{2}^{\prime }\left[ \left( g_{bk}g_{cl}-g_{bc}g_{kl}\right) Y_{a}+\left(
g_{ak}g_{cl}-g_{ac}g_{kl}\right) Y_{b}+\right. \\
\left. \left( g_{al}g_{bk}+g_{ak}g_{bl}\right) Y_{c}-\left(
g_{al}g_{bc}+g_{ac}g_{bl}\right) Y_{k}\right] =0
\end{multline*}%
holds on $M\times \{0\}.$
\end{lemma}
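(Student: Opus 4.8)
The plan is to follow the same mechanical strategy already used in Lemmas~\ref{LE5}, \ref{AfterLE5}, \ref{LE6} and \ref{LE6'}: start from one of the higher-order Taylor identities (here the third-order equation $(II_4)$, which already contains the fourth-order coefficients $G$ and $Z$ together with curvature terms $E^p R$), then eliminate the genuinely fourth-order unknowns by combining it with a covariantly differentiated lower-order relation, and finally perform a sequence of index alternations/symmetrizations together with one metric contraction to kill the $G$, $Z$ pieces and isolate the stated curvature–plus–lower-order expression. Concretely, I would differentiate covariantly the relation $(II_3)$ (or equivalently the already-processed identity \eqref{LE6-2} from Lemma~\ref{LE6}) with respect to $\nabla_l$, so as to produce $\nabla_l F$ and $\nabla_l W$ terms matching those appearing in $(II_4)$, and subtract to remove $AG_{lkpqr}+a_2 Z_{lkpqr}$-type contributions as well as $a_2\nabla_l F+a_1\nabla_l W$. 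What survives is an expression linear in $a_1 E^p R$, $B\,E$, $b_2$-weighted $\nabla S$, $\nabla K$, $T$, and $b_2'$-weighted $Y g g$ terms.

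After that reduction, the next step is to rename the symmetric triple of indices $(p,q,r)\to(a,b,c)$, then alternate in $(a,l)$, subsequently in $(k,l)$, and add back the original equation — exactly the device used to derive \eqref{LE5-2} and \eqref{LE6'-1}. This is what collapses the fully symmetric high-order tensors and leaves behind only the curvature term $a_1[2E_{ab}^pR_{plck}-\dots]$ together with $g$-contracted lower-order quantities. The hypothesis $\dim M>2$ enters precisely here: after a further alternation (in $(b,k)$ or $(b,c)$) one gets a relation of the shape $g_{\cdot l}(\text{stuff}) - g_{\cdot \cdot}(\text{stuff}) = 0$ whose contraction produces an overall factor $(n-2)$, allowing one to discard a spurious purely-metric tensor, just as $(n-2)T_{ak}=0$ appeared in Lemma~\ref{LE5}. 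Throughout, I would use the first Bianchi identity to symmetrize the curvature terms (as in the proof of Lemma~\ref{LE6}) and the Ricci identity \eqref{Conv4} to convert second covariant derivatives of $X$, $Y$ into curvature contractions where needed, and I would substitute \eqref{L5a1}, \eqref{L5a2}, \eqref{L5a2'} to express any residual $E$, $T$, $M$ built from $Y$ in closed form.

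The main obstacle I anticipate is purely bookkeeping: $(II_4)$ is an identity in five lower indices with three of them ($p,q,r$) symmetric, so the number of distinct terms after each alternation roughly doubles, and keeping track of the $B$-weighted $E$ block (which mixes $E_{qkp}$, $E_{pkq}$, etc., none of which is symmetric in the first two slots) through two successive alternations is where sign errors naturally creep in. The cancellation of the fourth-order coefficients $G$ and $Z$ is guaranteed in principle because they enter $(II_4)$ only through the fully-symmetrized combinations $AG_{lkpqr}+a_2Z_{lkpqr}$ and $\nabla_l F$, $\nabla_l W$, both of which are symmetric in $(p,q,r)$, so any alternation involving one of those indices against $k$ or $l$ removes them — but verifying this requires care about which index is the "special" one in each tensor. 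I expect the final clean-up, i.e.\ recognizing that the leftover purely-metric-times-$Y$ terms assemble into exactly the $2b_2'[\dots]$ expression displayed in the statement, to follow once the contraction factor $(n-2)$ has been divided out and \eqref{L5a1}, \eqref{L5a2'} have been applied; no new idea beyond the techniques of the preceding four lemmas should be required.
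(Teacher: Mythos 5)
Your overall strategy is the right one, but the concrete elimination step you propose would fail. The block in (\ref{II4}) that has to be cancelled by a differentiated lower-order identity is $a_{2}\nabla _{l}F_{kpqr}+a_{1}\nabla _{l}W_{kpqr}$, i.e.\ the combination of $F$ and $W$ with weights $(a_{2},a_{1})$. The identities you propose to differentiate, (\ref{II3}) or its processed form (\ref{LE6-2}), contain $F$ and $W$ with weights $(A,a_{2})$, so after subtraction the genuinely fourth-order data $\nabla _{l}F$ and $\nabla _{l}W$ do \emph{not} drop out (the two weight vectors are independent whenever $a\neq 0$), and no amount of alternation afterwards can express the leftover in the lower-order quantities appearing in the lemma. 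The identity the paper differentiates is (\ref{LE5-2}) of Lemma \ref{LE5} (which stems from (\ref{III3}) and carries exactly the weights $a_{2}F+a_{1}W$), with indices renamed $(l,a,b,k)\rightarrow (k,a,b,c)$; this is also where the hypothesis $\dim M>2$ enters, since Lemma \ref{LE5} needs $(n-2)T_{ak}=0$ — not, as you suggest, through a new contraction producing an $(n-2)$ factor inside this proof. It is moreover the differentiated (\ref{LE5-2}) that supplies the $(b_{1}-a_{1}^{\prime })\nabla \overline{S}$ and $b_{2}\nabla K$, $b_{2}\nabla \widehat{K}$ blocks visible in the statement, which your source identities could not produce with those coefficients.

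A second, smaller inaccuracy: the subtraction does not remove $AG_{lkpqr}+a_{2}Z_{lkpqr}$ (no third-order identity contains $G$ or $Z$); these are killed only at the end by an alternation inside the symmetric index set. Since $G_{lkabc}$ and $Z_{lkabc}$ are symmetric in $(k,a,b,c)$ but \emph{not} in $l$, your proposed sequence "alternate in $(a,l)$, then in $(k,l)$, add back" (copied from the treatment of (\ref{III3}) and (\ref{III4}), where both tensors occur in the symmetrized combination $F_{lk..}+F_{kl..}$) does not annihilate them here. The paper needs only a single alternation in $(k,c)$ after the subtraction, both indices lying in the symmetric set. So the proof as written has a genuine gap at the elimination step and in the choice of alternations, even though the guiding idea — differentiate a lower-order relation to match the $\nabla F$, $\nabla W$ terms and exploit the total symmetry of the fourth-order coefficients — is the correct one.
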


\begin{proof}
Firstly, we change in (\ref{LE5-2}) the indices $(l,a,b,k)$ into $(k,a,b,c)$
and differentiate covariantly with respect to $\partial _{l}.$ Setting in (%
\ref{II4}) $(a,b,c)$ instead of $(p,q,r),$ subtracting the just obtained
equation and, finally, alternating in $(k,c)$ we get the thesis.
\end{proof}

\begin{lemma}
\label{LE8}Under hypothesis (\ref{H}) relations%
\begin{multline}
\mathbf{A}_{km}=(3a_{1}B-a_{2}b_{2})\nabla _{k}X_{m}+(-2a_{2}b_{1}+\frac{3}{2%
}a_{1}b_{2}+2a_{2}a_{1}^{\prime }-3a_{1}a_{2}^{\prime })\nabla _{k}Y_{m}+ \\
a_{2}B(K_{km}-2K_{mk})+(3a_{1}B-2a_{2}b_{2}+2a_{2}a_{2}^{\prime })S_{km}+ \\
(-a_{2}b_{2}+2a_{2}a_{2}^{\prime })S_{mk}=0,  \label{LE8a}
\end{multline}%
\begin{multline*}
\mathbf{F}_{kl}+\mathbf{B}%
_{kl}=2a_{2}b_{2}(L_{X}g)_{kl}+(4a_{2}b_{1}-3a_{1}b_{2}-4a_{2}a_{1}^{\prime
})(L_{Y}g)_{kl}+ \\
2\left( 3a_{2}b_{2}+3a_{1}A^{\prime }-4a_{2}a_{2}^{\prime }\right) \overline{%
S}_{kl}+2a_{2}B\overline{K}_{kl}=0.
\end{multline*}%
hold at a point $(x,0)\in TM.$
\end{lemma}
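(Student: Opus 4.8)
The plan is to derive both displayed identities by algebraic manipulation of the first- and second-order conditions $(I_1)$–$(III_2)$, the lemmas of Lemma \ref{Lemmas L5}, and the relations \eqref{L6a}–\eqref{L6b}, exactly in the spirit of the preceding lemmas, treating all coefficients $a_j,b_j,a_j',b_j'$ as constants. For \eqref{LE8a}, I would start from $(II_2)$, which relates $AE_{lkp}$, $a_1(\nabla_l S_{kp}-X^aR_{alkp})$, $a_2(\nabla_l K_{kp}+T_{lkp})$ and curvature-free terms in $Y$. The idea is to use \eqref{L5a} to eliminate the combination $AE_{lkp}+a_2 T_{lkp}$ in favour of $Y$-terms, use \eqref{L6a} to replace $\nabla_l K_{km}$ (which contributes the $a_2B(K_{km}-2K_{mk})$ piece after the relevant (anti)symmetrization produces $K_{km}-2K_{mk}$ from the $g$-contractions), and use \eqref{L6b} together with $S_{km}=P_{km}+\nabla_k X_m$ to bring in $\nabla_k X_m$, $\nabla_k Y_m$, $S_{km}$, $S_{mk}$. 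Collecting the coefficients of each tensor and each of the scalar invariants $a_1B$, $a_2b_2$, $a_2b_1$, $a_1b_2$, $a_2a_1'$, $a_1a_2'$, $a_2a_2'$ should reproduce the stated $\mathbf{A}_{km}=0$; the labelling $\mathbf{A}_{km}$ suggests the author has a fixed naming scheme for these assembled expressions.

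For the second identity, I would symmetrize \eqref{LE8a} (or the analogous raw combination) in $(k,m)$: the antisymmetric piece $a_2B(K_{km}-2K_{mk})$ symmetrizes to $-a_2B\,\overline{K}_{km}$, the terms in $\nabla_k X_m$ and $S_{km},S_{mk}$ assemble into $(L_X g)_{km}=\overline{(\nabla X)}_{km}$ and $\overline{S}_{km}$, and similarly $\nabla_k Y_m$ gives $(L_Y g)_{km}$; one also needs to fold in $2a_1A'\,\overline{S}_{kl}$-type contributions coming from \eqref{L5b} or \eqref{L5c} so that the $A'$ coefficient $3a_1A'$ appears. Matching coefficients — for instance $2a_2b_2$ on $(L_Xg)$, $4a_2b_1-3a_1b_2-4a_2a_1'$ on $(L_Yg)$, $2(3a_2b_2+3a_1A'-4a_2a_2')$ on $\overline{S}$, and $2a_2B$ on $\overline{K}$ — is then a bookkeeping exercise. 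Since the statement only asserts these expressions vanish (as consequences of $Z$ being Killing), no nondegeneracy hypothesis is invoked, which is consistent with the derivation being a pure linear combination of the already-established identities.

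The main obstacle I expect is purely organizational rather than conceptual: keeping track of the many index (anti)symmetrizations and ensuring that the curvature terms cancel via the first and second Bianchi identities (as was already needed to pass from the raw equations to \eqref{L6a}–\eqref{L6b}). In particular, the terms $a_1(\nabla_l S_{kp}-X^aR_{alkp})$ and $a_2 X^r R_{rlkm}$ carry curvature, and the claimed $\mathbf{A}_{km}$ and $\mathbf{F}_{kl}+\mathbf{B}_{kl}$ are curvature-free, so the correct combination of $(II_2)$, \eqref{L6a}, \eqref{L6b}, \eqref{L5a} must be chosen so that every $R$-term drops out; verifying that cancellation, and then reading off the scalar coefficients correctly, is where the real care is required. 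Everything else is substitution and collection of like terms.
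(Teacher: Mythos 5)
Your plan for (\ref{LE8a}) rests on the wrong ingredients. The equations you propose to combine --- (\ref{II2}), (\ref{L5a}), (\ref{L6a}), (\ref{L6b}) --- contain no undifferentiated occurrences of $\nabla _{k}X_{m}$, $\nabla _{k}Y_{m}$, $K_{km}$, $S_{km}$: (\ref{II2}) involves $E$, $T$, $\nabla K$, $\nabla S$ and curvature, while (\ref{L6a})--(\ref{L6b}) express $\nabla K$ and $\nabla S-X^{r}R_{r\cdot \cdot \cdot }$ through $Y$ and curvature only. Worse, (\ref{L6a})--(\ref{L6b}) were themselves obtained from (\ref{II2}) (via (\ref{L6c})) together with (\ref{I2}) and (\ref{II1}), so substituting them back into (\ref{II2}) after removing $AE_{lkp}+a_{2}T_{lkp}$ by (\ref{L5a}) is circular: the curvature terms drop out because $a_{1}(-a_{1}a_{2})+a_{2}a_{1}^{2}=0$, and what survives is at most a condition on $Y$ alone --- never an algebraic relation among $\nabla X$, $\nabla Y$, $K$, $S$ such as $\mathbf{A}_{km}=0$. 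The actual source of Lemma \ref{LE8} is the second-order data: one differentiates (\ref{L5a}) covariantly, symmetrizes in $(k,l)$ and subtracts from (\ref{I3}); the metric-trace terms of (\ref{I3}) (those carrying $B(\nabla _{k}X_{p}+S_{kp})g_{ql}$, $b_{2}\nabla _{k}Y_{p}g_{ql}$, $2A^{\prime }g_{kl}\overline{S}_{pq}$ and the curvature contracted with $K$) are precisely where the tensors entering $\mathbf{A}$, $\mathbf{B}$, $\mathbf{F}$ appear, and the remaining curvature terms are eliminated against (\ref{like 78}), i.e. against the second-order information of (\ref{II3}) packaged in (\ref{LE6-2}). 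None of this is reachable from the first-order-level identities you list; note also that (\ref{L6a})--(\ref{L6b}) require $a\neq 0$, whereas the lemma and the paper's argument do not.

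Your route to the second identity also fails as described. Symmetrizing (\ref{LE8a}) in $(k,m)$ gives $(3a_{1}B-a_{2}b_{2})(L_{X}g)_{km}+(-2a_{2}b_{1}+\tfrac{3}{2}a_{1}b_{2}+2a_{2}a_{1}^{\prime }-3a_{1}a_{2}^{\prime })(L_{Y}g)_{km}-a_{2}B\overline{K}_{km}+(3a_{1}B-3a_{2}b_{2}+4a_{2}a_{2}^{\prime })\overline{S}_{km}$, which is not $\mathbf{F}_{km}+\mathbf{B}_{km}$: comparing coefficients one finds
\begin{equation*}
\mathbf{F}_{kl}+\mathbf{B}_{kl}+2\left( \mathbf{A}_{kl}+\mathbf{A}
_{lk}\right) =6a_{1}\left[ B(L_{X}g)_{kl}-a_{2}^{\prime
}(L_{Y}g)_{kl}+(A^{\prime }+B)\overline{S}_{kl}\right] ,
\end{equation*}
and nothing available at this stage makes the bracket vanish; in particular the $3a_{1}A^{\prime }$ term cannot be produced by ``folding in'' (\ref{L5b}) or (\ref{L5c}), which constrain second derivatives of $Y$ and $\nabla K$, not the undifferentiated tensors. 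In the paper both assertions fall out simultaneously: after the elimination just described one arrives at the single identity $g_{mn}\mathbf{B}_{kl}+g_{kl}\mathbf{F}_{mn}+g_{ln}\mathbf{A}_{km}+g_{kn}\mathbf{A}_{lm}+g_{lm}\mathbf{A}_{kn}+g_{km}\mathbf{A}_{ln}=0$, and the purely algebraic Lemma \ref{Apen3} then yields $\mathbf{A}=0$ and $\mathbf{B}+\mathbf{F}=0$ at once. Recognizing that trace-type structure and invoking Lemma \ref{Apen3} is the essential idea missing from your proposal.
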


\begin{proof}
First, we change in (\ref{L5a}) the indices $(l,k,p)$ into $(l,m,n),$ then
differentiate covariantly with respect to $\partial _{k}$ and symmetrize in $%
(k,l).$ Next, change in (\ref{I3}) the indices $(p,q)$ into $(m,n)$ and
subtract the former equality to obtain%
\begin{multline*}
\frac{1}{2}\left( b_{2}-2a_{2}^{\prime }\right) \left(
Y_{n,l}g_{km}+Y_{m,l}g_{kn}+Y_{n,k}g_{lm}+Y_{m,k}g_{ln}\right)
-b_{2}(Y_{k,l}+Y_{l,k})g_{mn}- \\
a_{2}\left[ K_{n}^{r}\left( R_{rklm}+R_{rlkm}\right) +K_{m}^{r}\left(
R_{rkln}+R_{rlkn}\right) \right] +2A^{\prime }g_{kl}\overline{S}_{mn}+ \\
B\left[ g_{ln}\left( X_{m,k}+S_{km}\right) +g_{lm}\left(
X_{n,k}+S_{kn}\right) +\right. \\
\left. g_{kn}\left( X_{m,l}+S_{lm}\right) +g_{km}\left(
X_{n,l}+S_{ln}\right) \right] =0.
\end{multline*}%
Eliminating between (\ref{like 78}) and the last equation the terms
containing curvature tensor we obtain 
\begin{equation*}
g_{mn}\mathbf{B}_{kl}+g_{kl}\mathbf{F}_{mn}+g_{ln}\mathbf{A}_{km}+g_{kn}%
\mathbf{A}_{lm}+g_{lm}\mathbf{A}_{kn}+g_{km}\mathbf{A}_{ln}=0,
\end{equation*}%
where%
\begin{equation*}
\mathbf{F}_{mn}=2a_{2}B\overline{K}_{mn}+2(2a_{2}b_{2}+3a_{1}A^{\prime
}-4a_{2}a_{2}^{\prime })\overline{S}_{mn},
\end{equation*}%
\begin{equation*}
\mathbf{B}%
_{kl}=2a_{2}b_{2}(L_{X}g)_{kl}+(4a_{2}b_{1}-3a_{1}b_{2}-4a_{2}a_{1}^{\prime
})(L_{Y}g)_{kl}+2a_{2}b_{2}\overline{S}_{kl}.
\end{equation*}%
Now, the thesis is a simple consequence of Lemma \ref{Apen3}.
\end{proof}

\section{Classification}

To simplify further considerations put for a moment $\overline{X}=\nabla
_{k}X_{l}+\nabla _{l}X_{k},$ $\overline{Y}=\nabla _{k}Y_{l}+\nabla
_{l}Y_{k}, $ $\overline{S}=P_{kl}+P_{lk}+\nabla _{k}X_{l}+\nabla _{l}X_{k},$ 
$\overline{K}=K_{kl}+K_{lk}.$ Symmetrizing indices in (\ref{II1}) and taking
into consideration equations (\ref{I1}), (\ref{III1}) and (\ref{LE5-1}) we
obtain a homogeneous system of linear equations in $\overline{X},$ $%
\overline{Y},$ $\overline{S},$ $\overline{K}:$%
\begin{equation}
\begin{bmatrix}
A & a_{2} & 0 & 0 \\ 
a_{2} & a_{1} & a_{2} & A \\ 
0 & 0 & a_{1} & a_{2} \\ 
0 & 0 & 2b & b_{2}%
\end{bmatrix}%
\begin{bmatrix}
\overline{X} \\ 
\overline{Y} \\ 
\overline{S} \\ 
\overline{K}%
\end{bmatrix}%
=%
\begin{bmatrix}
0 \\ 
0 \\ 
0 \\ 
0%
\end{bmatrix}%
,  \label{macierz1}
\end{equation}%
where $b=b_{1}-a_{1}^{\prime }.$ The system has a unique solution if and
only if 
\begin{equation*}
a(2ba_{2}-a_{1}b_{2})\neq 0,
\end{equation*}%
where $a=a_{1}A-a_{2}^{2}.$

Suppose $2ba_{2}-a_{1}b_{2}=0.$

If $a_{2}b_{2}\neq 0,$ then multiplying the third equation by $b_{2}$ and
the fourth one by $a_{2}$ we transform the whole system to%
\begin{equation*}
\begin{bmatrix}
A & a_{2} & 0 \\ 
a_{2}^{2} & a_{1}a_{2} & -a \\ 
0 & 0 & a_{1}%
\end{bmatrix}%
\begin{bmatrix}
\overline{X} \\ 
\overline{Y} \\ 
\overline{S}%
\end{bmatrix}%
=%
\begin{bmatrix}
0 \\ 
0 \\ 
-a_{2}\overline{K}%
\end{bmatrix}%
\end{equation*}%
with determinant equal to $a_{1}a_{2}a.$

Therefore, if $a_{1}\neq 0$ and $a_{2}b_{2}\neq 0,$ we get 
\begin{equation}
\overline{X}+\overline{S}=0,\quad \overline{Y}=\frac{A}{a_{2}}\overline{S}%
,\quad \overline{K}=-\frac{a_{1}}{a_{2}}\overline{S}.  \label{solution2}
\end{equation}%
On the other hand, if $a_{1}=0$ and $a_{2}b_{2}\neq 0,$ then $b=0$ and (\ref%
{macierz1}) yields%
\begin{equation*}
\begin{bmatrix}
A & a_{2} & 0 & 0 \\ 
a_{2} & 0 & a_{2} & A \\ 
0 & 0 & 0 & a_{2} \\ 
0 & 0 & 0 & b_{2}%
\end{bmatrix}%
\begin{bmatrix}
\overline{X} \\ 
\overline{Y} \\ 
\overline{S} \\ 
\overline{K}%
\end{bmatrix}%
=%
\begin{bmatrix}
0 \\ 
0 \\ 
0 \\ 
0%
\end{bmatrix}%
,
\end{equation*}%
whence%
\begin{equation*}
\overline{X}+\overline{S}=0,\quad A\overline{X}+a_{2}\overline{Y}=0,\quad 
\overline{K}=0.
\end{equation*}

Now suppose $a_{2}=0.$ Then by $2ba_{2}-a_{1}b_{2}=0$ we have either $%
a_{1}=0 $ or $b_{2}=0.$ But $a_{1}=a_{2}=0$ would give $a=0.$ On the other
hand $a_{2}=b_{2}=0$ reduce the system (\ref{macierz1}) to 
\begin{equation*}
\begin{bmatrix}
A & 0 & 0 & 0 \\ 
0 & a_{1} & 0 & A \\ 
0 & 0 & a_{1} & 0 \\ 
0 & 0 & 2b & 0%
\end{bmatrix}%
\begin{bmatrix}
\overline{X} \\ 
\overline{Y} \\ 
\overline{S} \\ 
\overline{K}%
\end{bmatrix}%
=%
\begin{bmatrix}
0 \\ 
0 \\ 
0 \\ 
0%
\end{bmatrix}%
.
\end{equation*}%
Since $a_{2}=0$ and $a\neq 0$ hold if $a_{1}A\neq 0,$ we obtain 
\begin{equation*}
\overline{X}=0,\quad \overline{S}=0,\quad A\overline{K}+a_{1}\overline{Y}=0.
\end{equation*}%
Finally, if $b_{2}=0$ but $a_{2}\neq 0,$ we have $b=b_{1}-a_{1}^{\prime }=0$
and from (\ref{macierz1}) we easily get (\ref{solution2}). Thus we have
proved

\begin{lemma}
Under assumption $a=a_{1}A-a_{2}^{2}\neq 0$ the system (\ref{macierz1}) has
the following solutions:

\begin{enumerate}
\item If $2ba_{2}-a_{1}b_{2}\neq 0,$ then $\overline{X}=\overline{Y}=%
\overline{S}=\overline{K}=0.$

\item If $2ba_{2}-a_{1}b_{2}=0$ and either $a_{1}a_{2}b_{2}\neq 0$ or $%
b_{2}=0$ and $a_{2}\neq 0$ and $b=0,$ then $\overline{X}+\overline{S}=0,\ 
\overline{Y}=\frac{A}{a_{2}}\overline{S},\ \overline{K}=-\frac{a_{1}}{a_{2}}%
\overline{S}.$

\item If $a_{1}=b=0,$ then $\overline{X}+\overline{S}=0,\ A\overline{X}+a_{2}%
\overline{Y}=0,\ \overline{K}=0.$

\item If $a_{2}=b_{2}=0,$ then $\overline{X}=0,\ \overline{S}=0,\ A\overline{%
K}+a_{1}\overline{Y}=0.$
\end{enumerate}

Conversely, if $a\neq 0,$ then the above four cases give the only possible
solutions to (\ref{macierz1}).
\end{lemma}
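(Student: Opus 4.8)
The plan is to read (\ref{macierz1}) as a homogeneous linear system in the unknowns $\overline{X},\overline{Y},\overline{S},\overline{K}$ and to dispose of it by a single determinant computation followed by an exhaustive case analysis. First I would note that the coefficient matrix in (\ref{macierz1}) is block upper-triangular for the $2+2$ splitting of its rows and columns: the lower-left $2\times 2$ block vanishes, the upper-left block has determinant $a=a_1A-a_2^2$, and the lower-right block has determinant $a_1b_2-2a_2b$, where $b=b_1-a_1^{\prime}$. Hence the determinant of the whole system equals $a(2ba_2-a_1b_2)$ up to sign, so when $a\neq 0$ and $2ba_2-a_1b_2\neq 0$ the matrix is nonsingular and the only solution is $\overline{X}=\overline{Y}=\overline{S}=\overline{K}=0$, which is conclusion 1.

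It remains to treat the degenerate case $a\neq 0$, $2ba_2-a_1b_2=0$, which I would split according to the vanishing pattern of $a_1,a_2,b_2$, always invoking $a=a_1A-a_2^2\neq 0$ to rule out impossible combinations (in particular $a_1=a_2=0$ is forbidden, since it forces $a=0$). If $a_1a_2b_2\neq 0$, the identity $2ba_2=a_1b_2$ lets me eliminate $\overline{K}$: multiplying the third row of (\ref{macierz1}) by $b_2$ and the fourth by $a_2$ and subtracting collapses the system to the $3\times 3$ system in $\overline{X},\overline{Y},\overline{S}$ recorded in the text, whose determinant $a_1a_2a$ is nonzero; solving it gives $\overline{X}+\overline{S}=0$, $\overline{Y}=\frac{A}{a_2}\overline{S}$, $\overline{K}=-\frac{a_1}{a_2}\overline{S}$, i.e. (\ref{solution2}). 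The companion sub-case $b_2=0$ with $a_2\neq 0$ forces $b=0$ immediately from $2ba_2-a_1b_2=0$, and feeding $b=0$ back into (\ref{macierz1}) again yields (\ref{solution2}); together these establish conclusion 2.

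For the remaining two cases I would argue as follows. If $a_1=0$, then $a_2\neq 0$ (else $a=0$) and $2ba_2=a_1b_2=0$ forces $b=0$; substituting $a_1=b=0$ into (\ref{macierz1}) and solving gives $\overline{X}+\overline{S}=0$, $A\overline{X}+a_2\overline{Y}=0$, $\overline{K}=0$, which is conclusion 3. If $a_2=0$, then $a_1b_2=0$ together with $a_1\neq 0$ (since $a_1=a_2=0$ is forbidden) forces $b_2=0$; substituting $a_2=b_2=0$ into (\ref{macierz1}) and using $a=a_1A\neq 0$ gives $\overline{X}=0$, $\overline{S}=0$, $A\overline{K}+a_1\overline{Y}=0$, which is conclusion 4. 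Finally, for the converse it suffices to observe that, under $a\neq 0$, the four hypotheses cover every possible value of $2ba_2-a_1b_2$ and every vanishing pattern of $(a_1,a_2,b_2)$, with the stated solutions agreeing on any overlap; hence (\ref{macierz1}) admits no solution beyond those listed.

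I expect the only genuine difficulty to be the bookkeeping of the case split — keeping track of which coefficients are forced to vanish and verifying that the cases are truly exhaustive — since the block-triangular determinant and each individual Gaussian elimination are routine.
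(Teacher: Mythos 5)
Your proposal is correct and follows essentially the same route as the paper: the determinant criterion $a(2ba_{2}-a_{1}b_{2})\neq 0$ for case 1, the elimination of the redundant fourth row via $b_{2}\cdot(\text{row }3)-a_{2}\cdot(\text{row }4)$ when $a_{2}b_{2}\neq 0$, and the same direct substitutions for the remaining vanishing patterns of $(a_{1},a_{2},b_{2})$, with $a\neq 0$ ruling out $a_{1}=a_{2}=0$. The only cosmetic difference is that you justify the determinant by the block upper-triangular structure, which the paper leaves implicit.
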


Combining the above lemma with (\ref{I1}), (\ref{II1}), (\ref{III1}) and (%
\ref{LE5-1}) we obtain the following

\begin{theorem}
\label{Splitting theorem}Let $(TM,$ $G)$ be a tangent bundle of a Riemannian
manifold $(M,g),$ dim$M>2,$ with g natural metric $G$ such that $%
a=a_{1}A-a_{2}^{2}\neq 0$ on $M\times \{0\}.$ Let $Z$ be a Killing vector
field on $TM$ with its Taylor series expansion around a point $(x,0)\in TM$
given by (\ref{Taylor1}). Then for each such a point there exists a
neighbourhood $U\subset M,$ $x\in U$, that one of the following cases occurs:

\begin{enumerate}
\item $2ba_{2}-a_{1}b_{2}\neq 0.$ Then 
\begin{eqnarray}
\nabla _{k}X_{l}+\nabla _{l}X_{k} &=&0,\quad \nabla _{k}Y_{l}+\nabla
_{l}Y_{k}=0,  \label{14} \\
P_{kl}+P_{lk} &=&0,\quad K_{kl}+K_{lk}=0.  \label{15}
\end{eqnarray}

\item $2ba_{2}-a_{1}b_{2}=0$ and either $a_{1}a_{2}b_{2}\neq 0$ or $%
a_{2}\neq 0$ and $b_{2}=0.$ Then%
\begin{eqnarray}
P_{kl}+P_{lk}+2\left( \nabla _{k}X_{l}+\nabla _{l}X_{k}\right) &=&0,
\label{16} \\
a_{2}\left( \nabla _{k}Y_{l}+\nabla _{l}Y_{k}\right) +A\left( \nabla
_{k}X_{l}+\nabla _{l}X_{k}\right) &=&0,  \label{17} \\
a_{2}\left( K_{kl}+K_{lk}\right) -a_{1}\left( \nabla _{k}X_{l}+\nabla
_{l}X_{k}\right) &=&0.  \label{18}
\end{eqnarray}

\item $a_{2}b_{2}\neq 0$ and $a_{1}=b=0.$ Then%
\begin{eqnarray}
P_{kl}+P_{lk}+2\left( \nabla _{k}X_{l}+\nabla _{l}X_{k}\right) &=&0,
\label{19} \\
a_{2}\left( \nabla _{k}Y_{l}+\nabla _{l}Y_{k}\right) +A\left( \nabla
_{k}X_{l}+\nabla _{l}X_{k}\right) &=&0,  \label{20} \\
K_{kl}+K_{lk} &=&0.  \label{21}
\end{eqnarray}

\item $a_{2}=b_{2}=0.$ Then%
\begin{equation}
\nabla _{k}X_{l}+\nabla _{l}X_{k}=0,\quad P_{kl}+P_{lk}=0,\quad
AK_{lk}+a_{1}\nabla _{l}Y_{k}=0.  \label{22}
\end{equation}
\end{enumerate}
\end{theorem}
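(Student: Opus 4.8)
The plan is to assemble the already-established first-order equations $(I_1)$, $(II_1)$, $(III_1)$ together with $(\ref{LE5-1})$ from Lemma \ref{LE5} into the single linear system $(\ref{macierz1})$, and then to read off the solution structure from the immediately preceding Lemma, which classifies the solutions of $(\ref{macierz1})$ according to which of the quantities $a$, $2ba_2-a_1b_2$, $a_1$, $a_2$, $b_2$ vanish. Concretely, I would first recall that in the notation $\overline{X}=\nabla_kX_l+\nabla_lX_k$, $\overline{Y}=\nabla_kY_l+\nabla_lY_k$, $\overline{S}=\overline{P}_{kl}+\overline{X}$, $\overline{K}=\overline{K}_{kl}$, equation $(I_1)$ reads $A\overline{X}+a_2\overline{Y}=0$; equation $(II_1)$, after symmetrizing in $(k,l)$, reads $a_2\overline{X}+a_1\overline{Y}+a_2\overline{S}+A\overline{K}=0$ (using $S_p^a=P_p^a+\nabla_pX^a$ so that $\overline{S}_{kl}$ is exactly the symmetrization of the bracket appearing there); equation $(III_1)$ reads $a_1\overline{S}+a_2\overline{K}=0$ after rewriting $S_{lk}+S_{kl}$ via the same substitution; and $(\ref{LE5-1})$, valid because $\dim M>2$, reads $2b\,\overline{S}+b_2\overline{K}=0$ with $b=b_1-a_1'$. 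These four equations are precisely the four rows of $(\ref{macierz1})$.

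Next I would invoke the preceding Lemma verbatim: under the standing assumption $a=a_1A-a_2^2\neq 0$, the homogeneous system $(\ref{macierz1})$ has exactly the four solution types listed there, governed by the vanishing pattern of the auxiliary invariants. Case 1 of the Lemma ($2ba_2-a_1b_2\neq 0$) forces $\overline{X}=\overline{Y}=\overline{S}=\overline{K}=0$, which is exactly $(\ref{14})$–$(\ref{15})$ once we translate $\overline{S}=0$ back into $P_{kl}+P_{lk}=0$ using $\overline{X}=0$. Case 2 of the Lemma gives $\overline{X}+\overline{S}=0$, $\overline{Y}=\tfrac{A}{a_2}\overline{S}$, $\overline{K}=-\tfrac{a_1}{a_2}\overline{S}$; clearing denominators (legitimate since in this case $a_2\neq 0$) and again substituting $\overline{S}=\overline{P}_{kl}+\overline{X}$ yields $(\ref{16})$–$(\ref{18})$. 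Case 3 ($a_1=b=0$, with $a_2b_2\neq 0$ so we are inside the relevant subcase) gives $\overline{X}+\overline{S}=0$, $A\overline{X}+a_2\overline{Y}=0$, $\overline{K}=0$, i.e. $(\ref{19})$–$(\ref{21})$ — note $(\ref{20})$ is just $(I_1)$ rewritten and $(\ref{21})$ is $\overline{K}=0$. Case 4 ($a_2=b_2=0$) gives $\overline{X}=0$, $\overline{S}=0$, $A\overline{K}+a_1\overline{Y}=0$; since $A\overline{K}+a_1\overline{Y}$ is (twice) $AK_{lk}+a_1\nabla_lY_k$ after using $(III_1)$ and $(II_1)$ to promote the symmetrized relation to the unsymmetrized one — or more directly, $(II_1)$ with $a_2=0$ already reads $AK_{lk}+a_1\nabla_lY_k=0$ — we obtain $(\ref{22})$.

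The one genuine subtlety is the bookkeeping needed to pass from symmetrized relations in $(k,l)$ to the unsymmetrized statements claimed in cases 3 and 4 (the equations $K_{kl}+K_{lk}=0$ versus $K_{lk}$ alone, and $AK_{lk}+a_1\nabla_lY_k=0$). For case 4 this is free: $(II_1)$ with $a_2=0$ is literally $AK_{lk}+a_1\nabla_lY_k=0$, no symmetrization involved. For case 3, $(\ref{21})$ is $\overline{K}=K_{kl}+K_{lk}=0$ exactly as stated, so no promotion is needed there either; and $(\ref{19})$, $(\ref{20})$ are the symmetrized forms, which is all that is asserted. So in fact no unsymmetrized promotion beyond what $(II_1)$ already supplies is required, and the "obstacle" dissolves once one notices that the theorem only claims the symmetrized identities except in case 4 where $(II_1)$ directly gives the sharper one.

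Finally I would check the exhaustiveness clause: the converse half of the preceding Lemma says that when $a\neq 0$ these four cases are the only possibilities, so every point $x\in M$ falls into (at least) one of them; since the defining inequalities/equalities among the constants $a_1,a_2,b_2,b$ are conditions on the functions $a_j,b_j$ evaluated at $0$ — hence constant on $M$ — the same case applies on a whole neighbourhood $U$ of $x$, giving the stated local uniformity. I expect the only place demanding care is verifying that the fourth row of $(\ref{macierz1})$ really is $(\ref{LE5-1})$ and that the hypothesis $\dim M>2$ is exactly what licenses invoking Lemma \ref{LE5}; everything else is direct substitution into the solution list already proved.
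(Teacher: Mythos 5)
Your proposal is correct and follows essentially the same route as the paper: rows of (\ref{macierz1}) obtained from (\ref{I1}), the symmetrized (\ref{II1}), (\ref{III1}) and (\ref{LE5-1}) (the last requiring $\dim M>2$), then the preceding lemma on the solutions of (\ref{macierz1}) under $a\neq 0$, translated back via $\overline{S}=P_{kl}+P_{lk}+\nabla_kX_l+\nabla_lX_k$, with the unsymmetrized relation in case 4 read off directly from (\ref{II1}) with $a_2=0$, exactly as in the paper.
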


In the above theorem we have put $a_{j}=a_{j}(r^{2})_{\mid (x,0)\in TM},$ $%
b_{j}=b_{j}(r^{2})_{\mid (x,0)\in TM},$ $a_{j}^{\prime }=a_{j}^{\prime
}(r^{2})_{\mid (x,0)\in TM},$ $A=a_{1}+a_{3}.$

It is clear that the above results together with Proposition \ref{Lift prop
2} yield Theorem \ref{mine}.

\subsection{Case 1}

In this section we study relations between $Y$ components of the Killing
vector field on $TM$ and the base manifold $M$ (Theorems \ref{LEC1b}, \ref%
{LEC1c}). Various conditions for $Y$ to be non-zero and relations between $%
X, $ $Y,$ $P$, $K$ are proved. Moreover, Theorem \ref{Structure 1}
establishes isomorphism between algebras of Killing vector fields on $M$ and 
$TM$ for large subclass of $g-$ metrics.

\begin{lemma}
\label{LEC1a}Under hypothesis (\ref{H}) suppose $\dim M>2,$ $a\neq 0$ and $%
2(b_{1}-a_{1}^{\prime })a_{2}-a_{1}b_{2}\neq 0$ at a point $(x,0)\in TM.$
Then 
\begin{equation}
(B+A^{\prime })Y_{k}=0,  \label{LEC1a1}
\end{equation}%
\begin{equation}
2a\nabla _{l}K_{km}=\left[ 2a_{1}A^{\prime }+a_{2}(b_{2}-2a_{2}^{\prime })%
\right] (g_{lm}Y_{k}-g_{lk}Y_{m}),  \label{LEC1a2}
\end{equation}%
\begin{equation}
2a\nabla _{l}P_{km}=-\left[ 2a_{2}A^{\prime }+A(b_{2}-2a_{2}^{\prime })%
\right] (g_{lm}Y_{k}-g_{lk}Y_{m}),  \label{LEC1a3}
\end{equation}%
\begin{equation}
a_{1}\nabla _{m}\nabla _{l}Y_{k}=A^{\prime }(g_{ml}Y_{k}-g_{mk}Y_{l}),
\label{LEC1a4}
\end{equation}%
\begin{equation}
a_{1}Y^{r}R_{rklm}=A^{\prime }(g_{km}Y_{l}-g_{kl}Y_{m})  \label{LAC1a5}
\end{equation}%
hold at the point.
\end{lemma}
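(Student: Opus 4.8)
The setting of Lemma~\ref{LEC1a} is Case~1 of the Splitting Theorem, so I may freely use the relations \eqref{14}--\eqref{15}: $\nabla_kX_l+\nabla_lX_k=0$, $\nabla_kY_l+\nabla_lY_k=0$, $\overline{P}_{kl}=0$ and $\overline{K}_{kl}=0$. In particular $S_{kl}=P_{kl}+\nabla_kX_l$ is \emph{not} symmetric but $\overline{S}_{kl}=2\nabla_{(k}X_{l)}+\overline{P}_{kl}=0$. The plan is to feed these antisymmetry facts into the Lemmas of Section~3.3, which then collapse dramatically. First I would revisit Lemma~\ref{LE6} (equation~\eqref{LE6-1}): since $\overline{S}_{mn}=0$ and $\overline{K}_{mn}=0$ there, and $S_{ln}=\nabla_lX_n+P_{ln}$ with $X_{n,l}+S_{ln}=2\nabla_{(l}X_{n)}+P_{ln}$ antisymmetric after symmetrising as it appears, the surviving content of \eqref{LE6-1} is $3AF_{lkmn}+3a_2W_{lkmn}+(b_1-a_1')(Y_{n,l}g_{km}+Y_{m,l}g_{kn}+Y_{k,l}g_{mn})=0$. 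Likewise \eqref{LE5-2} in Lemma~\ref{LE5} reduces, under $\overline{S}=\overline{K}=0$, to $a_2F_{labk}+a_1W_{labk}+\tfrac12 b_2(\widehat K_{kl}g_{ab}+\widehat K_{bl}g_{ak}+\widehat K_{al}g_{bk})=0$ with a further $\overline{K}_{ak}g_{bl}$ term vanishing. Treating the pair $(F,W)$ as unknowns in these two linear relations and using $a=a_1A-a_2^2\ne0$ and the standing hypothesis $2(b_1-a_1')a_2-a_1b_2\ne0$ lets me solve for $F$ and $W$ separately in terms of $Y_{\cdot,l}$ and $\widehat K$; the key output is an expression of the form $aF_{lkmn}=(\text{coeff})(Y_{n,l}g_{km}+\cdots)+(\text{coeff})(\widehat K\text{-terms})$.

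Next, I would extract \eqref{LEC1a1}. The natural source is Lemma~\ref{AfterLE5}, which already gives $(n-1)\beta Y_l=0$; but that only pins down $Y$ when $\beta\ne0$, which is a different genericity condition. Instead, under Case~1 one can return to $(I_3)$ \eqref{I3} or to the relation obtained in the proof of Lemma~\ref{LE8}: with $\overline{S}=0$, $\overline{K}=0$, $\nabla_{(k}X_{l)}=0$, $\nabla_{(k}Y_{l)}=0$, the big identity $g_{mn}\mathbf B_{kl}+g_{kl}\mathbf F_{mn}+(\text{four }g\,\mathbf A\text{ terms})=0$ has $\mathbf B_{kl}=0$, $\mathbf F_{mn}=0$ and $\mathbf A_{km}$ collapsing to a multiple of $\nabla_kY_m$; combined with the reduced $(I_2)$ \eqref{I2}, which now reads $A(\nabla_kK_{lp}+\nabla_lK_{kp})+a_2(\nabla_kS_{lp}+\nabla_lS_{kp})+2A'g_{kl}Y_p+B(Y_kg_{lp}+Y_lg_{kp})=0$, a trace over suitable index pairs (contract with $g^{kl}$, then with $g^{kp}$) produces, using $\dim M>2$ to invert the resulting $(n-1)$ or $(n-2)$ coefficient, exactly $(B+A')Y_k=0$. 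This trace step is where the $\dim M>2$ hypothesis is genuinely consumed.

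With \eqref{LEC1a1} in hand the remaining four equations follow by back-substitution. For \eqref{LEC1a2}: in the reduced $(I_2)$, symmetry of $K$ being broken as $\overline{K}=0$ means $\nabla_lK_{km}$ is the object to isolate; combining the reduced $(I_2)$ with the reduced $(II_2)$ \eqref{II2} (now $AE_{lkp}+a_1(\nabla_lS_{kp}-X^aR_{alkp})+a_2(\nabla_lK_{kp}+T_{lkp})+2a_2'g_{kl}Y_p+b_2(Y_kg_{lp}+Y_lg_{kp})=0$) and with the formulas for $E$, $T$, $M$ from Lemma~\ref{Lemmas L5} (equations \eqref{L5a1}--\eqref{L5a2'}), which under Case~1 simplify since $Y$-dependence is all that survives, eliminates $E$, $T$, $S$ and $X$-curvature terms and leaves $2a\nabla_lK_{km}=[2a_1A'+a_2(b_2-2a_2')](g_{lm}Y_k-g_{lk}Y_m)$. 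Here one also uses \eqref{LEC1a1} to rewrite $B$ in terms of $-A'$ wherever it appears. Equation~\eqref{LEC1a3} comes out of the companion relation \eqref{L6b} (the second displayed equation of the unnamed Lemma after Lemma~\ref{Lemmas L5}) by the same elimination, or equivalently from $S_{km}=P_{km}+\nabla_kX_m$ and the $X$-antisymmetry; since $\nabla_lX_m$ is a Killing field of $M$ one may need $\nabla_l\nabla_kX_m=-X^rR_{rlkm}$ via \eqref{Conv5}, and the curvature term is then itself controlled. Equation~\eqref{LAC1a5} is the integrability/Ricci-identity consequence: apply the Ricci identity \eqref{Conv4} to $Y$, i.e. $\nabla_l\nabla_kY_m-\nabla_k\nabla_lY_m=-Y^rR_{rlkm}$, and feed in \eqref{LEC1a4}; symmetrising/antisymmetrising appropriately and using $\nabla_{(k}Y_{l)}=0$ gives \eqref{LAC1a5} and, read the other way, \eqref{LEC1a4} itself follows from differentiating \eqref{LEC1a2} (through $K$ and the defining relation $T^a_{kp}=Q^a_{kq}+\cdots$ linking $K$, $S$, $Y$) or more directly from $(II_1)$ \eqref{II1}: $AK_{lk}+a_2\overline S_{lk}+a_1\nabla_lY_k=0$ reduces under Case~1 to $AK_{lk}+a_1\nabla_lY_k=0$, so $a_1\nabla_m\nabla_lY_k=-A\nabla_mK_{lk}$, and \eqref{LEC1a2} converts the right side into $A^{\prime}(g_{ml}Y_k-g_{mk}Y_l)$ after using \eqref{LEC1a1} to trade $B$ for $-A'$ and $a=a_1A-a_2^2$ with $a_2$-terms handled by the reduced $(III_1)$ giving $a_2\overline K=0$ hence consistency.

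\textbf{Main obstacle.} The delicate part is twofold: first, carefully tracking which symmetrised combinations vanish versus which antisymmetric parts survive (the notation $S$, $\overline S$, $\widehat K$, $K_{lk}$ vs $K_{kl}$ is easy to slip on, especially since $S$ itself is not symmetric while $\overline S=0$); and second, the genericity bookkeeping — one must verify that the $4\times4$-type system solved for $(F,W)$ and for the $Y$-coefficients is invertible precisely under $a\ne0$ \emph{and} $2(b_1-a_1')a_2-a_1b_2\ne0$, with the $\dim M>2$ hypothesis entering only at the trace step that yields $(B+A')Y_k=0$. The curvature terms $X^rR_{rlkm}$ and $Y^rR_{rlkm}$ must be shown to drop out of \eqref{LEC1a2}--\eqref{LEC1a3} (they do, because $X$ is Killing on $M$ so $\nabla\nabla X$ is pure curvature and the $a_1$- and $a_2$-weighted combinations are arranged to cancel), while reappearing in controlled form only in \eqref{LAC1a5}. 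Everything else is linear algebra over the ring of constants $a_j,b_j,a_j',b_j'$ together with repeated use of \eqref{Conv4}, \eqref{Conv5} and the Bianchi identity.
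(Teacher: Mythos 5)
Your overall architecture (work inside Case 1 where $\overline{X}=\overline{Y}=\overline{P}=\overline{K}=0$, derive $(B+A')Y=0$ first, then back-substitute) matches the paper, and your treatment of (\ref{LEC1a2}), (\ref{LEC1a3}), (\ref{LAC1a5}) is essentially the paper's. But the central step — how you actually obtain (\ref{LEC1a1}) — has a genuine gap. You propose to contract the reduced (\ref{I2}) (together with the $\mathbf{A},\mathbf{B},\mathbf{F}$ identity from Lemma \ref{LE8}) with $g^{kl}$ and $g^{kp}$ and read off $(B+A')Y_k=0$. That trace does not close: $g^{kl}\nabla_kK_{lp}$ and $g^{kl}\nabla_kS_{lp}$ are uncontrolled divergences, and $g^{kl}X^aR_{aklp}$ is a Ricci contraction that nothing in your argument eliminates. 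The paper's route is different and the difference is essential: it starts from the already-resolved formula (\ref{L6a}) for $2a\nabla_lK_{km}$, symmetrizes in $(k,m)$ so that the left side dies by skew-symmetry of $K$, then alternates in $(k,l)$ and applies the \emph{first Bianchi identity} to solve for $a_1Y^rR_{rmkl}$ purely in terms of $g\otimes Y$; substituting that back into (\ref{L6a}) and symmetrizing once more in $(k,m)$ is what isolates the coefficient $2a_1(B+A')$ of $g_{km}Y_l$. Without the Bianchi step you have no handle on the term $Y^rR_{rmkl}$, and no amount of tracing will produce (\ref{LEC1a1}). Relatedly, your argument nowhere treats the case $a_1=0$, which the paper must (and does) handle separately through (\ref{L6b}), since the Bianchi relation degenerates there.

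Two smaller slips: the machinery you invoke around Lemma \ref{LE6} and (\ref{LE5-2}) — solving a linear system for the third-order coefficients $(F,W)$ — plays no role in this lemma and conflates later material. And your reduction of (\ref{II1}) to $AK_{lk}+a_1\nabla_lY_k=0$ drops the term $a_2P_{lk}$, which does \emph{not} vanish in Case 1 (only $\overline{P}=0$); that term is needed, together with (\ref{LEC1a3}), for the coefficients in (\ref{LEC1a4}) to combine into $2aA'$ — without it the computation gives $A'$ only up to an error proportional to $a_2[A(b_2-2a_2')+2a_2A']$.
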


\begin{proof}
First suppose $a_{1}\neq 0.$ Symmetrizing (\ref{L6a}) in $(k,m),$ making use
of the skew-symmetricity of $K,$ then alternating in $(k,l)$ and applying
the first Bianchi identity, we get%
\begin{equation}
3a_{1}Y^{r}R_{rmkl}+(B-2A^{\prime })(g_{lm}Y_{k}-g_{km}Y_{l})=0.
\label{LEC1a6}
\end{equation}%
Applying the last identity to (\ref{L6a}) we find 
\begin{multline*}
6a\nabla _{l}K_{km}+2a_{1}(B+A^{\prime })g_{km}Y_{l}+3\left[ 2a_{1}A^{\prime
}+a_{2}(b_{2}-2a_{2}^{\prime })\right] g_{lk}Y_{m}+ \\
\left[ 2a_{1}(2B-A^{\prime })-3a_{2}(b_{2}-2a_{2}^{\prime })\right]
g_{lm}Y_{k}=0,
\end{multline*}%
whence, symmetrizing in $(k,m),$ we obtain (\ref{LEC1a1}) and, consequently,
(\ref{LEC1a2}).

Suppose now $a_{1}=0.$ Substituting in (\ref{L6a}) we easily state that (\ref%
{LEC1a2}) remains true. On the other hand, substituting $a_{1}=0$ into (\ref%
{L6b}) and symmetrizing in $(k,m)$ we get%
\begin{equation*}
2a_{2}Bg_{km}Y_{l}+a_{2}(B+2A^{\prime })(g_{lm}Y_{k}+g_{lk}Y_{m})=0,
\end{equation*}%
whence, by contractions with $g^{km}$ and $g^{lm},$ we obtain 
\begin{equation}
BY_{l}=0\text{ and }A^{\prime }Y_{l}=0  \label{LEC1a5}
\end{equation}%
respectively since $a_{2}\neq 0$ must hold. Thus (\ref{LEC1a1}) holds good.

Since $X$ is a Killing vector field, (\ref{L6b}), (\ref{Conv5}), (\ref%
{LEC1a1}) and (\ref{LEC1a6}) in the case $a_{1}\neq 0$ and (\ref{L6b}) and (%
\ref{LEC1a5}) as well in the case $a_{1}=0$ yield (\ref{LEC1a3}).

Differentiating covariantly (\ref{II1}), using just obtained identities, we
get (\ref{LEC1a4}). Finally, alternating (\ref{LEC1a4}) in $(l,m),$ by the
use of the Ricci identity (\ref{Conv4}), we obtain (\ref{LAC1a5}). This
completes the proof.
\end{proof}

From (\ref{LAC1a5}) and Theorem \ref{Apen1} by Grycak we infer

\begin{theorem}
\label{LEC1b}Under hypothesis (\ref{H}) suppose $\dim M>2,$ $a\neq 0$ and $%
2(b_{1}-a_{1}^{\prime })a_{2}-a_{1}b_{2}\neq 0$ on the set $M\times
\{0\}\subset TM.$ If the vector field $\frac{A^{\prime }}{a_{1}}%
Y^{a}\partial _{a}$ does not vanish on a dense subset of $M$ and $M$ is
semisymmetric, i.e. $R\cdot R=0,$ (resp. the Ricci tensor $S$ is
semisymmetric, i.e. $R\cdot S=0$), then $M$ is a space of constant
curvature, (resp. $M$ is an Einstein manifold).
\end{theorem}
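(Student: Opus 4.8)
The key input is equation (\ref{LAC1a5}) from Lemma \ref{LEC1a}, which under the stated hypotheses reads $a_{1}Y^{r}R_{rklm}=A^{\prime}(g_{km}Y_{l}-g_{kl}Y_{m})$ on $M\times\{0\}$. The plan is to recognize this as exactly the hypothesis of a known theorem of Grycak (cited here as Theorem \ref{Apen1} in the Appendix) about manifolds admitting a vector field $W$ with $W^{r}R_{rklm}=\lambda(g_{km}W_{l}-g_{kl}W_{m})$ for some function $\lambda$, and to transcribe its conclusions to our situation. First I would set $W^{a}=\frac{A^{\prime}}{a_{1}}Y^{a}$ (so that $a_{1}\neq 0$ is needed, but one should check the $a_{1}=0$ case separately --- there (\ref{LEC1a5}) gives $A^{\prime}Y_{l}=0$, so the hypothesis that $\frac{A^{\prime}}{a_{1}}Y^{a}\partial_{a}$ is nonzero on a dense set forces a contradiction, i.e. $a_{1}=0$ cannot occur under the theorem's assumption, so WLOG $a_{1}\neq0$). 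With $a_{1}\neq0$, rewrite (\ref{LAC1a5}) as $Y^{r}R_{rklm}=\frac{A^{\prime}}{a_{1}}(g_{km}Y_{l}-g_{kl}Y_{m})$, which is precisely the curvature condition in Grycak's theorem with the distinguished vector field $\frac{A^{\prime}}{a_{1}}Y^{a}\partial_{a}$ playing the role of $W$ (the factor $\frac{A^{\prime}}{a_{1}}$ is absorbed consistently on both sides).

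Next I would invoke the two conclusions of Grycak's result. Grycak's theorem says: if $M$ admits such a nonzero vector field on a dense subset and $R\cdot R=0$ (semisymmetry), then $M$ has constant curvature; and if instead $R\cdot S=0$ (Ricci-semisymmetry), then $M$ is Einstein. Because the hypotheses of our Theorem \ref{LEC1b} ($\dim M>2$, $a\neq0$, $2(b_{1}-a_{1}^{\prime})a_{2}-a_{1}b_{2}\neq0$) are exactly those needed to run Lemma \ref{LEC1a} and hence to produce (\ref{LAC1a5}) everywhere on $M\times\{0\}$, the transcription is immediate: apply the first alternative of Grycak under $R\cdot R=0$, the second under $R\cdot S=0$. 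No further computation is required --- the curvature identity does all the work through the cited result.

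The only genuine subtlety --- and the step I would flag as the main obstacle --- is making sure the hypothesis of Grycak's theorem is met verbatim, in particular that the ``distinguished vector field'' is nonzero on a dense subset: this is exactly why the statement of Theorem \ref{LEC1b} explicitly assumes that $\frac{A^{\prime}}{a_{1}}Y^{a}\partial_{a}$ does not vanish on a dense subset of $M$. One must also be slightly careful about whether (\ref{LAC1a5}) holds pointwise with possibly varying coefficients or with the coefficients treated as constants; since all of $a_{j},a_{j}',b_{j}$ are evaluated at $(x,0)$ and, per the remark following (\ref{III4}), are treated as constants on $M$, the coefficient $\frac{A^{\prime}}{a_{1}}$ in the curvature relation is a genuine constant, so Grycak's theorem (which is stated for a function $\lambda$, a fortiori for a constant) applies without modification. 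Modulo these bookkeeping points the proof is a one-line deduction from (\ref{LAC1a5}) and Theorem \ref{Apen1}.
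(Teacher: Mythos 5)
Your proposal is correct and coincides with the paper's own argument: the paper proves Theorem \ref{LEC1b} by exactly the one-line deduction you describe, reading (\ref{LAC1a5}) as the curvature condition (\ref{Ap1}) of Grycak's Theorem \ref{Apen1} with the one-form built from $\frac{A^{\prime}}{a_{1}}Y$, and invoking its two alternatives for $R\cdot R=0$ and $R\cdot S=0$. Your extra remarks on the $a_{1}=0$ degeneration and on the coefficients being constants are harmless bookkeeping consistent with the paper's setup.
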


\begin{theorem}
\label{LEC1c}Under hypothesis (\ref{H}) suppose $\dim M>2,$ $a\neq 0$ and $%
2(b_{1}-a_{1}^{\prime })a_{2}-a_{1}b_{2}\neq 0$ at a point $(x,0)\in TM.$
Then the $Y$ component of the Killing vector field on $TM$ satisfies%
\begin{equation}
S_{1}Y\left[ a_{1}R+\frac{B}{2}g\wedge g\right] =0
\label{Th Eq II4 and III3}
\end{equation}%
on $M.$
\end{theorem}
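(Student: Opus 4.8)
The plan is to combine the pointwise algebraic/differential identities already available in Case 1 — namely \eqref{LAC1a5}, \eqref{LEC1a1} and the Lie-derivative conditions \eqref{II3}, \eqref{III3} — and feed them into a Walker-type lemma (Theorem \ref{Apen1}, or the operator $S_1$ introduced for that purpose in the Appendix) to obtain the stated annihilation. First I would recall \eqref{LAC1a5}, which already says $a_1 Y^r R_{rklm} = A'(g_{km}Y_l - g_{kl}Y_m)$, i.e. in index-free form $a_1\, \iota_Y R = -\tfrac{A'}{2}\,\iota_Y(g\wedge g)$ up to the usual factor. The goal \eqref{Th Eq II4 and III3} is visibly of the same shape but with $B$ in place of $A'$, so the real content is to show that the coefficient $A'$ can be upgraded to $B$ after applying the symmetrizing operator $S_1$ — equivalently, that $S_1\,Y\,(A'-B)\big[\text{something}\big]$ vanishes, which follows from \eqref{LEC1a1}: $(B+A')Y_k=0$ forces $B Y_k = -A' Y_k$ on the nose, so wherever $Y$ appears contracted we may trade $B$ for $-A'$ freely.

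The key steps, in order, are: (1) Start from \eqref{II3} and \eqref{III3} evaluated at $(x,0)$, i.e. the second-order coefficient identities; using the already-derived consequences of Case 1 — in particular \eqref{LEC1a2}, \eqref{LEC1a3} for $\nabla K$ and $\nabla P$, \eqref{LEC1a4} for $\nabla\nabla Y$, \eqref{LEC1a5}/\eqref{LAC1a5} for $Y^r R$ — eliminate all the auxiliary tensors $F$, $W$, $S$, $K$, $P$ and their derivatives, reducing both identities to relations purely among $Y$, $R$, $g$ and the constants $a_j,b_j,a_j',b_j'$. (2) Contract and alternate the resulting relation in the appropriate index pairs so that the $Y\otimes R$ term appears with a clean scalar coefficient and the remaining terms are all of the form $Y\otimes g\otimes g$; this is where \eqref{LEC1a1} is used to collapse the $B$- and $A'$-coefficients into a single multiple. (3) Recognize the outcome as $Y\big[a_1 R + \tfrac B2\, g\wedge g\big]$ annihilated under the symmetrization encoded by $S_1$, and invoke the Appendix lemma on $S_1$ to conclude \eqref{Th Eq II4 and III3} holds on all of $M$ (the identities are tensorial and the coefficients constant, so pointwise gives global).

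The main obstacle I expect is bookkeeping in step (1)–(2): the identities \eqref{II3} and \eqref{III3} each carry many terms with the curvature tensor entering through $K^a{}_p R_{alkq}$-type contractions, and one must combine them with the third-order relations in exactly the right linear combination so that the $F$, $W$ terms cancel while the coefficient in front of $a_1 Y^r R_{rklm}$ does not degenerate — this is precisely where the hypothesis $2(b_1-a_1')a_2 - a_1 b_2 \neq 0$ (Case 1) and $a\neq 0$ are needed, to guarantee the eliminations are nonsingular. A secondary subtlety is handling the case $a_1 = 0$ separately, as was done in Lemma \ref{LEC1a}: there \eqref{LAC1a5} degenerates to $A'Y = 0$, and one must check that \eqref{Th Eq II4 and III3} still reads correctly (it becomes $S_1 Y\,[\tfrac B2 g\wedge g]=0$, which is forced by $B Y_k = -A' Y_k = 0$). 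Once those degeneracies are dispatched, the passage from the pointwise identity to the global statement is immediate since everything in sight is a smooth tensor field on $M$.
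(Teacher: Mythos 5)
You have misread what the theorem asserts, and this undoes the proposed argument. In the paper's notation (compare Theorem \ref{Apen1}), the conclusion $S_{1}Y\left[ a_{1}R+\frac{B}{2}g\wedge g\right] =0$ is a statement about the tensor product: $S_{1}\,Y_{a}\bigl(a_{1}R_{bkcl}+\tfrac{B}{2}(g\wedge g)_{bkcl}\bigr)=0$, where $S_{1}=a_{1}\left[ 2a_{2}a_{1}^{\prime }-a_{1}\left( b_{2}+2a_{2}^{\prime }\right) \right]$ is a \emph{scalar} defined inside the proof; so wherever $S_{1}\neq 0$ and $Y\neq 0$ the full curvature tensor is forced to be of constant-curvature type. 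Your reading of the conclusion as the contraction $\iota _{Y}R$, obtainable from (\ref{LAC1a5}) by trading $A^{\prime }$ for $B$ via (\ref{LEC1a1}), only reproduces the already-known contracted identity and is strictly weaker; no amount of symmetrizing upgrades a fact about $Y^{r}R_{rklm}$ to one about $Y_{a}R_{bkcl}$. Relatedly, $S_{1}$ is not a ``symmetrizing operator introduced in the Appendix'' and there is no Appendix lemma about it to invoke; the appendix tool the argument actually needs is Walker's Lemma \ref{Apen2} (Grycak's Theorem \ref{Apen1} is used for Theorem \ref{LEC1b}, not here), applied to a relation of the shape $Y_{a}T_{bckl}+Y_{b}T_{cakl}+Y_{c}T_{abkl}=0$.

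The second gap is your choice of source identities. In (\ref{II3}) and (\ref{III3}) the curvature enters only through contractions of the form $K_{p}^{a}R_{alkq}$, and in Case 1 the skew-symmetric tensor $K$ is not expressible through $Y$ (only $\overline{K}=0$ and $\nabla K$ are controlled, by (\ref{15}) and (\ref{LEC1a2})); hence your step (1) --- eliminating all auxiliary tensors so as to leave relations ``purely among $Y$, $R$, $g$'' --- cannot be carried out from those identities. The terms $Y_{a}R_{bkcl}$ that drive the theorem come from the third-order identity (\ref{II4}), packaged in Lemma \ref{LE6''}, where the curvature is contracted with $E$, and $E$ is proportional to $g\otimes Y$ by (\ref{L5a1}); substituting the Case 1 relations there yields the key identity (\ref{Case 1-39}) with the coefficients $S_{1},\dots ,S_{4}$, after which symmetrizations, Walker's lemma, the first Bianchi identity and a final transvection with $Y^{b}$ (using the contracted identity (\ref{LAC1a5}) together with (\ref{LEC1a1})) give the stated formula. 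Only your last remark --- handling $a_{1}=0$ through $BY=0$ and continuity --- matches the paper; the core of the argument as you propose it would not produce the tensorial identity.
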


\begin{proof}
Suppose $a_{1}\neq 0.$ By (\ref{14}) and (\ref{15}) we have $\overline{S}%
_{ab}=0.$ Applying this and (\ref{15}), (\ref{LEC1a1}), (\ref{LEC1a2}) and (%
\ref{LEC1a5}) to Lemma \ref{LE6''}, after long computations we obtain%
\begin{multline}
S_{1}\left[ 3(R_{blck}Y_{a}+R_{alck}Y_{b})+\left( R_{akbl}+R_{albk}\right)
Y_{c}-\left( R_{acbl}+R_{albc}\right) Y_{k}\right] +  \label{Case 1-39} \\
S_{2}g_{ab}\left( g_{kl}Y_{c}-g_{cl}Y_{k}\right) +S_{3}\left[ \left(
g_{al}g_{bk}+g_{ak}g_{bl}\right) Y_{c}-\left(
g_{al}g_{bc}+g_{ac}g_{bl}\right) Y_{k}\right] + \\
S_{4}\left[ (g_{bk}g_{cl}-g_{bc}g_{kl})Y_{a}+(g_{ak}g_{cl}-g_{ac}g_{kl})Y_{b}%
\right] =0,
\end{multline}%
where 
\begin{equation*}
S_{1}=a_{1}\left[ 2a_{2}a_{1}^{\prime }-a_{1}\left( b_{2}+2a_{2}^{\prime
}\right) \right] ,
\end{equation*}%
\begin{multline*}
S_{2}=-2\left[ b_{2}\left( -Ab_{1}+3a_{2}b_{2}+5a_{1}A^{\prime
}-Aa_{1}^{\prime }-4a_{2}a_{2}^{\prime }\right) +2b_{1}(Aa_{2}^{\prime
}-a_{2}A^{\prime })\right. + \\
\left. 2(a_{1}A^{\prime }+Aa_{1}^{\prime }-2a_{2}a_{2^{\prime
}})a_{2}^{\prime }\right] = \\
-2\left[ b_{2}\left( -Ab_{1}+3\left( a_{2}b_{2}+a_{1}A^{\prime
}-Aa_{1}^{\prime }\right) +2a^{\prime }\right) +2b_{1}(Aa_{2}^{\prime
}-a_{2}A^{\prime })+2a^{\prime }a_{2}^{\prime }\right. ],
\end{multline*}%
\begin{multline*}
S_{3}=-3a_{1}b_{2}A^{\prime }-2Ab_{2}a_{1}^{\prime }+2a_{2}A^{\prime
}a_{1}^{\prime }+4a_{2}a_{2}^{\prime }b_{2}-2a_{1}A^{\prime }a_{2}^{\prime
}+4ab_{2}^{\prime }= \\
2A^{\prime }(a_{2}a_{1}^{\prime }-a_{1}a_{2}^{\prime })-b_{2}(2a^{\prime
}+a_{1}A^{\prime })+4ab_{2}^{\prime },
\end{multline*}%
\begin{multline*}
S_{4}=b_{2}\left( -2Ab_{1}+6a_{2}b_{2}+7a_{1}A^{\prime }-4Aa_{1}^{\prime
}-4a_{2}a_{2}^{\prime }\right) -4a_{2}b_{1}A^{\prime }+2a_{2}A^{\prime
}a_{1}^{\prime }+ \\
a_{2}^{\prime }\left( 4Ab_{1}+2a_{1}A^{\prime }+4Aa_{1}^{\prime
}-8a_{2}a_{2}^{\prime }\right) +4ab_{2}^{\prime }
\end{multline*}%
and 
\begin{equation*}
S_{2}-S_{3}+S_{4}=0
\end{equation*}%
identically.

Symmetrizing (\ref{Case 1-39}) in $(a,b,l)$ we get%
\begin{equation*}
(S_{2}+2S_{3})\left[ \left( g_{al}g_{bk}+g_{ak}g_{lb}+g_{ab}g_{kl}\right)
Y_{c}-\left( g_{al}g_{bc}+g_{ac}g_{lb}+g_{ab}g_{cl}\right) Y_{k}\right] =0,
\end{equation*}%
whence, by contraction with $g^{al}g^{bk},$ we find $%
(n-1)(n+2)(S_{2}+2S_{3})Y_{c}=0$. Therefore, symmetrizing (\ref{Case 1-39})
in $(a,b,c)$ and using the last result, we obtain%
\begin{equation*}
Y_{a}T_{bckl}+Y_{b}T_{cakl}+Y_{c}T_{abkl}=0,
\end{equation*}%
where%
\begin{multline*}
T_{bckl}=T_{cbkl}=T_{klbc}= \\
2S_{1}(R_{bkcl}+R_{blck})-(S_{3}+S_{4})\left[ g_{bc}g_{kl}-\frac{1}{2}%
(g_{bl}g_{ck}+g_{bk}g_{cl})\right] .
\end{multline*}%
Hence, by the use of the Walker's Lemma \ref{Apen2}, we get%
\begin{equation}
Y_{a}T_{bckl}=0.  \label{Case 1-46}
\end{equation}

Alternating (\ref{Case 1-46}) in $(l,c)$ and applying the Bianchi identity
we obtain%
\begin{equation*}
Y_{a}\left[ 4S_{1}R_{bkcl}+\left( S_{3}+S_{4}\right) \left(
g_{bl}g_{kc}-g_{bc}g_{kl}\right) \right] =0.
\end{equation*}%
Transvecting the last equation with $Y^{b},$ by the use of (\ref{LEC1a5}),
we easily get 
\begin{equation*}
\left[ 4BS_{1}+a_{1}(S_{3}+S_{4})\right] Y_{a}=0,
\end{equation*}%
whence (\ref{Th Eq II4 and III3}) results.

On the other hand, from the proof of Lemma \ref{LEC1a} it follows that $%
a_{1}(0)=0$ implies $B(0)Y_{a}=0.$ Thus, by continuity, (\ref{Th Eq II4 and
III3}) holds good on $M.$
\end{proof}

\begin{corollary}
Under assumptions of the above theorem we have on $M:$%
\begin{eqnarray*}
\left( S_{2}+2S_{3}\right) Y &=&0\text{ if }a_{1}\neq 0, \\
\left[ 4BS_{1}+a_{1}(S_{3}+S_{4})\right] Y &=&0.
\end{eqnarray*}%
Notice that multiplying the first equation by $a_{1}$ and adding to the
second one we obtain%
\begin{equation*}
a_{1}\left( b_{2}a^{\prime }-2ab_{2}^{\prime }\right) Y=0.
\end{equation*}
\end{corollary}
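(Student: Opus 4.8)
The plan is to read off the two displayed identities from the proof of Theorem~\ref{LEC1c} and then to deduce the last relation by a purely algebraic manipulation that invokes the symmetry $S_{2}-S_{3}+S_{4}=0$ and equation~(\ref{LEC1a1}). First I would treat the case $a_{1}\neq 0$, which is the case in which the proof of Theorem~\ref{LEC1c} proceeds. There, symmetrizing (\ref{Case 1-39}) in $(a,b,l)$ and contracting with $g^{al}g^{bk}$ already yields $(n-1)(n+2)(S_{2}+2S_{3})Y_{c}=0$; since $\dim M=n>2$ the numerical factor is nonzero, which is exactly the first displayed equation of the corollary. The second equation, $[4BS_{1}+a_{1}(S_{3}+S_{4})]Y=0$, is obtained there as well: alternating (\ref{Case 1-46}) in $(l,c)$, applying the first Bianchi identity, transvecting with $Y^{b}$, substituting the curvature identity (\ref{LAC1a5}) and then contracting forces, at each point of $M$, either $Y=0$ or $4S_{1}A'-a_{1}(S_{3}+S_{4})=0$; in either case $[4S_{1}A'-a_{1}(S_{3}+S_{4})]Y=0$, and replacing $A'Y$ by $-BY$ through (\ref{LEC1a1}) turns this into $[4BS_{1}+a_{1}(S_{3}+S_{4})]Y=0$. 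Since $S_{1}$ and $a_{1}(S_{3}+S_{4})$ each carry $a_{1}$ as an overall factor, this identity is trivially true wherever $a_{1}=0$, so it holds on all of $M$ without the restriction $a_{1}\neq 0$.

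To obtain the last relation of the corollary I would multiply the first equation by $a_{1}$ and add it to the second, getting $[a_{1}S_{2}+3a_{1}S_{3}+a_{1}S_{4}+4BS_{1}]Y=0$, and then use $S_{2}-S_{3}+S_{4}=0$ to replace $S_{2}+S_{4}$ by $S_{3}$, so that the bracket collapses to $4(a_{1}S_{3}+BS_{1})$. Inserting the explicit expressions $S_{1}=a_{1}[2a_{2}a_{1}'-a_{1}(b_{2}+2a_{2}')]$ and $S_{3}=2A'(a_{2}a_{1}'-a_{1}a_{2}')-b_{2}(2a'+a_{1}A')+4ab_{2}'$ and collecting the terms carrying the factor $A'+B$, one finds
\[
a_{1}S_{3}+BS_{1}=(A'+B)\bigl(2a_{1}a_{2}a_{1}'-2a_{1}^{2}a_{2}'-a_{1}^{2}b_{2}\bigr)-2a_{1}b_{2}a'+4aa_{1}b_{2}' .
\]
By (\ref{LEC1a1}) the first summand annihilates $Y$, so $(a_{1}S_{3}+BS_{1})Y=-2a_{1}(b_{2}a'-2ab_{2}')Y$; combined with $4(a_{1}S_{3}+BS_{1})Y=0$ this gives $a_{1}(b_{2}a'-2ab_{2}')Y=0$.

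The only genuinely computational ingredient is the regrouping in the displayed formula, which hinges on $a'=a_{1}'A+a_{1}A'-2a_{2}a_{2}'$ coming from $a=a_{1}A-a_{2}^{2}$---the same relation that reconciles the two expressions for $S_{3}$ quoted in the proof of Theorem~\ref{LEC1c}. I expect this short bookkeeping, together with keeping the numerical coefficients straight through the addition of the two equations, to be the only place where care is required; everything else is a direct transcription of steps already carried out.
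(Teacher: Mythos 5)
Your proposal is correct and follows essentially the same route as the paper: the two displayed identities are exactly the intermediate relations $(n-1)(n+2)(S_{2}+2S_{3})Y_{c}=0$ and $[4BS_{1}+a_{1}(S_{3}+S_{4})]Y_{a}=0$ obtained inside the proof of Theorem~\ref{LEC1c} (the latter via (\ref{LAC1a5}) and $(B+A^{\prime})Y=0$), and your algebraic reduction of $4(a_{1}S_{3}+BS_{1})Y=0$ to $a_{1}(b_{2}a^{\prime}-2ab_{2}^{\prime})Y=0$ using $S_{2}-S_{3}+S_{4}=0$, $a^{\prime}=a_{1}^{\prime}A+a_{1}A^{\prime}-2a_{2}a_{2}^{\prime}$ and (\ref{LEC1a1}) checks out. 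The observation that everything is trivially satisfied where $a_{1}=0$ (since $S_{1}$ carries a factor of $a_{1}$) correctly covers the remaining case.
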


\begin{lemma}
Under hypothesis (\ref{H}) suppose $\dim M>2,$ $a\neq 0$ and $%
2(b_{1}-a_{1}^{\prime })a_{2}-a_{1}b_{2}\neq 0$ at a point $(x,0)\in TM.$

If $a_{1}a_{2}\neq 0,$ then%
\begin{multline}
A_{km}=\left[ 2a_{2}\left( b_{1}-a_{1}^{\prime }\right) -\frac{3}{2}%
a_{1}\left( b_{2}-2a_{2}^{\prime }\right) \right] Y_{k,m}+  \label{LEC1b1} \\
\left( 3a_{1}B-a_{2}b_{2}\right) P_{km}+3a_{2}BK_{km}=0.
\end{multline}

If $a_{2}=0$ and $a_{1}b_{2}\neq 0$ then%
\begin{equation}
A_{km}=-\frac{1}{2}a_{1}\left( b_{2}-2a_{2}^{\prime }\right)
Y_{k,m}+a_{1}BP_{km}=0.  \label{LEC1b2}
\end{equation}

If $a_{1}=0$ and $(b_{1}-a_{1}^{\prime })a_{2}\neq 0$ then%
\begin{eqnarray}
(n+1)BK_{kn}-b_{2}P_{kn}+2(b_{1}-a_{1}^{\prime })Y_{k,n} &=&0,
\label{LEC1b3} \\
3BK_{ln}-(n-1)b_{2}P_{ln}+2(n-1)(b_{1}-a_{1}^{\prime })Y_{l,n} &=&0.
\label{LEC1b4}
\end{eqnarray}
\end{lemma}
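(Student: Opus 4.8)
The plan is to specialize the master identity $(II_4)$ (equivalently, the consequence $\mathbf A_{km}=0$ of Lemma \ref{LE8}, equation (\ref{LE8a})) to the setting of Case~1, where by (\ref{14})--(\ref{15}) of Theorem \ref{Splitting theorem} we have $\overline X_{kl}=\overline Y_{kl}=\overline S_{kl}=\overline K_{kl}=0$, and then feed in the pointwise relations already established in Lemma \ref{LEC1a}. First I would write out (\ref{LE8a}) with the symmetrizations removed: since $\overline S_{kl}=0$ we may replace $S_{mk}$ by $-S_{km}$, and since $\overline K_{kl}=0$ we may replace $K_{mk}$ by $-K_{km}$; likewise $\nabla_kX_m$ and $\nabla_kY_m$ are now antisymmetric in $(k,m)$. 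Collecting the coefficients of $\nabla_kX_m$, $\nabla_kY_m$, $P_{km}$ (recall $S_{km}=P_{km}+\nabla_kX_m$) and $K_{km}$ then yields an expression of the shape displayed in (\ref{LEC1b1}). I would also use (\ref{Conv5}) together with $L_Xg=0$ to rewrite any surviving $\nabla_k\nabla_mX$-type terms as curvature terms, and then discard them via (\ref{LAC1a5}) / (\ref{LEC1a5}), which express $Y^rR_{rklm}$ and the $B$-, $A'$-contractions in terms of $g\wedge Y$; in Case~1, however, the cleanest route is that (\ref{14}) already kills the $L_Xg$ and $L_Yg$ contributions outright, so the bookkeeping is mostly a matter of tracking the constants $a_1,a_2,b_1,b_2,a_1',a_2'$.

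For the subcase $a_1a_2\neq0$ this is the whole story: the surviving terms are exactly $\big[2a_2(b_1-a_1')-\tfrac32a_1(b_2-2a_2')\big]Y_{k,m}+(3a_1B-a_2b_2)P_{km}+3a_2BK_{km}$, which is (\ref{LEC1b1}). For $a_2=0$, $a_1b_2\neq0$, I would substitute $a_2=0$ directly into the same collected expression; the $K$-coefficient $3a_2B$ drops, the $Y$-coefficient collapses to $-\tfrac12a_1(b_2-2a_2')$, and the $P$-coefficient becomes $3a_1B-0=3a_1B$ — but here one must also use that $a_2=0$ forces $2a_1'=2b_1$ is \emph{not} automatic, rather that the hypothesis $2(b_1-a_1')a_2-a_1b_2\neq0$ reduces to $a_1b_2\neq0$, and that (\ref{LEC1a3}) with $a_2=0$ gives $2a\nabla_lP_{km}=-A b_2(g_{lm}Y_k-g_{lk}Y_m)$ with $a=a_1A$. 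The factor $3a_1B$ versus $a_1B$ discrepancy with the stated (\ref{LEC1b2}) must be resolved by also invoking (\ref{LEC1a1}), $(B+A')Y=0$, and the $a_2=0$ specialization of Lemma \ref{LEC1a}'s proof (the relations (\ref{LEC1a5})); I expect the extra $2a_1BP_{km}$ to be eliminated by differentiating (\ref{II1}) and using (\ref{LEC1a3}), leaving precisely $-\tfrac12a_1(b_2-2a_2')Y_{k,m}+a_1BP_{km}=0$.

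For the subcase $a_1=0$, $(b_1-a_1')a_2\neq0$: here $a=-a_2^2\neq0$, and one cannot read the result off $\mathbf A_{km}=0$ alone because that single equation now degenerates; instead I would go back to $(II_3)$ and $(II_4)$ directly. Setting $a_1=0$ in $(II_3)$ (with $\overline S=\overline K$-relations from Case~1), contracting once with $g^{pq}$ and once in the mixed way produces two independent traces, giving a $2\times2$-type system in $BK_{kn}$, $b_2P_{kn}$, $(b_1-a_1')Y_{k,n}$; these are (\ref{LEC1b3}) and (\ref{LEC1b4}). The main obstacle throughout is the sheer volume of constant-coefficient bookkeeping and the need to keep the antisymmetrization conventions straight — in particular, making sure the identities from Lemma \ref{LEC1a} (which were derived under the \emph{same} hypotheses) are used to clear every curvature term and every $\nabla\nabla X$ term before reading off coefficients; the subcase $a_1=0$ is the genuinely separate argument since the generator $\mathbf A_{km}$ of Lemma \ref{LE8} loses its punch there and one must return to the third-order relations $(II_3)$, $(II_4)$ and take the right contractions.
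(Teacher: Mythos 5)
Your first subcase is exactly the paper's argument: substituting the Case~1 relations (\ref{14})--(\ref{15}) into $\mathbf{A}_{km}=0$ from (\ref{LE8a}) makes the two $S$-terms combine into $(3a_1B-a_2b_2)S_{km}$, the $\nabla X$ contributions cancel against the $\nabla_mX_k$ hidden in $S_{km}$, and the remaining coefficients are precisely those of (\ref{LEC1b1}); no curvature terms survive, so the extra clearing you mention is not needed. In the second subcase, however, you have a bookkeeping slip: with $a_2=0$ the $Y$-coefficient of (\ref{LE8a}) is $\tfrac32 a_1 b_2-3a_1a_2'=\tfrac32 a_1(b_2-2a_2')$, not $\tfrac12 a_1(b_2-2a_2')$, so after using (\ref{14})--(\ref{15}) the whole equation reads $-\tfrac32 a_1(b_2-2a_2')Y_{k,m}+3a_1BP_{km}=0$, which is just three times (\ref{LEC1b2}). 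There is no ``$3a_1B$ versus $a_1B$'' discrepancy to repair, and the repair you propose (differentiating (\ref{II1}) and invoking (\ref{LEC1a1}), (\ref{LEC1a3})) would amount to proving $a_1BP_{km}=0$, which is neither needed nor available at this stage; those identities produce relations among covariant derivatives, not the algebraic cancellation you want.

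The genuine gap is in the subcase $a_1=0$. You propose to contract (\ref{II3}) (and (\ref{II4})) directly, but (\ref{II3}) contains the third-order Taylor coefficients through $AF_{lkpq}+a_2W_{lkpq}$, and with $a_1=0$ the term $a_2W_{lkpq}$ survives with nonzero coefficient; its traces are unknown functions (third derivatives of the components of $Z$ at $u=0$) and are not controlled by any of the Case~1 relations, so your ``$2\times2$ system of traces'' does not close. (Going to (\ref{II4}) only makes this worse, since it brings in the fourth-order coefficients $G$ and $Z$.) The paper avoids this by working with (\ref{like 78}), which is obtained from (\ref{LE6-2}) by antisymmetrizing in a pair of indices lying among the three indices in which $F$ and $W$ are symmetric, so that $F$ and $W$ disappear identically; setting $a_1=0$ there and contracting in two different ways (and using (\ref{14})--(\ref{15}), under which $X_{m,l}+S_{lm}=P_{lm}$ and $\overline S=\overline K=0$) yields (\ref{LEC1b3}) and (\ref{LEC1b4}) at once. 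Your plan needs this elimination step --- either quote (\ref{like 78}) or perform the analogous antisymmetrization of (\ref{II3}) in an index pair from $\{k,p,q\}$ --- before taking traces; as written, the argument for the third subcase would stall on the uncontrolled $W$-terms.
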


\begin{proof}
If $a_{1}a_{2}\neq 0,$ we apply (\ref{14}) and (\ref{15}) to (\ref{LE8a}) to
obtain (\ref{LEC1b1}).

If $a_{2}=0$ but $a_{1}\neq 0,$ then also there must be $b_{2}\neq 0.$
Substituting $a_{2}=0$ into (\ref{LE8a}) and applying (\ref{14}) and (\ref%
{15}) we get (\ref{LEC1b2}).

Finally, the last two identities one obtains substituting $a_{1}=0$ into (%
\ref{like 78}), contracting with $g^{km}$ and $g^{lm}$ and making use of (%
\ref{14}) and (\ref{15}).
\end{proof}

Taking into account (\ref{LEC1b3}) and (\ref{LEC1b4}) together with the
equation (\ref{II1}) which, in virtue of (\ref{14}), writes%
\begin{equation*}
AK_{km}+a_{2}P_{km}-a_{1}Y_{k,m}=0
\end{equation*}%
we find that $B\neq 0$ implies $P=K=\nabla Y=0$ on $M.$ We conclude with the
following

\begin{theorem}
\label{Structure 1}Let $TM,$ $dimTM>4,$ be endowed with a $g$-natural metric 
$G,$ such that $a_{1}=0,$ $(b_{1}-a_{1}^{\prime })a_{2}\neq 0$ and $B\neq 0$
on $M\times \{0\}\subset TM$. Let $V$ be an open subset of $TM$ such that $%
M\times \{0\}\subset V.$ If $V$admits a Killing vector field, then it is a
complete lift of a Killing vector field on $M.$ Consequently, Lie algebras
of Killing vector fields on $M$ and $V\subset TM$ are isomorphic.
\end{theorem}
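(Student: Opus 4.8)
The plan is to work under the stated hypotheses ($\dim M>2$, i.e. $\dim TM>4$, $a_1=0$, $(b_1-a_1')a_2\neq0$, $B\neq0$ on $M\times\{0\}$) and to show that a Killing vector field $Z$ on $V$ reduces to the complete lift of a Killing vector field on $M$, which then forces the two Lie algebras to coincide. First I would note that since $a_1=0$ and $a=a_1A-a_2^2=-a_2^2\neq0$ (as $a_2\neq0$), the hypotheses of the Splitting Theorem are satisfied; moreover $2ba_2-a_1b_2=2ba_2$ with $b=b_1-a_1'\neq0$, so $2ba_2-a_1b_2\neq0$ and we are in Case 1. Thus (\ref{14}) and (\ref{15}) hold: $X$ and $Y$ are Killing vector fields on $M$, $\overline P_{kl}=0$ and $\overline K_{kl}=0$. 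I would then invoke the immediately preceding discussion: combining (\ref{LEC1b3}), (\ref{LEC1b4}) and the reduced form of (\ref{II1}) (namely $AK_{km}+a_2P_{km}-a_1Y_{k,m}=0$, which with $a_1=0$ reads $AK_{km}+a_2P_{km}=0$), one gets a linear system in $K_{kn}$, $P_{kn}$, $Y_{k,m}$ whose only solution, when $B\neq0$, is $P=K=\nabla Y=0$. This is the key computational input and it is essentially already carried out in the text.

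Next I would extract the geometric consequences. From $\nabla Y=0$ and $Y$ Killing we get that $Y$ is a parallel vector field; but also from Case 1 together with Lemma \ref{LEC1a} (applicable since $a_1=0$, and in that case (\ref{LEC1a5}) gives $BY_l=0$ and $A'Y_l=0$) and $B\neq0$ we conclude $Y=0$. So $Y\equiv0$ on $M\times\{0\}$. With $Y=0$, $K=0$, $P=0$, the first-order Taylor data of $Z$ is governed entirely by $X$: we have $H^a=X^a+K^a_pu^p+\cdots$ with $K=0$, and $\widetilde Z^a$ has vanishing $Y$ and $P$ terms. The plan is then to feed these vanishing coefficients back into the higher-order identities ($I_3$)–($III_4$) and the lemmas of Chapter 3 to kill all the remaining Taylor coefficients $E,T,F,W,G,Z,\dots$, showing inductively that $Z$ agrees to infinite order at each $(x,0)$ with the complete lift $X^c$ of $X$. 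Since $Z$ is an infinitesimal isometry (hence an infinitesimal affine transformation) and such transformations are determined by their $1$-jet at a point (the fact cited from \cite{KN}, p. 232), the agreement of the $1$-jets of $Z$ and $X^c$ at points of the zero section already forces $Z=X^c$ on the connected component of $V$ containing that part of the zero section; one appeals to Theorem \ref{Lift prop 2} (the cited ``Lift prop 2'') to know that $X^c$ is indeed Killing for $G$ whenever $X$ is Killing for $g$.

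Finally, to get the isomorphism of Lie algebras: the complete-lift map $X\mapsto X^c$ is an injective Lie algebra homomorphism from $\mathfrak{i}(M,g)$ into $\mathfrak{i}(V,G)$ (injectivity because $X^c|_{M\times\{0\}}$ is the horizontal lift of $X$, which vanishes only if $X$ does; homomorphism because $[X,Y]^c=[X^c,Y^c]$). Surjectivity is exactly the first half of the theorem: every Killing field on $V$ is some $X^c$. Hence the map is a Lie algebra isomorphism, and since $\dim TM>4$ is precisely $\dim M>2$, all invocations of the $\dim M>2$ lemmas are legitimate. The main obstacle I anticipate is the inductive step that propagates the vanishing of low-order coefficients to all orders: one must check that at each order the relevant subsystem of the identities ($I_k$)–($III_k$), with the lower-order data already set to the complete-lift values, has $E=T=F=W=\cdots=0$ (equivalently, $Z^a=X^a-u^rX^s\Gamma^a_{rs}$'s Taylor coefficients) as its unique solution — this is where the non-degeneracy conditions $a\neq0$, $B\neq0$ and $\dim M>2$ must be used repeatedly, and it is the only part not already spelled out verbatim in the preceding lemmas. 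If instead the paper intends the shorter route, the induction is bypassed entirely: $Y=P=K=\nabla Y=0$ makes the $1$-jet of $Z$ at $(x,0)$ equal to that of $X^c$, and the rigidity of infinitesimal affine transformations closes the argument immediately.
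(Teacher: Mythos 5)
Your proposal is correct and follows essentially the same route as the paper: Case 1 of the splitting theorem applies because $a=-a_2^2\neq 0$ and $2ba_2-a_1b_2=2(b_1-a_1')a_2\neq 0$; the linear system formed by (\ref{LEC1b3}), (\ref{LEC1b4}) and the reduced (\ref{II1}) then forces $P=K=\nabla Y=0$ when $B\neq 0$; and the Killing field is identified with the complete lift $X^{C}$. You are in fact more explicit than the paper on the two points it leaves implicit — that $Y=0$ itself (via $BY_l=0$ from the $a_1=0$ case of Lemma \ref{LEC1a}) and that equality of $1$-jets at a point of the zero section, combined with the rigidity of infinitesimal affine transformations and Theorem \ref{Lift prop 2}, already yields $Z=X^{C}$ without any induction on the higher Taylor coefficients.
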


Besides, for $B=0,$ we have

\begin{theorem}
Let $TM,$ $dimTM>4,$ be endowed with a $g$-natural metric $G,$ such that $%
a_{1}=0,$ $(b_{1}-a_{1}^{\prime })a_{2}\neq 0$ and $B=0$ on $M\times
\{0\}\subset TM$. Then%
\begin{eqnarray*}
a_{2}P+AK &=&0, \\
b_{2}P-2(b_{1}-a_{1}^{\prime })\nabla Y &=&0
\end{eqnarray*}%
hold on $M\times \{0\}\subset TM$.
\end{theorem}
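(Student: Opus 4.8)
The plan is to read both identities straight off the relations already available for Case~1, since the standing hypotheses force us into that case. First I would check that $a_1=0$ together with $(b_1-a_1^{\prime})a_2\neq 0$ gives $a=a_1A-a_2^2=-a_2^2\neq 0$ and $2ba_2-a_1b_2=2(b_1-a_1^{\prime})a_2\neq 0$, where $b=b_1-a_1^{\prime}$. Since $\dim TM>4$ forces $\dim M>2$, Theorem~\ref{Splitting theorem} applies, and because $2ba_2-a_1b_2\neq 0$ it is Case~1 that occurs at every point of $M\times\{0\}$; hence (\ref{14}) and (\ref{15}) hold on all of $M\times\{0\}$. In particular $\nabla_kX_l+\nabla_lX_k=0$ and $P_{kl}=-P_{lk}$, $K_{kl}=-K_{lk}$.

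For the first identity I would use (\ref{II1}), which reads $AK_{lk}+a_2\bigl(P_{lk}+\nabla_kX_l+\nabla_lX_k\bigr)+a_1\nabla_lY_k=0$. Substituting $a_1=0$ and the Killing equation $\nabla_kX_l+\nabla_lX_k=0$ from (\ref{14}) leaves $AK_{lk}+a_2P_{lk}=0$, i.e. $a_2P+AK=0$. (This is exactly the reduced form of (\ref{II1}) recorded just before the statement, now with $a_1=0$.)

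For the second identity I would invoke the immediately preceding lemma: under $a_1=0$ and $(b_1-a_1^{\prime})a_2\neq 0$ it supplies (\ref{LEC1b3}), namely $(n+1)BK_{kn}-b_2P_{kn}+2(b_1-a_1^{\prime})Y_{k,n}=0$. Setting $B=0$ collapses this to $b_2P_{kn}-2(b_1-a_1^{\prime})Y_{k,n}=0$, which is precisely $b_2P-2(b_1-a_1^{\prime})\nabla Y=0$. Alternatively, using (\ref{LEC1b4}) in place of (\ref{LEC1b3}) and $B=0$ gives $(n-1)\bigl[b_2P_{kn}-2(b_1-a_1^{\prime})Y_{k,n}\bigr]=0$, and $n-1\neq 0$ since $\dim M>2$, so the same conclusion follows; the two routes are consistent.

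I do not expect a genuine obstacle here, as the substantive work is already contained in the preceding lemma and in (\ref{II1}). The only points requiring care are verifying that the standing hypotheses really do place us in Case~1 (so that (\ref{14})--(\ref{15}) are at our disposal and (\ref{II1}) simplifies as stated), and observing that, since $a_1=0$, $(b_1-a_1^{\prime})a_2\neq 0$ and $B=0$ are assumed throughout $M\times\{0\}$, the resulting pointwise identities are in fact valid on all of $M\times\{0\}$.
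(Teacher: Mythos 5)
Your proposal is correct and follows the same route as the paper: the paper's own (implicit) derivation consists precisely of reducing (\ref{II1}) via (\ref{14}) and $a_{1}=0$ to $AK+a_{2}P=0$, and of setting $B=0$ in (\ref{LEC1b3})--(\ref{LEC1b4}) to obtain $b_{2}P-2(b_{1}-a_{1}^{\prime})\nabla Y=0$. Your preliminary check that the hypotheses place you in Case 1 of Theorem \ref{Splitting theorem} (so that (\ref{14})--(\ref{15}) and the lemma containing (\ref{LEC1b3})--(\ref{LEC1b4}) are available on all of $M\times\{0\}$) is exactly the verification the paper leaves tacit.
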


Hence, for $B=0,$ $A\neq 0$ and $b_{2}\neq 0,$ a theorem similar to the
former one can be deduced.

The next theorem gives further restrictions on the vector $Y$ to be non-zero.

\begin{theorem}
\label{Th7C1}Under hypothesis (\ref{H}) suppose $\dim M>2,$ $a\neq 0$ and $%
2(b_{1}-a_{1}^{\prime })a_{2}-a_{1}b_{2}\neq 0$ at a point $(x,0)\in TM.$ If 
$a_{1}\neq 0,$ then the $Y$ component of the Killing vector field on $TM$
satisfies%
\begin{equation*}
Q_{2}Y=\left\{ a_{1}b_{2}\left[ A(b_{2}-2a_{2}^{\prime })-2a_{2}B\right]
-4aB(b_{1}-a_{1}^{\prime }))\right\} Y=0,
\end{equation*}%
\begin{equation*}
B^{\prime }Y=0,
\end{equation*}%
\begin{equation*}
B\left[ a_{1}a_{2}\left( b_{2}+2a_{2}^{\prime }\right) -2Aa_{1}a_{1}^{\prime
}+aa_{1}^{\prime }\right] Y=0.
\end{equation*}
\end{theorem}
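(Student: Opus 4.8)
The plan is to squeeze these three scalar relations out of the third order horizontal--horizontal identity, in exactly the way Theorem \ref{LEC1c} was squeezed out of the third order vertical--horizontal one. We are in Case~1 of Theorem \ref{Splitting theorem} with $a_{1}\neq 0$, so (\ref{14}) and (\ref{15}) hold; in particular $\overline{S}_{ab}=\overline{K}_{ab}=0$ and $\nabla_{c}\overline{S}_{ab}=\nabla_{c}\overline{K}_{ab}=0$. Moreover Lemma \ref{LEC1a} is at our disposal: it expresses $\nabla_{l}K_{km}$, $\nabla_{l}P_{km}$, $\nabla_{m}\nabla_{l}Y_{k}$ and $Y^{r}R_{rklm}$ as explicit multiples of $g\otimes Y$, and furnishes $(B+A^{\prime})Y=0$ from (\ref{LEC1a1}) and the curvature identity (\ref{LEC1a6}); Lemma \ref{Lemmas L5} expresses $E_{lkm}$, $T_{lkm}$, $M_{lkm}$ as explicit multiples of $g\otimes Y$; and, by (\ref{L6b}), (\ref{Conv5}) and (\ref{LEC1a6}), even the combination $\nabla_{l}P_{km}-2X^{r}R_{rlkm}$ --- which is precisely what the tensor $K_{kal}$ of (\ref{LE6'-1}) turns into after the Killing identity $X_{m,kl}=-X^{r}R_{rlkm}$ --- is an explicit multiple of $g\otimes Y$.

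First I would substitute all of this into Lemma \ref{LE6'}. In Case~1 the tensor $L_{abk}$ of (\ref{LE6'-3}) collapses to $3BT_{kab}+3B^{\prime}(g_{ka}Y_{b}+g_{kb}Y_{a})$, which by (\ref{L5a2}) is a $g\otimes Y$ expression carrying the coefficient $B^{\prime}$ together with $B$ times a polynomial in the $a_{j},b_{j},a_{j}^{\prime}$; the tensor $K_{kal}$ of (\ref{LE6'-2}) reduces, via (\ref{LEC1a4}) and the remark just made, to a $g\otimes Y$ expression; and the first line of (\ref{LE6'-1}), after inserting $E$ in terms of $Y$ from (\ref{L5a1}) and (\ref{LAC1a5}), becomes a $g\otimes Y$ expression plus terms of the form $(\text{const})\,Y_{c}R_{abkl}$, which I keep symbolic just as in the passage to (\ref{Case 1-39}). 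The outcome is a $(0,5)$ identity in $(a,b,c,k,l)$, symmetric in $(a,b,c)$, of the shape ``(a curvature combination transvected with $Y$) $+$ (a $g\otimes g\otimes Y$ combination with constant coefficients) $=0$''.

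Next, mimicking the proof of Theorem \ref{LEC1c}, I would symmetrise this identity in $(a,b,c)$ and in the triples got by adjoining $l$ or $k$, contract successively with $g^{ab}$, $g^{ab}g^{ck}$ and $g^{ab}g^{kl}$, and invoke Walker's Lemma \ref{Apen2} to split off pieces $Y_{a}T_{bckl}=0$ with $T$ carrying the required pair symmetries; transvecting the bare-curvature relations so obtained with $Y^{b}$ and applying (\ref{LAC1a5}) removes the curvature completely. This produces several relations of the form $(\text{polynomial in }a_{j},b_{j},a_{j}^{\prime},b_{j}^{\prime})\,Y=0$. Finally I would combine them with the relations already in hand --- $(B+A^{\prime})Y=0$, and $(S_{2}+2S_{3})Y=0$, $[4BS_{1}+a_{1}(S_{3}+S_{4})]Y=0$, $a_{1}(b_{2}a^{\prime}-2ab_{2}^{\prime})Y=0$ from the corollary to Theorem \ref{LEC1c} --- to eliminate the unwanted combinations and land on exactly $Q_{2}Y=0$, $B^{\prime}Y=0$ and $B[a_{1}a_{2}(b_{2}+2a_{2}^{\prime})-2Aa_{1}a_{1}^{\prime}+aa_{1}^{\prime}]Y=0$; the relation $B^{\prime}Y=0$ should fall out precisely from the contraction isolating the $B^{\prime}$-term of $L_{abk}$, once its remaining $B$-proportional part is rewritten via the earlier relations.

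The hard part is clerical rather than conceptual: one must carry a $(0,5)$ tensor identity with several dozen index patterns through two rounds of symmetrisation and contraction while keeping every scalar coefficient exact, must verify the analogues of the identity $S_{2}-S_{3}+S_{4}=0$ that make Walker's Lemma applicable, and must check that no residual bare curvature survives the particular contractions used. One should also keep an eye on the sub-cases $a_{2}\neq 0$ and $a_{2}=0$ (Case~1 forces $a_{1}b_{2}\neq 0$ in the latter), since the reductions of $\nabla_{l}P_{km}$ and of $\nabla_{l}P_{km}-2X^{r}R_{rlkm}$ are taken from slightly different combinations of Lemma \ref{LEC1a}'s identities there, though the final computation runs in parallel.
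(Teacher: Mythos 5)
Your plan is exactly the paper's proof: the paper establishes Theorem \ref{Th7C1} by substituting the Case-1 reductions (\ref{14}), (\ref{15}), (\ref{LEC1a1})--(\ref{LEC1a4}), (\ref{L5a1})--(\ref{L5a2'}) into Lemma \ref{LE6'}, obtaining a $(0,5)$ identity of the form $Q_{1}(\text{curvature})\cdot Y+Q_{2},Q_{3},Q_{4}$ times $g\otimes g\otimes Y$, then contracting with $g^{ab}$, $g^{cl}$, $g^{kl}$, symmetrizing, applying Walker's Lemma \ref{Apen2} and eliminating the Ricci tensor to reduce to $Q_{2}=0$, $2BQ_{1}+a_{1}Q_{3}=0$, $Q_{3}+2Q_{4}=0$ (or $Y=0$), from which the three stated relations follow. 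The only cosmetic difference is that the paper closes the argument entirely within this $Q_{j}$ system rather than importing the corollary to Theorem \ref{LEC1c}.
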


\begin{proof}
We apply Lemma \ref{LE6'}. By the use of (\ref{14}), (\ref{15}), (\ref%
{LEC1a1}) - (\ref{LEC1a4}) and (\ref{L5a2}) the components of the tensors $K$
and $L$ defined by (\ref{LE6'-2}) and (\ref{LE6'-3}) can be written as%
\begin{equation*}
K_{kal}=\frac{\left[ aB(b_{1}-a_{1}^{\prime
})+2a_{1}a_{2}Bb_{2}-Aa_{1}b_{2}(b_{2}-2a_{2}^{\prime })\right] }{aa_{1}}%
(2g_{kl}Y_{a}-g_{ka}Y_{l}-g_{la}Y_{k}),
\end{equation*}%
\begin{multline*}
L_{abl}=3BT_{lab}+3B^{\prime }(g_{bl}Y_{a}+g_{al}Y_{b})= \\
-\frac{3B\left[ A(b_{1}-a_{1}^{\prime })-a_{2}b_{2}\right] }{a}g_{ab}Y_{l}+
\\
\frac{3\left[ B(a_{2}b_{2}-2Aa_{1}^{\prime }+2a_{2}a_{2}^{\prime
})+2aB^{\prime }\right] }{2a}\left( g_{al}Y_{b}+g_{bl}Y_{a}\right) .
\end{multline*}%
Substituting into (\ref{LE6'-1}) and applying (\ref{L5a1}), (\ref{L5a2'})
and (\ref{LEC1a5}) we get%
\begin{multline}
Q_{1}\left[ \left( R_{bkcl}+R_{blck}\right) Y_{a}+\left(
R_{ckal}+R_{clak}\right) Y_{b}+\left( R_{akbl}+R_{albk}\right) Y_{c}\right] +
\label{Th7C1-6} \\
Q_{2}\left[ \left( g_{al}g_{bc}+g_{bl}g_{ca}+g_{cl}g_{ab}\right)
Y_{k}+\left( g_{ak}g_{bc}+g_{bk}g_{ca}+g_{ck}g_{ab}\right) Y_{l}\right] + \\
Q_{3}g_{kl}\left( g_{bc}Y_{a}+g_{ca}Y_{b}+g_{ab}Y_{c}\right) + \\
Q_{4}\left[ \left( g_{bl}g_{kc}+g_{bk}g_{lc}\right) Y_{a}+\left(
g_{cl}g_{ka}+g_{ck}g_{la}\right) Y_{b}+\left(
g_{al}g_{kb}+g_{ak}g_{lb}\right) Y_{c}\right] =0,
\end{multline}%
where%
\begin{equation*}
Q_{1}=-\frac{3a_{2}\left( a_{1}b_{2}-2a_{2}a_{1}^{\prime
}+2a_{1}a_{2}^{\prime }\right) }{a},
\end{equation*}%
\begin{equation*}
Q_{2}=\frac{\left[ a_{1}b_{2}(A(b_{1}-a_{1}^{\prime
})-2Ba_{2})-4aB(b_{1}-a_{1}^{\prime }))\right] }{aa_{1}},
\end{equation*}%
\begin{equation*}
Q_{3}=2\frac{4aB(b_{1}-a_{1}^{\prime })-a_{1}\left[ A(b_{2}-2a_{2}^{\prime
})+B(a_{2}b_{2}-6Aa_{1}^{\prime }+6a_{2}a_{2}^{\prime })\right] }{aa_{1}},
\end{equation*}%
\begin{equation*}
Q_{4}=\frac{3\left[ B\left( a_{2}b_{2}-2Aa_{1}^{\prime }+2a_{2}a_{2}^{\prime
}\right) +2aB^{\prime }\right] }{a}.
\end{equation*}%
Contracting (\ref{Th7C1-6}) with $g^{ab}$, by the use of (\ref{LEC1a5}), we
get%
\begin{multline}
g_{kl}\left( -\frac{4BQ_{1}}{a_{1}}+(n+2)Q_{3}+2Q_{4}\right)
Y_{c}-2Q_{1}R_{kl}Y_{c}+  \label{Th7C1-7} \\
\left( \frac{2BQ_{1}}{a_{1}}+(n+2)Q_{2}+2Q_{4}\right)
(g_{cl}Y_{k}+g_{kc}Y_{l})=0.
\end{multline}%
Symmetrizing in $(c,k,l)$ we obtain%
\begin{equation*}
T_{kl}Y_{c}+T_{lc}Y_{k}+T_{ck}Y_{l}=0,
\end{equation*}%
where 
\begin{equation}
T_{kl}=T_{lk}=g_{kl}\left[ \left( n+2\right) \left( 2Q_{2}+Q_{3}\right)
+6Q_{4}\right] -2Q_{1}R_{kl}.  \label{Th7C1-8}
\end{equation}%
Then the Walker lemma yields $T_{kl}=0$ or $Y_{c}=0.$ Subtracting (\ref%
{Th7C1-8}) from (\ref{Th7C1-7}) and contracting with $g^{kl}$ we get%
\begin{equation}
\left[ a_{1}\left( (n+2)Q_{2}+2Q_{4}\right) +2BQ_{1}\right] Y_{c}=0.
\label{Th7C1-9}
\end{equation}

In the same way, by contraction of (\ref{Th7C1-6}) with $g^{cl},$ we find%
\begin{equation}
\left\{ g_{bk}\left[ (n+5)Q_{2}+3Q_{3}+2(n+2)Q_{4}\right] +2Q_{1}R_{bk}%
\right\} Y_{c}=0  \label{Th7C1-10}
\end{equation}%
and%
\begin{equation}
\left[ a_{1}\left( \left( n+3\right) Q_{2}+Q_{3}\right) -2BQ_{1}\right]
Y_{k}=0.  \label{Th7C1-11}
\end{equation}

At last, by contraction of (\ref{Th7C1-6}) with $g^{kl},$ we obtain%
\begin{equation}
\left[ g_{bc}\left( 2Q_{2}+nQ_{3}+2Q_{4}\right) -2Q_{1}R_{bc}\right] Y_{a}=0.
\label{Th7C1-12}
\end{equation}

Eliminating the Ricci tensor between (\ref{Th7C1-8}), (\ref{Th7C1-12}) and (%
\ref{Th7C1-10}) we find 
\begin{equation*}
\left[ 3(n+3)Q_{2}+(n+5)(Q_{3}+2Q_{4})\right] Y_{c}=0,
\end{equation*}%
\begin{equation*}
\left[ (n+1)Q_{2}+2Q_{3}+2Q_{4}\right] Y_{c}=0.
\end{equation*}%
The system consisting of (\ref{Th7C1-9}), (\ref{Th7C1-11}) and the above two
equations is undetermined and equivalent to $Y=0$ or $Q_{2}=0$ and $%
2BQ_{1}+a_{1}Q_{3}=0$ and $Q_{3}+2Q_{4}=0.$ Hence $2Q_{2}+Q_{3}+2Q_{4}$
yields the second identity, while $a_{1}(Q_{3}+2Q_{4})-(2BQ_{1}+a_{1}Q_{3})$
gives the third one.
\end{proof}

\begin{remark}
From (\ref{Th7C1-6}) one can deduce the identity%
\begin{equation*}
Q_{1}Y\left[ a_{1}R+\frac{B}{2}g\wedge g\right] =0.
\end{equation*}
\end{remark}

\subsection{Case 2}

The next theorem partially improves the result of Tanno (\cite{Tanno 1})
concerned with Killing vector field on $(TM,g^{C}),$ where the complete lift 
$g^{C}$ of $g$ is \ a $g-$ natural metric with $a_{2}=1,$ all others being
zero. (In Tanno's paper the Killing vector on $(TM,g^{C})$ is of the form $%
\iota C^{[X]}+X^{C}+Y^{v}+(u^{r}P_{r}^{t})\partial _{t}^{h},$ where $Y$ and $%
P$ satisfy some additional conditions). Furthermore, we prove in the section
some sufficient conditions for $X$ and $Y$ to be either infinitesimal affine
transformation or infinitesimal isometry.

\begin{theorem}
Let $X$ be an infinitesimal affine vector field on some open $U\subset M.$ If%
\begin{equation*}
a_{2}=const\neq 0,\text{ }b_{3}=const,\text{ }all\ others\ equal\text{ }0
\end{equation*}
on $\pi ^{-1}(U)\subset TM,$ then $\iota C^{[X]}+X^{C}$ is a Killing vector
field on $\pi ^{-1}(U).$
\end{theorem}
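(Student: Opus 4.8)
The plan is to prove $L_{Z}G=0$ directly from the adapted–frame formulas of Lemma~\ref{Lie Deriv}, specialised to the metric in the hypothesis. First I would transcribe $Z=\iota C^{[X]}+X^{C}$ into the adapted frame $(\partial_{a}^{h},\partial_{\alpha}^{v})$ using the lift calculus of the preceding section: the complete lift contributes $X^{h}$ together with the vertical field $(\nabla_{j}X^{a})u^{j}\partial_{a}^{v}$, and $\iota C^{[X]}$ is purely vertical, so that $Z$ has horizontal component $H^{a}=X^{a}$ and vertical component $V^{\alpha}=\Phi^{\alpha}_{j}u^{j}$ with $\Phi=\nabla X+C^{[X]}$. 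The point of the normalisation of $C^{[X]}$ used there is exactly that the lowered tensor satisfies $\Phi_{lj}:=g_{\alpha l}\Phi^{\alpha}_{j}=-\nabla_{l}X_{j}$, i.e.\ $\Phi$ is (up to sign) the transpose of $\nabla X$ and $V_{l}=-u^{j}\nabla_{l}X_{j}$. Getting this transpose right is the one genuinely delicate bookkeeping point; the opposite convention would double, rather than cancel, the curvature terms that appear below.

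Next I would feed in the coefficients. For this $G$ (Lemma~\ref{Lemma 8}) one has $a_{1}=a_{3}=b_{1}=b_{2}=0$, $a_{2}=\mathrm{const}\neq0$, $b_{3}=\mathrm{const}$, hence $A=0$, $B=b_{3}$, and all $a_{j}',b_{j}'$ vanish. The auxiliary quantities are immediate: since $X$ does not depend on $u$, $\partial_{k}^{v}H^{a}=0$ and $\partial_{k}^{h}H^{a}+H^{r}\Gamma_{rk}^{a}=\nabla_{k}X^{a}$; while $\partial_{k}^{v}V^{a}=\Phi^{a}_{k}$ and $\partial_{k}^{h}V^{a}+V^{r}\Gamma_{rk}^{a}=u^{j}\nabla_{k}\Phi^{a}_{j}$, whose lowered form is $-u^{j}\nabla_{k}\nabla_{l}X_{j}$. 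With these substitutions the mixed and vertical components collapse at once: in (\ref{LD12}) only the $a_{2}$-term survives and it is proportional to $\partial_{k}^{v}H^{a}=0$; in (\ref{LD11}) the surviving terms add up to $a_{2}\nabla_{l}X_{k}+a_{2}\Phi_{lk}=a_{2}(\nabla_{l}X_{k}-\nabla_{l}X_{k})=0$. So all the content is in the horizontal–horizontal component (\ref{LD10}).

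In (\ref{LD10}) I would group terms by homogeneity degree in $u$ — legitimate because the coefficients are now constants, so each degree must vanish separately. The degree-two part (the $b_{3}$-terms) reduces to $b_{3}\big[(u^{b}\nabla_{k}X_{b}+V_{k})u_{l}+(u^{b}\nabla_{l}X_{b}+V_{l})u_{k}\big]$, and this is identically zero precisely because $V_{k}=-u^{j}\nabla_{k}X_{j}=-u^{b}\nabla_{k}X_{b}$ — this is where the transpose in $\Phi$ pays off, and no hypothesis on $X$ is used. The degree-one part (the $a_{2}$-terms) is $a_{2}\big\{-X^{a}u^{r}(R_{aklr}+R_{alkr})-u^{j}(\nabla_{k}\nabla_{l}X_{j}+\nabla_{l}\nabla_{k}X_{j})\big\}$, and this is where I invoke the hypothesis that $X$ is an infinitesimal affine transformation: by (\ref{Conv5}), $L_{X}\Gamma=0$ is equivalent to $\nabla_{k}\nabla_{l}X_{j}=-X^{r}R_{rklj}$; substituting this and relabelling the dummy indices turns the second group into exactly $X^{a}u^{r}(R_{aklr}+R_{alkr})$, which cancels the explicit curvature term. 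Only a relabelling is needed here, no Bianchi identity.

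Thus every component of $L_{Z}G$ vanishes on $\pi^{-1}(U)$, so $Z=\iota C^{[X]}+X^{C}$ is Killing. I expect the main obstacle to be purely organisational: correctly reading off the adapted–frame description of $\iota C^{[X]}+X^{C}$ from the lift formulas of the previous section (the transpose/sign in $\Phi_{lj}=-\nabla_{l}X_{j}$ being crucial), and keeping the four index slots of $R$ straight when applying (\ref{Conv5}) inside (\ref{LD10}). Everything else is mechanical, and it is worth noting that the affine hypothesis enters only through the single cancellation in (\ref{LD10}), while the $b_{3}$-part of the metric is handled entirely by the algebraic shape of $Z$.
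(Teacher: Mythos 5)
Your proposal is correct and follows essentially the same route as the paper: the paper's one-line proof points to subsection \ref{Subsection543}, which likewise writes $\iota C^{[X]}+X^{C}$ in the adapted frame, feeds it into the Lie-derivative formulas (\ref{LD10})--(\ref{LD12}), and uses $L_{X}\Gamma=0$ via (\ref{Conv5}) to cancel the curvature terms against the second covariant derivatives, after which every surviving term carries a coefficient ($A$, $a_{1}$, $b_{1}$, $b_{2}$, or a derivative $a_{j}'$, $b_{j}'$) that vanishes for this metric. The only organisational difference is that you specialise the coefficients before computing, whereas the paper computes the general expressions for the two lifts separately and then specialises; your identification $V_{l}=-u^{j}\nabla_{l}X_{j}$ and the resulting cancellation of the $b_{3}$-terms match the paper's computation exactly.
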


\begin{proof}
It follows from the results of subsection \ref{Subsection543}.
\end{proof}

\begin{lemma}
Under hypothesis (\ref{H}) suppose $\dim M>2,$ $a\neq 0$ and $%
2(b_{1}-a_{1}^{\prime })a_{2}-a_{1}b_{2}=0$ at a point $(x,0)\in TM.$
Moreover, let either $a_{1}a_{2}b_{2}\neq 0$ or $a_{2}\neq 0,$ $b_{2}=0,$ $%
b_{1}-a_{1}^{\prime }=0.$ Then 
\begin{equation*}
(a_{1}B-2a_{2}b_{2}-3a_{1}A^{\prime }+4a_{2}a_{2}^{\prime })\left[ \left(
L_{X}g\right) -\frac{1}{n}Tr(L_{X}g)g\right] =0,
\end{equation*}%
\begin{equation*}
a_{2}(b_{1}-a_{1}^{\prime })\left[ \left( L_{Y}g\right) -\frac{1}{n}%
Tr(L_{Y}g)g\right] =0,
\end{equation*}%
\begin{equation*}
a_{1}\left[ a_{2}^{\prime }\left( L_{Y}g\right) +A^{\prime }\left(
L_{X}g\right) \right] =0,
\end{equation*}%
\begin{equation*}
\left[ a_{1}(B-3A^{\prime })+A(b_{1}-a_{1}^{\prime
})-2a_{2}(b_{2}-2a_{2}^{\prime })\right] \left( L_{X}g\right) =0.
\end{equation*}
\end{lemma}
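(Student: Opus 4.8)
The idea is to derive all four identities from the relations already established for an arbitrary Killing field on $(TM,G)$ --- principally Lemma~\ref{LE8} and the third--order identities --- by feeding in the Case~2 specialisations (\ref{16}), (\ref{17}), (\ref{18}) from Theorem~\ref{Splitting theorem}. The starting point is that in both admissible sub-cases of Case~2 one has $a_{2}\neq 0$, so (\ref{16})--(\ref{18}) can be solved:
\[
P_{kl}+P_{lk}+\nabla_{k}X_{l}+\nabla_{l}X_{k}=-(L_{X}g)_{kl},\quad
\nabla_{k}Y_{l}+\nabla_{l}Y_{k}=-\tfrac{A}{a_{2}}(L_{X}g)_{kl},\quad
K_{kl}+K_{lk}=\tfrac{a_{1}}{a_{2}}(L_{X}g)_{kl}.
\]
Hence every symmetric second--order quantity $\overline{S}$, $L_{Y}g$, $\overline{K}$ occurring in the earlier identities is a fixed scalar multiple of $(L_{X}g)_{kl}$, so each of the four assertions becomes a scalar--coefficient statement about $(L_{X}g)_{kl}$ or about its trace--free part. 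Clearing all divisions by $a_{2}$ by means of the Case~2 constraint $a_{1}b_{2}=2a_{2}(b_{1}-a_{1}^{\prime})$, the claims reduce respectively to: the fourth, $c_{4}(L_{X}g)=0$ with $c_{4}=a_{1}(B-3A^{\prime})+A(b_{1}-a_{1}^{\prime})-2a_{2}(b_{2}-2a_{2}^{\prime})$; the third, $a_{1}(a_{2}A^{\prime}-Aa_{2}^{\prime})(L_{X}g)=0$; and the first and second, which --- granting the fourth --- both collapse to $A(b_{1}-a_{1}^{\prime})\,[\,(L_{X}g)-\tfrac1n\mathrm{Tr}(L_{X}g)\,g\,]=0$.

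First I would establish the fourth relation. Substitute the three displayed proportionalities into the second identity of Lemma~\ref{LE8} ($\mathbf F_{kl}+\mathbf B_{kl}=0$), and wherever $a_{1}b_{2}/a_{2}$ occurs replace it by $2(b_{1}-a_{1}^{\prime})$: the bracketed coefficient of $(L_{X}g)_{kl}$ then collapses precisely to $2c_{4}$, which is the fourth relation. For the third relation, symmetrise the first identity $\mathbf A_{km}=0$ of Lemma~\ref{LE8} in $(k,m)$, insert the same proportionalities, clear $a_{2}$, and add the outcome to $a_{2}c_{4}(L_{X}g)=0$: all terms cancel except $-3a_{1}(a_{2}A^{\prime}-Aa_{2}^{\prime})(L_{X}g)$, which yields the third relation.

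For the remaining two relations the second--order identities no longer suffice, and I would turn to the third--order ones. The cleanest route is to eliminate the third--order Taylor coefficients $F_{lkmn}$ and $W_{lkmn}$ between equation (\ref{LE6-1}) of Lemma~\ref{LE6} and equation (\ref{LE5-2}) (equivalently (\ref{III3})); the relevant $2\times 2$ coefficient matrix has determinant a nonzero multiple of $a=a_{1}A-a_{2}^{2}$, so this produces an identity free of $F$ and $W$ --- alternatively one simply invokes Lemma~\ref{LE6''}, whose right--hand side is already $F$- and $W$-free. Into that identity one substitutes the Case~2 proportionalities together with the formulas for $E_{lkm}$, $T_{lkm}$, $M_{lkm}$ from (\ref{L5a1})--(\ref{L5a2'}) and those for $\nabla_{l}K_{km}$, $\nabla_{l}S_{km}$ from (\ref{L6a})--(\ref{L6b}), and then takes the traces and symmetrisations (in the spirit of the Case~1 arguments behind Theorems~\ref{LEC1c} and~\ref{Th7C1}, using Walker's lemma to peel $Y$ off where it occurs) needed to isolate a scalar multiple, with coefficient $A(b_{1}-a_{1}^{\prime})$, of the trace--free part of $(L_{X}g)$. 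In the sub-case $b_{2}=0$, $b_{1}=a_{1}^{\prime}$ the first two relations are immediate: the second is vacuous, and the coefficient in the first coincides with $c_{4}$, so it follows from the fourth relation.

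The hard part is precisely this last step: the elimination of $F$ and $W$, followed by the bookkeeping of the many products of $a_{j},b_{j},a_{j}^{\prime},b_{j}^{\prime}$ through the contractions and (anti)symmetrisations, keeping careful account of which pieces are trace, trace-free, symmetric or skew. Two points of vigilance: every division by $a_{2}$ used above is legitimate because $a_{2}\neq 0$ throughout Case~2, and the sub-cases $a_{1}a_{2}b_{2}\neq 0$ and $b_{2}=0=b_{1}-a_{1}^{\prime}$ must be kept separate wherever an intermediate coefficient degenerates.
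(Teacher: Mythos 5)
Your treatment of the third and fourth identities is sound and essentially coincides with the paper's: both follow from the two displayed relations of Lemma \ref{LE8} (namely $\mathbf{F}_{kl}+\mathbf{B}_{kl}=0$ and the symmetrization of $\mathbf{A}_{km}=0$) after substituting the Case~2 proportionalities coming from (\ref{16})--(\ref{18}) and repeatedly using $a_{1}b_{2}=2a_{2}(b_{1}-a_{1}^{\prime})$. Your coefficient bookkeeping there checks out (up to an immaterial sign: one must \emph{subtract} $a_{2}c_{4}(L_{X}g)=0$ from the symmetrized $\mathbf{A}$-relation, not add it, to be left with $3a_{1}(a_{2}A^{\prime}-Aa_{2}^{\prime})(L_{X}g)=0$). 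Your reduction of the first two identities, granting the fourth, to $A(b_{1}-a_{1}^{\prime})\left[(L_{X}g)-\tfrac{1}{n}Tr(L_{X}g)g\right]=0$ is also correct, as is the observation that in the sub-case $b_{2}=0$, $b_{1}=a_{1}^{\prime}$ the second identity is vacuous and the first coincides with the fourth.

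The gap is the first two identities in the main sub-case $a_{1}a_{2}b_{2}\neq 0$. You assert that the second-order identities "no longer suffice" and propose, without carrying it out, an elimination of $F$ and $W$ from the third-order relations followed by unspecified contractions and appeals to Walker's lemma; you yourself flag this as the hard part and never exhibit the combination that produces the coefficient $A(b_{1}-a_{1}^{\prime})$. In fact no third-order input is needed, and the idea that closes the argument is one you have overlooked: the proof of Lemma \ref{LE8} establishes the tensor identity $g_{mn}\mathbf{B}_{kl}+g_{kl}\mathbf{F}_{mn}+g_{ln}\mathbf{A}_{km}+g_{kn}\mathbf{A}_{lm}+g_{lm}\mathbf{A}_{kn}+g_{km}\mathbf{A}_{ln}=0$, and Lemma \ref{Apen3} extracts from it not only $\mathbf{A}=0$ and $\mathbf{F}+\mathbf{B}=0$ (the conclusions recorded in the statement of Lemma \ref{LE8}) but also $n\mathbf{F}-Tr(\mathbf{F})g=0$ and $n\mathbf{B}-Tr(\mathbf{B})g=0$ \emph{separately}. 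Under (\ref{16})--(\ref{18}) and $a_{1}b_{2}=2a_{2}(b_{1}-a_{1}^{\prime})$ one computes $\mathbf{F}=2(a_{1}B-2a_{2}b_{2}-3a_{1}A^{\prime}+4a_{2}a_{2}^{\prime})(L_{X}g)$ and $\mathbf{B}=-2a_{2}(b_{1}-a_{1}^{\prime})(L_{Y}g)$, so these two trace-free conditions are precisely the first and second assertions of the lemma. As written, your proposal leaves those two assertions unproved in the generic sub-case.
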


\begin{proof}
First consider the case $a_{1}a_{2}b_{2}\neq 0.$ By the use of (\ref{16}) - (%
\ref{18}) and the equality $a_{1}b_{2}=2a_{2}(b_{1}-a_{1}^{\prime })$ Lemma %
\ref{LE8} yields%
\begin{equation*}
\mathbf{F}=2(a_{1}B-2a_{2}b_{2}-3a_{1}A^{\prime }+4a_{2}a_{2}^{\prime
})(L_{X}g),
\end{equation*}%
\begin{equation*}
\mathbf{B}=-2a_{2}(b_{1}-a_{1}^{\prime })(L_{Y}g),
\end{equation*}%
whence, by Lemma \ref{Apen3}, the first two equalities result. Moreover, by
Lemma \ref{LE8} we have%
\begin{equation}
\mathbf{F}+\mathbf{B}=-2a_{2}(b_{1}-a_{1}^{\prime
})(L_{Y}g)+2(a_{1}B-2a_{2}b_{2}-3a_{1}A^{\prime }+4a_{2}a_{2}^{\prime
})(L_{X}g)=0,  \label{Case 2-30}
\end{equation}%
and%
\begin{multline*}
\mathbf{A}%
_{km}=3a_{2}BK_{km}+(3a_{1}B-a_{2}b_{2})P_{km}+(a_{1}B-2a_{2}a_{2}^{\prime
})(L_{X}g)_{km}+ \\
\left[ a_{2}\left( b_{1}-a_{1}^{\prime }\right) -3a_{1}a_{2}^{\prime }\right]
\nabla _{k}Y_{m}=0.
\end{multline*}%
Symmetrizing in $(k,m)$ and transforming the obtained equation in the same
manner as before we find%
\begin{equation}
\left[ a_{2}\left( b_{1}-a_{1}^{\prime }\right) -3a_{1}a_{2}^{\prime }\right]
\left( L_{Y}g\right) -(a_{1}B-2a_{2}b_{2}+4a_{2}a_{2}^{\prime })(L_{X}g)=0.
\label{Case2-32}
\end{equation}

Now from (\ref{Case 2-30}) and (\ref{Case2-32}) we easily deduce the third
equality. Finally, the last one is obtained by applying (\ref{17}) to (\ref%
{Case 2-30}).

A proof of the second case can be obtained in the same way. The statements
differ only in that $b_{2}=0.$
\end{proof}

\begin{corollary}
If $a_{1}(a_{2}A^{\prime }-a_{2}^{\prime }A)\neq 0,$ then $L_{X}g=0.$
\end{corollary}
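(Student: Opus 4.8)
The plan is to reduce the assertion to a $2\times 2$ homogeneous linear system in the symmetric tensors $L_Xg$ and $L_Yg$, whose coefficient determinant turns out to be exactly $a_2A'-a_2'A$; nonvanishing of that determinant, together with $a_1\neq 0$, then forces $L_Xg=0$.

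First I would record the two relations that make up the system. Writing $(L_Xg)_{kl}=\nabla_kX_l+\nabla_lX_k$ and likewise for $Y$, the hypotheses here are precisely those defining Case 2 of Theorem \ref{Splitting theorem}, so equation (\ref{17}) is available; in the present notation it reads
\begin{equation*}
a_2\left(L_Yg\right)+A\left(L_Xg\right)=0 .
\end{equation*}
The second relation is the third conclusion of the preceding lemma, $a_1\left[a_2'(L_Yg)+A'(L_Xg)\right]=0$, which, since $a_1\neq 0$ by hypothesis, becomes
\begin{equation*}
a_2'\left(L_Yg\right)+A'\left(L_Xg\right)=0 .
\end{equation*}

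Next I would solve this system: its coefficient determinant is $a_2A'-a_2'A$, so if $a_2A'-a_2'A\neq 0$ the only solution is $L_Yg=L_Xg=0$, and in particular $L_Xg=0$, which is the claim. (As a byproduct one also obtains $L_Yg=0$, consistent with Theorem \ref{mine}: $X$, and here also $Y$, are Killing on the base.)

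I do not expect a genuine obstacle; the whole argument is a short deduction from the lemma once one notices that (\ref{17}) supplies the needed second equation. The only things worth double-checking are that (\ref{17}) really does apply under these hypotheses — it does, since $2(b_1-a_1')a_2-a_1b_2=0$ together with the subcase condition ($a_1a_2b_2\neq 0$, or $a_2\neq 0$ and $b_2=0$) places us in Case 2 — and that the third conclusion of the lemma indeed collapses to $a_2'(L_Yg)+A'(L_Xg)=0$, which it does precisely because the hypothesis $a_1(a_2A'-a_2'A)\neq 0$ includes $a_1\neq 0$.
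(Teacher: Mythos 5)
Your proof is correct and matches the intended derivation: the paper states the corollary without proof, but the evident route is exactly yours, pairing the lemma's third identity $a_{1}\left[a_{2}^{\prime}(L_{Y}g)+A^{\prime}(L_{X}g)\right]=0$ (with $a_{1}\neq 0$) against equation (\ref{17}) and noting the $2\times 2$ determinant is $a_{2}A^{\prime}-a_{2}^{\prime}A\neq 0$. Your checks that the Case 2 hypotheses make (\ref{17}) available are also accurate.
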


\subsection{Case 3}

The main result of the section establishes isomorphism between algebras of
Killing vector fields on $M$ and $TM$ for large subclass of $g-$ metrics
(Theorem \ref{Structure 2}). Furthermore, conditions for $Y$ to be non-zero
are proved.

\begin{lemma}
\label{Case3 Lemma 2}Under hypothesis (\ref{H}) suppose that $dimM>2$ and
the following conditions on $a_{j},$ $b_{j}$ at a point $(x,0)\in M$ are
satisfied: $a_{1}=0,$ $b_{1}=a_{1}^{\prime },$ $a_{2}\neq 0,$ $b_{2}\neq 0.$
Then the relations%
\begin{equation}
\left( b_{2}-2a_{2}^{\prime }\right) L_{X}g=0,\quad \left(
b_{2}-2a_{2}^{\prime }\right) Tr\left( \nabla X\right) =0,\quad \left(
b_{2}-2a_{2}^{\prime }\right) TrP=0,  \label{Case3-1-1}
\end{equation}%
\begin{equation}
BK=0,\quad L_{X}g+P=0  \label{Case3-1-2}
\end{equation}%
hold. Moreover $P$ is symmetric. Finally $a_{3}K=0.$
\end{lemma}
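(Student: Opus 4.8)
The hypotheses for Lemma \ref{Case3 Lemma 2} are exactly those of Case 3 of Theorem \ref{Splitting theorem} together with $b_1 = a_1'$ (equivalently $b = b_1 - a_1' = 0$), so I may freely invoke the Case 3 relations (\ref{19})--(\ref{21}) as well as the general second- and third-order Taylor identities from Section 3.2 and the lemmas in Section 3.3. Note $a_1 = 0$ and $a\neq 0$ force $a_2\neq 0$ (since $a = a_1 A - a_2^2 = -a_2^2$), which is consistent with the stated $a_2\neq 0$. Throughout, $a = -a_2^2$.

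**Plan for the first block (\ref{Case3-1-1}) and (\ref{Case3-1-2}).** The natural engine is Lemma \ref{LE8}, which under $a_1 = 0$, $b = 0$ collapses dramatically: in $\mathbf{A}_{km}$ the terms with coefficients $3a_1B$ and $-2a_2 b_1 + 2a_2 a_1'$ vanish, and in $\mathbf{F}+\mathbf{B}$ the factor $4a_2 b_1 - 3a_1 b_2 - 4a_2 a_1'$ also vanishes. So $\mathbf{A}_{km}$ becomes a combination of $a_2 b_2 \nabla_k X_m$, $a_2 b_2 \nabla_k Y_m$, $a_2 B(K_{km}-2K_{mk})$, $a_2 b_2 S_{km}$, $a_2 b_2 S_{mk}$, and $\mathbf{F}+\mathbf{B} = 2a_2 b_2 (L_X g) + 2(3a_2 b_2 + 3a_1 A' - 4a_2 a_2')\overline{S} + 2a_2 B\overline{K}$. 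Now feed in (\ref{19})--(\ref{21}): (\ref{21}) gives $\overline{K} = 0$, so $K$ is skew; (\ref{19}) gives $\overline{S} = -2\overline{X}$ where $\overline{X}_{km} = (L_Xg)_{km}$; (\ref{20}) gives $a_2\overline{Y} + A\overline{X} = 0$. Substituting, the symmetric part of $\mathbf{A}_{km} = 0$ and the equation $\mathbf{F}+\mathbf{B}=0$ reduce to scalar multiples of $L_Xg$; tracking the coefficient I expect to land on $(b_2 - 2a_2')\,L_Xg = 0$ (using $a\neq 0$ to cancel the overall $a_2$'s). Taking the trace of this gives the statements about $\mathrm{Tr}(\nabla X)$ and $\mathrm{Tr}\,P$ — here I use $\overline S = P_{kl}+P_{lk}+ (L_Xg)_{kl}$ and (\ref{19}) to relate $\mathrm{Tr}\,P$ to $\mathrm{Tr}(\nabla X)$. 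For $BK = 0$ I would go to Lemma \ref{LEC1a}, whose hypothesis $2(b_1 - a_1')a_2 - a_1 b_2 \neq 0$ is NOT available here (it is $=0$), so instead I expect to use the $a_1 = 0$ branch of Lemma \ref{LE6} / Lemma \ref{LE6''} or the identity (\ref{like 78}) specialized to $a_1 = 0$, $b = 0$, contracted suitably, to isolate $BK$. The relation $L_Xg + P = 0$ is just (\ref{19}) rewritten once symmetry of $P$ is known.

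**Plan for symmetry of $P$ and $a_3 K = 0$.** For $\widehat P_{kl} = 0$: the antisymmetric part of $P$ is governed by the first-order identity (\ref{II1}), which with $A = a_1 + a_3$, $a_1 = 0$ reads $a_3 K_{lk} + a_2(P_{lk} + \nabla_k X_l + \nabla_l X_k) + 0 = 0$ (the $a_1\nabla_l Y_k$ term drops). Antisymmetrizing in $(l,k)$: $a_3 \widehat K_{lk} + a_2 \widehat P_{lk} = 0$. But $K$ is skew, so $\widehat K_{lk} = 2K_{lk}$, giving $a_3 K_{lk} = -\tfrac{a_2}{2}\widehat P_{lk}$ — this is essentially the content of $a_3 K = 0$ once I show $\widehat P = 0$, so these two claims are coupled. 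I would close the loop by combining this with the $BK = 0$ result and the identity (\ref{22})-type or (\ref{II1}) relation: since $A K_{km} + a_2 P_{km} = 0$ from (\ref{II1}) with $a_1 = 0$ (no $\nabla Y$ term), and $A = a_1 + a_3 = a_3$, I get $a_3 K_{km} = -a_2 P_{km}$; symmetrizing and using (\ref{19}), $a_3 \overline K = -a_2 \overline P = 2a_2 \overline X$, and since $\overline K = 0$ this forces $a_2 \overline X = 0$, hence $L_Xg = 0$ when... wait, that would be too strong. More carefully: $a_3 K_{km} = -a_2 P_{km}$ directly gives, upon taking the skew part, $0 = a_3 \widehat K = -a_2\widehat P$ only if $K$ skew — but $a_3 K_{km}$ skew part is $a_3 K_{km}$ itself (K skew), so $a_3 K_{km} = -a_2 \widehat P_{km}/?$... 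I will need to be careful here: the clean route is $a_3 K_{km} = -a_2 P_{km}$ (from (\ref{II1}), $a_1=0$), and separately the symmetric part relation from (\ref{19}). Taking symmetric part of $a_3 K = -a_2 P$: $0 = a_3\overline K = -a_2 \overline P$, i.e. $\overline P = 0$ (as $a_2\neq 0$); with (\ref{19}) this yields $L_Xg = 0$?? That again seems too strong, so I must be misreading which equation has no $\nabla Y$ — I'd re-derive (\ref{II1}) under Case 3 carefully before committing.

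**Main obstacle.** The delicate point is bookkeeping: under $a_1 = 0$ several "general" identities degenerate and one must re-specialize them from scratch (Lemma \ref{LE6}, Lemma \ref{LE6''}, and especially which first-order equations lose their $\nabla Y$ terms) rather than quoting the generic consequences — and it is easy to over-conclude (e.g. accidentally "proving" $L_Xg = 0$) if one mixes up $A = a_3$ versus $A = a_1 + a_3$ with $a_1 = 0$. The real work is: (i) re-derive the specialized forms of (\ref{I1})--(\ref{III1}), (\ref{I2})--(\ref{III2}), and Lemmas \ref{LE5}--\ref{LE8} with $a_1 = 0$, $b = 0$ plugged in; (ii) run the Lemma \ref{LE8} argument to get $(b_2 - 2a_2')L_Xg = 0$; (iii) use the specialized (\ref{like 78}) or Lemma \ref{LE6''} contraction for $BK = 0$; (iv) read off $\widehat P = 0$ and $a_3 K = 0$ from the genuinely-first-order relation (\ref{II1}) with $a_1=0$, being scrupulous that $K$ is already known skew from (\ref{21}). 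I expect step (iii) to be the most computation-heavy and step (iv) the one most prone to sign/indexing errors.
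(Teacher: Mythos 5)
Your route to the first identity in (\ref{Case3-1-1}) is sound and is a mild variant of the paper's: under $a_{1}=0$, $b_{1}=a_{1}^{\prime}$ the relation $\mathbf{F}+\mathbf{B}=0$ of Lemma \ref{LE8}, combined with $\overline{K}=0$ from (\ref{21}) and $\overline{S}=-L_{X}g$ from (\ref{19}), collapses to $-4a_{2}(b_{2}-2a_{2}^{\prime})L_{X}g=0$, and the trace statements then follow from (\ref{19}); the paper instead gets all of (\ref{Case3-1-1}) by contracting the specialization (\ref{Case3-1-3}) of (\ref{like 78}), which is also the equation you correctly flag as the likely source of $BK=0$.

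The genuine gap is in the second half. You never produce the full tensor identity $L_{X}g+P=0$: equation (\ref{19}) gives only its symmetric part $\overline{P}+2L_{X}g=0$, and, as you yourself notice, the skew part of (\ref{II1}) --- which correctly specialized to $a_{1}=0$ reads $a_{3}K_{lk}+a_{2}\left(P_{lk}+\nabla_{k}X_{l}+\nabla_{l}X_{k}\right)=0$, not $a_{3}K+a_{2}P=0$ (dropping the $\nabla X$ terms is what produced your spurious conclusion $L_{X}g=0$) --- merely yields $2a_{3}K_{lk}=-a_{2}\widehat{P}_{lk}$, so $a_{3}K=0$ and the symmetry of $P$ stay mutually dependent with no way to break the loop. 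The paper breaks it exactly at the step you left vague: in (\ref{like 78}) the combination $X_{m,l}+S_{lm}=(L_{X}g)_{lm}+P_{lm}$ enters with the \emph{full}, unsymmetrized $P_{lm}$, so after substituting $a_{1}=0$, $b_{1}=a_{1}^{\prime}$, using (\ref{19}), (\ref{21}) and $(b_{2}-2a_{2}^{\prime})L_{X}g=0$, the contractions with $g^{kn}$ and with $g^{kl}$ give the two independent relations $-3BK_{lm}+(n-1)b_{2}\left[P_{lm}+(L_{X}g)_{lm}\right]=0$ and $-(n+1)BK_{lm}+b_{2}\left[P_{lm}+(L_{X}g)_{lm}\right]=0$, a linear system in $BK_{lm}$ and $b_{2}\left[P_{lm}+(L_{X}g)_{lm}\right]$ with determinant proportional to $n^{2}-4\neq 0$. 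Hence $BK=0$ and, since $b_{2}\neq 0$, $P+L_{X}g=0$ simultaneously; symmetry of $P$ is then immediate, and (\ref{II1}) with $a_{1}=0$ gives $a_{3}K_{lk}=-a_{2}\left(P_{lk}+(L_{X}g)_{lk}\right)=0$. Without this step your proposal establishes neither $BK=0$, nor $L_{X}g+P=0$, nor the symmetry of $P$, nor $a_{3}K=0$.
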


\begin{proof}
Substituting $a_{1}=0$ and $a_{1}^{\prime }=b_{1}$ into (\ref{like 78}),
then applying (\ref{21}) and (\ref{19}) we find%
\begin{multline}
b_{2}\left[ 2g_{kn}\left( \left( L_{X}g\right) _{lm}+P_{lm}\right)
+g_{mn}\left( \left( L_{X}g\right) _{kl}+P_{kl}\right) -g_{km}\left( \left(
L_{X}g\right) _{ln}+P_{ln}\right) \right] +  \label{Case3-1-3} \\
g_{ln}\left[ -3BK_{km}+\left( b_{2}-2a_{2}^{\prime }\right) \left(
L_{X}g\right) _{km}\right] + \\
g_{kl}\left[ 3BK_{mn}+\left( b_{2}-2a_{2}^{\prime }\right) \left(
L_{X}g\right) _{mn}\right] -2(b_{2}-2a_{2}^{\prime })g_{lm}\left(
L_{X}g\right) _{kn}=0.
\end{multline}%
From (\ref{19}) it follows that $P_{a}^{a}+2X_{,a}^{a}=0.$ Thus contracting (%
\ref{Case3-1-3}) with $g^{lm}$ and then with $g^{kn}$ we get (\ref{Case3-1-1}%
) in turn. Consequently, contracting (\ref{Case3-1-3}) with $g^{kn},$ by the
use of (\ref{19}), (\ref{21}) and (\ref{Case3-1-1}), we obtain 
\begin{equation*}
-3BK_{lm}+(n-1)b_{2}\left[ P_{lm}+(L_{X}g)_{lm}\right] =0.
\end{equation*}%
In a similar way, contracting (\ref{Case3-1-1}) with $g^{kl},$ we find%
\begin{equation*}
-(n+1)BK_{mn}+b_{2}\left[ P_{mn}+(L_{X}g)_{mn}\right] =0.
\end{equation*}%
The last two equations yield (\ref{Case3-1-2}). The final statement is a
consequence of (\ref{Case3-1-2}), (\ref{II1}) and $a_{1}=0.$
\end{proof}

\begin{lemma}
\label{Case3 Lemma 4}Under assumptions of Lemma \ref{Case3 Lemma 2} relations%
\begin{equation*}
\left[ \left( b_{2}-2a_{2}^{\prime }\right)
(2Ab_{1}-3a_{2}b_{2}-2a_{2}a_{2}^{\prime })-2a_{2}Bb_{1}\right] Y=0,
\end{equation*}%
\begin{equation*}
\left[ a_{2}Bb_{1}+Ab_{1}b_{2}-2a_{2}\left( b_{2}a_{2}^{\prime
}-a_{2}b_{2}^{\prime }\right) \right] Y=0,
\end{equation*}%
\begin{equation*}
(b_{1}b_{2}-a_{2}b_{1}^{\prime })Y=0
\end{equation*}%
hold on $M\times \{0\}.$
\end{lemma}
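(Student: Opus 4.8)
The strategy is to mimic the pattern established in Lemma \ref{Case3 Lemma 2} and the earlier lemmas: start from the third–order Taylor identities (\ref{III4}) (and possibly (\ref{II4})), substitute the specialized values $a_{1}=0$, $b_{1}=a_{1}^{\prime}$, $a_{2}\neq 0$, $b_{2}\neq 0$, then feed in the algebraic relations already available in this case, namely (\ref{19})--(\ref{21}), together with the conclusions of Lemma \ref{Case3 Lemma 2} ($BK=0$, $L_{X}g+P=0$, $P$ symmetric, $(b_{2}-2a_{2}^{\prime})L_{X}g=0$, $(b_{2}-2a_{2}^{\prime})\operatorname{Tr}P=0$, $a_{3}K=0$) and the formulas for $E$, $T$, $M$ from Lemma \ref{Lemmas L5} (\ref{L5a1}), (\ref{L5a2}), (\ref{L5a2'}). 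The goal is to isolate, after contractions, equations whose only surviving "unknown" is the vector $Y$, with scalar coefficients built from $a_{j},b_{j},a_{j}^{\prime},b_{j}^{\prime}$.

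First I would take (\ref{III4}) with $a_{1}=0$, $b_{1}=a_{1}^{\prime}$ substituted, which kills the $Z$-terms and several others, leaving an identity involving $G$, $E$, $M$ (hence $T$) and the $b_{1}^{\prime}$- and $a_{1}^{\prime}$-terms quadratic in $g$ times $Y$. The $G$-terms are symmetrized in a pair of indices across $(lk)$; antisymmetrizing appropriately (the trick used repeatedly, e.g. in Lemmas \ref{LE5}, \ref{AfterLE5}, \ref{LE6}) should eliminate $G_{lkpqr}+G_{klpqr}$. Then I would contract with the metric in the two (or three) ways that appear throughout the paper: contract one pair $g^{pq}$, then $g^{kl}$, etc. Using (\ref{L5a1}) for $E$ contracted, (\ref{L5a2'}) for $M$ contracted, and the already-established $(b_{2}-2a_{2}^{\prime})L_{X}g=0$ and $BK=0$ to discard the contributions of $E_{lkp}$ (which via (\ref{L5a1}) is a multiple of $Y$) and $T$ (via (\ref{L5a2'}), also a multiple of $Y$), every contracted equation collapses to (scalar)$\cdot Y=0$. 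Doing this for the independent contractions produces a small linear system in the single unknown direction $Y$; the three stated identities should be (independent combinations of) the rows of that system, exactly as $\beta Y=0$ arose in Lemma \ref{AfterLE5}. A second source, if one row is missing, is (\ref{II4}) treated the same way, or differentiating (\ref{Case3-1-2}) covariantly and feeding into (\ref{II4}); also the relations of Lemma \ref{Case3 Lemma 2} may need to be combined with (\ref{II1}) (in the reduced form $AK+a_{2}P-a_{1}Y_{,m}=0$, here $a_{1}=0$ so $AK+a_{2}P=0$) to trade $K$ and $P$ for one another.

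The main obstacle I expect is bookkeeping rather than conceptual: (\ref{III4}) is a long symmetric expression in five lower indices, and one must choose the antisymmetrization/contraction pattern that actually produces three independent scalar relations rather than trivial or redundant ones — the paper's earlier proofs (Lemmas \ref{AfterLE5}, \ref{LE6''}) show that the "right" pattern is "alternate in $(a,l)$, then in $(k,l)$, add to the original, then contract twice", and getting the index relabelling and the coefficient arithmetic right (including correctly importing the $E$- and $M$-substitutions, which themselves carry factors like $a_{2}b_{1}-a_{1}b_{2}-a_{2}a_{1}^{\prime}$ that simplify under $a_{1}=0$, $b_{1}=a_{1}^{\prime}$) is where the work lies. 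A secondary subtlety is that some intermediate relations hold only "at the point $(x,0)$" while others (like the conclusions of Lemma \ref{Case3 Lemma 2}) are stated on $M\times\{0\}$; since all the $a_{j},b_{j}$-coefficients are constants on $M$ in these Taylor identities, this distinction is harmless, but one should state the final relations on $M\times\{0\}$ as in the claim. Once the linear system in $Y$ is assembled, reading off the three displayed coefficients — $(b_{2}-2a_{2}^{\prime})(2Ab_{1}-3a_{2}b_{2}-2a_{2}a_{2}^{\prime})-2a_{2}Bb_{1}$, $a_{2}Bb_{1}+Ab_{1}b_{2}-2a_{2}(b_{2}a_{2}^{\prime}-a_{2}b_{2}^{\prime})$, and $b_{1}b_{2}-a_{2}b_{1}^{\prime}$ — is then a matter of taking the appropriate linear combinations of the rows, in the same spirit as the final lines of the proofs of Lemmas \ref{AfterLE5} and \ref{LE8}.
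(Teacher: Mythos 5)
Your overall strategy (third-order Taylor identities, substitute $a_{1}=0$, $b_{1}=a_{1}^{\prime}$, alternate/contract, then use Lemma \ref{Lemmas L5} to convert $E$, $T$, $M$, $\nabla K$ into multiples of $Y$) is the right family of ideas, and your (\ref{III4})-route does recover the third identity: it is exactly Lemma \ref{AfterLE5}, whose $\beta$ collapses to $-2a_{2}(b_{1}b_{2}-a_{2}b_{1}^{\prime})$ under $a_{1}=0$, $b_{1}=a_{1}^{\prime}$. But your primary line of attack cannot produce the first two identities, and this is not mere bookkeeping. The second identity contains $a_{2}^{2}b_{2}^{\prime}$, and $b_{2}^{\prime}$ simply does not occur in (\ref{III4}), nor in any of the substitution formulas (\ref{L5a1}), (\ref{L5a2}), (\ref{L5a2'}) you plan to feed into it; likewise $B=b_{1}+b_{3}$ and $A$ enter (\ref{III4}) only indirectly through $M$, in the single combination $\beta$. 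So no amount of alternating and contracting (\ref{III4}) yields the first two relations; they must come from the mixed identity (\ref{II4}). You mention (\ref{II4}) only as a fallback "if one row is missing", without specifying how to use it, which is precisely where the substance of the proof lies.

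The paper's actual route is: take Lemma \ref{LE6''} (which is (\ref{II4}) with the covariant derivative of (\ref{LE5-2}) already subtracted off), substitute $a_{1}=0$, $a_{1}^{\prime}=b_{1}$, contract once with $g^{ab}g^{cl}$ and once with $g^{al}g^{bc}$, apply (\ref{21}), and then use Lemma \ref{Lemmas L5} (together with (\ref{L6a}) for $\nabla K$) to reduce each contraction to a scalar multiple of $Y$; these two contractions give the first and second identities respectively, and Lemma \ref{AfterLE5} gives the third. Beyond the misplaced emphasis on (\ref{III4}), your proposal verifies none of the coefficient arithmetic and does not identify which contraction produces which displayed scalar, so as written it is a plausible plan rather than a proof.
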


\begin{proof}
We apply Lemma \ref{LE6''}. Substituting $a_{1}^{\prime }=b_{1},$ $a_{1}=0,$
contracting with $g^{ab}g^{cl}$ and applying (\ref{21}) we get%
\begin{equation*}
-2b_{2}(n+2)K_{\ k,r}^{r}+2BE_{\ kr}^{r}-2BE_{k\ r}^{\
r}+(n-1)(b_{2}-2a_{2}^{\prime })M_{\ kr}^{r}=0,
\end{equation*}%
whence, by the use of Lemma \ref{Lemmas L5} we obtain the first equality.
Similarly, contracting with $g^{al}g^{bc}$ we find%
\begin{multline*}
-b_{2}(n+2)K_{\ k,r}^{r}+B(n+2)E_{\ kr}^{r}-B(n+2)E_{k\ r}^{\ r}-b_{2}nT_{\
kr}^{r}+b_{2}T_{k\ r}^{\ r}- \\
2(n+2)(n-1)Y_{k}=0,
\end{multline*}%
whence the second equation results. Finally, the third one follows from
Lemma \ref{AfterLE5}.
\end{proof}

\begin{lemma}
Under assumptions of Lemma \ref{Case3 Lemma 2} suppose $L_{X}g=0.$ Then%
\begin{equation*}
AY=BY=A^{\prime }Y=0
\end{equation*}%
at each point $(x,0)\in TM.$
\end{lemma}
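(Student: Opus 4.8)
The plan is to read off the Case~3 structure equations (\ref{19})--(\ref{21}) and the conclusions of Lemma~\ref{Case3 Lemma 2} in the presence of $L_Xg=0$, then to obtain $BY=0$ and $A'Y=0$ from the identity (\ref{L5b}), and $AY=0$ from (\ref{L6a}) together with $AK=0$.

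First I would record the reductions. Since $a_1=0$, $b_1=a_1'$ and $a_2b_2\neq0$ we are in Case~3 of Theorem~\ref{Splitting theorem}, so (\ref{19})--(\ref{21}) hold. Combined with $L_Xg=0$: Lemma~\ref{Case3 Lemma 2} gives $L_Xg+P=0$ with $P$ symmetric, hence $P=0$; (\ref{20}) and $a_2\neq0$ give $L_Yg=0$, so $Y$ is a Killing field on $M$; (\ref{21}) and $a_2\neq0$ give $\overline{K}=K_{kl}+K_{lk}=0$. Lemma~\ref{Case3 Lemma 2} further supplies $BK=0$ and $a_3K=AK=0$, and $a=a_1A-a_2^{2}=-a_2^{2}\neq0$, so the lemmas and identities established above (in particular Lemma~\ref{Lemmas L5}, Lemma~\ref{LE5}, Lemma~\ref{LE6''} and the relations (\ref{L5a1})--(\ref{L5a2'}), (\ref{L6a})) are all available.

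For the $B$- and $A'$-statements I would substitute $L_Xg=0$ and $a_1=0$ into (\ref{L5b}): the bracket multiplied by $a_2$ is a sum of covariant derivatives of $L_Xg$ and therefore vanishes, and the $a_1$-term drops, so
\[ 2A'g_{kl}Y_p+B\left(Y_kg_{lp}+Y_lg_{kp}\right)=0. \]
Contracting with $g^{kl}$ gives $(nA'+B)Y=0$ and contracting with $g^{kp}$ gives $(2A'+(n+1)B)Y=0$; since $\dim M>2$ we have $(n+2)(n-1)\neq0$, and eliminating between the two relations yields $BY=0$ and then $A'Y=0$. For $AY=0$ I would argue at a fixed $x$: if $A(x)=0$ there is nothing to prove; if $A(x)\neq0$ then, $A$ being continuous, $A\neq0$ on a neighbourhood of $x$, so $AK=0$ forces $K=0$ there and hence $\nabla K=0$ at $x$. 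Substituting $a_1=0$ into (\ref{L6a}) gives $\nabla_lK_{km}=-\frac{b_2-2a_2'}{2a_2}\left(g_{lm}Y_k-g_{kl}Y_m\right)$, so contracting $\nabla K=0$ produces $(b_2-2a_2')Y=0$ at $x$ (using $n>2$). If $b_2\neq2a_2'$ this already gives $Y=0$, hence $AY=BY=A'Y=0$. In the remaining case $b_2=2a_2'$ I would feed the data in hand ($P=0$, $K=0$, $\nabla K=0$, $L_Xg=L_Yg=0$, $BY=0$, and the explicit forms of $E$, $T$, $M$ from (\ref{L5a1})--(\ref{L5a2'}), each a metric contraction of $Y$) into the relations of Lemma~\ref{Case3 Lemma 4} and into the higher-order conditions (\ref{I3})--(\ref{III4}) and Lemma~\ref{LE6''}; alternating, symmetrizing, contracting with $g$ and invoking the Walker Lemma~\ref{Apen2} exactly as in the proofs of Theorems~\ref{LEC1c} and~\ref{Th7C1}, I expect this to leave a scalar multiple of $Y$ whose coefficient carries $A$, forcing $AY=0$.

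The hard part is precisely this sub-case $b_2=2a_2'$. Because $a_1=0$ and $b_1=a_1'$, the parameter $A$ cancels out of the first-, second- and third-order identities (the $Y$-expressions for the Taylor coefficients $K$, $E$, $T$, $M$ already absorb it, as one checks directly on (\ref{I2})--(\ref{III3})), so $A$ only reappears paired with $Y$ in the fourth-order conditions (\ref{I4})--(\ref{III4}) through the $M$-, $F$-, $W$-, $G$- and $Z$-terms; organizing that elimination — first removing the top Taylor coefficients via the fourth-order analogue of Lemma~\ref{LE5}, then contracting and applying Walker's lemma — is where the substantial bookkeeping lies.
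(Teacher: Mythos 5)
Your first half coincides with the paper's proof: from (\ref{20}) with $a_{2}\neq 0$ and $L_{X}g=0$ you get that $Y$ is Killing, and substituting $a_{1}=0$, $L_{X}g=0$ into (\ref{L5b}) leaves $2A^{\prime }g_{kl}Y_{p}+B(Y_{k}g_{lp}+Y_{l}g_{kp})=0$, from which two contractions give $BY=A^{\prime }Y=0$ (the determinant $(n+2)(n-1)$ is the right one). For $AY=0$, however, the paper's route is much shorter than what you sketch: since Lemma \ref{Case3 Lemma 2} gives $P=0$, one has $S_{km}=\nabla _{m}X_{k}$, and because the Killing field $X$ is an infinitesimal affine transformation, (\ref{Conv5}) yields $\nabla _{l}\nabla _{m}X_{k}=-X^{r}R_{rlmk}$, so the term $\nabla _{l}S_{km}-X^{r}R_{rlkm}$ in (\ref{L6b}) vanishes identically. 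With $a_{1}=0$ and $BY=A^{\prime }Y=0$ already in hand, (\ref{L6b}) then collapses to $A(b_{2}-2a_{2}^{\prime })(g_{lm}Y_{k}-g_{kl}Y_{m})=0$. Your detour through $AK=0$, the continuity of $A$, and $\nabla K$ from (\ref{L6a}) produces exactly the same scalar relation $A(b_{2}-2a_{2}^{\prime })Y=0$, so up to that point the two arguments are equivalent in strength, yours merely being longer.

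The genuine gap is the sub-case $b_{2}=2a_{2}^{\prime }$ with $A\neq 0$, which you do not prove but only announce as an expectation about the fourth-order Taylor conditions. That is a programme, not an argument: you have not exhibited the claimed scalar identity carrying a clean factor of $A$, and it is not visible how Lemma \ref{Case3 Lemma 4} or the relations (\ref{L5a1})--(\ref{L5a2'}) would supply one once $b_{2}=2a_{2}^{\prime }$ and $BY=A^{\prime }Y=0$ are imposed (the relations of Lemma \ref{Case3 Lemma 4} then reduce to consequences of $BY=0$ or involve $b_{1}^{\prime }$ rather than $A$). As written, your proposal therefore establishes only $A(b_{2}-2a_{2}^{\prime })Y=0$. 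You should be aware that the paper's own one-line appeal to (\ref{L6b}) yields the same factor $(b_{2}-2a_{2}^{\prime })$, so the degenerate sub-case you isolate is a real difficulty and not an artifact of your method; but the reviewable point is that your proposal does not close it either, and it replaces the paper's concrete, checkable computation with an uncarried-out elimination scheme whose success you merely "expect".
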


\begin{proof}
By (\ref{20}), $Y$ is a Killing vector field on $M.$ Moreover, (\ref{L5b})
reduces to 
\begin{equation*}
2A^{\prime }g_{kl}Y_{p}+B\left( Y_{k}g_{lp}+Y_{l}g_{kp}\right) =0,
\end{equation*}%
whence we easily deduce $BY=A^{\prime }Y=0.$ Since an infinitesimal isometry
is also an infinitesimal affine transformation, from (\ref{L6b}), by the use
of (\ref{Conv5}) and the above properties, we obtain $AY=0.$
\end{proof}

\begin{lemma}
Under assumptions of Lemma \ref{Case3 Lemma 2} suppose 
\begin{equation}
P+L_{X}g=0.  \label{Case3-8-1}
\end{equation}
Then $\nabla P=0$ if and only if $BY=A^{\prime }Y=0.$
\end{lemma}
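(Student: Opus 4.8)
The plan is to turn the identity (\ref{L5b}) into a Killing-type equation for the tensor $P$ and to solve that equation by the usual cyclic-permutation trick; the coefficients surviving on the right-hand side are then precisely $A'Y$ and $BY$, and the equivalence drops out.

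First I would record what the standing hypotheses give. By Lemma \ref{Case3 Lemma 2} (together with the assumption (\ref{Case3-8-1})) we have $a_1=0$, $a_2\neq 0$, the tensor $P$ is symmetric, and $P=-L_X g$, hence $\nabla_i X_j+\nabla_j X_i=(L_X g)_{ij}=-P_{ij}$. Putting $a_1=0$ into (\ref{L5b}) and rewriting its left-hand bracket through $P$ yields
\begin{equation*}
a_2\left(\nabla_k P_{lp}+\nabla_l P_{kp}-\nabla_p P_{lk}\right)=-2A'g_{kl}Y_p-B\left(g_{lp}Y_k+g_{kp}Y_l\right).
\end{equation*}
Now I would add to this the equation obtained from it by the cyclic substitution $(k,l,p)\mapsto(p,k,l)$; since $P$ is symmetric, the left-hand side collapses to $2a_2\nabla_k P_{lp}$, and one gets
\begin{equation*}
2a_2\nabla_k P_{lp}=-2A'\left(g_{kl}Y_p+g_{kp}Y_l\right)-B\left(2g_{lp}Y_k+g_{kp}Y_l+g_{kl}Y_p\right),
\end{equation*}
whose right-hand side is indeed symmetric in $(l,p)$, as it must be.

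Both implications now follow from this one formula. If $BY=A'Y=0$ the right-hand side vanishes and, $a_2\neq 0$ being assumed, $\nabla P=0$. Conversely, if $\nabla P=0$ the right-hand side vanishes identically; contracting it with $g^{kl}$ and then with $g^{lp}$ gives the scalar relations
\begin{equation*}
2(n+1)A'Y+(n+3)BY=0,\qquad 2A'Y+(n+1)BY=0,
\end{equation*}
a linear system in $(A'Y,\,BY)$ with coefficient determinant $2(n-1)(n+2)$, which is nonzero because $n=\dim M>2$; hence $A'Y=BY=0$.

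I do not foresee a genuine obstacle. The only place calling for care is the bookkeeping of indices in the cyclic sum and the repeated use of the symmetry of $P$ at the right moments; no curvature terms need to be handled here, since they were already eliminated when (\ref{L5b}) was obtained from (\ref{I2}) and (\ref{II1}).
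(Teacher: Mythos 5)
Your proof is correct and follows essentially the same route as the paper: substitute $a_1=0$ and $P=-L_Xg$ into (\ref{L5b}), use the symmetry of $P$ to isolate a single $\nabla P$ term (your cyclic addition is the paper's symmetrization in $(k,p)$ up to relabelling), and read off the equivalence; your two contractions make the converse direction fully explicit. No issues.
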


\begin{proof}
Substituting into (\ref{L5b}), symmetrizing in $(k,p)$ and applying (\ref%
{Case3-8-1}) we get 
\begin{equation*}
a_{2}\nabla _{l}P_{kp}+B(2g_{kp}Y_{l}+g_{lp}Y_{k}+g_{kl}Y_{p})+2A^{\prime
}(g_{kl}Y_{p}+g_{lp}Y_{k})=0,
\end{equation*}%
whence the thesis results.
\end{proof}

A complete lift of a Killing vector field on $M$ to $(TM,G)$ is always a
Killing vector field (see the next section). Thus we have proved

\begin{theorem}
\label{Structure 2}Let on $TM,$ $dimTM>4,$ a $g-$ natural metric $G$%
\begin{eqnarray*}
G_{(x,u)}(X^{h},Y^{h}) &=&A(r^{2})g_{x}(X,Y)+B(r^{2})g_{x}(X,u)g_{x}(Y,u), \\
G_{(x,u)}(X^{h},Y^{v})
&=&a_{2}(r^{2})g_{x}(X,Y)+b_{2}(r^{2})g_{x}(X,u)g_{x}(Y,u), \\
G_{(x,u)}(X^{v},Y^{h})
&=&a_{2}(r^{2})g_{x}(X,Y)+b_{2}(r^{2})g_{x}(X,u)g_{x}(Y,u), \\
G_{(x,u)}(X^{v},Y^{v}) &=&0
\end{eqnarray*}%
be given, where $a_{2}b_{2}\neq 0$ everywhere on $TM$ while $%
b_{2}-a_{2}^{\prime }$ and either $A$ or $B$ do not vanish on a dense subset
of $TM.$ If $Z$ is a Killing vector field on $TM,$ then there exists an open
subset $U$ containing $M$ such that $Z$ restricted to $U$ is a complete lift
of a Killing vector field on $M,$ i.e.%
\begin{equation*}
Z_{|U}=X^{C}.
\end{equation*}
\end{theorem}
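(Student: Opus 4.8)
The plan is to recognise this as Case~3 of Theorem~\ref{Splitting theorem}, run through the Case~3 lemmas to kill the tensors $Y$, $K$, $P$, and then close by rigidity of infinitesimal affine transformations. First I would note that $G(X^{v},Y^{v})=0$ forces $a_{1}\equiv b_{1}\equiv 0$ on $TM$, so $a=a_{1}A-a_{2}^{2}=-a_{2}^{2}\neq 0$, $b=b_{1}-a_{1}^{\prime }=0$, $2ba_{2}-a_{1}b_{2}=0$, $a_{1}=0$ and $a_{2}b_{2}\neq 0$; this is exactly case~3 of Theorem~\ref{Splitting theorem}, so on a neighbourhood $U$ of the zero section the relations (\ref{19})--(\ref{21}) hold, and the hypotheses of Lemma~\ref{Case3 Lemma 2}, of Lemma~\ref{Case3 Lemma 4}, and of the two (unlabelled) lemmas between them and the present statement are all in force with $b_{1}=a_{1}^{\prime }=0$.

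Next I would extract the vanishing of the tensor fields. From Lemma~\ref{Case3 Lemma 2} one has $BK=0$, $a_{3}K=AK=0$, $P$ symmetric with $P+L_{X}g=0$, and the relations carrying the factor $b_{2}-2a_{2}^{\prime }$; using that $b_{2}-2a_{2}^{\prime }$ is non-vanishing on a dense subset of $TM$ and that the accompanying tensor expressions are continuous, one concludes $L_{X}g=0$ on $M$, hence $P=0$. Substituting $L_{X}g=0$ into (\ref{20}) gives $a_{2}\,L_{Y}g=0$, so $Y$ is a Killing vector field on $M$; then the lemma asserting that $L_{X}g=0$ implies $AY=BY=A^{\prime }Y=0$, together with the hypothesis that $A$ or $B$ is non-vanishing on a dense subset, yields $Y=0$, and the same non-vanishing applied to $AK=BK=0$ gives $K=0$.

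Finally I would use affine rigidity. Put $X=(Z^{a}(x,0))$, a Killing vector field on $M$ by the previous paragraph; its complete lift $X^{C}$ is itself a Killing, hence infinitesimal affine, vector field on $(TM,G)$ (as recalled immediately before the statement), so $W:=Z-X^{C}$ is an infinitesimal affine vector field on $U$. In the Taylor notation, $X^{C}$ has $X$-component $X^{a}(x)$ with all $u$-derivatives of its horizontal part zero, and $Y=P=0$; hence along the zero section the $0$-jet data ($X$- and $Y$-components) and the first-order data ($K$- and $P$-components, equivalently the adapted-frame first derivatives) of $W$ all vanish, so at any point $(x,0)$ both $W$ and its first covariant derivative with respect to the Levi-Civita connection of $G$ vanish. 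By the rigidity theorem for infinitesimal affine transformations (\cite{KN}, p.~232), $W\equiv 0$ on the connected component of $U$ containing $M\times \{0\}$; renaming that component $U$ gives $Z_{|U}=X^{C}$.

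The step I expect to be the main obstacle is the cancellation of the scalar factors $b_{2}-2a_{2}^{\prime }$, $A$ and $B$: the Case~3 lemmas deliver the relevant identities on the zero section, where these are the constants $b_{2}(0)-2a_{2}^{\prime }(0)$, $A(0)$, $B(0)$, so deducing that these constants are non-zero from ``non-vanishing on a dense subset of $TM$'' needs either the observation that the identities actually persist for $u\neq 0$ with $r^{2}$-dependent coefficients, or an analyticity hypothesis on the profile functions; one should also reconcile the printed hypothesis on $b_{2}-a_{2}^{\prime }$ with the $b_{2}-2a_{2}^{\prime }$ that appears in Lemma~\ref{Case3 Lemma 2}. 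Everything after the three tensors are killed --- matching the full $1$-jet of $Z$ along the zero section with that of $X^{C}$ and invoking affine rigidity --- is routine.
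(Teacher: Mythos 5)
Your proposal is correct and follows essentially the same route as the paper's (largely implicit) proof: identify the hypotheses as Case~3 of Theorem~\ref{Splitting theorem} via $a_{1}\equiv b_{1}\equiv 0$, use Lemma~\ref{Case3 Lemma 2} and the subsequent Case~3 lemmas to force $L_{X}g=0$, $P=0$, $Y=0$, $K=0$, and then conclude $Z=X^{C}$ by matching the $1$-jet along the zero section and invoking the rigidity of infinitesimal affine (here Killing) transformations together with Theorem~\ref{Lift prop 2}. The two caveats you flag --- the mismatch between the printed factor $b_{2}-a_{2}^{\prime }$ and the $b_{2}-2a_{2}^{\prime }$ actually produced by Lemma~\ref{Case3 Lemma 2}, and the fact that the non-vanishing hypotheses must be read at $r^{2}=0$ since all the coefficient identities live on $M\times \{0\}$ --- are genuine imprecisions in the paper's statement rather than gaps in your argument, and you resolve them in the only way the paper's own lemmas permit.
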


\subsection{Case 4}

The class under consideration contains the Sasaki metric $g^{S}$ and the
Cheeger-Gromol one $g^{CG}.$ In (\cite{Tanno 2}) Tanno proved the following

\begin{theorem}
Let $(M,g)$ be a Riemannian manifold. Let $X$ be a Killing vector field on $%
M,$ $P$ be a $(1,1)$ tensor field on $M$ that is skew-symmetric and parallel
and $Y$ be a vector field on $M$ that satisfies $\nabla _{k}\nabla
_{l}Y_{p}+\nabla _{l}\nabla _{k}Y_{p}=0$ and (\ref{LemmaCase4-3-1}). Then
the vector field $Z$ on $TM$ defined by%
\begin{equation*}
Z=X^{C}+\iota P+Y^{\#}=(X^{r}-\nabla ^{r}Y_{s}u^{s})\partial
_{r}^{h}+(Y^{r}+S_{s}^{r}u^{s})\partial _{r}^{v}
\end{equation*}%
is a Killing vector field on $(TM,g^{S}).$ Conversely, any Killing vector
field on $(TM,g^{S})$ is of this form.
\end{theorem}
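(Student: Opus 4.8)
The plan is to recognize the Sasaki metric $g^{S}$ as the member of Case~4 obtained by setting $a_{1}\equiv 1$ and $a_{2}\equiv a_{3}\equiv b_{1}\equiv b_{2}\equiv b_{3}\equiv 0$, so that also $A\equiv 1$, $B\equiv 0$ and $A^{\prime }=B^{\prime }=a_{j}^{\prime }=b_{j}^{\prime }=0$. For these values Lemma \ref{Lie Deriv} collapses dramatically: (\ref{LD10}) becomes $(L_{Z}G)(\partial _{k}^{h},\partial _{l}^{h})=g_{al}(\partial _{k}^{h}H^{a}+H^{r}\Gamma _{rk}^{a})+g_{ak}(\partial _{l}^{h}H^{a}+H^{r}\Gamma _{rl}^{a})$, (\ref{LD12}) becomes $(L_{Z}G)(\partial _{k}^{v},\partial _{l}^{v})=g_{al}\partial _{k}^{v}V^{a}+g_{ak}\partial _{l}^{v}V^{a}$, and (\ref{LD11}) becomes $(L_{Z}G)(\partial _{k}^{v},\partial _{l}^{h})=-R_{alkr}u^{r}H^{a}+g_{al}\partial _{k}^{v}H^{a}+g_{ak}(\partial _{l}^{h}V^{a}+V^{r}\Gamma _{rl}^{a})$. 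These three identities, fed with the Taylor expansions of $H^{a}$ and $V^{a}$, are the backbone for both implications.

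For the "if" direction I would substitute the components of $Z=X^{C}+\iota P+Y^{\#}$, namely $H^{a}=X^{a}+K_{p}^{a}u^{p}$ with $K_{p}^{a}=-\nabla ^{a}Y_{p}$ and $V^{a}=Y^{a}+S_{p}^{a}u^{p}$ with $S_{p}^{a}=P_{p}^{a}+\nabla _{p}X^{a}$, using $\partial _{k}^{h}H^{a}+H^{r}\Gamma _{rk}^{a}=\nabla _{k}X^{a}+u^{p}\nabla _{k}K_{p}^{a}$ and $\partial _{l}^{h}V^{a}+V^{r}\Gamma _{rl}^{a}=\nabla _{l}Y^{a}+u^{p}\nabla _{l}S_{p}^{a}$, and then check the three components vanish order by order in $u$. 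The $(\partial ^{h},\partial ^{h})$-component becomes $(L_{X}g)_{kl}-u^{p}(\nabla _{k}\nabla _{l}Y_{p}+\nabla _{l}\nabla _{k}Y_{p})$, which is zero by the hypotheses on $X$ and $Y$. The $(\partial ^{v},\partial ^{v})$-component becomes $\overline{S}_{kl}=(P_{kl}+P_{lk})+(L_{X}g)_{kl}=0$ since $P$ is skew and $X$ Killing. The $(\partial ^{v},\partial ^{h})$-component splits by $u$-degree: the constant term is $K_{lk}+\nabla _{l}Y_{k}$, zero by the choice of $K$; the linear term is $u^{p}(\nabla _{l}S_{kp}-X^{a}R_{alkp})$, which reduces to $-u^{p}\nabla _{l}P_{kp}$ upon using the Killing identity $\nabla _{l}\nabla _{p}X_{k}=-X^{r}R_{rlpk}$ (from (\ref{Conv5}) with $L_{X}\Gamma =0$) and the antisymmetry of $R$ in its last two slots, hence vanishes because $P$ is parallel; the quadratic term is $-u^{r}u^{p}R_{alkr}K_{p}^{a}$, whose vanishing is precisely condition (\ref{LemmaCase4-3-1}) rewritten via $K=-\nabla Y$. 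Thus $L_{Z}G=0$.

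For the converse I would first invoke Theorem \ref{Splitting theorem}: since $a_{2}=b_{2}=0$ we are in Case~4, so (\ref{22}) already gives that $X$ is Killing, $P$ skew-symmetric, and $K_{lk}+\nabla _{l}Y_{k}=0$. It then remains to kill all Taylor coefficients of $Z$ of $u$-order $\geq 2$. Specialising (\ref{II2}) gives $E_{lkp}=-\nabla _{l}S_{kp}+X^{a}R_{alkp}=-\nabla _{l}P_{kp}$ (again by the Killing identity and $\overline{S}=0$ from (\ref{III1})); since $E_{lkp}$ is symmetric in $(k,p)$ while $\nabla _{l}P_{kp}$ is skew in $(k,p)$, this forces $E\equiv 0$ and $\nabla P=0$ simultaneously. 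Equation (\ref{III2}) gives $T_{lkp}+T_{klp}=0$, and since the Taylor coefficient $T_{lkp}$ is symmetric in $(k,p)$, the "symmetric in the trailing slots, alternating in the first two" pattern forces $T\equiv 0$ (the transposition $(12)$ together with the permutations of the symmetric slots generate the full symmetric group, so $T$ would be totally symmetric and totally alternating). The same argument applied to (\ref{III3}) gives $W\equiv 0$; then (\ref{II3}) reduces to $F_{lkpq}=K_{p}^{a}R_{alkq}+K_{q}^{a}R_{alkp}$, and since $F_{lkpq}$ is symmetric in its last three indices while the right side is symmetric only in $(p,q)$, symmetrising over $(k,p,q)$ and using the antisymmetry of $R$ forces the right side to vanish, so $F\equiv 0$; this identity is exactly (\ref{LemmaCase4-3-1}), while $\nabla _{k}\nabla _{l}Y_{p}+\nabla _{l}\nabla _{k}Y_{p}=0$ drops out of (\ref{I2}). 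An induction on the order, using the higher-order analogues of (\ref{I4})--(\ref{III4}), repeats the pattern: the $(\partial ^{v},\partial ^{v})$-identity at order $m$ forces the degree-$(m+1)$ vertical coefficient to vanish by the symmetry argument, and the $(\partial ^{v},\partial ^{h})$-identity then forces the degree-$(m+1)$ horizontal coefficient to vanish because its only curvature corrections involve already-vanishing lower coefficients. Hence $H^{a}=X^{a}-\nabla ^{a}Y_{p}u^{p}$ and $V^{a}=Y^{a}+S_{p}^{a}u^{p}$, i.e. $Z=X^{C}+\iota P+Y^{\#}$; as both $Z$ and $X^{C}+\iota P+Y^{\#}$ are Killing fields with the same $1$-jet at $(x,0)$, they coincide on the connected component.

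I expect the main obstacle to be bookkeeping rather than concept: matching the quadratic part of the $(\partial ^{v},\partial ^{h})$-component and the identity produced by (\ref{II3}) with the stated hypothesis (\ref{LemmaCase4-3-1}) requires scrupulous care with index positions and with the order in which the Killing and Bianchi identities are applied, and the induction in the converse needs the higher-order Killing identities, which are not written out above and must be generated by continuing the Taylor procedure. The only structural inputs — that the $u$-coefficients of $H^{a}$ and $V^{a}$ are totally symmetric in their "$u$-indices", and that the curvature terms appearing in the order-$m$ identities involve only strictly lower coefficients — are easy but must be verified so that the induction actually closes. For $\dim M\leq 2$ the Splitting Theorem is unavailable and one would fall back on Tanno's original argument in \cite{Tanno 2}.
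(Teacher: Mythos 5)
The paper does not actually prove this statement: it is quoted verbatim as Tanno's theorem from \cite{Tanno 2} at the head of the Case~4 subsection, so there is no in-paper proof to match yours against. That said, your proposal is exactly the natural reconstruction using the paper's own machinery (and it is, in substance, Tanno's original method, which the paper generalizes): specialize $a_{1}=A=1$ and all other coefficients to zero in Lemma \ref{Lie Deriv}, check the three components of $L_{Z}G$ degree by degree in $u$ for the sufficiency, and run the Taylor-coefficient identities (\ref{I1})--(\ref{III4}) for the necessity. I verified the computations you outline: the constant, linear and quadratic terms of the $(\partial^{v},\partial^{h})$-component do reduce to $K_{lk}+\nabla_{l}Y_{k}$, $\nabla_{l}P_{kp}$ (via the Killing identity $\nabla_{l}\nabla_{p}X_{k}=-X^{r}R_{rlpk}$ and the skew-symmetry of $R$ in its last pair), and condition (\ref{LemmaCase4-3-1}); and in the converse the "symmetric in the trailing slots, alternating in the leading pair" argument does annihilate $T$, $W$ and their higher analogues, while the cyclic sum over $(k,p,q)$ of $K_{p}^{a}R_{alkq}+K_{q}^{a}R_{alkp}$ vanishes by the antisymmetry of $R$, forcing $F=0$ and recovering (\ref{LemmaCase4-3-1}). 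Closing with the fact that a Killing field is determined by its $1$-jet is the right way to dispose of the Taylor remainder.

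Two small points. First, you do not need Theorem \ref{Splitting theorem} (and hence not $\dim M>2$): for $a_{2}=b_{2}=0$ and $a_{1}=A=1$ the equations (\ref{I1}), (\ref{III1}) and (\ref{II1}) directly give $L_{X}g=0$, $\overline{S}=0$ (hence $P$ skew) and $K_{lk}=-\nabla_{l}Y_{k}$, so your final caveat about falling back on Tanno for low dimensions is unnecessary. Second, the induction on higher-order coefficients is genuinely only sketched — the paper records the identities only through third order, so the general-order versions of (\ref{III4}) and (\ref{II4}) must be generated and one must check that the curvature corrections at order $m$ involve only coefficients of order $\le m-1$; you flag this honestly, and the structure of the Lie-derivative formulas makes it routine, but it is the one place where the write-up is not yet a complete proof.
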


A similar theorem holds for $(TM,g^{CG}),$ (\cite{Abbassi 2003}). However,
in virtue of Lemma \ref{AfterLE5} and the remark after it, the $Y$ component
vanishes.

We shall give a simple sufficient condition for $\iota P$ to be a Killing
vector field on $TM.$ The rest of the section is devoted to investigations
on the properties of the $Y$ component.

Notice that $a\neq 0$ and $a_{2}=0$ require $a_{1}A\neq 0.$ From (\ref{L5b})
we get immediately%
\begin{equation}
a_{1}\left( \nabla _{k}\nabla _{l}Y_{p}+\nabla _{l}\nabla _{k}Y_{p}\right)
=2A^{\prime }g_{kl}Y_{p}+B\left( Y_{k}g_{lp}+Y_{l}g_{kp}\right) .
\label{LEC4-0-1}
\end{equation}

Since $b_{2}=0,$ symmetrizing (\ref{II2}) in $\left( k,p\right) $ we get $%
AE_{lkp}=a_{2}^{\prime }(g_{lk}Y_{p}+g_{lp}Y_{k}).$ Consequently$,$ in
virtue of the properties of the Lie derivative (\ref{Conv5}), (\ref{L6b}) \
and (\ref{22}) yield 
\begin{equation*}
a_{1}\nabla _{l}P_{kp}=a_{2}^{\prime }(g_{lp}Y_{k}-g_{lk}Y_{p}).
\end{equation*}

Moreover, because of $a_{2}=0,$ $b_{2}=0,$ $\overline{S}_{pq}=0$ and $\nabla
_{l}X_{q}+S_{lq}=P_{lq}=-P_{ql},$ identity (\ref{I3}) together with (\ref%
{L5a})\ yields 
\begin{equation*}
BP_{kp}=a_{2}^{\prime }\nabla _{k}Y_{p},
\end{equation*}%
whence, since $P$ is skew-symmetric,%
\begin{equation}
a_{2}^{\prime }L_{Y}g=0\text{ and }a_{2}^{\prime }Tr(\nabla Y)=0
\label{LEC4-0-3}
\end{equation}%
result.

Next, Lemma \ref{LE8} yields%
\begin{equation*}
B\nabla _{k}X_{m}-a_{2}^{\prime }\nabla _{k}Y_{m}+BP_{km}=0,
\end{equation*}%
whence we find%
\begin{equation*}
B\nabla X=0.
\end{equation*}%
We conclude with

\begin{lemma}
Suppose (\ref{H}), $dimM>2,$ and $a_{2}=0,$ $b_{2}=0$ on $M\times \{0\}.$\
If $a_{2}^{\prime }=0$ on $M\times \{0\},$ then $BP=0$ and $\nabla P=0$ on $%
M\times \{0\}.$
\end{lemma}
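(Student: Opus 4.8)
The plan is to obtain the lemma as the clean specialization $a_{2}^{\prime}=0$ of the two pointwise identities for $P$ and $\nabla P$ that were already established in the paragraphs immediately preceding the statement. Note first that, as remarked there, the standing hypotheses $a\neq 0$ and $a_{2}=0$ force $a_{1}A\neq 0$; in particular $a_{1}\neq 0$, which is exactly what will let us divide by $a_{1}$ at the end.

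First I would invoke the identity $a_{1}\nabla _{l}P_{kp}=a_{2}^{\prime}(g_{lp}Y_{k}-g_{lk}Y_{p})$, which was derived above from (\ref{L6b}), the property (\ref{Conv5}) of the Lie derivative of a Killing vector field, and the Case~4 relations (\ref{22}). Substituting $a_{2}^{\prime}=0$ and cancelling the nonzero factor $a_{1}$ gives $\nabla _{l}P_{kp}=0$, i.e.\ $\nabla P=0$ on $M\times \{0\}$. Next I would use the identity $BP_{kp}=a_{2}^{\prime}\nabla _{k}Y_{p}$, obtained above from (\ref{I3}) together with (\ref{L5a}) after using $a_{2}=b_{2}=0$, $\overline{S}_{pq}=0$, and the skew-symmetry of $P$; with $a_{2}^{\prime}=0$ this yields $BP=0$ directly. (Alternatively one reads $BP=0$ off the relation $B\nabla _{k}X_{m}-a_{2}^{\prime}\nabla _{k}Y_{m}+BP_{km}=0$ coming from Lemma~\ref{LE8} combined with the already-established $B\nabla X=0$; this is a useful consistency check.)

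The work in this lemma is essentially bookkeeping rather than new mathematics: the substantive computations — extracting $AE_{lkp}$, $a_{1}\nabla _{l}P_{kp}$, $BP_{kp}$, and $B\nabla X=0$ from the second- and third-order Taylor identities (\ref{II2}), (\ref{I3}), (\ref{L6b}) and from Lemmas~\ref{Lemmas L5} and~\ref{LE8} — were carried out in the run-up to the statement. Accordingly, the only point that genuinely needs care is to verify that none of those derivations secretly assumed $a_{2}^{\prime}\neq 0$; inspecting them, each was written as an equality valid for every value of $a_{2}^{\prime}$, so the substitution $a_{2}^{\prime}=0$ is legitimate and the two displayed conclusions follow at once.
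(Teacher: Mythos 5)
Your proposal is correct and follows essentially the same route as the paper: the paper states the lemma as the immediate conclusion of the displayed identities $a_{1}\nabla _{l}P_{kp}=a_{2}^{\prime }(g_{lp}Y_{k}-g_{lk}Y_{p})$ and $BP_{kp}=a_{2}^{\prime }\nabla _{k}Y_{p}$ derived just above it, exactly as you do, with $a_{1}\neq 0$ guaranteed by $a\neq 0$ and $a_{2}=0$.
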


By Proposition \ref{jotaP} we obtain

\begin{theorem}
Suppose $a_{2}(r^{2})=0,$ $b_{2}(r^{2})=0$ and $B(r^{2})=0$ on $(TM,G).$ If $%
M$ admits non-trivial skew-symmetric and parallel $(0,2)$ tensor field $P,$
then its $\iota $-lift is a Killing vector field on $TM.$
\end{theorem}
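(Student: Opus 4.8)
The plan is to compute $L_{\iota P}G$ directly from the formulas of Lemma~\ref{Lie Deriv} and check that all three of its components vanish; this is precisely the content of Proposition~\ref{jotaP}, which one may invoke in place of the computation. Write $Z=\iota P$. By the definition of the $\iota$-lift, $Z$ is vertical with local components $H^{a}=0$ and $V^{a}=u^{r}P_{r}{}^{a}$, where $P_{r}{}^{a}=g^{ab}P_{rb}$; in the Taylor notation of Section 3.2 this means $X^{a}=Y^{a}=0$, $K=E=0$, $S_{p}^{a}=P_{p}{}^{a}$, and all higher coefficients vanish. Before substituting I would record the elementary facts that make everything collapse: since $V^{a}$ is linear in $u$, $\partial _{k}^{v}V^{a}=P_{k}{}^{a}$; since $P$ is parallel, $\partial _{k}^{h}V^{a}+V^{r}\Gamma _{rk}^{a}=u^{r}\nabla _{k}P_{r}{}^{a}=0$; and since $P$ is skew-symmetric, $V^{b}u_{b}=P_{pb}u^{p}u^{b}=0$, $P_{kl}+P_{lk}=0$, while $V_{k}=P_{pk}u^{p}$ and $\partial _{k}^{v}V^{a}u_{a}=P_{k}{}^{a}u_{a}=P_{kp}u^{p}$.

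Next I would substitute these, together with the hypotheses $a_{2}\equiv b_{2}\equiv B\equiv 0$ — which also force $a_{2}^{\prime }=b_{2}^{\prime }=B^{\prime }=0$ identically, each function being constantly zero — into (\ref{LD10}), (\ref{LD11}), (\ref{LD12}). In (\ref{LD10}) every term is a multiple of $a_{2}$, $B$, $b_{2}$, $H^{a}$, or $V^{b}u_{b}$, hence vanishes. In (\ref{LD11}) the terms carrying $H^{a}$ drop, the terms carrying $a_{2}$, $b_{2}$, $a_{2}^{\prime }$, $b_{2}^{\prime }$ drop by hypothesis, and the remaining $a_{1}(\partial _{l}^{h}V^{a}+V^{r}\Gamma _{rl}^{a})g_{ak}+b_{1}(\partial _{l}^{h}V^{a}+V^{r}\Gamma _{rl}^{a})u_{a}u_{k}$ vanishes because $\partial _{l}^{h}V^{a}+V^{r}\Gamma _{rl}^{a}=0$. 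In (\ref{LD12}) the $a_{2}$- and $b_{2}$-terms drop, the terms $2a_{1}^{\prime }g_{kl}V^{b}u_{b}$ and $2b_{1}^{\prime }V^{b}u_{b}u_{k}u_{l}$ drop since $V^{b}u_{b}=0$, the term $a_{1}(\partial _{k}^{v}V^{a}g_{al}+\partial _{l}^{v}V^{a}g_{ak})$ equals $a_{1}(P_{kl}+P_{lk})=0$, and the two surviving $b_{1}$-terms $b_{1}(V_{k}u_{l}+V_{l}u_{k})$ and $b_{1}(\partial _{k}^{v}V^{a}u_{a}u_{l}+\partial _{l}^{v}V^{a}u_{a}u_{k})$ add up to $b_{1}\left[ (P_{pk}+P_{kp})u^{p}u_{l}+(P_{pl}+P_{lp})u^{p}u_{k}\right] =0$. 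Thus all three components of $L_{Z}G$ vanish; since $L_ZG$ is symmetric and a symmetric $(0,2)$ tensor on $TM$ is zero as soon as it is zero on every pair from the adapted frame $(\partial _{k}^{h},\partial _{l}^{v})$, it follows that $Z=\iota P$ is a Killing vector field on $TM$.

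The only step that is not purely mechanical inspection is the last cancellation in (\ref{LD12}): the two $b_{1}$-terms do not vanish separately and must be combined after writing $V_{k}=P_{pk}u^{p}$ and $\partial _{k}^{v}V^{a}u_{a}=P_{kp}u^{p}$ and only then using skew-symmetry; organizing the substitution so this pairing is visible is the one place to take care. I would also remark that the hypotheses force $a=a_{1}A\neq 0$, so that this metric falls under Case~4, that non-degeneracy of $G$ is nowhere needed in the computation, and that the hypothesis that $P$ be non-trivial serves only to make the conclusion non-vacuous.
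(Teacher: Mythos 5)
Your proposal is correct and follows essentially the same route as the paper: the paper's proof is precisely to invoke Proposition \ref{jotaP} (and its skew-symmetric specialization), which is itself obtained by the direct substitution $H^{a}=0$, $V^{a}=u^{r}P_{r}{}^{a}$ into (\ref{LD10})--(\ref{LD12}) that you carry out, with the same cancellations from $a_{2}=b_{2}=B=0$, $\nabla P=0$ and skew-symmetry. The only (harmless) discrepancy is the index-raising convention for $P_{r}{}^{a}$, which changes the sign of $V_{k}$ but not the final pairwise cancellation of the $b_{1}$-terms.
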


\begin{lemma}
\label{LE12}If $a_{2}=0,$ $b_{2}=0$ at $(x,0)$ and $a\neq 0$ everywhere on $%
TM,$ then 
\begin{multline}
3a_{1}^{2}\left( \nabla ^{r}Y_{q}R_{rlkp}+\nabla ^{r}Y_{p}R_{rlkq}\right) =
\label{LE12-1} \\
a_{1}B\left[ \left( 2\nabla _{q}Y_{k}-\nabla _{k}Y_{q}\right) g_{pl}+\left(
2\nabla _{p}Y_{k}-\nabla _{k}Y_{p}\right) g_{ql}-\left( \nabla
_{p}Y_{q}+\nabla _{q}Y_{p}\right) g_{kl}\right] + \\
2A(b_{1}-a_{1}^{\prime })\left( 2\nabla _{l}Y_{k}g_{pq}-\nabla
_{l}Y_{p}g_{kq}-\nabla _{l}Y_{q}g_{kp}\right)
\end{multline}%
and%
\begin{multline*}
3a_{1}^{2}\nabla ^{q}Y_{p}R_{qlkr}u^{p}u^{r}= \\
2A(b_{1}-a_{1}^{\prime })\left[ Y_{k,l}r^{2}-Y_{p,l}u_{k}u^{p}\right] +a_{1}B%
\left[ \left( 2Y_{k,p}-Y_{p,k}\right) u_{l}-g_{kl}Y_{p,q}u^{q}\right] u^{p}
\end{multline*}%
hold at arbitrary point $(x,0)\in TM.$
\end{lemma}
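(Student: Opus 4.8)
The plan is to read off (\ref{LE12-1}) from equation (\ref{like 78}) of Lemma \ref{LE6}, once the latter is specialized to the case at hand, and then to obtain the second formula by contracting with $u$.

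Since $a\neq 0$ and $a_{2}=b_{2}=0$ at $(x,0)$, Case~4 of Theorem \ref{Splitting theorem} applies, so the relations (\ref{22}) hold: $\nabla_{k}X_{l}+\nabla_{l}X_{k}=0$, $P_{kl}+P_{lk}=0$ and $AK_{lk}+a_{1}\nabla_{l}Y_{k}=0$; moreover $a=a_{1}A$, so $a_{1}A\neq 0$. In particular $\overline{S}_{ab}=0$, and, recalling that in $K_{lk}=K_{k}^{a}g_{al}$ the first index is the contracted one, the third relation is equivalently $AK_{k}^{a}=-a_{1}\nabla^{a}Y_{k}$.

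First I would substitute $b_{2}=0$ and $\overline{S}_{ab}=0$ into (\ref{like 78}). The bracket multiplied by $b_{2}$ and the term carrying $(b_{2}-2a_{2}^{\prime})\overline{S}$ drop, and what is left is
\begin{multline*}
B\left[ g_{kl}\left( K_{mn}-2K_{nm}\right) +g_{lm}\left( K_{kn}+K_{nk}\right) +g_{ln}\left( K_{mk}-2K_{km}\right) \right] + \\
2(b_{1}-a_{1}^{\prime})\left( 2\nabla_{l}Y_{m}\,g_{kn}-\nabla_{l}Y_{n}\,g_{km}-\nabla_{l}Y_{k}\,g_{mn}\right) + \\
3a_{1}\left( K_{n}^{r}R_{rlmk}+K_{k}^{r}R_{rlmn}\right) =0 .
\end{multline*}
Multiplying this identity by $A$ and replacing every factor $AK$ by $-a_{1}\nabla Y$ (using $AK_{ab}=-a_{1}\nabla_{a}Y_{b}$ in the $g$-contracted terms and $AK_{a}^{r}=-a_{1}\nabla^{r}Y_{a}$ in the curvature terms), the last summand becomes $-3a_{1}^{2}\left( \nabla^{r}Y_{n}R_{rlmk}+\nabla^{r}Y_{k}R_{rlmn}\right)$, and the first two become purely $\nabla Y$-terms. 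Isolating the curvature terms and relabelling $(m,k,n)\mapsto(k,p,q)$ reproduces (\ref{LE12-1}): the $(b_{1}-a_{1}^{\prime})$-part matches line by line, and the $B$-bracket, after the substitution, coincides term by term with the $a_{1}B$-bracket of (\ref{LE12-1}).

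For the second identity I would contract (\ref{LE12-1}) with $u^{p}u^{q}$ and put $r^{2}=g_{pq}u^{p}u^{q}$. On the left the two curvature terms coincide, and on the right the pairs $g_{pl},g_{ql}$ and $g_{kp},g_{kq}$ coincide after the contraction, so an overall factor $2$ appears on both sides; dividing by $2$ yields the stated formula. The whole argument is elementary index bookkeeping — no Bianchi or Ricci identity is required here. The only step needing care is identifying which slot of $K$ is contracted when passing from $AK_{lk}=-a_{1}\nabla_{l}Y_{k}$ to $AK_{k}^{a}=-a_{1}\nabla^{a}Y_{k}$, together with the final relabelling that brings the expression into the shape (\ref{LE12-1}); routing through (\ref{like 78}) avoids having to differentiate any second-order identity.
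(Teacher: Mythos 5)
Your proposal is correct and follows essentially the same route as the paper: the author likewise specializes (\ref{like 78}) to $a_{2}=b_{2}=0$, multiplies by $A$, substitutes via (\ref{22}), and relabels indices to reach (\ref{LE12-1}), with the second formula following by contraction with $u^{p}u^{q}$. Your index bookkeeping (in particular $AK_{k}^{a}=-a_{1}\nabla^{a}Y_{k}$ and the relabelling $(m,k,n)\mapsto(k,p,q)$) checks out.
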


\begin{proof}
To prove the lemma it is enough to put $a_{2}=0,$ $b_{2}=0$ in (\ref{like 78}%
), then multiply by $A$ and apply (\ref{22}). For convenience indices $(k,m)$
are interchanged after that.
\end{proof}

\begin{lemma}
\label{LemmaCase4-3}Suppose (\ref{H}), $dimM>2,$ $a\neq 0.$ If neither $%
\nabla _{n}Y_{m}=0$ nor $\nabla _{n}Y_{m}=\frac{T}{n}g_{mn},$ then 
\begin{equation}
\nabla ^{r}Y_{q}R_{rlkp}+\nabla ^{r}Y_{p}R_{rlkq}=0  \label{LemmaCase4-3-1}
\end{equation}%
if and only if $B=b=0.$
\end{lemma}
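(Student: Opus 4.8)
The plan is to feed the simplifying hypotheses of Case 4 — namely $a_2 = b_2 = 0$, and consequently (from Lemma \ref{Lemma 9}) $a_1 A \neq 0$ — into the third-order identity (\ref{LE12-1}) of Lemma \ref{LE12}, which already packages the relevant content of (\ref{like 78}) after multiplication by $A$ and use of (\ref{22}). The key point is that (\ref{LemmaCase4-3-1}) says exactly that the left-hand side of (\ref{LE12-1}) vanishes, so under the hypothesis (\ref{LemmaCase4-3-1}) the identity (\ref{LE12-1}) collapses to
\begin{multline*}
a_1 B\left[ \left( 2\nabla_q Y_k - \nabla_k Y_q\right) g_{pl} + \left( 2\nabla_p Y_k - \nabla_k Y_p\right) g_{ql} - \left( \nabla_p Y_q + \nabla_q Y_p\right) g_{kl}\right] + \\
2A(b_1 - a_1')\left( 2\nabla_l Y_k\, g_{pq} - \nabla_l Y_p\, g_{kq} - \nabla_l Y_q\, g_{kp}\right) = 0.
\end{multline*}
This is now a purely algebraic (tensorial) identity in the $(0,2)$-tensor $\nabla Y$ and the metric $g$, with scalar coefficients $a_1 B$ and $A(b_1-a_1') = Ab$.

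First I would extract the "only if" direction. Contract the displayed identity with $g^{pq}$ and with $g^{kp}$ to obtain two relations expressing combinations of $\nabla_l Y_k$, $\nabla_k Y_l$, $(\mathrm{Tr}\,\nabla Y)g_{kl}$ with coefficients built from $a_1 B$ and $Ab$ and $n$; these are linear in the symmetric and skew parts of $\nabla Y$ and in $(\mathrm{Tr}\,\nabla Y)g$. Since $n = \dim M > 2$, the coefficient matrix of this small linear system is generically invertible, and one reads off that either $\nabla Y$ has the excluded special form ($\nabla_n Y_m = 0$ or $\nabla_n Y_m = \frac{T}{n} g_{mn}$, i.e. $\nabla Y$ is a pure-trace tensor) or the determinantal condition forces $a_1 B = 0$ and $Ab = 0$; since $a_1 \neq 0$ this gives $B = 0$, and then $b = b_1 - a_1' = 0$ as well. (One also has to handle the skew part: plugging $B=b=0$ back shows consistency, but the reverse substitution also needs the antisymmetrized version of the identity to pin down $b$ separately from $B$ — alternating the collapsed identity in a suitable index pair isolates $Ab$ times the skew part of $\nabla Y$, forcing $Ab = 0$ once $\nabla Y$ is not symmetric, which it isn't under our exclusion.) The "if" direction is immediate: if $B = b = 0$, the collapsed identity above is satisfied vacuously, hence (\ref{LE12-1}) already asserts $3a_1^2(\nabla^r Y_q R_{rlkp} + \nabla^r Y_p R_{rlkq}) = 0$, and $a_1 \neq 0$ yields (\ref{LemmaCase4-3-1}).

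The main obstacle I anticipate is the bookkeeping in the "only if" direction: after the two contractions one gets a homogeneous linear system whose unknowns are the skew part $\widehat{\nabla Y}$, the traceless symmetric part of $\overline{\nabla Y}$, and $(\mathrm{Tr}\,\nabla Y)$, and one must verify that the relevant determinant factors as (something nonzero for $n>2$) times the product of $a_1 B$ and $Ab$ — so that non-vanishing of that determinant, which is what the hypothesis "$\nabla Y$ not of the two special forms" guarantees, forces $a_1 B = Ab = 0$. Getting the coefficient arithmetic exactly right (the factors $2\nabla_q Y_k - \nabla_k Y_q$ mix symmetric and skew parts) is the delicate part; everything else is routine index manipulation using $\dim M > 2$ and $a_1 A \neq 0$.
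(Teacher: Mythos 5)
Your overall strategy is the paper's own: the hypothesis kills the curvature terms in (\ref{LE12-1}), leaving a purely algebraic identity in $\nabla Y$ and $g$ with coefficients $a_1B$ and $2A(b_1-a_1')$, and one then contracts to force these coefficients to vanish. However, the two specific contractions you propose do not suffice. Writing $\nabla_l Y_k=S_{kl}+\Omega_{kl}$ for the symmetric/skew decomposition, $T=g^{rs}\nabla_sY_r$, $\beta=a_1B$ and $\gamma=2A(b_1-a_1')$, a direct computation of the two traces of the collapsed identity gives
\begin{equation*}
g^{pq}(\cdot)=2\Bigl\{\bigl[\beta+(n-1)\gamma\bigr]S_{kl}+\bigl[3\beta+(n-1)\gamma\bigr]\Omega_{kl}-\beta\,Tg_{kl}\Bigr\},
\end{equation*}
\begin{equation*}
g^{kp}(\cdot)=-\bigl[\beta+(n-1)\gamma\bigr]S_{ql}-\bigl[3\beta+(n-1)\gamma\bigr]\Omega_{ql}+\beta\,Tg_{ql},
\end{equation*}
so the two relations are proportional and your ``small linear system'' has rank one: its skew part yields only the single condition $[3\beta+(n-1)\gamma]\Omega=0$, which is perfectly compatible with $\beta$ and $\gamma$ both nonzero. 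To pin down $\beta=\gamma=0$ one must contract over the \emph{other} index pairs: the paper uses $g^{kl}$ (which annihilates the skew part and produces the differently weighted coefficient $(n-1)\beta+\gamma$ on $S$), the double trace $g^{kl}g^{pq}$, and a mixed contraction such as $g^{lp}$, whose skew coefficient is $(n+1)\beta+\gamma$. The pair $(n+1)\beta+\gamma=0$, $3\beta+(n-1)\gamma=0$ has determinant $\pm(n^2-4)\neq0$ for $n>2$, and that is where the conclusion actually comes from.

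A second gap is your parenthetical assertion that $\nabla Y$ is not symmetric ``under our exclusion.'' The hypothesis excludes only $\nabla Y=0$ and $\nabla Y=\frac{T}{n}g$; a nonzero symmetric, non-pure-trace $\nabla Y$ is entirely allowed. In that case every skew-part equation is vacuous, and one must instead combine the symmetric-part relations $[(n-1)\beta+\gamma]S=\gamma Tg$ and $[\beta+(n-1)\gamma]S=\beta Tg$ with the double-trace identity $(n-1)(\gamma-\beta)T=0$ to first get $(\gamma-\beta)S=0$ and then $\beta=\gamma=0$; this is a genuinely separate case in the paper's proof and cannot be skipped. The ``if'' direction of your argument ($B=b=0$ makes the right-hand side of (\ref{LE12-1}) vanish, and $a_1\neq0$ then gives (\ref{LemmaCase4-3-1})) is correct.
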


\begin{proof}
The "only if " part is obvious. Put $T=Y_{r,s}g^{rs}.$ Suppose that (\ref%
{LemmaCase4-3-1}) holds. Contracting the right hand side of (\ref{LE12-1}) \
in turn with $g^{kl},$ $g^{kl}g^{mn},$ $g^{lm}$ and $g^{kn}$ we get
respectively%
\begin{equation*}
\left[ 2Ab+(n-1)a_{1}B\right] \left( Y_{m,n}+Y_{n,m}\right) -4AbTg_{mn}=0,
\end{equation*}%
\begin{equation*}
(n-1)\left( 2Ab-a_{1}B\right) T=0,
\end{equation*}%
\begin{equation*}
-\left[ 4Ab+(2n+1)a_{1}B\right] Y_{k,n}+\left[ 2Ab+(n+2)a_{1}B\right]
Y_{n,k}+2AbTg_{kn}=0,
\end{equation*}%
\begin{equation*}
-a_{1}BY_{l,m}+2\left[ (n-1)Ab+a_{1}B\right] Y_{m,l}-a_{1}BTg_{lm}=0.
\end{equation*}%
If $Y_{l,m}-Y_{m,l}\neq 0,$ then alternating the last two equations in
indices we obtain 
\begin{eqnarray*}
2Ab+(n+1)a_{1}B &=&0, \\
2(n-1)Ab+3a_{1}B &=&0,
\end{eqnarray*}%
whence $B=b=0$ for $n\neq 2$ results.

If $Y_{l,m}-Y_{m,l}=0,$ then the suitable linear combination of these
equations gives%
\begin{equation*}
(n-2)\left( 2Ab-a_{1}B\right) Y_{l,m}+\left( 2Ab-a_{1}B\right) Tg_{lm}=0.
\end{equation*}%
By the second equation this yields $\left( 2Ab-a_{1}B\right) Y_{l,m}=0.$
Applying the last result to the first equality completes the proof.
\end{proof}

\begin{lemma}
\label{LemmaCase4-6}\label{LE13}Let $(TM,G)$ be a tangent bundle of a
manifold $(M,g),$ $dimM>2,$ with $g$-natural metric $G$ given by (\ref{g1a}%
). Suppose there is given a Killing vector field $Z$ on $TM$ with Taylor
expansion (\ref{Taylor1}). If the coefficients $a_{2}(t),$ $b_{2}(t)$ vanish
along $M$ then $Y$ satisfies%
\begin{equation}
A^{\prime }(2b_{1}+a_{1}^{\prime })Y=0,  \label{LC4-1}
\end{equation}%
\begin{equation}
\left\{ \left[ 2B(B+A^{\prime })-3AB^{\prime }\right]
a_{1}+AB(2b_{1}+a_{1}^{\prime })\right\} Y=0.  \label{LC4-2}
\end{equation}
\end{lemma}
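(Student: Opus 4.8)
The plan is to read $(\ref{LC4-1})$ and $(\ref{LC4-2})$ off the third--order Taylor identity $(\ref{I4})$. The point is that for $a_2 = b_2 = 0$ (so $a = a_1A \neq 0$, and in particular $A\neq 0$) every curvature term and every occurrence of the third--order generator $W$ drops out of $(\ref{I4})$, which then reads, schematically, $A(\nabla_k F_{lpqr}+\nabla_l F_{kpqr}) + B\,(g\otimes\nabla\overline K)_{klpqr} + B\,(g\otimes T)_{klpqr} + 2B'\,(g\otimes g\otimes Y)_{klpqr} + 2A'g_{kl}M_{pqr} = 0$. All tensors here are already determined in Case 4: from $(\ref{22})$ one gets $\overline K_{kl}=-\tfrac{a_1}{A}(L_Yg)_{kl}$ and $\nabla X$ skew, hence $\overline S_{kl}=0$; $(\ref{L5a1})$, $(\ref{L5a2})$, $(\ref{L5a2'})$ give $E$, $T$, $M$ as explicit constant multiples of $g\otimes Y$; and $(\ref{LE6-1})$, whose leading term is $3AF_{lkmn}$ once $a_2=0$, gives $F_{lkmn}$ as an explicit constant--coefficient combination of $g\otimes\nabla Y$. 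After these substitutions $(\ref{I4})$ becomes a universal tensor identity in $Y$, $\nabla Y$ and second covariant derivatives of $Y$ alone.

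The next step is to eliminate the second derivatives of $Y$: their $(k,l)$--symmetric part is replaced by means of $(\ref{LEC4-0-1})$, which expresses $a_1(\nabla_k\nabla_lY_p+\nabla_l\nabla_kY_p)$ through $g\otimes Y$, and their skew part is rewritten via the Ricci identity $(\ref{Conv4})$ as $Y^rR_{r\cdots}$. The auxiliary Case 4 relations recorded just before Lemma \ref{LE12} --- $a_2'L_Yg=0$, $a_2'Tr(\nabla Y)=0$, $B\nabla X=0$, $a_1\nabla_lP_{kp}=a_2'(g_{lp}Y_k-g_{lk}Y_p)$ and $BP_{kp}=a_2'\nabla_kY_p$ --- together with Lemma \ref{LE12} for the $\nabla Y$--into--curvature contractions, keep the identity expressed in $Y$, $\nabla Y$ and a bounded set of curvature contractions of $Y$. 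One then contracts the resulting identity with $g^{kl}$, with $g^{pq}$, and with the mixed pairs, exactly as in the proofs of Lemmas \ref{AfterLE5} and \ref{LEC1c}: the $\nabla Y$ monomials either cancel after the appropriate (anti)symmetrization or are killed by $(\ref{LEC4-0-3})$, the curvature of $Y$ reduces to the Ricci tensor which is then eliminated between two of the contracted identities, and what survives is a system of scalar relations of the form $[\text{polynomial in }A,B,a_1,a_1',b_1,A',B']\,Y = 0$. When such a contracted relation first appears in the shape $Y_aT_{bc\ldots}+(\text{cyclic})=0$, Walker's Lemma \ref{Apen2} is used to split off $Y$, as in Lemma \ref{LEC1c}. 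Choosing the two independent combinations that isolate the factors $A'(2b_1+a_1')$ and $\{[2B(B+A')-3AB']a_1+AB(2b_1+a_1')\}$ gives $(\ref{LC4-1})$ and $(\ref{LC4-2})$.

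The main obstacle is the third--order bookkeeping rather than any conceptual difficulty: each substitution multiplies the number of $g\otimes g\otimes Y$ and $g\otimes\nabla Y$ monomials, and one has to check that after the Ricci--identity rewriting the genuine curvature contributions cancel, or collapse to the Ricci tensor that is subsequently removed, so that the final scalar identities carry no leftover curvature. Verifying that the coefficients contract down to exactly $A'(2b_1+a_1')$ and $[2B(B+A')-3AB']a_1+AB(2b_1+a_1')$ --- and not to some strictly larger combination --- is the delicate part of the computation, and is a natural place to rely on the symbolic--algebra check mentioned in the introduction.
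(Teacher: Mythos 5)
Your strategy is in substance the paper's: the paper's proof also starts from the third--order identity (\ref{I4}) with $a_2=b_2=0$, uses the covariant derivative of (\ref{LE6-1}) to remove $\nabla F$ (this combination is exactly Lemma \ref{LE6'}), substitutes the Case 4 relations (\ref{22}), $\overline S=0$, and the lower--order formulas (\ref{L5b}), (\ref{L5a2}), (\ref{L5a2'}), and then contracts to extract the two scalar factors. The tactical differences are worth noting. First, the paper never fully eliminates the second derivatives of $Y$: it keeps $\nabla\overline K$ as an unknown, supplements the third--order identity with the cyclic relation (\ref{LC4-4}) coming from (\ref{L5c}), and because only the $(k,l)$--symmetrized Hessian combinations covered by (\ref{L5b}) ever occur, no curvature term, no Ricci identity, no appeal to Lemma \ref{LE12} and no Walker lemma are needed in this proof; your plan of splitting the Hessian into its symmetric part via (\ref{LEC4-0-1}) and its skew part via the Ricci identity drags curvature back in, and your assertion that these contributions cancel or can always be eliminated down to relations of the form $[\text{scalar}]\,Y=0$ is precisely the unverified part (in the closely analogous computation of Lemma \ref{LEC4-10} the curvature does not drop out but survives as a separate condition of type (\ref{LEC4-10-2})). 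Second, and more concretely, the final extraction in the paper hinges on a dichotomy your sketch omits: for $B\neq 0$ one solves the contracted identity (\ref{LC4-9}) for $\overline K_{bc,l}$ (this requires dividing by $B$) and combines with (\ref{LC4-4}) and (\ref{LC4-6}) to get (\ref{LC4-1}) and then (\ref{LC4-2}); for $B=0$ these relations degenerate and a separate short argument (contracting (\ref{LC4-9}) with $g^{bc}$ and $g^{bl}$) is needed to obtain $B'Y=0$ and $A'(2b_1+a_1')Y=0$, without which (\ref{LC4-2}) is not established in that branch. So the route is right, but "choosing the two independent combinations" conceals both the $B=0$ case and the curvature bookkeeping that the paper's ordering of eliminations is specifically designed to avoid.
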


\begin{proof}
Recall that if $a_{2}(r_{0}^{2})=0,$ then necessary $a_{1}A\neq 0$ on some
neighbourhood of $r_{0}^{2}.$

From (\ref{L5c}) we easily get%
\begin{equation}
A(\overline{K}_{ab,c}+\overline{K}_{bc,a}+\overline{K}_{ca,b})+2(B+A^{\prime
})(g_{bc}Y_{a}+g_{ca}Y_{b}+g_{ab}Y_{c})=0.  \label{LC4-4}
\end{equation}%
From Lemma \ref{LE6'}, by the use of the assumptions on $a_{2}$ and $b_{2},$
we find%
\begin{multline*}
2B\left[ \overline{K}_{ab,(k}g_{l)c}+\overline{K}_{bc,(k}g_{l)a}+\overline{K}%
_{ca,(k}g_{l)b}\right] + \\
3B\left[ g_{c(l}T_{k)ab}+g_{a(l}T_{k)bc}+g_{b(l}T_{k)ca}\right] + \\
6A^{\prime }g_{kl}M_{abc}-b\left[ g_{ab}\left( Y_{c,kl}+Y_{c,lk}\right)
+g_{bc}\left( Y_{a,kl}+Y_{a,lk}\right) +g_{ca}\left(
Y_{b,kl}+Y_{b,lk}\right) \right] + \\
6B^{\prime }\left[ \left( g_{al}g_{kb}+g_{ak}g_{lb}\right) Y_{c}+\left(
g_{bl}g_{kc}+g_{bk}g_{lc}\right) Y_{a}+\left(
g_{al}g_{kc}+g_{ak}g_{lc}\right) Y_{b}\right] =0.
\end{multline*}%
Applying (\ref{L5b}), (\ref{L5a2}) and (\ref{L5a2'}) we find%
\begin{multline}
2B\left[ \overline{K}_{ab,(k}g_{l)c}+\overline{K}_{bc,(k}g_{l)a}+\overline{K}%
_{ca,(k}g_{l)b}\right] -  \label{LC4-5} \\
\frac{4bB}{a_{1}}\left[ \left( g_{al}g_{bc}+g_{bl}g_{ca}+g_{cl}g_{ab}\right)
Y_{k}+\left( g_{ak}g_{bc}+g_{bk}g_{ca}+g_{ck}g_{ab}\right) Y_{l}\right] - \\
\frac{4A^{\prime }(2b_{1}+a_{1}^{\prime })}{a_{1}}\left(
g_{ab}Y_{c}+g_{bc}Y_{a}+g_{ca}Y_{b}\right) g_{kl}+\frac{6(B^{\prime
}a_{1}-Ba_{1}^{\prime })}{a_{1}}\times \\
\left[ \left( g_{al}g_{kb}+g_{ak}g_{lb}\right) Y_{c}+\left(
g_{bl}g_{kc}+g_{bk}g_{lc}\right) Y_{a}+\left(
g_{al}g_{kc}+g_{ak}g_{lc}\right) Y_{b}\right] =0.
\end{multline}%
Hence, contracting with $g^{kl},$ we obtain%
\begin{multline}
B(\overline{K}_{ab,c}+\overline{K}_{bc,a}+\overline{K}_{ca,b})+
\label{LC4-6} \\
\left[ 3B^{\prime }-\frac{(B+nA^{\prime })(2b_{1}+a_{1}^{\prime })}{a_{1}}%
\right] (g_{bc}Y_{a}+g_{ca}Y_{b}+g_{ab}Y_{c})=0.
\end{multline}%
If $B\neq 0,$ then a linear combination of (\ref{LC4-4}) and (\ref{LC4-6})
yields $\psi Y=0$ where%
\begin{equation}
\psi =2B(B+A^{\prime })-3AB^{\prime }+\frac{A(B+nA^{\prime
})(2b_{1}+a_{1}^{\prime })}{a_{1}}.  \label{LC4-7}
\end{equation}%
On the other hand, contractions of (\ref{LC4-5}) with $g^{ak}$ and then with$%
\ g^{bl}$ yield respectively%
\begin{multline*}
B\left[ (n+3)\overline{K}_{bc,l}+\overline{K}_{c,r}^{r}g_{bl}+\overline{K}%
_{b,r}^{r}g_{cl}\right] = \\
\frac{2}{a_{1}}\left[ (n+3)bB+A^{\prime }(2b_{1}+a_{1}^{\prime })\right]
g_{bc}Y_{l}+ \\
\frac{1}{a_{1}}\left[ 2bB+3(n+2)(Ba_{1}^{\prime }-B^{\prime
}a_{1})+2A^{\prime }(2b_{1}+a_{1}^{\prime })\right] (g_{bl}Y_{c}+g_{cl}Y_{b})
\end{multline*}%
and 
\begin{equation*}
2B\overline{K}_{c,r}^{r}=\frac{1}{a_{1}}\left[ 4bB+3(n+1)(Ba_{1}^{\prime
}-B^{\prime }a_{1})+2A^{\prime }(2b_{1}+a_{1}^{\prime })\right] Y_{c}.
\end{equation*}%
Hence we find%
\begin{multline}
2(n+3)a_{1}B\overline{K}_{bc,l}=4\left[ (n+3)bB+A^{\prime
}(2b_{1}+a_{1}^{\prime })\right] g_{bc}Y_{l}+  \label{LC4-9} \\
\left[ 3(n+3)(Ba_{1}^{\prime }-B^{\prime }a_{1})+2A^{\prime
}(2b_{1}+a_{1}^{\prime })\right] (g_{bl}Y_{c}+g_{cl}Y_{b})
\end{multline}%
and%
\begin{multline*}
(n+3)a_{1}B(\overline{K}_{bc,l}+\overline{K}_{cl,b}+\overline{K}_{lb,c})- \\
\left[ (n+3)(B(2b_{1}+a_{1}^{\prime })-3a_{1}B^{\prime })+4A^{\prime
}(2b_{1}+a_{1}^{\prime })\right] (g_{bc}Y_{l}+g_{cl}Y_{b}+g_{lb}Y_{c})=0
\end{multline*}%
If $B\neq 0,$ then combining the last relation with (\ref{LC4-6}) we obtain (%
\ref{LC4-1}) and, as a consequence of (\ref{LC4-7}), equality (\ref{LC4-2}).
On the other hand, if $B(r_{0}^{2})=0,$ then contractions of (\ref{LC4-9})
with $g^{bc}$ and $g^{bl}$ yield either $Y^{a}=0$ or $B^{\prime }=0$ and $%
A^{\prime }(2b_{1}+a_{1}^{\prime })=0.$ This completes the proof.
\end{proof}

\begin{lemma}
For an arbitrary $B$ we have%
\begin{equation*}
a_{1}^{2}B(\nabla _{l}\nabla _{c}Y_{b}+\nabla _{l}\nabla
_{b}Y_{c})=-2ABbg_{bc}Y_{l}-\frac{3}{2}A(Ba_{1}^{\prime }-a_{1}B^{\prime
})(g_{bl}Y_{c}+g_{cl}Y_{b}),
\end{equation*}%
\begin{multline*}
a_{1}^{2}B(\nabla _{l}\nabla _{c}Y_{b}-\nabla _{b}\nabla _{l}Y_{c})=-B\left(
a_{1}B+2Ab\right) g_{bc}Y_{l}+ \\
-\dfrac{1}{2}\left[ 4A^{\prime }a_{1}B+3A(Ba_{1}^{\prime }-a_{1}B^{\prime })%
\right] g_{bl}Y_{c}-\dfrac{1}{2}\left[ 2a_{1}B^{2}+3A(Ba_{1}^{\prime
}-a_{1}B^{\prime })\right] g_{cl}Y_{b}.
\end{multline*}
\end{lemma}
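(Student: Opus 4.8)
The plan is to first obtain a closed local formula for every component of $\nabla\nabla Y$ on $M\times\{0\}$ and then to kill the one curvature term that survives. We are in Case 4, so $a_{2}=b_{2}=0$ along $M$ and hence $a=a_{1}A\neq0$, i.e. $a_{1}\neq0$ and $A\neq0$; consequently \eqref{22}, \eqref{LEC4-0-1}, \eqref{L6a} and Lemma \ref{LE12} are all available. Combining \eqref{LEC4-0-1} (the symmetrization of $\nabla\nabla Y$ over its two differentiation indices) with the Ricci identity \eqref{Conv4} (the antisymmetrization) and dividing by $a_{1}$ yields
\begin{equation*}
\nabla_{l}\nabla_{c}Y_{b}=-\tfrac{1}{2}Y^{s}R_{sbcl}+\frac{B}{2a_{1}}\left(g_{cb}Y_{l}+g_{lb}Y_{c}\right)+\frac{A'}{a_{1}}g_{cl}Y_{b}
\end{equation*}
at every point of $M\times\{0\}$; the same formula comes directly from \eqref{L6a} after substituting $a_{1}\nabla_{l}Y_{k}=-AK_{lk}$ from \eqref{22}.

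Next I would form the two combinations appearing on the left-hand sides, $\nabla_{l}\nabla_{c}Y_{b}+\nabla_{l}\nabla_{b}Y_{c}$ and $\nabla_{l}\nabla_{c}Y_{b}-\nabla_{b}\nabla_{l}Y_{c}$. Using the display above, the $g\otimes Y$ parts come out immediately, while in both cases the curvature part is a single contraction of $Y$ with the Riemann tensor, which the anti-symmetries of $R$ and the first Bianchi identity bring to a common normal form (e.g. $Y^{s}(R_{sbcl}-R_{sclb})$). Multiplying through by $a_{1}^{2}B$, the lemma is thereby reduced to exhibiting a curvature-free expression for $a_{1}^{2}B$ times this contraction, with coefficients built from $A,B,b_{1},a_{1}',B'$.

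This last reduction is the core of the proof and the step I expect to be the main obstacle, since Case 4 provides no direct relation of the form $Y^{s}R_{s\cdots}=(\text{curvature-free})$ — it must be manufactured out of Lemma \ref{LE12}. Concretely, I would substitute $a_{1}\nabla_{r}Y_{q}=-AK_{rq}$ into \eqref{LE12-1} to turn it into an identity among $K$ contracted with $R$ and $g\otimes K$ terms, differentiate \eqref{L6a} once, and use the Ricci identity together with the second Bianchi identity to rewrite the $\nabla\nabla Y\cdot R$ and $Y\cdot\nabla R$ terms that appear; combining these so that the curvature-of-curvature contributions cancel leaves precisely the wanted closed formula for $a_{1}^{2}B\,Y^{s}R_{sbcl}$. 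Feeding this back into the two combinations above, and using \eqref{LC4-1}--\eqref{LC4-2} to absorb the one residual term that does not fit the $g\otimes Y$ pattern, produces the right-hand sides $-2ABb\,g_{bc}Y_{l}-\tfrac{3}{2}A(Ba_{1}'-a_{1}B')(g_{bl}Y_{c}+g_{cl}Y_{b})$ and the analogous expression for the second identity. The genuinely delicate part is the bookkeeping: checking that all the coefficient combinations collapse to exactly $-2ABb$ and $-\tfrac{3}{2}A(Ba_{1}'-a_{1}B')$, with no leftover curvature — the sort of verification the MathTensor/Mathematica computation announced in the introduction is there to certify.
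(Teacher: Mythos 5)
Your opening reduction is sound: on $M\times\{0\}$ one has $a_{2}=b_{2}=0$, hence $a=a_{1}A\neq 0$, and combining (\ref{LEC4-0-1}) with the Ricci identity (equivalently, substituting (\ref{22}) into (\ref{L6a})) does give the closed formula $\nabla_{l}\nabla_{c}Y_{b}=-\frac{1}{2}Y^{s}R_{sbcl}+\frac{B}{2a_{1}}(g_{cb}Y_{l}+g_{lb}Y_{c})+\frac{A^{\prime}}{a_{1}}g_{cl}Y_{b}$. But this route leaves you needing a curvature-free expression for $a_{1}^{2}B\,Y^{s}(R_{sbcl}+R_{scbl})$, and that is exactly where the proposal stops being a proof: no such identity is established in the paper, and your plan for manufacturing it (feed (\ref{22}) into (\ref{LE12-1}), differentiate (\ref{L6a}), invoke Ricci and the second Bianchi identity, and hope the curvature-of-curvature terms cancel) is only announced, not carried out. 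Since (\ref{LE12-1}) controls contractions of $\nabla Y$ with $R$ rather than of $Y$ with $R$, it is not clear that this scheme closes up at all; as written it is a computation plan, not an argument, and the "delicate bookkeeping" you defer is the entire content of the lemma.

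The intended proof never meets the curvature term. With $a_{2}=0$, (\ref{II1}) reads $AK_{lk}+a_{1}\nabla_{l}Y_{k}=0$, so differentiating gives $A\nabla_{l}\overline{K}_{bc}=-a_{1}(\nabla_{l}\nabla_{b}Y_{c}+\nabla_{l}\nabla_{c}Y_{b})$; on the other hand (\ref{LC4-9}), already obtained in the proof of Lemma \ref{LE13}, expresses $2(n+3)a_{1}B\overline{K}_{bc,l}$ purely in terms of $g\otimes Y$, and after the terms carrying $A^{\prime}(2b_{1}+a_{1}^{\prime})Y$ are discarded by (\ref{LC4-1}) and the result is multiplied by $A$, the first identity drops out with no Riemann tensor ever appearing. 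The second identity then follows from the first together with (\ref{L5b}) (i.e. (\ref{LEC4-0-1})), a step you could equally well perform. Finally, you do not address the case $B=0$, which the phrase "for an arbitrary $B$" is there to cover: when $B=0$ both left-hand sides vanish while the right-hand sides retain the term $\frac{3}{2}Aa_{1}B^{\prime}(g_{bl}Y_{c}+g_{cl}Y_{b})$, and one must appeal to Lemma \ref{LE13} to conclude $B^{\prime}Y=0$ in that case.
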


\begin{proof}
We can suppose $B\neq 0.$ From (\ref{II1}) we get 
\begin{equation*}
A\nabla _{l}\overline{K}_{km}=-a_{1}\left( \nabla _{l}\nabla
_{k}Y_{m}+\nabla _{l}\nabla _{m}Y_{k}\right) .
\end{equation*}%
Combining this with (\ref{LC4-9}), by the use of (\ref{LC4-1}) we find the
first equality. Hence, by the use of (\ref{L5b}) and (\ref{LC4-1}), we get
the second one. On the other hand, if $BY=0,$ then the previous lemma yields 
$B^{\prime }Y=0.$ This completes the proof.
\end{proof}

\begin{lemma}
\label{LEC4-10}Under hypothesis (\ref{H}) suppose $\dim M>2,$ $a\neq 0$ on $%
TM$ and $a_{2}=0,$ $b_{2}=0$ on $M\times \{0\}\subset TM.$ Then 
\begin{equation}
\left[ Aa_{2}^{\prime }(b_{1}+a_{1}^{\prime })-2a_{1}(Ba_{2}^{\prime
}+Ab_{2}^{\prime })\right] Y=0,  \label{LEC4-10-1}
\end{equation}%
\begin{equation}
Y\left[ a_{1}a_{2}^{\prime }R-\frac{(Ba_{2}^{\prime }+2Ab_{2}^{\prime })}{2}%
g\wedge g\right] =0.  \label{LEC4-10-2}
\end{equation}%
If $a_{2}^{\prime }\neq 0,$ then%
\begin{equation}
b_{2}^{\prime }\nabla Y=0,\quad (b_{1}-a_{1}^{\prime })\nabla Y=0
\label{LEC4-10-2-1}
\end{equation}
\end{lemma}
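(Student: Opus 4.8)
The plan is to run, in Case 4, the same machinery that produced Theorems \ref{LEC1c} and \ref{Th7C1}. Throughout one keeps in mind that $a\ne0$ together with $a_{2}=0$ forces $a_{1}A\ne0$ along $M$; that the structure equations (\ref{22}) give $L_{X}g=0$ and $P_{kl}+P_{lk}=0$, hence $\overline{S}_{kl}=0$; and that the relations assembled in this subsection before the statement are in force, namely the expressions for $E$ from (\ref{L5a1}) and for $M$ from (\ref{L5a2'}) (both reducing, since $a_{2}=b_{2}=0$, to multiples of $g\otimes Y$), the identity (\ref{LEC4-0-1}), the relations $a_{1}\nabla_{l}P_{kp}=a_{2}^{\prime}(g_{lp}Y_{k}-g_{lk}Y_{p})$, $BP_{kp}=a_{2}^{\prime}\nabla_{k}Y_{p}$ and $B\nabla X=0$, the identities (\ref{LEC4-0-3}), and the conclusions (\ref{LC4-1})--(\ref{LC4-2}) of Lemma \ref{LE13}.

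First I would substitute all of this into Lemma \ref{LE6''}, which is legitimate because $\dim M>2$. With $a_{2}=0$ and $b_{2}=0$ most terms drop; the $(b_{1}-a_{1}^{\prime})[\nabla_{l}\overline{S}\dots]$ block vanishes because $\overline{S}=0$; and, replacing $E$ and $M$ by their $g\otimes Y$ expressions, the surviving curvature/$E$ block and the $M$ block turn into genuine $R\cdot Y$ terms and $g\otimes g\otimes Y$ terms. After multiplying through by $A$ and tidying with the curvature symmetries and the first Bianchi identity, Lemma \ref{LE6''} collapses to
\[
a_{1}a_{2}^{\prime}[(R_{bkcl}+R_{blck})Y_{a}+(R_{ckal}+R_{clak})Y_{b}+(R_{akbl}+R_{albk})Y_{c}]+\Lambda_{abckl}=0,
\]
where $\Lambda_{abckl}$ is an explicit linear combination of tensors of type $g\otimes g\otimes Y$ with scalar coefficients in $a_{j},b_{j},a_{j}^{\prime},b_{j}^{\prime}$; this is the Case 4 analogue of (\ref{Th7C1-6}). (When $a_{2}^{\prime}=0$ one has $E=0$, the curvature block is absent, and Lemma \ref{LE6''} gives $b_{2}^{\prime}Y=0$ at once, in agreement with the formulas below; the substance of the lemma is the case $a_{2}^{\prime}\ne0$.)

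Next, exactly as in the proof of Theorem \ref{Th7C1}, I would contract the displayed identity with $g^{ab}$, with $g^{cl}$ and with $g^{kl}$, obtaining scalar relations coupling $R_{bc}Y_{a}$ to $g\otimes Y$ terms in the manner of (\ref{Th7C1-7})--(\ref{Th7C1-12}); symmetrizing in the free indices carrying $Y$ and invoking Walker's Lemma \ref{Apen2} splits each into ``$Y=0$'' or ``the accompanying symmetric $(0,2)$ tensor vanishes'', after which the usual alternation plus Bianchi identity reduces the system to a tensor identity of the form $Y_{a}[a_{1}a_{2}^{\prime}R_{bkcl}+\dots]=0$, which rewrites as (\ref{LEC4-10-2}). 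Transvecting this with $Y$ — using the curvature/$Y$ relation that the Ricci identity yields from (\ref{LEC4-0-1}) and the relations on $\nabla P$, in analogy with (\ref{LAC1a5}) — and eliminating the Ricci tensor among the contracted equations produce the scalar identity (\ref{LEC4-10-1}). Where $B$ vanishes the degenerate contractions are closed up by feeding in (\ref{LC4-1})--(\ref{LC4-2}) of Lemma \ref{LE13}; by continuity the identities then hold on all of $M\times\{0\}$.

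Finally, for (\ref{LEC4-10-2-1}) one assumes $a_{2}^{\prime}\ne0$; then $BP_{kp}=a_{2}^{\prime}\nabla_{k}Y_{p}$ with $P$ skew forces $a_{2}^{\prime}L_{Y}g=0$, hence $L_{Y}g=0$ and $\nabla Y$ skew, and in the subcase $B\equiv0$ one even gets $\nabla Y=0$ outright. In the subcase $B\ne0$ one substitutes the relations above, together with (\ref{LEC4-10-1})--(\ref{LEC4-10-2}) and (\ref{LC4-1}), into (\ref{II4}) (equivalently, differentiates (\ref{LE6-1}) covariantly and compares with (\ref{I4})) and uses (\ref{LE12-1}) with $\nabla\nabla Y$ and $\nabla Y$ expressed through $g\otimes Y$ and $P$; the coefficients of $\nabla Y$ that survive must vanish, which is precisely $b_{2}^{\prime}\nabla Y=0$ and $(b_{1}-a_{1}^{\prime})\nabla Y=0$. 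I expect the main obstacle to be entirely computational: carrying the substitution through Lemma \ref{LE6''} (and, for the last pair, (\ref{II4}), (\ref{I4}) and (\ref{LE12-1})) without slips, and spotting which linear combinations of contractions annihilate the Ricci tensor; the secondary nuisance is the case split $B\ne0$ versus $B\equiv0$, dispatched via Lemma \ref{LE13}.
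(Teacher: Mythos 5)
Your proposal follows essentially the same route as the paper's proof: substitute $a_{2}=b_{2}=0$ and the Case 4 relations (\ref{22}) into Lemma \ref{LE6''}, replace $E$ and $M$ via (\ref{L5a1}) and (\ref{L5a2'}) to reduce everything to curvature and $g\otimes g\otimes Y$ blocks, extract (\ref{LEC4-10-1}) and (\ref{LEC4-10-2}) with the help of Walker's Lemma \ref{Apen2}, and then obtain (\ref{LEC4-10-2-1}) by feeding (\ref{LEC4-10-2}) into (\ref{LE12-1}) together with (\ref{LEC4-0-3}) and contracting. The only real deviation is the ordering: in the paper (\ref{LEC4-10-1}) is obtained first, directly by symmetrizing the reduced identity in $(a,b,l)$, and it is precisely what kills the $Y$-block needed to bring the identity into Walker form for (\ref{LEC4-10-2}), rather than being recovered afterwards by transvecting (\ref{LEC4-10-2}) with $Y$ and eliminating the Ricci tensor as you sketch.
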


\begin{proof}
For the proof of the first part we apply Lemma \ref{LE6''}. Substituting $%
a_{2}=0,$ $b_{2}=0,$ by the use of (\ref{22}), we get%
\begin{multline*}
a_{1}\left[
2E_{ab}^{p}R_{plck}-E_{bk}^{p}R_{plac}+E_{bc}^{p}R_{plak}-E_{ak}^{p}R_{plbc}+E_{ac}^{p}R_{plbk}%
\right] + \\
B\left[ \left( E_{ckb}-E_{kcb}\right) g_{al}+\left( E_{cak}-E_{kac}\right)
g_{bl}+\right. \\
\left. \left( E_{abk}+E_{bak}\right) g_{cl}-\left( E_{abc}+E_{bac}\right)
g_{kl}\right] -2a_{2}^{\prime }\left( M_{abk}g_{cl}-M_{abc}g_{kl}\right) + \\
2b_{2}^{\prime }\left[ \left( g_{bk}g_{cl}-g_{bc}g_{kl}\right) Y_{a}+\left(
g_{ak}g_{cl}-g_{ac}g_{kl}\right) Y_{b}+\right. \\
\left. \left( g_{al}g_{bk}+g_{ak}g_{bl}\right) Y_{c}-\left(
g_{al}g_{bc}+g_{ac}g_{bl}\right) Y_{k}\right] =0.
\end{multline*}%
Applying (\ref{L5a1}) - (\ref{L5a2'}) and the Bianchi identity we find%
\begin{multline*}
-a_{1}^{2}a_{2}^{\prime }\left[
3R_{blck}Y_{a}+3R_{alck}Y_{b}+(R_{akbl}+R_{bkal})Y_{c}-(R_{acbl}+R_{bcal})Y_{k}%
\right] - \\
2a_{2}^{\prime }\left[ A(b_{1}+a_{1}^{\prime })-a_{1}B\right] g_{ab}\left(
g_{kl}Y_{c}-g_{cl}Y_{k}\right) - \\
\left[ 2Aa_{2}^{\prime }(b_{1}+a_{1}^{\prime })+a_{1}(2Ab_{2}^{\prime
}-Ba_{2}^{\prime })\right] \left[ \left( g_{bc}g_{kl}-g_{bk}g_{cl}\right)
Y_{a}+\left( g_{ac}g_{kl}-g_{ak}g_{cl}\right) Y_{b}\right] + \\
a_{1}(Ba_{2}^{\prime }+2Ab_{2}^{\prime })\left[ g_{bl}\left(
g_{ak}Y_{c}-g_{ac}Y_{k}\right) +g_{al}\left( g_{bk}Y_{c}-g_{bc}Y_{k}\right) %
\right] =0.
\end{multline*}%
Symmetrizing the last relation in $(a,b,l)$ we obtain (\ref{LEC4-10-1}).
Then, symmetrize in $(a,b,k).$ Since coefficient times $Y_{c}$ vanishes by (%
\ref{LEC4-10-1}), by the use of the the Walker lemma and (\ref{LEC4-10-1})
we get either $Y=0$ or 
\begin{equation*}
a_{1}a_{2}^{\prime }(R_{acbl}+R_{albc})=(Ba_{2}^{\prime }+2Ab_{2}^{\prime
})(g_{al}g_{bc}+g_{ac}g_{bl}-2g_{ab}g_{cl}),
\end{equation*}%
whence, alternating in $(b,l),$ we easily obtain%
\begin{equation*}
a_{1}a_{2}^{\prime }R_{acbl}=(Ba_{2}^{\prime }+2Ab_{2}^{\prime
})(g_{al}g_{bc}-g_{ab}g_{cl}).
\end{equation*}%
Thus (\ref{LEC4-10-2}) is proved.

Suppose now $Y\neq 0$ and $a_{2}^{\prime }\neq 0$ on $M\times \{0\}.$
Applying (\ref{LEC4-10-2}) to (\ref{LE12-1}) and eliminating $B,$ by the use
of (\ref{LEC4-0-3}), we obtain%
\begin{multline}
2(b_{1}-a_{1}^{\prime })\left(
Y_{n,l}g_{km}+Y_{m,l}g_{kn}-2Y_{k,l}g_{mn}\right) +  \label{LEC4-10-6} \\
\left( 2\frac{a_{1}b_{2}^{\prime }}{a_{2}^{\prime }}-b_{1}-a_{1}^{\prime
}\right) (Y_{k,m}g_{ln}+Y_{k,n}g_{lm})- \\
\left( 4\frac{a_{1}b_{2}^{\prime }}{a_{2}^{\prime }}+b_{1}+a_{1}^{\prime
}\right) (Y_{m,k}g_{ln}+Y_{n,k}g_{lm})=0.
\end{multline}%
Contracting (\ref{LEC4-10-6}) with $g^{lm},$ by the use of (\ref{LEC4-0-3})
we get%
\begin{equation*}
\left[ (n+1)\frac{a_{1}b_{2}^{\prime }}{a_{2}^{\prime }}-(b_{1}-a_{1}^{%
\prime })\right] Y_{k,n}=0.
\end{equation*}%
On the other hand, by contraction with $g^{mn}$ we obtain%
\begin{equation*}
\left[ \frac{a_{1}b_{2}^{\prime }}{a_{2}^{\prime }}-(n-1)(b_{1}-a_{1}^{%
\prime })\right] Y_{k,l}=0.
\end{equation*}%
Hence we easily get either $\nabla Y=0$ or both $b_{2}^{\prime }=0$ and $%
b_{1}-a_{1}^{\prime }=0$ on $M\times \{0\}.$

\begin{remark}
If $a_{2}^{\prime }\neq 0$ and $Y\neq 0,$ then equations (\ref{LEC4-10-2-1})
give a further restriction on the metric $G.$ Namely, if $\nabla Y=0,$ then
from (\ref{22}) we infer $K=0$ while from (\ref{LEC4-0-1}) we get $A^{\prime
}=0$ and $B=0$ on $M\times \{0\}.$ Consequently, (\ref{LC4-2}) yields $%
B^{\prime }=0.$

On the other hand, substituting $b_{2}^{\prime }=0$ and $b_{1}=a_{1}^{\prime
}$ into Lemma \ref{AfterLE5} we get $b_{1}^{\prime }=0$ on $M\times \{0\}.$
\end{remark}
\end{proof}

\section{Lifts properties}

\subsection{$X^{C}$ and $X^{v}$}

If $X=X^{r}\partial _{r}$ is a vector field on $M,$ then $%
X^{C}=X^{r}\partial _{r}+u^{s}\partial _{s}X^{r}\delta _{r}=X^{r}\partial
_{r}^{h}+u^{s}\nabla _{s}X^{r}\partial _{r}^{v}$ is said to be the complete
lift of $X$ to $TM.$

\begin{lemma}
Let $X$ be a vector field on $(M,g)$ satisfying%
\begin{equation}
L_{X}g=fg,  \label{Kvf}
\end{equation}%
$f$ being a function on $M,$ and $X^{C}$ be its complete lift to $(TM,G)$
with $g$-natural metric $G.$ Then%
\begin{eqnarray*}
\left( L_{X^{C}}G\right) \left( \partial _{k}^{h},\partial _{l}^{h}\right)
&=&\left[ a_{2}\partial f+f(A+A^{\prime }r^{2})\right] g_{kl}+f(2B+B^{\prime
}r^{2})u_{k}u_{l}+ \\
&&\frac{1}{2}b_{2}r^{2}\left( \nabla _{k}fu_{l}+\nabla _{l}fu_{k}\right) ,
\end{eqnarray*}%
\begin{eqnarray*}
\left( L_{X^{C}}G\right) \left( \partial _{k}^{v},\partial _{l}^{h}\right)
&=&\frac{1}{2}a_{1}\left( \nabla _{l}fu_{k}-\nabla _{k}u_{l}+\partial
fg_{kl}\right) + \\
&&f(a_{2}+a_{2}^{\prime }r^{2})g_{kl}+f(2b_{2}+b_{2}^{\prime
}r^{2})u_{k}u_{l}+\frac{1}{2}b_{1}r^{2}\nabla _{l}fu_{k},
\end{eqnarray*}%
\begin{equation*}
\left( L_{X^{C}}G\right) \left( \partial _{k}^{v},\partial _{l}^{v}\right)
=f(a_{1}+a_{1}^{\prime }r^{2})g_{kl}+f(2b_{1}+b_{1}^{\prime
}r^{2})u_{k}u_{l},
\end{equation*}%
where $\partial f=u^{r}\nabla _{r}f.$
\end{lemma}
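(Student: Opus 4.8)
The plan is to apply Lemma~\ref{Lie Deriv} to the vector field $Z=X^{C}$, whose adapted-frame components are $H^{a}=X^{a}$ and $V^{\alpha}=u^{s}\nabla_{s}X^{\alpha}$. Since $X^{a}$ depends only on the base coordinates one has $\partial_{k}^{v}H^{a}=0$ and $\partial_{k}^{h}H^{a}+H^{r}\Gamma_{rk}^{a}=\partial_{k}X^{a}+X^{r}\Gamma_{rk}^{a}=\nabla_{k}X^{a}$, while differentiating $V^{a}=u^{s}\nabla_{s}X^{a}$ and rearranging in the usual way gives $\partial_{k}^{v}V^{a}=\nabla_{k}X^{a}$ and $\partial_{k}^{h}V^{a}+V^{r}\Gamma_{rk}^{a}=u^{s}\nabla_{k}\nabla_{s}X^{a}$. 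First I would substitute these four expressions into the right-hand sides of (\ref{LD10}), (\ref{LD11}) and (\ref{LD12}), which reduces the three Lie-derivative components to algebraic combinations of $\nabla X$, $\nabla\nabla X$, the curvature tensor and powers of $u$.

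Next I would use the hypothesis (\ref{Kvf}) in the form $\nabla_{k}X_{l}+\nabla_{l}X_{k}=fg_{kl}$, together with its $u$-contractions $u^{a}\left(\nabla_{k}X_{a}+\nabla_{a}X_{k}\right)=fu_{k}$ and $u^{a}u^{b}\nabla_{a}X_{b}=\frac{1}{2}fr^{2}$, to simplify all first-order terms. For the second covariant derivatives of $X$ I would invoke (\ref{Conv5}) with $L_{X}g=fg$, which gives $L_{X}\Gamma_{ls}^{a}=\frac{1}{2}\left(\delta_{s}^{a}\nabla_{l}f+\delta_{l}^{a}\nabla_{s}f-g_{ls}\nabla^{a}f\right)$ and hence $\nabla_{l}\nabla_{s}X_{m}=\frac{1}{2}\left(g_{sm}\nabla_{l}f+g_{lm}\nabla_{s}f-g_{ls}\nabla_{m}f\right)-X^{r}R_{rlsm}$. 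Contracting this with $u^{s}$ (and with $u^{s}u^{m}$) supplies exactly the contribution coming from $\partial_{k}^{h}V^{a}+V^{r}\Gamma_{rk}^{a}$, with $\partial f=u^{r}\nabla_{r}f$ produced by the middle term $g_{lm}u^{s}\nabla_{s}f$.

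The $\left(\partial_{k}^{v},\partial_{l}^{v}\right)$ identity then follows at once, since (\ref{LD12}) involves neither curvature nor second derivatives and its eight summands collapse under the conformal equation to $f\left(a_{1}+a_{1}'r^{2}\right)g_{kl}+f\left(2b_{1}+b_{1}'r^{2}\right)u_{k}u_{l}$. For the other two components the one genuinely delicate point is the curvature bookkeeping: the explicit term $-a_{1}R_{alkr}u^{r}H^{a}$ in (\ref{LD11}) and the term $-a_{2}\left(R_{aklr}+R_{alkr}\right)H^{a}u^{r}$ in (\ref{LD10}) have to cancel exactly against the curvature generated by $u^{s}\nabla_{l}\nabla_{s}X^{a}$ (through the coefficients $a_{1}$ and $a_{2}$ respectively), which works using only the antisymmetry of $R$ in its last two indices; meanwhile the $b_{1}$- and $b_{2}$-curvature contributions vanish outright because $R_{rlsm}u^{s}u^{m}=0$. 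After this cancellation, collecting the surviving terms according to the tensors $g_{kl}$, $u_{k}u_{l}$, $\partial f\,g_{kl}$ and $\nabla_{k}f\,u_{l}\pm\nabla_{l}f\,u_{k}$ yields the three displayed formulas; I expect this final bookkeeping, rather than any conceptual issue, to be the main labour.
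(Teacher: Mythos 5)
Your proposal is correct and follows essentially the same route as the paper, whose proof consists precisely of substituting the adapted-frame components of $X^{C}$ into (\ref{LD10})--(\ref{LD12}) and using (\ref{Conv5}) with $L_{X}g=fg$ to handle the second covariant derivatives of $X$. Your identification of the curvature cancellations (via antisymmetry of $R$ in its last two indices) and of the contractions $u^{a}(\nabla_{k}X_{a}+\nabla_{a}X_{k})=fu_{k}$, $u^{a}u^{b}\nabla_{a}X_{b}=\tfrac{1}{2}fr^{2}$ supplies exactly the bookkeeping the paper leaves implicit.
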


\begin{proof}
Straightforward calculations with the use of (\ref{LD10}) - (\ref{LD12}) .
Relations (\ref{Conv3}) and (\ref{Conv5}) are useful.
\end{proof}

\begin{theorem}
\label{Lift prop 2}Let $X$ be a vector field on $(M,g)$ such that (\ref{Kvf}%
) is satisfied. Then $X^{C}$ is a Killing vector field on $(TM,G)$ with
non-degenerated $g$-natural metric $G$ if and only if $f=0$ on $M.$
\end{theorem}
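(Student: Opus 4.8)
The plan is to use the formulas for $L_{X^C}G$ from the preceding lemma and extract, one tensorial component at a time, constraints that force $f=0$. The "if" direction is immediate: when $f=0$, relation (\ref{Kvf}) becomes $L_Xg=0$, so $X$ is Killing on $M$; then $\partial f=u^r\nabla_rf=0$ and every right-hand side in the preceding lemma vanishes identically, so $L_{X^C}G=0$ on all of $TM$. (Alternatively one may quote the standard fact that the complete lift of a Killing field is Killing for any $g$-natural metric; but here it drops out for free.) So the substance is the "only if" direction.

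For the converse, assume $L_{X^C}G=0$ on $TM$. First I would look at $\left(L_{X^C}G\right)(\partial_k^v,\partial_l^v)=f(a_1+a_1'r^2)g_{kl}+f(2b_1+b_1'r^2)u_ku_l=0$. Contracting with $g^{kl}$ and then evaluating at $u=0$ (so $r^2=0$) kills the $u_ku_l$ term and gives $n\,f\,a_1(0)=0$ pointwise on $M\times\{0\}$; since $\dim M>2>0$ this is $f\,a_1(0)=0$. I would not yet conclude $a_1(0)\neq0$, so instead I would also feed the same vanishing component into the other two relations. From $\left(L_{X^C}G\right)(\partial_k^h,\partial_l^h)=0$ restricted to $u=0$ I get $\left[a_2\partial f+f\,A\right]g_{kl}=0$, and at $u=0$ the term $\partial f=u^r\nabla_rf$ vanishes, leaving $f\,A(0)g_{kl}=0$, i.e. $f\,A(0)=0$. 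Likewise from $\left(L_{X^C}G\right)(\partial_k^v,\partial_l^h)=0$ at $u=0$ I get $\left(\tfrac12 a_1\partial f\right)g_{kl}+f\,a_2(0)g_{kl}=0$, hence $f\,a_2(0)=0$. Thus at every point of $M$ we have $f\,a_1(0)=f\,a_2(0)=f\,A(0)=0$, so $f\cdot a(0)=f\big(a_1(0)A(0)-a_2(0)^2\big)=0$; but by Lemma \ref{Lemma 9} non-degeneracy of $G$ forces $a(0)\neq0$, so $f\equiv0$ on $M$.

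The only delicate point is making sure the argument is honest about evaluating at $u=0$: the identity $L_{X^C}G=0$ holds on all of $TM$, in particular on the zero section, and each displayed right-hand side is a smooth function of $(x,u)$, so restricting to $u=0$ is legitimate and kills exactly the terms carrying factors $r^2$, $u_k$, or $\partial f=u^r\nabla_rf$. I expect this restriction-to-the-zero-section step, together with the appeal to Lemma \ref{Lemma 9} (non-degeneracy $\Leftrightarrow$ $a(t)\neq0$ and $F(t)\neq0$), to be the crux; everything else is reading off coefficients. If one prefers to avoid evaluating at a single point, one can instead contract the three relations with $g^{kl}$ over all of $TM$ and take the limit $u\to0$, which yields the same three identities $f\,a_1(0)=f\,a_2(0)=f\,A(0)=0$, and conclude as above. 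No use of $\dim M>2$ beyond $\dim M\geq1$ is actually needed here, though the ambient hypothesis of the paper supplies it.
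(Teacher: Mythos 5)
Your proof is correct, and the ``only if'' direction takes a genuinely different route from the paper's. The paper does not restrict to the zero section: assuming $f\neq 0$ somewhere, it reads the identity $f(a_{1}+a_{1}'r^{2})=f(2b_{1}+b_{1}'r^{2})=0$ as holding for \emph{all} $r^{2}\geq 0$, invokes the fact that the only smooth solution of $h(t)+th'(t)=0$ (resp. $2h(t)+th'(t)=0$) on $[0,\infty)$ is $h\equiv 0$ to conclude $a_{1}\equiv b_{1}\equiv 0$ on $TM$, and then feeds this into the mixed component to get $a_{2}\equiv 0$, contradicting $a(t)\neq 0$. You instead evaluate all three components of $L_{X^{C}}G$ at $u=0$, obtaining the pointwise relations $f\,a_{1}(0)=f\,a_{2}(0)=f\,A(0)=0$ and hence $f\cdot a(0)=0$, which contradicts $a(0)\neq 0$ from Lemma \ref{Lemma 9} unless $f=0$. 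Your argument is more elementary (no smooth-prolongation/ODE step) and, as you note, needs only the values of the coefficient functions at $t=0$ and no dimension restriction; the paper's argument is slightly heavier but yields the stronger global conclusion that $a_{1},b_{1},a_{2}$ would have to vanish identically on $TM$. Both hinge on the same preceding lemma and on the non-degeneracy criterion, and both are sound; your care in justifying the restriction to the zero section (smoothness of each displayed expression in $(x,u)$) covers the only delicate point.
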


\begin{proof}
If $f=0,$ then the theorem is obvious by the previous lemma.

Suppose that $L_{X^{C}}G=0$ on $TM$ holds for some $f\neq 0.$ At first,
contracting the third equation with $g^{kl},$ next transvecting with $%
u^{k}u^{l},$ we easily find 
\begin{gather*}
f(a_{1}+a_{1}^{\prime }r^{2})=0, \\
f(2b_{1}+b_{1}^{\prime }r^{2})=0.
\end{gather*}%
Hence we obtain $a_{1}=0$ and $b_{1}=0$ on $TM$ since the only smooth
solution to $h(r^{2})+r^{2}h^{\prime }(r^{2})=0$ that can be prolonged
smoothly on \thinspace $<0,\infty )$ is $h(r^{2})=0.$ Then the second
equation gives $a_{2}=0$ on $TM,$ a contradiction to $a(r^{2})\neq 0.$ This
completes the proof.
\end{proof}

\begin{proposition}
The vertical lift $X^{v}=X^{r}\partial _{r}^{v}$ of a Killing vector field $%
X=X^{r}\partial _{r}$ to $(TM,G)$ with $g-$natural metrics $G$ is a Killing
vector field on $TM$ if and only if $a_{j}^{\prime }=0$ and $b_{j}=0$ on $%
TM. $
\end{proposition}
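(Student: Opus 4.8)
The plan is to substitute the vertical lift directly into the Lie-derivative formulas of Lemma~\ref{Lie Deriv}. Writing $X^{v}=X^{a}\partial_{a}^{v}$ amounts to taking $H^{a}\equiv 0$ and $V^{a}=X^{a}$ in (\ref{LD10})--(\ref{LD12}). Since the $X^{a}$ depend only on the base point, $\partial_{k}^{v}V^{a}=0$ and $\partial_{k}^{h}V^{a}+V^{r}\Gamma_{rk}^{a}=\nabla_{k}X^{a}$, so every term carrying $H^{a}$ or a vertical derivative of $V^{a}$ drops, and the three right-hand sides shrink to expressions built only from $\nabla_{k}X_{l}$, $X_{k}$, $u_{k}$ and the coefficient functions $a_{j}(r^{2}),b_{j}(r^{2}),a_{j}^{\prime}(r^{2}),b_{j}^{\prime}(r^{2})$. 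I would then invoke that $X$ is Killing: $L_{X}g=0$ removes $\nabla_{k}X_{l}+\nabla_{l}X_{k}$ from the $(\partial_{k}^{h},\partial_{l}^{h})$ entry, and the skew-symmetry of $\nabla X$ (whence $X$ is divergence-free and $\nabla_{a}X_{b}u^{a}u^{b}=0$) simplifies the contractions that appear below. A useful parallel picture: the flow of $X^{v}$ is the fibrewise translation $(x,u)\mapsto(x,u+tX(x))$, which carries the adapted horizontal frame to $\partial_{k}^{h}+t(\nabla_{k}X)^{v}$, so invariance of $G$ must force constancy of the $a_{j}$, vanishing of the $b_{j}$, and a condition involving $a_{1}\nabla X$.

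For the ``only if'' direction, assume $L_{X^{v}}G=0$; then each collapsed expression vanishes identically on $\pi^{-1}(U)$. Each is a polynomial in the fibre variables $u^{r}$ with coefficients the functions $a_{j},b_{j},a_{j}^{\prime},b_{j}^{\prime}$ at $r^{2}=g(u,u)$, and I would separate the polynomial degree from the $r^{2}$-dependence by fixing $|u|$ and letting its direction vary, at a point where $X\neq 0$. The $(\partial_{k}^{v},\partial_{l}^{v})$ identity then gives $b_{1}\equiv 0$ and $a_{1}^{\prime}\equiv 0$; the $(\partial_{k}^{h},\partial_{l}^{h})$ identity gives $A^{\prime}\equiv 0$ and $B\equiv 0$, i.e.\ $a_{3}^{\prime}\equiv 0$ and $b_{3}\equiv 0$; and the mixed $(\partial_{k}^{v},\partial_{l}^{h})$ identity, whose $u$-independent part is exactly $a_{1}\nabla_{l}X_{k}$ (forced to vanish by setting $u=0$), gives $a_{2}^{\prime}\equiv 0$ and $b_{2}\equiv 0$; the derivatives $b_{j}^{\prime}$ then vanish since $b_{j}\equiv 0$. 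Each extraction rests on the elementary observation that, for $X\neq 0$ and $\dim M\ge 2$, the $u$-forms $X_{k}u_{l}+X_{l}u_{k}$ and $g_{kl}X_{r}u^{r}$ are linearly independent; for $\dim M=1$ the claim is immediate, $b_{j}\equiv 0$ already.

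For the ``if'' direction, put $a_{j}^{\prime}=0$ (so the $a_{j}$ are constants and $b_{j}^{\prime}=0$) and $b_{j}=0$ back into the three collapsed expressions. The $(\partial_{k}^{v},\partial_{l}^{v})$ and $(\partial_{k}^{h},\partial_{l}^{h})$ ones vanish term by term, and the mixed one reduces to $a_{1}\nabla_{l}X_{k}$; hence $L_{X^{v}}G=0$ holds provided $a_{1}\nabla X=0$, the residual relation already extracted from the mixed identity at $u=0$ (automatic, e.g., when $a_{1}\equiv 0$, as for the complete lift $g^{C}$). Assembling the three statements yields $L_{X^{v}}G=0$.

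The step I expect to be the real work is the ``only if'' direction: keeping clean track of the several $u$-degrees while the coefficient functions themselves depend on $r^{2}=|u|^{2}$, and checking that the directional freedom in $u$ really isolates $b_{1},a_{1}^{\prime},a_{3}^{\prime},b_{3},a_{2}^{\prime},b_{2}$ one after another. The single delicate point running through the argument is the $u$-free term $a_{1}\nabla X$ in the mixed component, where the Killing property of $X$ and the structure of $G$ must be made to interact.
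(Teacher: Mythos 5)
Your argument follows the paper's proof essentially verbatim in the necessity direction: substitute $H^{a}=0$, $V^{a}=X^{a}$ into (\ref{LD10})--(\ref{LD12}), use $L_{X}g=0$ and the skew-symmetry of $\nabla X$ to kill the gradient terms, and contract each resulting identity with $g^{kl}$ and with $u^{k}u^{l}$ to isolate, at points where $g(X,u)\neq 0$, relations of the form $h+nh'r^{2}\hbox{-type}$; subtracting gives $A'=0$, $a_{1}'=0$, $a_{2}'=0$ and the ODE $b+r^{2}b'=0$, whose only smooth solution on $[0,\infty)$ is $b\equiv 0$, whence $B=b_{1}=b_{2}=0$. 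That part is correct and is exactly what the paper does.

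The one place where you and the paper part ways is the sufficiency direction, and you have in fact put your finger on a real defect of the statement. As you observe, with $a_{j}'=0$ and $b_{j}=0$ the mixed component does not vanish but reduces to $a_{1}\nabla_{l}X_{k}$, so $X^{v}$ is Killing only if in addition $a_{1}\nabla X=0$. Your phrase ``the residual relation already extracted from the mixed identity at $u=0$'' cannot be invoked here: in the sufficiency direction one is not entitled to that relation, since it was derived under the hypothesis that $X^{v}$ is Killing. Nor is any repair possible within the stated hypotheses: for the Sasaki metric ($a_{1}=1$, all other coefficients zero) every $a_{j}'$ and $b_{j}$ vanishes, yet the vertical lift of a non-parallel Killing field (a rotation field on the round sphere, say) is not Killing, because $(L_{X^{v}}G)(Y^{v},Z^{h})=g(\nabla_{Z}X,Y)$ survives. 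The paper's own proof simply declares this direction obvious and never addresses the surviving term. The statement you have actually proved --- $X^{v}$ is Killing if and only if $a_{j}'=0$, $b_{j}=0$ and $a_{1}\nabla X=0$ --- is the correct one, and your necessity computation already yields the extra condition as the $u$-independent part of the mixed identity.
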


\begin{proof}
Suppose $X^{v}$ is a Killing vector field. Since $X$ is also the Killing
one, (\ref{LD10}) yields%
\begin{equation*}
b_{2}(X_{r,k}u_{l}+X_{r,l}u_{k})u^{r}+B(u_{k}X_{l}+u_{l}X_{k})+2u^{r}X_{r}(A^{\prime }g_{kl}+B^{\prime }u_{k}u_{l})=0,
\end{equation*}%
whence, by contraction with $g^{kl}$ and $u^{k}u^{l}$ we obtain 
\begin{equation*}
2u^{r}X_{r}(B+nA^{\prime }+r^{2}B^{\prime })=0
\end{equation*}%
and%
\begin{equation*}
2r^{2}u^{r}X_{r}(B+A^{\prime }+r^{2}B^{\prime })=0
\end{equation*}%
since $X$ is a Killing vector field on $M.$ Thus $A^{\prime }=0$ and the
only smooth solution to $B+r^{2}B^{\prime }=0$ on $TM$ is $B=0$. In similar
manner, from (\ref{LD11}) and (\ref{LD12}) we deduce that $a_{1}^{\prime
}=a_{2}^{\prime }=0$ and $b_{1}=b_{2}=0$ on $TM.$ The "only if" part is
obvious. Thus the proposition is proved.
\end{proof}

\subsection{$V^{a}\partial _{a}^{v}=u^{p}\protect\nabla ^{r}Y_{p}\partial
_{r}^{v}$}

Let $Y$ be a non-parallel Killing vector field on $M$ and consider its lift $%
u^{p}\nabla ^{r}Y_{p}\partial _{r}^{v}$ to $(TM,G).$Then we have $\partial
_{k}^{v}V^{a}=\nabla ^{a}Y_{k},$ $\partial _{k}^{h}V^{a}=u^{p}\Theta
_{k}\left( \nabla ^{a}Y_{p}\right) $ and from (\ref{LD10}) - (\ref{LD12}) we
obtain

\begin{align*}
\left( L_{V^{a}\partial _{a}^{v}}G\right) \left( \partial _{k}^{h},\partial
_{l}^{h}\right) & =a_{2}(\nabla _{l}\nabla _{k}Y_{p}+\nabla _{l}\nabla
_{k}Y_{p})u^{p}+B(\nabla _{k}Y_{p}u^{p}u_{l}+\nabla _{l}Y_{p}u^{p}u_{k}), \\
\left( L_{V^{a}\partial _{a}^{v}}G\right) \left( \partial _{k}^{v},\partial
_{l}^{h}\right) & =a_{2}\nabla _{l}Y_{k}+a_{1}\nabla _{l}\nabla
_{k}Y_{p}u^{p}+b_{2}\nabla _{l}Y_{p}u^{p}u_{k}, \\
\left( L_{V^{a}\partial _{a}^{v}}G\right) \left( \partial _{k}^{v},\partial
_{l}^{v}\right) & =0.
\end{align*}%
Hence we deduce

\begin{proposition}
Let $Y$ be a non-parallel Killing vector field on $M$ satisfying $\nabla
\nabla Y=0.$ Then $u^{p}\nabla ^{r}Y_{p}\partial _{r}^{v}$ is a Killing
vector field on $TM$ if and only if $a_{2}=b_{2}=B=0$ on $TM.$
\end{proposition}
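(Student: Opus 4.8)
The plan is to argue directly from the three Lie-derivative formulas displayed immediately above the statement, which already incorporate the hypothesis $\nabla\nabla Y=0$. Write $N_{lk}=\nabla_lY_k$; since $Y$ is Killing, $N$ is skew-symmetric, and since $Y$ is non-parallel there is a point $x\in M$ with $N_x\neq0$. The "if" direction is then immediate: if $a_2\equiv b_2\equiv B\equiv 0$ on $TM$, all three right-hand sides vanish identically, so $u^p\nabla^rY_p\,\partial_r^v$ is a Killing vector field.

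For the converse I would assume $L_{V^a\partial_a^v}G=0$ and read each of the three displayed identities as a polynomial identity in $u\in T_xM$, with the scalars $a_2,b_2,B$ evaluated at $r^2=g_x(u,u)$. The key observation is that, because $N$ is skew-symmetric, the naive contractions (with $g^{kl}$, or with $u^ku^l$) annihilate every term, so information can only be extracted by contracting against a single factor of $u$. Contracting the mixed identity $a_2N_{lk}+b_2(N_{lp}u^p)u_k=0$ with $u^l$ and using $N_{lk}u^l=-N_{kp}u^p$ together with $N_{lp}u^pu^l=0$ gives $a_2\big(g_x(u,u)\big)\,N_{kp}u^p=0$ for all $u\in T_xM$. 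Picking $u_0$ with $N_{lp}u_0^p\neq0$ for some $l$ (possible since $N_x\neq0$) and then rescaling $u_0$ by arbitrary $c>0$ shows $a_2(t)=0$ for every $t>0$; smoothness of $a_2$ on $[0,\infty)$ forces $a_2(0)=0$ as well, hence $a_2\equiv 0$.

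Feeding $a_2\equiv 0$ back into the mixed identity leaves $b_2\big(g_x(u,u)\big)(N_{lp}u^p)u_k=0$; evaluating at $u=cu_0$ and varying $c$ gives $b_2\equiv 0$ on $[0,\infty)$ in the same fashion. Finally, contracting the horizontal–horizontal identity $B\big((N_{kp}u^p)u_l+(N_{lp}u^p)u_k\big)=0$ with $u^l$ yields $B\big(g_x(u,u)\big)\,g_x(u,u)\,N_{kp}u^p=0$; evaluating at $u=cu_0$, where $g_x(cu_0,cu_0)>0$ and $N_{lp}(cu_0)^p\neq0$, and varying $c$ gives $B\equiv 0$ on $[0,\infty)$. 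The vertical–vertical identity is vacuous and imposes nothing, which is consistent. This yields $a_2=b_2=B=0$ on $TM$.

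I do not expect any real obstacle here. The only points needing a little care are that the three equations must be treated as identities valid for every $u$ in the fibre — so that the scalar functions of $r^2$ can be isolated by scaling $u$ — and that skew-symmetry of $\nabla Y$ dictates contracting with exactly one factor of $u$ rather than with the metric; the passage from $t>0$ to $t=0$ is handled by the smoothness of $a_2,b_2,B$ on $[0,\infty)$, which is built into the notion of a $g$-natural metric.
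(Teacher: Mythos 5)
Your argument is correct and follows essentially the paper's own route: the paper derives exactly the three displayed Lie-derivative formulas for $L_{V^{a}\partial_{a}^{v}}G$ and then states the proposition with only ``Hence we deduce,'' leaving the extraction of $a_{2}=b_{2}=B=0$ implicit. Your contraction with a single factor of $u$ plus the rescaling $u\mapsto cu_{0}$ (using skew-symmetry of $\nabla Y$ and continuity at $t=0$) supplies precisely the details the paper omits, so nothing further is needed.
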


\begin{proposition}
Let $Y$ be a non-parallel Killing vector field on $M.$ If $a_{2}=b_{2}=B=0$
on $TM$ and $u^{p}\nabla ^{r}Y_{p}\partial _{r}^{v}$ is a Killing vector
field on $TM$ then $\nabla \nabla Y=0$ on $M$.
\end{proposition}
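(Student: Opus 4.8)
The plan is to substitute the vanishings $a_{2}=b_{2}=B=0$ into the three Lie derivative formulas for the vertical lift $V^{a}\partial _{a}^{v}$ with $V^{a}=u^{p}\nabla ^{a}Y_{p}$ that are displayed immediately above this proposition, and then to see what the Killing condition retains. With $a_{2}=b_{2}=B=0$ the $(\partial _{k}^{h},\partial _{l}^{h})$- and the $(\partial _{k}^{v},\partial _{l}^{v})$-components of $L_{V^{a}\partial _{a}^{v}}G$ vanish identically, while the mixed component collapses to
\[
\left( L_{V^{a}\partial _{a}^{v}}G\right) \left( \partial _{k}^{v},\partial _{l}^{h}\right) =a_{1}\,\nabla _{l}\nabla _{k}Y_{p}\,u^{p}.
\]
Hence $V^{a}\partial _{a}^{v}$ is a Killing vector field on $TM$ if and only if
\[
a_{1}\bigl(g_{x}(u,u)\bigr)\,\nabla _{l}\nabla _{k}Y_{p}(x)\,u^{p}=0\qquad \text{for every }(x,u)\in TM .
\]

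The next step is to cancel the factor $a_{1}$. Since $a_{2}=0$, non-degeneracy of $G$ together with Lemma \ref{Lemma 9} gives $a(t)=a_{1}(t)A(t)\neq 0$ for all $t\in <0,\infty )$, so $a_{1}$ is nowhere zero. (Alternatively, if $a_{1}$ vanished for some value of $g_{x}(u,u)$ with $u\neq 0$, then — since $a_{2}=b_{2}=B=0$ as well — the vertical lift of any vector $g_{x}$-orthogonal to $u$ would pair trivially with every horizontal and every vertical lift, hence with all of $T_{(x,u)}TM$, contradicting non-degeneracy when $\dim M>1$.) Therefore the displayed identity forces $\nabla _{l}\nabla _{k}Y_{p}(x)\,u^{p}=0$ for every $u\in T_{x}M$; letting $u$ run over a coordinate basis yields $\nabla _{l}\nabla _{k}Y_{p}(x)=0$, and since $x$ is arbitrary, $\nabla \nabla Y=0$ on $M$.

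I expect no genuine obstacle here: all of the analytic work is already contained in the three Lie derivative formulas preceding the statement, and what remains is bookkeeping. The only point deserving a word of justification is that the coefficient $a_{1}$ cannot vanish, which is exactly where non-degeneracy of the $g$-natural metric enters. The hypothesis that $Y$ be non-parallel plays no role in this implication — it is retained only so that the lift $u^{p}\nabla ^{r}Y_{p}\partial _{r}^{v}$ is a non-trivial vector field, in parallel with the converse proved in the preceding proposition.
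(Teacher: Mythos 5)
Your proposal is correct and follows essentially the same route as the paper: the paper's entire proof is to read off the surviving term $a_{1}\nabla _{l}\nabla _{k}Y_{p}u^{p}$ in the mixed component of the Lie derivative displayed just above the proposition. If anything you are more careful than the printed proof (which merely says to symmetrize that equation), since you make explicit that non-degeneracy with $a_{2}=0$ forces $a_{1}\neq 0$ via $a=a_{1}A\neq 0$, which is exactly the point that lets one cancel the coefficient and conclude $\nabla \nabla Y=0$.
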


\begin{proof}
It is enough to symmetrize the second equation.
\end{proof}

\subsection{$\protect\iota P$}

\begin{proposition}
\label{jotaP}Let $P$ be an arbitrary (0,2)-tensor field on $(M,g).$ The its $%
\iota $ - lift $\iota P=u^{r}P_{r}^{a}\partial _{a}^{v}$ to $\left(
TM,G\right) $ with $g-$natural metric $G$ satisfies%
\begin{multline*}
\left( L_{\iota P}G\right) \left( \partial _{k}^{h},\partial _{l}^{h}\right)
=a_{2}u^{r}\left( \nabla _{k}P_{lr}+\nabla _{l}P_{kr}\right)
+b_{2}u^{p}u^{r}\left( \nabla _{k}P_{pr}u_{l}+\nabla _{l}P_{pr}u_{k}\right) +
\\
2(A^{\prime }g_{kl}+B^{\prime }u_{k}u_{l})P_{pr}u^{p}u^{r}+Bu^{r}\left(
P_{kr}u_{l}+P_{lr}u_{k}\right) , \\
\left( L_{\iota P}G\right) \left( \partial _{k}^{v},\partial _{l}^{h}\right)
=a_{2}P_{lk}+b_{2}u^{r}P_{rk}u_{l}+a_{1}u^{r}\nabla _{l}P_{kr}+b_{1}\nabla
_{l}P_{r}^{a}u_{a}u^{r}u_{k}+ \\
2(a_{2}^{\prime }g_{kl}+b_{2}^{\prime
}u_{k}u_{l})P_{pr}u^{p}u^{r}+b_{2}u^{r}\left( P_{kr}u_{l}+P_{lr}u_{k}\right)
, \\
\left( L_{\iota P}G\right) \left( \partial _{k}^{v},\partial _{l}^{v}\right)
=a_{1}\left( P_{kl}+P_{lk}\right) +b_{1}\left[ u^{p}\left(
P_{kp}+P_{pk}\right) u_{l}+u^{p}\left( P_{lp}+P_{pl}\right) u_{k}\right] + \\
2(a_{1}^{\prime }g_{kl}+b_{1}^{\prime }u_{k}u_{l})P_{pr}u^{p}u^{r}.
\end{multline*}
\end{proposition}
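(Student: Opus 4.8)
The plan is to regard $\iota P=u^{r}P_{r}^{a}\partial _{a}^{v}$ as a vector field $H^{a}\partial _{a}^{h}+V^{\alpha }\partial _{\alpha }^{v}$ on $TM$ with vanishing horizontal part, $H^{a}\equiv 0$, and vertical part $V^{a}=u^{r}P_{r}^{a}$, and then to specialize the three formulas (\ref{LD10})--(\ref{LD12}) of Lemma \ref{Lie Deriv} to this choice. Because $H^{a}\equiv 0$, every summand in (\ref{LD10})--(\ref{LD12}) carrying a factor $H^{a}$, $\partial _{k}^{h}H^{a}$, $\partial _{k}^{v}H^{a}$ or a curvature term $R_{\cdots }H^{a}u^{r}$ disappears at once, leaving only the contributions of $V$.

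The computation of the $V$-derivatives is the only place a short argument is needed. Since $V^{a}=u^{r}P_{r}^{a}$ with $P_{r}^{a}$ depending on the base point alone, $\partial _{k}^{v}V^{a}=\delta _{k}V^{a}=P_{k}^{a}$. For the horizontal derivative I would use $\partial _{k}^{h}=\partial _{k}-u^{r}\Gamma _{rk}^{s}\delta _{s}$, which gives $\partial _{k}^{h}V^{a}=u^{r}\big(\partial _{k}P_{r}^{a}-\Gamma _{rk}^{m}P_{m}^{a}\big)$; rewriting $\partial _{k}P_{r}^{a}$ through $\nabla _{k}P_{r}^{a}$ and cancelling the remaining Christoffel terms against $V^{r}\Gamma _{rk}^{a}$ yields the identity
\begin{equation*}
\partial _{k}^{h}V^{a}+V^{r}\Gamma _{rk}^{a}=u^{r}\nabla _{k}P_{r}^{a},
\end{equation*}
which is exactly the combination occurring in (\ref{LD10})--(\ref{LD12}). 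In addition $V^{b}u_{b}=u^{p}P_{p}^{a}g_{ab}u^{b}=P_{pr}u^{p}u^{r}$ and $V_{k}=g_{ka}u^{r}P_{r}^{a}=u^{r}P_{kr}$, where I keep to the convention $P_{lk}=P_{k}^{a}g_{al}$ introduced earlier (the index coming from the contravariant slot written first); since $P$ is not assumed symmetric this ordering must be tracked throughout.

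Substituting $\partial _{k}^{v}V^{a}=P_{k}^{a}$, $\partial _{k}^{h}V^{a}+V^{r}\Gamma _{rk}^{a}=u^{r}\nabla _{k}P_{r}^{a}$, $V^{b}u_{b}=P_{pr}u^{p}u^{r}$ and $V_{k}=u^{r}P_{kr}$ into (\ref{LD10}), (\ref{LD11}), (\ref{LD12}) respectively, and then grouping the coefficients of $g_{kl}$, $u_{k}u_{l}$, $g_{al}u^{r}$, and the like, reproduces line by line the three displayed formulas of the proposition. The step carrying essentially all the work is purely the bookkeeping of which index of $P$ is contracted in each term (for instance $\nabla _{k}P_{lr}$ versus $\nabla _{k}P_{rl}$, or $P_{kr}$ versus $P_{rk}$); there is no conceptual obstacle, the statement being just the restriction of Lemma \ref{Lie Deriv} to $H=0$, $V=\iota P$.
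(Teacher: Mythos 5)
Your proposal is correct and is essentially the paper's own proof: the paper likewise takes $H^{a}=0$, $V^{a}=u^{r}P_{r}^{a}$, records $\partial _{k}^{v}V^{a}=P_{k}^{a}$ and $\partial _{k}^{h}V^{a}+V^{r}\Gamma _{kr}^{a}=u^{p}\nabla _{k}P_{p}^{a}$, and substitutes into Lemma \ref{Lie Deriv}. Your additional bookkeeping of $V_{k}=u^{r}P_{kr}$ and $V^{b}u_{b}=P_{pr}u^{p}u^{r}$ is consistent with the paper's index conventions, so nothing further is needed.
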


\begin{proof}
We have $V^{a}=u^{r}P_{r}^{a},$ $\partial _{k}^{v}V^{a}=P_{k}^{a},$ $%
\partial _{k}^{h}V^{a}=u^{p}\left( \partial _{k}P_{p}^{a}-\Gamma
_{pk}^{t}P_{t}^{a}\right) $ and $\partial _{k}^{h}V^{a}+V^{r}\Gamma
_{kr}^{a}=u^{p}\nabla _{k}P_{p}^{a}.$
\end{proof}

\begin{proposition}
Let $P$ be a skew-symmetric (0,2)-tensor field on $(M,g).$ The its $\iota $
- lift $\iota P=u^{r}P_{r}^{a}\partial _{a}^{v}$ to $\left( TM,G\right) $
with $g-$natural metric $G$ satisfies%
\begin{multline*}
\left( L_{\iota P}G\right) \left( \partial _{k}^{h},\partial _{l}^{h}\right)
=a_{2}\left( u^{r}\nabla _{k}P_{lr}+u^{r}\nabla _{l}P_{kr}\right) +B\left(
u^{r}P_{kr}u_{l}+u^{r}P_{lr}u_{k}\right) , \\
\left( L_{\iota P}G\right) \left( \partial _{k}^{v},\partial _{l}^{h}\right)
=a_{2}P_{lk}+b_{2}u^{r}P_{lr}u_{k}+a_{1}u^{r}\nabla _{l}P_{kr}, \\
\left( L_{\iota P}G\right) \left( \partial _{k}^{v},\partial _{l}^{v}\right)
=0.
\end{multline*}
\end{proposition}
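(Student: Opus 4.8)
The plan is to obtain the statement as an immediate specialization of Proposition \ref{jotaP}: substitute the skew-symmetry relation $P_{ab}=-P_{ba}$ (equivalently $P_{k}^{a}g_{al}=-P_{l}^{a}g_{ak}$) into the three formulas displayed there and simplify. No fresh computation of the Lie derivative is needed, since Proposition \ref{jotaP} already records $\left(L_{\iota P}G\right)$ for an arbitrary $(0,2)$-tensor $P$; alternatively one could recompute from Lemma \ref{Lie Deriv} using $V^{a}=u^{r}P_{r}^{a}$, $\partial_{k}^{v}V^{a}=P_{k}^{a}$ and $\partial_{k}^{h}V^{a}+V^{r}\Gamma_{kr}^{a}=u^{p}\nabla_{k}P_{p}^{a}$, but this merely reproduces Proposition \ref{jotaP}.

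First I would record the elementary consequences of skew-symmetry that kill most of the terms. Since $u^{p}u^{r}$ is symmetric in $(p,r)$ while $P_{pr}$ and each $\nabla_{k}P_{pr}$ are antisymmetric in $(p,r)$, every contraction $P_{pr}u^{p}u^{r}$, $u^{p}u^{r}\nabla_{k}P_{pr}$ and $\nabla_{l}P_{rb}u^{b}u^{r}$ vanishes; this removes the terms with coefficients $A'$, $B'$, $a_{2}'$, $b_{2}'$, $a_{1}'$, $b_{1}'$, as well as the $b_{2}u^{p}u^{r}(\nabla_{k}P_{pr}u_{l}+\nabla_{l}P_{pr}u_{k})$ term and the $b_{1}\nabla_{l}P_{r}^{a}u_{a}u^{r}u_{k}$ term. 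Likewise the symmetrizations $P_{kl}+P_{lk}$, $u^{p}(P_{kp}+P_{pk})$ and $u^{p}(P_{lp}+P_{pl})$ all vanish, so the entire $\left(L_{\iota P}G\right)(\partial_{k}^{v},\partial_{l}^{v})$ expression collapses to $0$.

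Next I would handle the one place where cancellation, rather than outright vanishing, occurs: in $\left(L_{\iota P}G\right)(\partial_{k}^{v},\partial_{l}^{h})$ the surviving pieces of Proposition \ref{jotaP} are $a_{2}P_{lk}+b_{2}u^{r}P_{rk}u_{l}+a_{1}u^{r}\nabla_{l}P_{kr}+b_{2}u^{r}(P_{kr}u_{l}+P_{lr}u_{k})$; using $P_{rk}=-P_{kr}$ the term $b_{2}u^{r}P_{rk}u_{l}$ cancels $b_{2}u^{r}P_{kr}u_{l}$, leaving exactly $a_{2}P_{lk}+b_{2}u^{r}P_{lr}u_{k}+a_{1}u^{r}\nabla_{l}P_{kr}$. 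For the $(\partial_{k}^{h},\partial_{l}^{h})$ component the surviving terms are already in final form, namely $a_{2}(u^{r}\nabla_{k}P_{lr}+u^{r}\nabla_{l}P_{kr})+B(u^{r}P_{kr}u_{l}+u^{r}P_{lr}u_{k})$. Collecting the three results yields the asserted formulas.

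I do not expect any genuine obstacle here: the proof is pure bookkeeping. The only point requiring a moment's care is keeping track of which index pair $P$ is antisymmetric in when the contraction is against $u^{p}u^{r}$ versus against a single $u^{r}$, so that one does not inadvertently cancel the $Bu^{r}P_{kr}u_{l}$ and $b_{2}u^{r}P_{lr}u_{k}$ terms (which survive) along with the genuinely vanishing ones.
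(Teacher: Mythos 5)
Your proof is correct and follows essentially the same route as the paper: the paper verifies the identities directly from (\ref{LD10})--(\ref{LD12}) with $H^{a}=0$, $V^{a}=u^{r}P_{r}^{a}$, $V_{k}=-u^{r}P_{rk}$, which is exactly the computation underlying the general-$P$ proposition you specialize, so passing through Proposition \ref{jotaP} and then imposing $P_{ab}=-P_{ba}$ is the same bookkeeping packaged slightly differently. Your accounting of which terms vanish (symmetric contractions against antisymmetric $P$ or $\nabla P$) and which merely cancel (the two $b_{2}u^{r}P_{kr}u_{l}$-type terms in the mixed component) is accurate.
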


\begin{proof}
By definition, $\iota P=u^{r}P_{r}^{t}\partial _{t}^{v},$ where $%
P_{r}^{t}=P_{xr}g^{xt}.$ Thus it is enough to check the identities making
use of (\ref{LD10}) - (\ref{LD12}) with $H^{a}=0,$ $V^{a}=u^{r}P_{r}^{a},$ $%
V_{k}=-u^{r}P_{rk}.$
\end{proof}

\subsubsection{$\protect\iota C^{\left[ X\right] }$}

Put $C^{\left[ X\right] }=\left( \left( C^{\left[ X\right] }\right)
_{k}^{h}\right) =\left( -g^{hr}\left( L_{X}g\right) _{rk}\right) =\left(
-\left( \nabla ^{h}X_{k}+\nabla _{k}X^{h}\right) \right) $ on $(M,g)$ Then
its $\iota $-lift $\iota C^{\left[ X\right] }=\left( 0,\ u^{k}\left( C^{%
\left[ X\right] }\right) _{k}^{h}\right) =\left( 0,\ -u^{k}\left( \nabla
^{h}X_{k}+\nabla _{k}X^{h}\right) \right) \ $is a vertical vector field on $%
TM.$ In adapted coordinates $\left( \partial _{k}^{v},\partial
_{l}^{h}\right) $ we have%
\begin{equation*}
\iota C^{\left[ X\right] }=-u^{k}\left( \nabla ^{h}X_{k}+\nabla
_{k}X^{h}\right) \partial _{h}^{v}.
\end{equation*}%
Hence, applying Lemma \ref{Lie Deriv}, we easily get%
\begin{multline*}
\left( L_{\iota C^{\left[ X\right] }}G\right) \left( \partial
_{k}^{h},\partial _{l}^{h}\right) = \\
-a_{2}u^{p}\left[ \nabla _{k}\nabla _{l}X_{p}+\nabla _{l}\nabla
_{k}X_{p}+\nabla _{k}\nabla _{p}X_{l}+\nabla _{l}\nabla _{p}X_{k}\right] - \\
2b_{2}u^{p}u^{q}\left[ \nabla _{k}\nabla _{p}X_{q}u_{l}+\nabla _{l}\nabla
_{p}X_{q}u_{k}\right] -4\left( A^{\prime }g_{kl}+B^{\prime
}u_{k}u_{l}\right) u^{p}u^{q}\nabla _{p}X_{q}- \\
Bu^{p}\left[ \left( \nabla _{k}X_{p}+\nabla _{p}X_{k}\right) u_{l}+\left(
\nabla _{l}X_{p}+\nabla _{p}X_{l}\right) u_{k}\right] ,
\end{multline*}%
\begin{multline*}
\left( L_{\iota C^{\left[ X\right] }}G\right) \left( \partial
_{k}^{v},\partial _{l}^{h}\right) = \\
-a_{2}\left( \nabla _{k}X_{l}+\nabla _{l}X_{k}\right) -b_{2}u^{p}\left[
2\left( \nabla _{k}X_{p}+\nabla _{p}X_{k}\right) u_{l}+\left( \nabla
_{l}X_{p}+\nabla _{p}X_{l}\right) u_{k}\right] - \\
a_{1}u^{p}\left( \nabla _{l}\nabla _{k}X_{p}+\nabla _{l}\nabla
_{p}X_{k}\right) -2b_{1}u^{p}u^{q}\nabla _{l}\nabla _{p}X_{q}u_{k}- \\
4\left( a_{2}^{\prime }g_{kl}+b_{2}^{\prime }u_{k}u_{l}\right)
u^{p}u^{q}\nabla _{p}X_{q},
\end{multline*}%
\begin{multline*}
\left( L_{\iota C^{\left[ X\right] }}G\right) \left( \partial
_{k}^{v},\partial _{l}^{v}\right) = \\
-2a_{1}\left( \nabla _{k}X_{l}+\nabla _{l}X_{k}\right) -2b_{1}u^{p}\left[
\left( \nabla _{k}X_{p}+\nabla _{p}X_{k}\right) u_{l}+\left( \nabla
_{l}X_{p}+\nabla _{p}X_{l}\right) u_{k}\right] - \\
4\left( a_{1}^{\prime }g_{kl}+b_{1}^{\prime }u_{k}u_{l}\right)
u^{p}u^{q}\nabla _{p}X_{q}.
\end{multline*}

\subsubsection{Complete lift X$^{C}$ of X to (TM, G)}

We have $X^{C}=(X^{r}\partial _{r})^{C}=X^{r}\partial _{r}+\partial
X^{r}\delta _{r}=X^{r}\partial _{r}^{h}+u^{p}\nabla _{p}X^{r}\partial
_{r}^{v}.$ Making use of Lemma \ref{Lie Deriv} we obtain%
\begin{multline*}
\left( L_{X^{C}}G\right) \left( \partial _{k}^{h},\partial _{l}^{h}\right) =
\\
a_{2}u^{p}\left[ \nabla _{k}\nabla _{p}X_{l}+X^{r}R_{rkpl}+\nabla _{l}\nabla
_{p}X_{k}+X^{r}R_{rlpk}\right] + \\
b_{2}u^{p}u^{q}\left[ \left( \nabla _{k}\nabla
_{p}X_{q}+X^{r}R_{rkpq}\right) u_{l}+\left( \nabla _{l}\nabla
_{p}X_{q}+X^{r}R_{rlpq}\right) u_{k}\right] + \\
A\left( \nabla _{k}X_{l}+\nabla _{l}X_{k}\right) +Bu^{p}\left[ \left( \nabla
_{k}X_{p}+\nabla _{p}X_{k}\right) u_{l}+\left( \nabla _{l}X_{p}+\nabla
_{p}X_{l}\right) u_{k}\right] + \\
2\left( A^{\prime }g_{kl}+B^{\prime }u_{k}u_{l}\right) u^{p}u^{q}\nabla
_{p}X_{q},
\end{multline*}%
\begin{multline*}
\left( L_{X^{C}}G\right) \left( \partial _{k}^{v},\partial _{l}^{h}\right) =
\\
a_{1}u^{p}\left[ \nabla _{l}\nabla _{p}X_{k}+X^{r}R_{rlpk}\right]
+a_{2}\left( \nabla _{k}X_{l}+\nabla _{l}X_{k}\right) + \\
b_{2}u^{p}\left[ \left( \nabla _{k}X_{p}+\nabla _{p}X_{k}\right)
u_{l}+\left( \nabla _{l}X_{p}+\nabla _{p}X_{l}\right) u_{k}\right] + \\
b_{1}u^{p}u^{q}\left( \nabla _{l}\nabla _{p}X_{q}+X^{r}R_{rlpq}\right)
u_{k}+2\left( a_{2}^{\prime }g_{kl}+b_{2}^{\prime }u_{k}u_{l}\right)
u^{p}u^{q}\nabla _{p}X_{q},
\end{multline*}%
\begin{multline*}
\left( L_{X^{C}}G\right) \left( \partial _{k}^{v},\partial _{l}^{v}\right) =
\\
a_{1}\left( \nabla _{k}X_{l}+\nabla _{l}X_{k}\right) +b_{1}u^{p}\left[
\left( \nabla _{k}X_{p}+\nabla _{p}X_{k}\right) u_{l}+\left( \nabla
_{l}X_{p}+\nabla _{p}X_{l}\right) u_{k}\right] + \\
2\left( a_{1}^{\prime }g_{kl}+b_{1}^{\prime }u_{k}u_{l}\right)
u^{p}u^{q}\nabla _{p}X_{q}.
\end{multline*}

\subsubsection{\label{Subsection543}$\protect\iota C^{\left[ X\right]
}+X^{C} $ for an infinitesimal affine transformation}

Suppose that $X$ is an infinitesimal affine transformation on $M.$ Then by (%
\ref{Conv5}) and the definition

\begin{equation*}
\nabla _{k}\nabla _{l}X_{p}+\nabla _{k}\nabla _{p}X_{l}=\nabla _{k}\nabla
_{l}X_{p}+X^{r}R_{rklp}+\nabla _{k}\nabla _{p}X_{l}+X^{r}R_{rkpl}=0
\end{equation*}%
and 
\begin{equation*}
u^{p}u^{q}\nabla _{k}\nabla _{p}X_{q}=-u^{p}u^{q}X^{r}R_{rkpq}=0.
\end{equation*}%
Therefore, applying results of previous subsections, we find%
\begin{equation*}
\left( L_{\iota C^{\left[ X\right] }+X^{C}}G\right) \left( \partial
_{k}^{h},\partial _{l}^{h}\right) =A\left( \nabla _{k}X_{l}+\nabla
_{l}X_{k}\right) -2\left( A^{\prime }g_{kl}+B^{\prime }u_{k}u_{l}\right)
u^{p}u^{q}\nabla _{p}X_{q},
\end{equation*}%
\begin{multline*}
\left( L_{\iota C^{\left[ X\right] }+X^{C}}G\right) \left( \partial
_{k}^{v},\partial _{l}^{h}\right) = \\
-2\left( a_{2}^{\prime }g_{kl}+b_{2}^{\prime }u_{k}u_{l}\right)
u^{p}u^{q}\nabla _{p}X_{q}-b_{2}u^{p}\left( \nabla _{k}X_{p}+\nabla
_{p}X_{k}\right) u_{l},
\end{multline*}%
\begin{multline*}
\left( L_{\iota C^{\left[ X\right] }+X^{C}}G\right) \left( \partial
_{k}^{v},\partial _{l}^{v}\right) =-a_{1}\left( \nabla _{k}X_{l}+\nabla
_{l}X_{k}\right) - \\
b_{1}u^{p}\left[ \left( \nabla _{k}X_{p}+\nabla _{p}X_{k}\right)
u_{l}+\left( \nabla _{l}X_{p}+\nabla _{p}X_{l}\right) u_{k}\right] -2\left(
a_{1}^{\prime }g_{kl}+b_{1}^{\prime }u_{k}u_{l}\right) u^{p}u^{q}\nabla
_{p}X_{q}.
\end{multline*}

\section{Appendix{}}

The Levi-Civita connection $\widetilde{\nabla }$ of the $g$ - natural metric 
$G$ on $TM$ was calculated and presented in (\cite{Abb 2005 a}, \cite{Abb
2005 b}, \cite{Abb 2005 c}). Unfortunately, they contain some misprints and
omissions. Below, we present the correct version owing to the courtesy of
the authors of the aforementioned papers. In addition, it was checked
independently by the present author.

Moreover, observe that it is the Levi-Civita connection of a metric given by
(\ref{g1a}).

Let $T$ be a tensor field of type\thinspace $(1,s)$ on $M.$ For any $%
X_{1},...,X_{s}\in T_{x}M,$ $x\in M,$ we define horizontal and vertical
vectors at a point $(x,u)\in TTM$ setting respectively%
\begin{equation*}
h\left\{ T(X_{1},...,u,...,X_{s-1}\right\}
=\sum_{r=1}^{dimM}u^{r}[T(X_{1},...,\partial _{r},..,X_{s-1})]^{h},
\end{equation*}%
\begin{equation*}
v\left\{ T(X_{1},...,u,...,X_{s-1}\right\}
=\sum_{r=1}^{dimM}u^{r}[T(X_{1},...,\partial _{r},..,X_{s-1})]^{v}.
\end{equation*}

By the similar formulas we define 
\begin{equation*}
h\left\{ T(X_{1},...,u,...,u,...,X_{s-1}\right\} \ \text{and}\ h\left\{
T(X_{1},...,u,...,u,...,X_{s-1}\right\} .
\end{equation*}
Moreover, we put $h\left\{ T(X_{1},...,X_{s}\right\} =\left(
T(X_{1},...,X_{s})\right) ^{h}$ and $v\left\{ T(X_{1},...,X_{s}\right\}
=\left( T(X_{1},...,X_{s})\right) ^{v}.$ Therefore $h\{X\}=X^{h}$ and $%
v\{X\}=X^{v}$ (\cite{Abb 2005 a}, p. 22-23).

Finally, we write \thinspace 
\begin{equation*}
R(X,Y,Z)=R(X,Y)Z\ \text{and}\ R(X,Y,Z,V)=g(R(X,Y,Z),V)
\end{equation*}
for all $X,Y,Z,V\in T_{x}M.$

\begin{proposition}
\label{Connection}(\cite{Abb 2005 a}, \cite{Abb 2005 b}, \cite{Abb 2005 c})
Let $(M,g)$ be a Riemannian manifold, $\nabla $ its Levi-Civita connection
and $R$ its Riemann curvature tensor. If $G$ is a $g-$natural metrics on $%
TM, $ then the Levi-Civita connection $\widetilde{\nabla }$of $(TM,G)$ at a
point $(x,u)\in TM$ is given by 
\begin{eqnarray*}
\left( \widetilde{\nabla }_{X^{h}}Y^{h}\right) _{(x,u)} &=&\left( \nabla
_{X}Y\right) _{(x,u)}^{h}+h\left\{ A(u,X_{x},Y_{x})\right\} +v\left\{
B(u,X_{x},Y_{x})\right\} , \\
\left( \widetilde{\nabla }_{X^{h}}Y^{v}\right) _{(x,u)} &=&\left( \nabla
_{X}Y\right) _{(x,u)}^{v}+h\left\{ C(u,X_{x},Y_{x})\right\} +v\left\{
D(u,X_{x},Y_{x})\right\} , \\
\left( \widetilde{\nabla }_{X^{v}}Y^{h}\right) _{(x,u)} &=&h\left\{
C(u,Y_{x},X_{x})\right\} +v\left\{ D(u,Y_{x},X_{x})\right\} , \\
\left( \widetilde{\nabla }_{X^{v}}Y^{v}\right) _{(x,u)} &=&h\left\{
E(u,X_{x},Y_{x})\right\} +v\left\{ F(u,X_{x},Y_{x})\right\} ,
\end{eqnarray*}%
for all vector fields $X,$ $Y$ on $M,$ where $P=a_{2}^{\prime }-\frac{b_{2}}{%
2},$ $Q=$ $a_{2}^{\prime }+\frac{b_{2}}{2}$ and%
\begin{multline*}
A(u,X,Y)=-\frac{a_{1}a_{2}}{2a}\left[ R(X,u,Y)+R(Y,u,X)\right] + \\
\frac{a_{2}B}{2a}\left[ g(Y,u)X+g(X,u)Y\right] + \\
\frac{1}{aF}\QATOPD\{ . {{}}{{}}\left. a_{2}\left[ a_{1}\left(
F_{1}B-F_{2}b_{2}\right) +a_{2}\left( b_{1}a_{2}-b_{2}a_{1}\right) \right]
R(X,u,Y,u)+\right. \\
\left. \left[ aF_{2}B^{\prime }+B\left[
a_{2}(F_{2}b_{2}-F_{1}B)+A(a_{1}b_{2}-a_{2}b_{1})\right] \right]
g(X,u)g(Y,u)+\right. \\
\left. aF_{2}A^{\prime }g(X,Y)\right. \QATOPD. \} {{}}{{}}u,
\end{multline*}%
\begin{multline*}
B(u,X,Y)=\frac{a_{2}^{2}}{a}R(X,u,Y)-\frac{a_{1}A}{2a}R(X,Y,u)- \\
\frac{AB}{2a}\left[ g(Y,u)X+g(X,u)Y\right] + \\
\frac{1}{aF}\QATOPD\{ .
{{}}{{}}a_{2}[a_{2}(F_{2}b_{2}-F_{1}B)+A(b_{2}a_{1}-b_{1}a_{2})]R(X,u,Y,u)+
\\
\left[ \frac{{}}{{}}\right. -a(F_{1}+F_{3})B^{\prime }+B\left[ A\left(
(F_{1}+F_{3})b_{1}-F_{2}b_{2}\right) +a_{2}\left( a_{2}B-b_{2}A\right) %
\right] \left. \frac{{}}{{}}\right] g(X,u)g(Y,u) \\
-a(F_{1}+F_{3})A^{\prime }g(X,Y)\QATOPD. \} {{}}{{}}u,
\end{multline*}%
\begin{multline*}
C(u,X,Y)=-\frac{a_{1}^{2}}{2a}R(Y,u,X)+\frac{a_{1}B}{2a}g(X,u)Y+ \\
\frac{1}{a}\left( a_{1}A^{\prime }-a_{2}P\right) g(Y,u)X+ \\
\frac{1}{aF}\QATOPD\{ . {{}}{{}}\frac{a_{1}}{2}\left[ a_{2}\left(
a_{2}b_{1}-a_{1}b_{2}\right) +a_{1}\left( F_{1}B-F_{2}b_{2}\right) \right]
R(X,u,Y,u)+ \\
a\left( \frac{F_{1}}{2}B+F_{2}P\right) g(X,Y)+ \\
\QATOPD[ . {{}}{{}}aF_{1}B^{\prime }+\left( A^{\prime }+\frac{B}{2}\right) %
\left[ a_{2}\left( a_{1}b_{2}-a_{2}b_{1}\right) +a_{1}\left(
F_{2}b_{2}-BF_{1}\right) \right] + \\
P\left[ a_{2}\left( b_{1}\left( F_{1}+F_{3}\right) -b_{2}F_{2}\right)
-a_{1}\left( b_{2}A-a_{2}B\right) \right] \QATOPD. ]
{{}}{{}}g(X,u)g(Y,u)\QATOPD. \} {{}}{{}}u,
\end{multline*}%
\begin{multline*}
D(u,X,Y)=\frac{1}{a}\left\{ \QATOP{{}}{{}}\frac{a_{1}a_{2}}{2}R(Y,u,X)-\frac{%
a_{2}B}{2}g(X,u)Y\right. + \\
\left. \left( AP-a_{2}A^{\prime })g(Y,u)X\right) \QATOP{{}}{{}}\right\} + \\
\frac{1}{aF}\left\{ \frac{a_{1}}{2}\left[
A(a_{1}b_{2}-a_{2}b_{1})+a_{2}(F_{2}b_{2}-F_{1}B)\right] R(X,u,Y,u)\right. -
\\
a\left[ \frac{F_{2}}{2}B+(F_{1}+F_{3})P\right] g(X,Y)+ \\
\left[ \QATOP{{}}{{}}-aF_{2}B^{\prime }+\left( A^{\prime }+\frac{B}{2}%
\right) \left[ A(a_{2}b_{1}-a_{1}b_{2})+a_{2}(F_{1}B-F_{2}b_{2})\right]
+\right. \\
\left. \left. P\left[ A(b_{2}F_{2}-b_{1}(F_{1}+F_{3}))+a_{2}(b_{2}A-a_{2}B)%
\right] \QATOP{{}}{{}}\right] g(X,u)g(Y,u)\right\} u,
\end{multline*}%
\begin{multline*}
E(u,X,Y)=\frac{1}{a}\left( a_{1}Q-a_{2}a_{1}^{\prime }\right) \left[
g(X,u)Y+g(Y,u)X\right] + \\
\frac{1}{aF}\left\{ \QATOP{{}}{{}}a\left[ F_{1}b_{2}-F_{2}(b_{1}-a_{1}^{%
\prime })\right] g(X,Y)+\right. \\
\left[ \QATOP{{}}{{}}a(2F_{1}b_{2}^{\prime }-F_{2}b_{1}^{\prime
})+2a_{1}^{\prime }\left[
a_{1}(a_{2}B-b_{2}A)+a_{2}(b_{1}(F_{1}+F_{3})-b_{2}F_{2})\right] +\right. \\
\left. \left. 2Q\left[ a_{1}(F_{2}b_{2}-F_{1}B)+a_{2}(a_{1}b_{2}-a_{2}b_{1})%
\right] \QATOP{{}}{{}}\right] g(X,u)g(Y,u)\QATOP{{}}{{}}\right\} u,
\end{multline*}%
\begin{multline*}
F(u,X,Y)=\frac{1}{a}\left( Aa_{1}^{\prime }-a_{2}Q\right) \left[
g(X,u)Y+g(Y,u)X\right] + \\
\frac{1}{aF}\left\{ \QATOP{{}}{{}}a\left[ (F_{1}+F_{3})(b_{1}-a_{1}^{\prime
})-F_{2}b_{2}\right] g(X,Y)+\right. \\
\left[ \QATOP{{}}{{}}a((F_{1}+F_{3})b_{1}^{\prime }-2F_{2}b_{2}^{\prime
})+\right. 2a_{1}^{\prime }\left[
a_{2}(b_{2}A-a_{2}B)+A(b_{2}F_{2}-b_{1}(F_{1}+F_{3}))\right] + \\
\left. \left. 2Q\left[ a_{2}(F_{1}B-F_{2}b_{2})+A(a_{2}b_{1}-a_{1}b_{2})%
\right] \QATOP{{}}{{}}\right] g(X,u)g(Y,u)\right\} u.
\end{multline*}
\end{proposition}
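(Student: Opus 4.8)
The plan is to verify the displayed formulas by a direct application of the Koszul formula
\begin{equation*}
2G(\widetilde{\nabla}_{U}V,W)=UG(V,W)+VG(U,W)-WG(U,V)+G([U,V],W)-G([U,W],V)-G([V,W],U),
\end{equation*}
with $U,V,W$ ranging over horizontal and vertical lifts of vector fields $X,Y,Z$ on $M$. Since at every point $(x,u)$ the lifts $Z^{h},Z^{v}$ ($Z\in T_{x}M$) span $T_{(x,u)}TM$, it suffices to evaluate the scalars $G(\widetilde{\nabla}_{X^{h}}Y^{h},Z^{h})$, $G(\widetilde{\nabla}_{X^{h}}Y^{h},Z^{v})$ and the analogous pairs for $\widetilde{\nabla}_{X^{h}}Y^{v}$, $\widetilde{\nabla}_{X^{v}}Y^{h}$, $\widetilde{\nabla}_{X^{v}}Y^{v}$; the connection vector is then recovered by inverting $G$ on the horizontal--vertical splitting.

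All the needed ingredients are already in the excerpt. The six scalar products of lifts are given by Lemma \ref{Lemma 8}; they are smooth functions of $r^{2}=g(u,u)$ and of the pairings $g(X,u),g(Y,u)$. The relevant derivations are: $X^{h}$ annihilates $r^{2}$, since parallel transport is an isometry, while $X^{v}(f(r^{2}))=2f'(r^{2})g(X,u)$ and $X^{v}(g(Y,u))=g(X,Y)$; the horizontal derivative $X^{h}(g(Y,u))=g(\nabla_{X}Y,u)$ produces covariant derivatives which, by checking the identities at a fixed $x$ in normal coordinates centred at $x$ with $X,Y,Z$ taken parallel at $x$, may be assumed to vanish at the point in question. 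Finally, the Lie brackets of lifts (the lemma preceding Section 3) supply, in particular, $[X^{h},Y^{h}]=[X,Y]^{h}-v\{R(X,Y)u\}$, which is the sole source of the curvature terms, together with $[X^{h},Y^{v}]=(\nabla_{X}Y)^{v}$ and $[X^{v},Y^{v}]=0$.

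Carrying this out, the curvature contributions in each of the four covariant derivatives enter only through the bracket terms $G([U,V],W)$ via the $R(X,Y)u$ piece, while the remaining ``metric-derivative'' contributions are polynomial in $u$ with coefficients built from $a_{j},b_{j}$ and their first derivatives. Collecting like terms one gets, for each case, an identity of the form $G(\widetilde{\nabla}_{X^{h}}Y^{h},Z^{h})=(\text{known})$ and $G(\widetilde{\nabla}_{X^{h}}Y^{h},Z^{v})=(\text{known})$ valid for all $Z$; solving this linear system — whose coefficient matrix has the two ``determinants'' $a=a_{1}A-a_{2}^{2}$ and $F=F_{1}(F_{1}+F_{3})-F_{2}^{2}$ along the $g(\cdot,\cdot)$- and $g(\cdot,u)g(\cdot,u)$-directions respectively — yields exactly the displayed $h\{A(u,X,Y)\}+v\{B(u,X,Y)\}$, etc.; the factors $\frac{1}{a}$ and $\frac{1}{aF}$ arise from this inversion, and this is the only place where non-degeneracy (Lemma \ref{Lemma 9}) is invoked.

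The main obstacle is organisational rather than conceptual: the volume of algebra is large, and the two delicate points are (i) tracking the $u$-dependence of $G$ under differentiation so that no $f'(r^{2})$-term is dropped, and (ii) correctly splitting each covariant derivative into its horizontal and vertical components by inverting the block metric — it is precisely here that the versions in the cited references contained the misprints being corrected. As a consistency check one specialises the parameters $a_{j},b_{j}$ to the Sasaki and Cheeger--Gromoll values and compares with the classical expressions, and verifies symbolically that the resulting $\widetilde{\nabla}$ is metric and torsion-free.
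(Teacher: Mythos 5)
Your approach --- the Koszul formula applied to horizontal and vertical lifts, with the scalar products supplied by Lemma \ref{Lemma 8}, the derivation rules for $r^{2}$, $g(X,u)$, $g(Y,u)$, the bracket lemma as the sole source of curvature terms, and a final inversion of the block metric (where $a$ and $F$ appear as the two determinants and non-degeneracy is used) --- is the standard and correct route, and is exactly what the cited sources and the author's independent check do; note that the paper itself contains no proof of Proposition \ref{Connection}, only the corrected statement quoted from \cite{Abb 2005 a}--\cite{Abb 2005 c}. The one caveat is that your write-up stops at the level of strategy: none of the explicit coefficients in $A(u,X,Y),\dots,F(u,X,Y)$ is actually derived, and since those coefficients are the entire content of the proposition --- and precisely the place where the published versions contained the misprints being corrected --- the verification is announced rather than performed.
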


A $(0,4)$ tensor $B$ on a manifold $M$ is said to be a generalized curvature
tensor if%
\begin{equation*}
B(V,X,Y,Z)+B(V,Y,Z,X)+B(V,Z,X,Y)=0
\end{equation*}%
and%
\begin{equation*}
B(V,X,Y,Z)=-B(X,V,Y,Z),\qquad B(V,X,Y,Z)=B(Y,Z,V,X)
\end{equation*}%
for all vector fields $V,$ $X,$ $Y,$ $Z$ on $M$ (\cite{Nomizu}).

For a $(0,k)$ tensor $T,$ $k\geq 1,$ we define 
\begin{equation*}
(R\cdot T)(X_{1},...,X_{k};X,Y)=\nabla _{Y}\nabla
_{X}T(X_{1},...,X_{k})-\nabla _{X}\nabla _{Y}T(X_{1},...,X_{k}).
\end{equation*}%
For more details see for example (\cite{Belkhelfa}), (\cite{Ewert}).

The Kulkarni-Nomizu product of symmetric $(0,2)$ tensors $A$ and $B$ is
given by 
\begin{multline*}
\left( A\wedge B\right) (U,X,Y,V)= \\
A(X,Y)B(U,V)-A(X,V)B(U,Y)+A(U,V)B(X,Y)-A(U,Y)B(X,V).
\end{multline*}

\begin{theorem}
\label{Apen1}\cite{Grycak} Let $(M,g)$ be a semi-Riemannian manifold with
metric $g$, $dimM>2.$ Let $g_{X}$ be a 1-form associated to $g,$ i.e. $%
g_{X}(Y)=g(Y,X)$ for any vector field $Y.$

If $B$ is generalized curvature tensor having the property $R\cdot B=0$ and $%
P$ is a one-form on $M$ satisfying%
\begin{equation}
\left( R\cdot V\right) \left( X;Y,Z\right) =\left( P\wedge g_{X}\right)
\left( Y,Z\right) ,  \label{Ap1}
\end{equation}%
for some 1-form $V,$ then%
\begin{equation*}
P\left( B-\frac{TrB}{2n(n-1)}g\wedge g\right) =0.
\end{equation*}

If $A$ is a symmetric $(0,2)$-tensor on $M$ having the properties $R\cdot
A=0 $ and (\ref{Ap1}) then 
\begin{equation*}
P\left( A-\frac{TrA}{n}g\right) =0.
\end{equation*}
\end{theorem}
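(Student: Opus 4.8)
The plan is to reduce both assertions to one mechanism, reading "$P(\,\cdot\,)=0$" as: at each point, either $P=0$ or the bracketed tensor vanishes. At points where $P=0$ there is nothing to prove, so fix a point where $P\neq 0$ and let $\overline P$, $\overline V$ denote the vector fields metrically dual to $P$, $V$. Set $\widetilde A = A-\frac{\mathrm{Tr}A}{n}\,g$ and $\widetilde B = B-\frac{\mathrm{Tr}B}{2n(n-1)}\,g\wedge g$; these are normalized to be trace-free, and since $R\cdot g=0$ (hence $R\cdot(g\wedge g)=0$) we have $R\cdot\widetilde A=R\cdot A=0$ and $R\cdot\widetilde B=R\cdot B=0$. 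The decisive first step is to unwind (\ref{Ap1}). By the Ricci identity, $(R\cdot V)(X;Y,Z)=V(R(Y,Z)X)=R(Y,Z,X,\overline V)$, and the pair symmetry of $R$ together with its skew-symmetry turns (\ref{Ap1}) into the purely algebraic statement
\[
R(\overline V,Y)W = g(W,Y)\,\overline P - P(W)\,Y\qquad\text{for all }Y,W,
\]
i.e.\ the operator $R(\overline V,\cdot)$ has "constant-curvature shape" (incidentally this already forces $\overline P\parallel\overline V$, but I will not need it).

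Next I would feed this into $R\cdot A=0$, taking the curvature pair to be $(\overline V,Y)$: from $A(R(\overline V,Y)W_1,W_2)+A(W_1,R(\overline V,Y)W_2)=0$ one gets the algebraic identity $g(W_1,Y)A(\overline P,W_2)-P(W_1)A(Y,W_2)+g(W_2,Y)A(W_1,\overline P)-P(W_2)A(W_1,Y)=0$. Tracing over $(Y,W_1)$ collapses the middle terms and leaves $n\,A(\overline P,\cdot)=(\mathrm{Tr}A)\,g(\overline P,\cdot)$, that is $\widetilde A(\overline P,\cdot)=0$. Replacing $A$ by $\widetilde A$ in the algebraic identity (allowed since $R\cdot\widetilde A=0$) and using $\widetilde A(\overline P,\cdot)=0$ kills the $g(\cdot,Y)$ terms and yields $P(W_1)\widetilde A(Y,W_2)+P(W_2)\widetilde A(Y,W_1)=0$. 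Choosing $W_1$ with $P(W_1)\neq0$ and first putting $W_2=W_1$ gives $\widetilde A(Y,W_1)=0$, whence $\widetilde A(Y,W_2)=0$ for all $Y,W_2$; this is the second assertion.

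For the generalized curvature tensor $B$ the scheme is the same but heavier. Specializing $R\cdot B=0$ to the pair $(\overline V,Y)$ and substituting the displayed identity gives $\sum_{i=1}^{4}g(W_i,Y)\,B(W_1,\dots,\overline P,\dots,W_4)=\sum_{i=1}^{4}P(W_i)\,B(W_1,\dots,Y,\dots,W_4)$. I would then contract this over two different pairs of slots, using the skew-symmetries, the pair symmetry and the first Bianchi identity of the generalized curvature tensor to eliminate its Ricci contraction $\rho$, obtaining first $\widetilde B(\overline P,\cdot,\cdot,\cdot)=0$; then, replacing $B$ by $\widetilde B$ and dropping the now-vanishing $\overline P$-terms, $\sum_i P(W_i)\,\widetilde B(W_1,\dots,Y,\dots,W_4)=0$. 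Splitting this according to the two skew pairs of $\widetilde B$ and inserting a vector with $P\neq0$ (in the spirit of the Walker-lemma arguments used elsewhere in the paper) forces $\widetilde B=0$ wherever $P\neq0$. The main obstacle is exactly this $(0,4)$ bookkeeping: juggling all the symmetries of a generalized curvature tensor together with Bianchi so as to remove $\rho$, and turning the four-term relation $\sum_i P(W_i)\widetilde B(\dots)=0$ into $\widetilde B=0$. The hypothesis $\dim M>2$ is what keeps the denominators produced by these contractions nonzero.
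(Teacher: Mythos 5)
This theorem is not proved in the paper at all: it is quoted from Grycak's 1979 paper \cite{Grycak} and used as a black box, so there is no internal proof to compare your argument against; what you have written is a reconstruction of an external result. Judged on its own terms, your treatment of the symmetric $(0,2)$ case is complete and correct: unwinding (\ref{Ap1}) via the Ricci identity and pair symmetry into $R(\overline V,Y)W=g(W,Y)\overline P-P(W)Y$, tracing the resulting algebraic identity over $(Y,W_1)$ to get $n A(\overline P,\cdot)=(\mathrm{Tr}A)\,g(\overline P,\cdot)$, and then running the two-term Walker-type argument on $P(W_1)\widetilde A(Y,W_2)+P(W_2)\widetilde A(Y,W_1)=0$ all check out, and the normalization $R\cdot\widetilde A=R\cdot A$ is legitimate because $R\cdot$ kills functions and $g$.

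The generalized-curvature-tensor half, however, is only a plan, and the step you yourself identify as "the main obstacle" is exactly where the content of Grycak's theorem sits. The single contraction of $\sum_i g(W_i,Y)B(\dots,\overline P,\dots)=\sum_i P(W_i)B(\dots,Y,\dots)$ over $(Y,W_1)$ does not directly give $\widetilde B(\overline P,\cdot,\cdot,\cdot)=0$: it produces a mixture of $B(\overline P,\cdot,\cdot,\cdot)$-terms (which must be recombined with the first Bianchi identity and pair symmetry) and $P\otimes\rho$-terms involving the Ricci contraction $\rho$ of $B$; one then needs a further contraction to identify $\rho(\overline P,\cdot)$ with $\frac{\mathrm{Tr}B}{2n}P$ and to feed this back before the $\overline P$-insertion statement emerges. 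None of this is carried out, so as written the first assertion is asserted rather than proved. The concluding Walker-type step is fine once $\widetilde B(\overline P,\cdot,\cdot,\cdot)=0$ is in hand (take $W_1$ with $P(W_1)\neq0$ and the remaining arguments in $\ker P$, then handle the mixed terms $\widetilde B(u,x,u,y)$ by a second substitution into the four-term relation), but be careful with your decomposition of vectors "along $\overline P$ versus orthogonal to $\overline P$": the theorem is stated for semi-Riemannian $g$, where $\overline P$ may be null and lie in its own orthogonal complement, so the splitting should be done with respect to a vector $u$ satisfying $P(u)=1$ and the hyperplane $\ker P$, not with respect to $\overline P$ and $\overline P^{\perp}$.
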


\begin{lemma}
\cite{Walker}\label{Apen2} Let $A_{l},$ $B_{hk}$ where $l,h,k=1,...,n$ be
numbers satisfying 
\begin{equation*}
B_{hk}=B_{kh},\quad A_{l}B_{hk}+A_{h}B_{kl}+A_{k}B_{lh}=0.
\end{equation*}%
Then either $A_{l}=0$ for all $l$ or $B_{hk}=0$ for all $h,k.$
\end{lemma}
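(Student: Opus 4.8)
The plan is to prove the contrapositive in the form of a dichotomy: assuming that not all of the $A_l$ vanish, I will show that every $B_{hk}$ is zero. After relabelling the index set $\{1,\dots,n\}$ I may assume $A_1\neq 0$, and the whole argument then consists of specializing the cyclic identity $A_lB_{hk}+A_hB_{kl}+A_kB_{lh}=0$ to progressively fewer copies of the index $1$.

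First I would put $l=h=k=1$ in the identity; the three terms coincide and give $3A_1B_{11}=0$, hence $B_{11}=0$. Next I would take $l=h=1$ with $k$ arbitrary: the identity reads $A_1B_{1k}+A_1B_{k1}+A_kB_{11}=0$, and using the symmetry $B_{1k}=B_{k1}$ together with $B_{11}=0$ this becomes $2A_1B_{1k}=0$, whence $B_{1k}=0$ for every $k$. Finally I would take $l=1$ with $h,k$ arbitrary: the identity is $A_1B_{hk}+A_hB_{k1}+A_kB_{1h}=0$, and since the last two factors $B_{k1}$ and $B_{1h}$ were just shown to vanish, this collapses to $A_1B_{hk}=0$; dividing by $A_1\neq 0$ gives $B_{hk}=0$ for all $h,k$, as required.

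There is essentially no obstacle here, since the statement is purely elementary algebra; the only point needing a little care is that one is allowed to assume only a \emph{single} component $A_1$ is nonzero, so the computation must bootstrap the full conclusion from that one hypothesis. The three-step specialization above does exactly this, each step feeding the vanishing it has established into the next, and no appeal to any earlier result in the paper is needed.
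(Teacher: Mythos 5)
Your proof is correct: the three-step specialization ($l=h=k=1$, then $l=h=1$, then $l=1$) does bootstrap $B_{hk}=0$ for all $h,k$ from the single assumption $A_1\neq 0$, and each step uses only the symmetry of $B$ and the vanishing established in the previous step. Note that the paper itself offers no proof of this lemma --- it is quoted from Walker's 1950 article --- so your argument supplies a self-contained elementary justification that is entirely consistent with how the result is used in the text.
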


\begin{lemma}
\label{Apen3}Let on a manifold $(M,g),$ $dimM>2,$ $(0,2)$ tensors $A,$ $B,$ $%
F$ satisfying the condition%
\begin{multline*}
g(X,Y)F(U,V)+g(U,V)B(X,Y)+ \\
g(Y,V)A(X,U)+g(X,V)A(Y,U)+g(Y,U)A(X,V)+g(X,U)A(Y,V)=0
\end{multline*}%
for arbitrary vectors $X,Y,U,V$ be given.

Then $F$ and $B$ are symmetric. Moreover, $A=0$, $B+F=0$ and $%
nF-TrFg=nB-TrBg=0.$
\end{lemma}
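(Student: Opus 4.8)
The plan is to argue pointwise: fix $x\in M$, regard $A,B,F$ as bilinear forms on $T_{x}M$, and use that $g_{x}$ is positive definite --- here the Riemannian hypothesis is essential --- so that an orthonormal basis $e_{1},\dots ,e_{n}$ of $T_{x}M$ is available. Write $E(X,Y,U,V)$ for the left-hand side of the hypothesis. The starting point is the observation that the four $A$-terms, taken together, are invariant both under $X\leftrightarrow Y$ and under $U\leftrightarrow V$, that $g(X,Y)F(U,V)$ is invariant under $X\leftrightarrow Y$, and that $g(U,V)B(X,Y)$ is invariant under $U\leftrightarrow V$. Hence $E(X,Y,U,V)-E(Y,X,U,V)=g(U,V)\bigl(B(X,Y)-B(Y,X)\bigr)$; choosing $U=V$ with $g(U,U)>0$ forces $B$ to be symmetric, and the mirror computation gives $F$ symmetric.

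Next I would contract the identity $E\equiv 0$ against the orthonormal frame. Contracting over the slots $(Y,V)$ and using the symmetry of $F$ gives the $(0,2)$-tensor relation
\begin{equation*}
F+B+(n+2)A+(TrA)\,g=0 ;
\end{equation*}
antisymmetrizing this in its two arguments and using that $B,F,g$ are symmetric yields $(n+2)\bigl(A(X,Y)-A(Y,X)\bigr)=0$, so $A$ is symmetric as well. Contracting $E\equiv 0$ over the slots $(X,Y)$ and over the slots $(U,V)$ then gives, after using the symmetry just established,
\begin{equation*}
nF+4A+(TrB)\,g=0,\qquad nB+4A+(TrF)\,g=0 ,
\end{equation*}
and taking a further trace gives the scalar relation $n(TrF+TrB)+4\,TrA=0$.

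It remains to solve this linear system in the symmetric tensors $A,B,F$. Adding the last two tensor relations, then substituting the first one together with the scalar relation, collapses everything to $(n+4)(n-2)\,A=c(n)\,(TrA)\,g$ for an explicit constant $c(n)=-(n^{2}+4)/n$. Since $\dim M>2$, the factor $(n+4)(n-2)$ is nonzero, so $A=\lambda g$ for some scalar $\lambda$; taking the trace and using $(n+2)(n-1)\neq 0$ forces $\lambda =0$, hence $A=0$. Feeding $A=0$ into the first relation gives $B+F=0$, and then the remaining two relations read $nF-(TrF)\,g=0$ and $nB-(TrB)\,g=0$, which is the assertion.

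The proof has no conceptual obstacle: it is entirely a matter of carrying out the three contractions correctly and then performing the elimination without arithmetic slips. The one genuinely structural point is that the hypothesis $\dim M>2$ is used exactly to guarantee that the coefficients $(n+4)(n-2)$ and $(n+2)(n-1)$ do not vanish; for $\dim M=2$ the conclusion $A=0$ indeed fails, so the dimension assumption cannot be dropped.
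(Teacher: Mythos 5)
Your argument is correct and is essentially the paper's own proof: both reduce the hypothesis to the three metric contractions (over $(X,Y)$, $(U,V)$, and a mixed pair) and then solve the resulting linear system, the only differences being cosmetic ones of ordering — you read off the symmetry of $B$ and $F$ from swap-invariance of the full identity and eliminate so as to get $A=\lambda g$ first, whereas the paper extracts symmetry from the contracted equations and first obtains $F+B=0$ together with the antisymmetry of $A$ before concluding $A=0$. In both versions the dimension hypothesis enters through the same nonvanishing factors $(n+4)(n-2)$ and $(n+2)(n-1)$, so your elimination and the paper's are interchangeable.
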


\begin{proof}
In local coordinates $(U,(x^{a}))$ the condition writes%
\begin{equation*}
g_{kl}F_{mn}+g_{mn}B_{kl}+g_{ln}A_{km}+g_{kn}A_{lm}+g_{lm}A_{kn}+g_{km}A_{ln}=0.
\end{equation*}%
By contractions with $g^{kl},$ $g^{mn},$ $g^{km}$ we obtain in turn%
\begin{eqnarray*}
2(A_{mn}+A_{nm})+nF_{mn}+B_{p}^{p}g_{mn} &=&0, \\
2(A_{kl}+A_{lk})+nB_{kl}+F_{p}^{p}g_{kl} &=&0, \\
(n+2)A_{ln}+B_{nl}+F_{ln}+A_{p}^{p}g_{ln} &=&0.
\end{eqnarray*}%
Now, the symmetry of $F$ and $B$ results from the first two equations.
Contracting the first equation with $g^{mn}$ and the third one with $g^{ln}$
we get%
\begin{eqnarray*}
4A_{p}^{p}+n(B_{p}^{p}+F_{p}^{p}) &=&0, \\
2(n+1)A_{p}^{p}+B_{p}^{p}+F_{p}^{p} &=&0,
\end{eqnarray*}%
whence $TrA=TrF+TrB=0$ results. Applying these to the first system we easily
get%
\begin{eqnarray*}
4(A_{mn}+A_{nm})+n(F_{mn}+B_{mn}) &=&0, \\
(n+2)(A_{mn}+A_{nm})+2(F_{mn}+B_{mn}) &=&0,
\end{eqnarray*}%
whence $F+B=0$ and $A_{mn}+A_{nm}=0.$ Now the third equation yields $A=0.$
The further statements are obvious.
\end{proof}

Stanis\l aw Ewert-Krzemieniewski

West Pomeranian University of Technology Szczecin

School of Mathematics

Al. Piast\'{o}w 17

70-310 Szczecin

Poland

e-mail: ewert@zut.edu.pl

\end{document}